\par\end{em}\end{center}\end{quotation}}
\newtheorem{theorem}{Theorem}[section]
\newtheorem{corollary}[theorem]{Corollary}
\newtheorem{lemma}[theorem]{Lemma}
\newtheorem{proposition}[theorem]{Proposition}
\theoremstyle{definition}
\newtheorem{definition}[theorem]{Definition}
\newtheorem{remark}[theorem]{Remark}
\newtheorem{example}[theorem]{Example}
\DeclareMathOperator{\rk}{\mathsf{rk}}
\renewcommand{\ker}{\mathsf{ker}}
\newcommand{\id}{\mathrm{id}}
\DeclareMathOperator{\Coh}{\mathsf{Coh}}
\DeclareMathOperator{\Tor}{\mathsf{Tor}}
\DeclareMathOperator{\VB}{\mathsf{VB}}
\DeclareMathOperator{\Hom}{\mathsf{Hom}}
\DeclareMathOperator{\Ext}{\mathsf{Ext}}
\DeclareMathOperator{\Aut}{\mathsf{Aut}}
\DeclareMathOperator{\End}{\mathsf{End}}
\DeclareMathOperator{\Mat}{\mathsf{Mat}}
\DeclareMathOperator{\Ob}{\mathsf{Ob}}
\DeclareMathOperator{\Red}{Red}
\DeclareMathOperator{\Image}{\mathsf{Im}}
\DeclareMathOperator{\lcm}{lcm}
\newcommand{\E}{\mathbf{E}}
\newcommand{\F}{\mathbf{F}}
\newcommand{\NC}{\mathsf{NC}}
\newcommand{\Rad}{\mathsf{Rad}}
\newcommand{\diag}{\mathsf{diag}}
\newcommand{\kk}{\mathbbm{k}}
\newcommand{\dd}{\mathbbm{d}}
\newcommand{\KK}{\mathbbm{K}}
\newcommand{\CC}{\mathbb{C}}
\newcommand{\RR}{\mathbb{R}}
\newcommand{\NN}{\mathbb{N}}
\newcommand{\ZZ}{\mathbb{Z}}
\newcommand{\ff}{\mathbbm{f}}
\renewcommand{\dim}{\mathsf{dim}}
\newcommand{\twoheadarrow}{\rightarrow\mathrel{\mkern-14mu}\rightarrow}
\newcommand{\FF}{\mathbb{F}}
\newcommand{\EE}{\mathbb{E}}
\newcommand{\cD}{\mathsf{D}}
\newcommand{\cA}{\mathsf{A}}
\renewcommand{\ff}{\mathbbm{f}}
\renewcommand{\gg}{\mathbbm{g}}
\newcommand{\XX}{\mathbb{X}}
\newcommand{\PP}{\mathbbm{P}}
\newcommand{\sHom}{\mbox{\it{Hom}}}
\newcommand{\kA}{\mathcal{A}}
\newcommand{\kB}{\mathcal{B}}
\newcommand{\kF}{\mathcal{F}}
\newcommand{\kG}{\mathcal{G}}
\newcommand{\kH}{\mathcal{H}}
\newcommand{\kI}{\mathcal{I}}
\newcommand{\kJ}{\mathcal{J}}
\newcommand{\kO}{\mathcal{O}}
\newcommand{\kL}{\mathcal{L}}
\newcommand{\kP}{\mathcal{P}}
\newcommand{\kR}{\mathcal{R}}
\newcommand{\kK}{\mathcal{K}}
\newcommand{\kN}{\mathcal{N}}
\newcommand{\kT}{\mathcal{T}}
\newcommand{\kS}{\mathcal{S}}
\newcommand{\kX}{\mathcal{X}}
\newcommand{\kZ}{\mathcal{Z}}
\newcommand{\lar}{\longrightarrow}
\newcommand{\llangle}{\langle\!\langle}
\newcommand{\rrangle}{\rangle\!\rangle}
\newcommand{\llbrace}{(\!(}
\newcommand{\rrbrace}{)\!)}
\def\sA{\mathsf A}
\def\sD{\mathsf D} 
\def\sE{\mathsf E} 
\def\sF{\mathsf F} \def\sS{\mathsf S}
\def\sG{\mathsf G} \def\sT{\mathsf T}
\def\kron#1#2{\xymatrix@C=2em{{#1}\ar@/^3pt/[r]\ar@/_3pt/[r]&{#2}}}
\def\bu{{\scriptscriptstyle\bullet}}
\tikzset{
  decorate with/.style={decorate,decoration={shape backgrounds,shape=#1,shape size=1.5mm}},
   deco/.style={decorate with=dart},
   ordi/.style={draw,-stealth,  thick},
   conj/.style={dashed, draw, thick},
   ve/.style={circle, draw, thick, fill=blue!20, inner sep=1pt, outer sep=2pt, minimum size=7pt},
    dot/.style={fill=blue!10,circle,draw, inner sep=1pt, minimum size=5pt},
  dv/.style={star,star points=5,
star point ratio=2, draw, thick, fill=green!20, inner sep=1pt,outer sep=2pt,minimum size=7pt}
}
\tikzset{
    tbl5 nodes/.style={
        rectangle,
        execute at begin node=$,
       execute at end node=$,
       fill=blue!5,
        align=center,
        text depth=0.5ex,
        text height=2ex,
        inner xsep=0pt,
        outer sep=0pt,
           },
    tbl5/.style={
        matrix of nodes,
        row sep=-\pgflinewidth,
        column sep=-\pgflinewidth,
        nodes={
            tbl5 nodes
        },
        execute at empty cell={\node[draw=none]{};}
    }
  }
\title[Non-crossing partitions for exceptional curves]{Non-crossing partitions for exceptional hereditary curves}
\date{December 1, 2025}
\author[Baumeister]{Barbara Baumeister}
\email{b.baumeister@math.uni-bielefeld.de}
\address{Fakultät für Mathematik, Universität Bielefeld, Universitätsstraße 25, 33501 Bielefeld}
\author[Burban]{Igor Burban}
\email{burban@math.uni-paderborn.de}
\address{Institut für Mathematik, Universität Paderborn, Warburger Str. 100, 33098 Paderborn}
\author[Neaime]{Georges Neaime}
\email{gneaime@math.uni-bielefeld.de}
\address{Fakultät für Mathematik, Universität Bielefeld, Universitätsstraße 25, 33501 Bielefeld}
\author[Schwabe]{Charly Schwabe}
\email{cschwabe@math.uni-paderborn.de}
\address{Institut für Mathematik, Universität Paderborn, Warburger Str. 100, 33098 Paderborn}
\begin{document}

\begin{abstract}
We introduce a new class of reflection groups associated with the canonical bilinear lattices of Lenzing, which we call reflection groups of canonical type. The main result of this work is a categorification of the corresponding poset of non-crossing partitions for any such group, realized via the poset of thick subcategories of the category of coherent sheaves on an exceptional hereditary curve generated by an exceptional sequence. A second principal result, essential for the categorification, is a proof of the transitivity of the Hurwitz action in these reflection groups. 
\end{abstract}

\maketitle

\setcounter{tocdepth}{1}  
\tableofcontents

\section{Introduction}

Let $\kk$ be a field and let $\cD$ be a $\Hom$-finite $\kk$-linear triangulated category admitting a full exceptional sequence $\bigl(E_1, \dots, E_n\bigr)$; see \cite{Bondal, Helices}.  Then the Grothendieck group
$\Gamma = K_0(\cD)$ is free of rank $n$. Moreover, we have a non-degenerate (in general, non-symmetric) biadditive form $K: \Gamma \times \Gamma \lar \ZZ$ (the Euler form of $\cD$); hence $(\Gamma, K)$ is a so-called \emph{bilinear lattice} in the sense of \cite{LenzingKTheory, HuberyKrause}. The classes $[E_1], \dots, [E_n]$ are so-called \emph{pseudo-roots} of $(\Gamma, K)$, and they form a basis of $\Gamma$.

Let $B: \Gamma \times \Gamma \lar \ZZ$ be the symmetrization of the form $K$, and let $\mathsf{O}(\Gamma, B)$ denote the corresponding group of isometries of $(\Gamma, B)$. Then we obtain the following group-theoretic objects:
\begin{itemize}
\item[(a)] A reflection group $W \subseteq \mathsf{O}(\Gamma, B)$.
\item[(b)] The set of reflections $T \subset W$.
\item[(c)] A distinguished element $c \in W$, called the Coxeter element (defined by the action of the Auslander--Reiten functor of $\cD$).
\item[(d)] The set of real roots $\Phi \subset \Gamma$.
\end{itemize}

Next, we have a reflection length function $\ell_T: W \lar \NN$ as well as the corresponding absolute order $\le_T$ on $W$. The main object of study in this work is the associated poset of \emph{non-crossing partitions}
\begin{equation*}
\mathsf{NC}_T(W, c) := \bigl\{w \in W  \big|  \mathbbm{1} \le_T w \le_T c \bigr\}.
\end{equation*}
The name ``non-crossing partitions’’ arises from the special case where $W = S_n$ is the symmetric group on $n$ elements. In this case, $c$ is an $n$-cycle (e.g.~$c = (1 2 \dots n)$), $T$ is the set of simple transpositions, and the elements of $\mathsf{NC}_T(W, c)$ can be identified with “non-crossing partitions’’ of the set $\bigl\{1, \dots, n\bigr\}$; see, e.g., \cite[Section 4]{RingelCatalanCombinatorics} for a detailed exposition as well as \cite{NonCrossing} for an overview of applications of non-crossing partitions in various fields of mathematics.

There is a natural braid group action on the set of complete exceptional sequences in $\cD$; see  \cite{Bondal, Helices}.  In the case this action is \emph{transitive}, 
the datum $(W, T, c)$ and $\Phi \subset \Gamma$ (and therefore the associated poset $\mathsf{NC}_T(W, c)$) is completely determined by a triangulated category $\cD$ as above and is independent of the choice of a complete exceptional sequence $\bigl(E_1, \dots, E_n\bigr)$.

Of particular interest is the case when $\cD = D^b(\cA)$ is the bounded derived category of an $\Ext$-finite $\kk$-linear hereditary abelian category $\cA$, which is noetherian and admits a tilting object. By a result of Happel and Reiten \cite[Theorem 2.8]{HappelReiten}, any such (connected) category $\cA$ is equivalent either to the module category $A$--$\mathsf{mod}$, where $A$ is a finite-dimensional hereditary $\kk$-algebra, or to the category $\Coh(\XX)$ of coherent sheaves on an \emph{exceptional non-commutative hereditary curve} $\XX$; see also \cite{HappelTilting} for the special case when $\kk$ is algebraically closed.
\begin{itemize}
\item[(a)] In the case $\cA = A$--$\mathsf{mod}$, the corresponding group $W$ is a \emph{crystallographic Coxeter group}. More precisely, $W$ is the \emph{Weyl group} of the symmetrizable Kac--Moody Lie algebra associated with the Cartan matrix of $A$; see \cite[Appendix B]{HuberyKrause} for a detailed discussion. Moreover, all such Weyl groups arise in this way.
\item[(b)] For $\cA = \Coh(\XX)$, we obtain a very interesting new class of discrete groups, which we call \emph{reflection groups of canonical type}. Depending on the representation type of $\XX$, the associated group $W_{\XX}$ is either an \emph{affine Weyl group}, an \emph{elliptic Weyl group} \cite{SaitoI}, or a \emph{cuspidal canonical reflection group}. All affine Weyl groups, as well as all elliptic Weyl groups of codimension one, arise in this way.
\end{itemize}

In both cases, the structure of the set of isomorphism classes of indecomposable objects of $\cA$ (and hence of $\cD$) is controlled by the bilinear lattice $(\Gamma, K)$. For example, two exceptional objects $E, F \in \mathsf{Ob}(\cA)$ are isomorphic if and only if $[E] = [F] \in \Gamma$; see \cite[Section 5]{LenzingSurvey} and references therein. Moreover, it turns out that $[E] \in \Phi$ for any exceptional object $E$ in $\cA$.

It turns out that the poset $\mathsf{NC}_{T}(W, c)$ admits a categorical description. For a hereditary category $\cA$ as above, one can consider the set $\mathsf{Ex}(\cA)$ of its thick exact subcategories generated by an exceptional sequence. This set becomes a partially ordered set with respect to inclusion of subcategories.
It turns out that there is a well-defined map
\begin{equation}\label{E:CoxMap}
\mathsf{cox}: \mathsf{Ex}(\cA) \lar \mathsf{NC}_{T}(W, c), \quad \llangle F_1, \dots, F_r\rrangle \mapsto s_{[F_1]} \dots s_{[F_r]}.
\end{equation}
Here, $\llangle F_1, \dots, F_r\rrangle$ denotes the thick exact (in fact, abelian) subcategory of $\cA$ generated by an exceptional sequence $\bigl(F_1, \dots, F_r\bigr)$, and $s_{[F_i]} \in W$ is the reflection corresponding to the class $[F_i] \in \Phi$ for each $1 \le i \le r$. If $\cA = A$--$\mathsf{mod}$ for a finite-dimensional hereditary algebra $A$, then the map \eqref{E:CoxMap} is an isomorphism of posets. This was proven by Ingalls and Thomas \cite{IngallsThomas} for path algebras of representation-finite and tame quivers, later extended by Igusa and Schiffler \cite{IgusaSchiffler} to arbitrary path algebras, and finally established in full generality by Hubery and Krause \cite{HuberyKrause}.

We now wish to emphasize the role of the base field $\kk$ in this context. If $\kk$ is algebraically closed, then any finite-dimensional hereditary algebra $A$ is Morita equivalent to the path algebra of a finite quiver without loops or oriented cycles. However, this is no longer true over a non-algebraically closed field. In fact, this is “not a bug but a feature,” since the existence of finite (skew-)field extensions of $\kk$ makes it possible to categorify crystallographic root systems with simple roots of different lengths.

Generalizations of the bijection \eqref{E:CoxMap} to the case where $\cA = \Coh(\XX)$ is the category of coherent sheaves on a weighted projective line $\XX$ of Geigle and Lenzing \cite{GeigleLenzingWeightedCurves} were studied by Baumeister, Wegener and Yahiatene in \cite{BaumeisterWegenerYahiateneI, BaumeisterWegenerYahiateneII}. At this point, however, a surprise occurs: while $\mathsf{cox}$ is a bijection when $\XX$ is domestic or wild, this no longer holds when $\XX$ is tubular. To remedy this “defect,” one must replace $W$ by its \emph{hyperbolic extension} $\widetilde{W}$; see \cite{BaumeisterWegener}. 
At this point we emphasize that, from the perspective of studying elliptic Artin groups, passing from an elliptic Weyl group to its hyperbolic extension is entirely natural; see, for example, \cite{SaitoII}, \cite{VanderLekThesis} and \cite{TakahashiEtAl}.

In this paper, we deal with the case of an arbitrary exceptional hereditary curve $\XX$ over an arbitrary base field $\kk$. If $\kk$ is algebraically closed, then such $\XX$ is a weighted projective line (this follows from the vanishing of the Brauer group $\mathsf{Br}\bigl(\kk(t)\bigr)$ of the field of rational functions $\kk(t)$ due to Tsen's Theorem). Over arbitrary fields, however, the class of exceptional hereditary curves is considerably broader.

A key feature of such a curve $\XX$ is the existence of an exact equivalence of derived categories
$$D^b\bigl(\Coh(\XX)\bigr) \simeq D^b\bigl(\Sigma\text{\,--\,}\mathsf{mod}),$$
where $\Sigma$ is an appropriate canonical algebra of Ringel \cite{RingelCrawleyBoevey}. Starting with a canonical algebra $\Sigma$ as a ``primary object'', one can construct a distinguished $t$-structure on $D^b\bigl(\Sigma$--$\mathsf{mod})$, whose heart is equivalent to $\Coh(\XX)$. This allows one to describe $\Coh(\XX)$ in an axiomatic way. This approach was initiated by Lenzing in \cite{LenzingCurveSingularities}, used by Happel and Reiten in \cite{HappelReiten} and further developed by Kussin \cite{KussinMemoirs, KussinWeightedCurve}.

A direct description of $\XX$ in terms of non-commutative algebraic geometry, based on \cite{ArtindeJong, BurbanDrozd}, was initiated in \cite{Burban}. Such an $\XX$ is a ringed space $(X, \kH)$, where $X$ is a proper regular curve over $\kk$ of genus zero and $\kH$ is a sheaf of hereditary orders. The curve $\XX$ is exceptional if and only if the corresponding Brauer class $\eta_\XX \in \mathsf{Br}(\KK)$ is exceptional, where $\KK$ is the field of rational functions of $X$.  Moreover, up to Morita equivalence, such an $\XX$ is determined by the datum $(X, \eta, \rho)$, where $X$ is a proper regular genus-zero curve, $\eta \in \mathsf{Br}(\KK)$ is an exceptional class, and $\rho: X_\circ \lar \NN$ is a weight function on the set of closed points of $X$.

The bilinear lattice $(\Gamma, K)$ of $D^b\bigl(\Sigma$--$\mathsf{mod})$ (and, consequently, of $D^b\bigl(\Coh(\XX)\bigr)$) for a canonical algebra $\Sigma$ was studied by Lenzing in \cite{LenzingKTheory}, where it is named \emph{canonical}. Its invariants are captured by the corresponding \emph{symbol} $\sigma_{\XX}$, which is a table
\begin{equation}\label{E:SymbolIntro}
\sigma_{\Sigma} = \sigma_{\XX} = \left( \begin{array}{ccc|c} 
            p_1 & \dots & p_t &\\
            d_1 & \dots & d_t & \varepsilon\\
            f_1 & \dots & f_t & \end{array} \right)
\end{equation}
where $t \in \NN$, $\varepsilon \in \bigl\{1, 2\bigr\}$, $p_1, \dots, p_t \in \NN_{\ge 2}$, and $d_1, \dots, d_t$, $f_1, \dots, f_t \in \NN$ satisfy $f_i \mid d_i$ for all $1 \le i \le t$.
Key properties of $\sigma_{\XX}$ (and hence of $\XX$) are captured by the parameter
\begin{equation*}
\delta_{\XX}:=
\left(\sum\limits_{i = 1}^t \varepsilon d_i \Bigl(1- \frac{1}{p_i}\Bigr)\right) -2.
\end{equation*}
Regarding representation type, an exceptional hereditary curve $\XX$ is domestic if $\delta_{\XX} < 0$, tubular if $\delta_{\XX} = 0$, and wild if $\delta_{\XX} > 0$. Viewing $\XX$ as a ringed space allows one to give natural interpretations to all parameters arising in the symbol $\sigma_{\XX}$.

We call reflection groups arising from exceptional hereditary curves \emph{reflection groups of canonical type}. It turns out that such groups of domestic type are precisely the affine Weyl groups, whereas the tubular ones are precisely elliptic Weyl groups of codimension one. In the wild case, we obtain an interesting new class of discrete groups, called cuspidal reflection groups of canonical type. The main result of this work is the following:

\smallskip
\noindent
\textbf{Theorem A} (see Theorems \ref{T:MainCategorification} and \ref{T:MainCategorificationII}). Let $\XX$ be an exceptional hereditary curve and $(W, T, c)$ be the corresponding generalized dual Coxeter datum.
\begin{itemize}
\item[(a)] If $\XX$ is domestic or wild, then the map $\mathsf{cox}: \mathsf{Ex}\bigl(\Coh(\XX)\bigr) \lar \mathsf{NC}_{T}(W, c)$ is a bijection of posets.
\item[(b)] If $\XX$ is tubular, then the map $\widetilde{\mathsf{cox}}: \mathsf{Ex}\bigl(\Coh(\XX)\bigr) \lar \mathsf{NC}_{\widetilde{T}}(\widetilde{W}, \tilde{c})$ is a bijection of posets, where $\widetilde{W}$ is the hyperbolic extension of $W$, $\tilde{c}$ is the corresponding Coxeter element, and $\widetilde{T}$ is the associated set of reflections.
\end{itemize}

Another principal result of independent interest, which also plays a crucial role in the proof of the previous theorem, is the transitivity of the Hurwitz action on the set $\mathrm{Red}_{T}(c)$ (respectively,
$\mathrm{Red}_{\widetilde{T}}(\tilde{c})$) of reduced reflection factorizations of the Coxeter element $c$ (respectively, $\tilde{c}$) in the domestic/wild (respectively, tubular) cases. We formulate this result in the next theorem. Note that it was proven in \cite{BaumeisterWegenerYahiateneI, BaumeisterWegenerYahiateneII} for the special case when $\XX$ is a weighted projective line, using a different approach.

\smallskip
\noindent
\textbf{Theorem B} (see Theorem \ref{newMainTransitivity} and Corollaries \ref{C:HurwitzTransitivityNonTub} and \ref{C:HurwitzTransitivityTub}). Let $\XX$ be an exceptional hereditary curve and $(W, T, c)$ be the corresponding generalized dual Coxeter datum.
\begin{itemize}
\item[(a)] If $\XX$ is domestic or wild, then the Hurwitz action on $\mathrm{Red}_{T}(c)$ is transitive.
\item[(b)] If $\XX$ is tubular, then the Hurwitz action on $\mathrm{Red}_{\widetilde{T}}(\tilde{c})$ is transitive.
\end{itemize}

The structure of this paper is as follows. In Section \ref{S:BilinearLattices}, we recall, following \cite{HuberyKrause}, the theory of bilinear lattices $(\Gamma, K)$ and exceptional sequences therein. We also review the definition of the associated datum $(W, T, c)$ together with some basic properties of the objects involved, and we establish several technical results concerning the reflection length of the Coxeter element $c$.

In Section \ref{S:ExceptCollections}, we recall the definition and main properties of (full) exceptional sequences in a $\Hom$-finite $\kk$-linear triangulated category $\cD$ over an arbitrary field $\kk$, with particular emphasis on the case $\cD = D^b(\cA)$, where $\cA$ is an $\Ext$-finite $\kk$-linear hereditary abelian category. This and the previous sections are primarily expositional.

In Section \ref{S:ExceptionalHereditary}, we lay the foundations for the theory of exceptional hereditary curves. The first key result is Theorem \ref{T:Lenzing}, which establishes a derived equivalence $$D^b\bigl(\Coh(\EE)\bigr) \simeq  D^b\bigl(\Lambda\text{\,--\,}\mathsf{mod}),$$
where $\EE$ is an exceptional homogeneous curve and $\Lambda$ is a tame hereditary algebra with two non-isomorphic simple modules. Then we proceed to the case of arbitrary exceptional hereditary curves $\XX$ and give a description of their invariants arising in the corresponding symbol $\sigma_{\XX}$; see (\ref{E:SymbolIntro}). The next key result is given by Theorem \ref{T:ExceptionalMain}, giving a construction of a distinguished full exceptional sequence in the derived category $D^b\bigl(\Coh(\XX)\bigr)$ and describing the corresponding Gram matrix in terms of the symbol $\sigma_{\XX}$. All together, it provides a full description of the bilinear lattice $(\Gamma, K)$ corresponding to the triangulated category  $D^b\bigl(\Coh(\XX)\bigr)$.

In Section \ref{S:ReflectionGroups}, we introduce reflection groups of canonical type associated with canonical bilinear lattices, as well as the corresponding hyperbolic extensions. We highlight here Corollary \ref{C:reflectionLengthHyp} and Proposition \ref{P:reflectionlengthCox}, which describe the reflection length of the Coxeter element in the tubular case. 

The most technically demanding part of our work is carried out in Section \ref{S:HurwitzTransitivity}, which forms the ``heart’’ of the paper. In Theorem \ref{newMainTransitivity}, we prove the transitivity of the Hurwitz  action on the sets $\mathrm{Red}_{T}(c)$ (respectively, $\mathrm{Red}_{\widetilde{T}}(\tilde{c})$) of reduced reflection factorizations of the Coxeter element $c$ (respectively, $\tilde{c}$) in the non-tubular and tubular cases, respectively. The main feature of our uniform proof of transitivity is the existence of an epimorphism into a Coxeter group that satisfies the necessary properties of Theorem \ref{newMainTransitivity}.

After these preparations, we establish in Section \ref{S:OrderPreservingBijections} our main results: Theorem \ref{T:MainCategorification} and Theorem \ref{T:MainCategorificationII}, which assert that the maps
$$
\mathsf{cox}: \mathsf{Ex}\bigl(\Coh(\XX)\bigr) \lar \mathsf{NC}_{T}(W, c)
\;\;  \text{and} \; \; 
\widetilde{\mathsf{cox}}: \mathsf{Ex}\bigl(\Coh(\XX)\bigr) \lar \mathsf{NC}_{\widetilde{T}}(\widetilde{W}, \tilde{c})
$$
are poset isomorphisms in the non-tubular and tubular cases, respectively.

We conclude the paper with two appendices. In Appendix \ref{S:Comparison}, we recall and elaborate a dictionary relating symbols of domestic (respectively, tubular) types to affine (respectively, elliptic) root systems. In Appendix \ref{S:Appendix}, we further expand  the theory of hyperbolic extensions developed by Saito in \cite{SaitoI}; see also \cite{BaumeisterWegener}.

\smallskip
\noindent
\emph{Acknowledgements}. This work was partially supported by the German Research Foundation
SFB-TRR 358/1 2023 -- 491392403. We are very grateful to Daniel Perniok for his explanations of the works \cite{RingelCrawleyBoevey} and \cite{LenzingKTheory} and  many fruitful discussions.

\section{Bilinear lattices}\label{S:BilinearLattices}
In this section, we recall basic notions such as bilinear lattices and reflection groups. We also introduce non-crossing partitions as well as exceptional sequences and the braid group action on them. These are some of the central notions in our work.

\subsection{Generalities on bilinear lattices and associated reflection groups}
\noindent
Following \cite{LenzingKTheory, HuberyKrause}, we recall the following basic definitions. 

\begin{definition}\label{D:BilLattice}
A \emph{bilinear lattice} is a pair $(\Gamma, K)$, where $\Gamma$ is a free abelian group of finite rank and $K = \langle -,\,-\rangle: \; \Gamma \times \Gamma \lar \ZZ$ is a non-degenerate (possibly non-symmetric) bilinear form. Here, non-degenerate means that $\langle \alpha, \,-\rangle = 0$ implies $\alpha = 0$ and 
$\langle -, \,\beta\rangle = 0$  implies $\beta = 0$. 
\end{definition}

\smallskip
\noindent
From now on, let $(\Gamma, K)$ be a bilinear lattice. 

\begin{definition}\label{D:Coxeter element}  A \emph{Coxeter element}  of $(\Gamma, K)$ is a group isomorphism $c:\Gamma\lar \Gamma$ such that
\begin{equation}\label{E:CoxeterDefinition}
\langle \alpha, \beta\rangle + \langle \beta, c(\alpha) \rangle = 0 \; \mbox{\rm for all}\; 
\alpha, \beta\in \Gamma. 
\end{equation}
\end{definition}

\noindent
Note that such $c \in \Aut(\Gamma)$ is unique provided it exists; see \cite[Proposition 2.1]{LenzingKTheory}.

\begin{definition} An element $\alpha \in \Gamma$ is called a \emph{pseudo-root} if the following conditions are satisfied: 
\begin{enumerate}
\item[(i)] $\langle \alpha, \alpha\rangle > 0$ and
\item[(ii)] $\dfrac{\langle \alpha, \gamma\rangle}{\langle \alpha, \alpha\rangle}, \dfrac{\langle \gamma, \alpha\rangle}{\langle \alpha, \alpha\rangle} \in \ZZ$ for all $\gamma \in \Gamma$. 
\end{enumerate}
In what follows, $\Pi$ denotes the set of all pseudo-roots of $(\Gamma, K)$. Obviously, if $\alpha \in \Pi$ then $-\alpha \in \Pi$, too.
\end{definition}

\begin{definition}\label{D:CompleteExcSeq}
For $r \in \NN$ a tuple $E = (\gamma_1, \dots, \gamma_r) \in \Pi^r$ is an \emph{exceptional sequence} if $\langle \gamma_i, \gamma_j\rangle = 0$ for all $1 \le j < i \le r$. Such a sequence $E$ is called \emph{complete} if the subgroup generated by $E$ is $\Gamma$, i.e. $\llangle \gamma_1, \dots, \gamma_r\rrangle_{\ZZ} = \Gamma$ (of course, in this case we have: $r = \mathrm{rk}(\Gamma)$). 
\end{definition}

\smallskip
\noindent
We denote by  $B = ( -,\,-): \; \Gamma \times \Gamma \lar \ZZ$ the symmetrization of the form $K$, i.e.
$
(\alpha, \beta) := \langle \alpha, \beta\rangle + \langle \beta, \alpha \rangle
$ for any $\alpha, \beta \in \Gamma$. Next, we denote by
\begin{equation*}
\mathsf{O}(\Gamma, B) := \bigl\{f \in \Aut(\Gamma) \, \big| \, \bigl(f(\alpha), 
f(\beta)\bigr) = (\alpha, \beta)\; \mbox{for all}\, \alpha, \beta \in \Gamma\}
\end{equation*}
the group of isometries of $(\Gamma, B)$. 

\begin{definition} For $\alpha \in \Pi$ consider the following group homomorphism
\begin{equation}\label{E:Reflection}
s_\alpha:\Gamma\lar \Gamma, \; \gamma \mapsto \gamma - 2 \dfrac{(\gamma, \alpha)}{(\alpha, \alpha)} \alpha
\end{equation}
called a \emph{reflection}. Note that the assumption that $\alpha$ is a pseudo-root implies that $2 \frac{(\gamma, \alpha)}{(\alpha, \alpha)} = \frac{\langle \gamma, \alpha\rangle + \langle \alpha, \gamma\rangle}{\langle \alpha, \alpha\rangle} \in \ZZ$ for all $\gamma \in \Gamma$. 
\end{definition}

\smallskip
\noindent
The following results can be verified by a straightforward computation.
\begin{lemma} For any $\alpha \in \Pi$ we have $s_\alpha^2 = \mathbbm{1}$ and $s_\alpha \in \mathsf{O}(\Gamma, B)$. Moreover, for any other $\beta \in \Pi$ we have
\begin{equation}\label{E:Action1}
s_{\beta}(\alpha) \in \Pi \quad{\rm and} \quad s_\beta s_\alpha s_\beta^{-1} = s_{s_{\beta}(\alpha)}.
\end{equation}
\end{lemma}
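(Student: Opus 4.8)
The plan is to verify the four assertions by direct computation, arranged so that the only substantive point — stability of $\Pi$ under $s_\beta$ — comes last.

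\textbf{The identity $s_\alpha^2 = \mathbbm{1}$ and $s_\alpha \in \Aut(\Gamma)$.} The map $s_\alpha$ is $\ZZ$-linear with image in $\Gamma$, the integrality being precisely the remark following \eqref{E:Reflection}, and $s_\alpha(\alpha) = -\alpha$. Since $\bigl(s_\alpha(\gamma), \alpha\bigr) = (\gamma,\alpha) - 2\frac{(\gamma,\alpha)}{(\alpha,\alpha)}(\alpha,\alpha) = -(\gamma,\alpha)$, one obtains $s_\alpha\bigl(s_\alpha(\gamma)\bigr) = s_\alpha(\gamma) - 2\frac{(s_\alpha(\gamma),\alpha)}{(\alpha,\alpha)}\alpha = s_\alpha(\gamma) + 2\frac{(\gamma,\alpha)}{(\alpha,\alpha)}\alpha = \gamma$; in particular $s_\alpha$ is invertible. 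For $s_\alpha \in \mathsf{O}(\Gamma,B)$ one expands
$$\bigl(s_\alpha(\gamma), s_\alpha(\delta)\bigr) = (\gamma,\delta) - 2\frac{(\delta,\alpha)}{(\alpha,\alpha)}(\gamma,\alpha) - 2\frac{(\gamma,\alpha)}{(\alpha,\alpha)}(\alpha,\delta) + 4\frac{(\gamma,\alpha)(\delta,\alpha)}{(\alpha,\alpha)}$$
and observes that the symmetry of $B$ makes the last three terms cancel.

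\textbf{The conjugation formula.} This is formal given the previous step: for any $g \in \mathsf{O}(\Gamma,B)$ and any $\alpha$ with $(\alpha,\alpha) \neq 0$ we have
$$g s_\alpha g^{-1}(\gamma) = \gamma - 2\frac{\bigl(g^{-1}(\gamma),\alpha\bigr)}{(\alpha,\alpha)}g(\alpha) = \gamma - 2\frac{\bigl(\gamma, g(\alpha)\bigr)}{\bigl(g(\alpha), g(\alpha)\bigr)}g(\alpha) = s_{g(\alpha)}(\gamma),$$
using that $g$ preserves $B$. Taking $g = s_\beta = s_\beta^{-1}$ yields $s_\beta s_\alpha s_\beta^{-1} = s_{s_\beta(\alpha)}$, and $s_{s_\beta(\alpha)}$ is legitimate because $\bigl(s_\beta(\alpha), s_\beta(\alpha)\bigr) = (\alpha,\alpha) > 0$.

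\textbf{Stability of $\Pi$.} This is the one step needing care, since pseudo-roots are defined through the \emph{non-symmetric} form $K$ while $s_\beta$ is an isometry only of $B$. As $\langle v,v\rangle = \frac{1}{2}(v,v)$ for every $v$, we get $\langle s_\beta(\alpha), s_\beta(\alpha)\rangle = \langle\alpha,\alpha\rangle > 0$, which is condition (i) and also shows the relevant denominators agree. Put $\alpha' := s_\beta(\alpha) = \alpha - c\beta$ with $c := 2\frac{(\alpha,\beta)}{(\beta,\beta)} = \frac{\langle\alpha,\beta\rangle + \langle\beta,\alpha\rangle}{\langle\beta,\beta\rangle} \in \ZZ$. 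For $\delta \in \Gamma$ write $\delta = s_\beta(\delta')$ with $\delta' := s_\beta(\delta)$ and $c_{\delta'} := 2\frac{(\delta',\beta)}{(\beta,\beta)} \in \ZZ$; expanding $\langle\alpha - c\beta, \delta' - c_{\delta'}\beta\rangle$ and grouping the $\beta$-terms gives
$$\langle\alpha', \delta\rangle = \langle\alpha,\delta'\rangle - c_{\delta'}\langle\alpha,\beta\rangle - c\,\langle\beta, s_\beta(\delta')\rangle.$$
The first two summands lie in $\langle\alpha,\alpha\rangle\ZZ$ because $\langle\alpha, -\rangle$ takes values in $\langle\alpha,\alpha\rangle\ZZ$ (as $\alpha \in \Pi$); for the third, rewrite $c\,\langle\beta, s_\beta(\delta')\rangle = \bigl(\langle\alpha,\beta\rangle + \langle\beta,\alpha\rangle\bigr)\cdot\frac{\langle\beta, s_\beta(\delta')\rangle}{\langle\beta,\beta\rangle}$, where the second factor is an integer since $\beta \in \Pi$, while the first lies in $\langle\alpha,\alpha\rangle\ZZ$ since both $\langle\alpha,-\rangle$ and $\langle-,\alpha\rangle$ do. Hence $\langle\alpha',\delta\rangle \in \langle\alpha,\alpha\rangle\ZZ = \langle\alpha',\alpha'\rangle\ZZ$ for all $\delta$; the symmetric expansion $\langle\delta, \alpha'\rangle = \langle\delta',\alpha\rangle - c_{\delta'}\langle\beta,\alpha\rangle - c\,\langle s_\beta(\delta'),\beta\rangle$ handles the other quotient the same way. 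This yields $s_\beta(\alpha) \in \Pi$ and completes the proof. The main (indeed only) obstacle is this last integrality argument; everything else is routine bookkeeping with the definition of a reflection.
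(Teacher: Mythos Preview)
Your proof is correct; the paper itself omits the argument entirely, remarking only that ``the following results can be verified by a straightforward computation.'' One small simplification: in the stability step you need not introduce $\delta' = s_\beta(\delta)$ at all --- expanding $\langle\alpha',\delta\rangle = \langle\alpha,\delta\rangle - c\langle\beta,\delta\rangle$ directly and writing $c\langle\beta,\delta\rangle = \bigl(\langle\alpha,\beta\rangle + \langle\beta,\alpha\rangle\bigr)\cdot\frac{\langle\beta,\delta\rangle}{\langle\beta,\beta\rangle}$ already exhibits the required divisibility, which is exactly the regrouping you perform after your substitution anyway (since $s_\beta(\delta') = \delta$).
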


\smallskip
\noindent
For a proof of the following results, we refer to \cite[Proposition 2.4 and Lemma 2.7]{HuberyKrause}.

\begin{proposition}\label{P:HKLemmas} Assume that a bilinear lattice $(\Gamma, K)$ admits a complete exceptional sequence $E = (\gamma_1, \dots, \gamma_n)$. Then the following statements are true.
\begin{enumerate}
\item[(a)] The composition $c:= s_{\gamma_1} \dots s_{\gamma_n} \in \mathsf{O}(\Gamma, B)$ satisfies (\ref{E:CoxeterDefinition}), i.e.~$c$ is the Coxeter element of $(\Gamma, K)$. 
\item[(b)] The set of pseudo-roots $\Pi$ is \emph{reduced}, i.e.~for any $\alpha, \beta \in \Pi$ such that $s_\alpha = s_\beta$ we have $\alpha = \pm \beta$. 
\end{enumerate}
\end{proposition}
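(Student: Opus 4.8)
The plan is to treat the two assertions independently; each reduces to a short piece of linear algebra once one unwinds the exceptional condition, respectively the pseudo-root condition.

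\emph{Part (a).} Since $E=(\gamma_1,\dots,\gamma_n)$ is a $\ZZ$-basis of $\Gamma$ and both sides of \eqref{E:CoxeterDefinition} are $\ZZ$-bilinear in $(\alpha,\beta)$, it suffices to verify $\langle\gamma_i,\gamma_j\rangle+\langle\gamma_j,c(\gamma_i)\rangle=0$ for all $i,j$, where $c:=s_{\gamma_1}\cdots s_{\gamma_n}$. Writing $G=\bigl(\langle\gamma_i,\gamma_j\rangle\bigr)_{i,j}$ for the Gram matrix of $K$ and $C$ for the matrix of $c$ in the basis $E$, these identities are equivalent to the single matrix equation $G^{\mathsf{T}}+GC=0$, i.e.\ $C=-G^{-1}G^{\mathsf{T}}$ (note that $G$ is invertible since $K$ is non-degenerate). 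The exceptional condition $\langle\gamma_i,\gamma_j\rangle=0$ for $j<i$ says precisely that $G$ is upper triangular, with diagonal entries $d_i:=\langle\gamma_i,\gamma_i\rangle>0$; and \eqref{E:Reflection} shows that the matrix of $s_{\gamma_i}$ is $S_i=\mathbbm{1}-d_i^{-1}e_ie_i^{\mathsf{T}}B$, where $e_i$ is the $i$-th standard basis vector and $B=G+G^{\mathsf{T}}$ is the matrix of the symmetrized form.

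I would then prove $G\,S_1S_2\cdots S_n=-G^{\mathsf{T}}$ by induction on $k$, the claim being that the partial product $G\,S_1\cdots S_k$ has its first $k$ rows equal to the corresponding rows of $-G^{\mathsf{T}}$ and its remaining $n-k$ rows equal to those of $G$. For the inductive step, set $M:=G\,S_1\cdots S_{k-1}$; by hypothesis, upper-triangularity of $G$ forces the $k$-th column of $M$ to be $d_ke_k$, so $MS_k=M-d_k^{-1}(Me_k)(e_k^{\mathsf{T}}B)=M-e_k(e_k^{\mathsf{T}}B)$, which alters only the $k$-th row of $M$: it subtracts from row $k$ of $G$ the $k$-th row of $B=G+G^{\mathsf{T}}$, leaving $-$(row $k$ of $G^{\mathsf{T}}$). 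Taking $k=n$ yields $C=S_1\cdots S_n=-G^{-1}G^{\mathsf{T}}$, which is (a).

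\emph{Part (b).} Suppose $s_\alpha=s_\beta$ with $\alpha,\beta\in\Pi$. Evaluating $s_\alpha(\gamma)=s_\beta(\gamma)$ at $\gamma=\alpha$ and using $(\alpha,\alpha)=2\langle\alpha,\alpha\rangle\neq0$ gives $\alpha=\lambda\beta$ with $\lambda:=(\alpha,\beta)/(\beta,\beta)\in\mathbb{Q}$, and $\lambda\neq0$ since $\alpha\neq0$. Condition (ii) in the definition of a pseudo-root, applied to $\alpha$ with $\gamma=\beta$, gives $\langle\alpha,\beta\rangle/\langle\alpha,\alpha\rangle=1/\lambda\in\ZZ$; applied to $\beta$ with $\gamma=\alpha$, it gives $\langle\beta,\alpha\rangle/\langle\beta,\beta\rangle=\lambda\in\ZZ$. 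Hence $\lambda\in\{\pm1\}$ and $\alpha=\pm\beta$. (Completeness of $E$ is not used here.)

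The main obstacle is the bookkeeping in the induction for (a): one has to check carefully that successive right-multiplications by the $S_i$ preserve the triangular shape of the already-processed block in exactly the way that makes the $k$-th column of the partial product collapse to $d_ke_k$ at each stage. Part (b) is routine once one observes that $\alpha$ and $\beta$ are proportional. Alternatively, (a) can be proved by induction on $n$, restricting $E$ to the sublattice $\llangle\gamma_2,\dots,\gamma_n\rrangle_{\ZZ}$ and combining with the uniqueness of the Coxeter element; but this still requires essentially the same triangular computation in order to control $c(\gamma_1)$, so I would keep the direct matrix argument.
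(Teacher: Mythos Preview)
Your proof is correct. The paper does not actually prove this proposition but simply refers to \cite[Proposition 2.4 and Lemma 2.7]{HuberyKrause}; your argument is a clean self-contained substitute. The matrix induction in (a) is essentially the standard ``$C=-G^{-1}G^{\mathsf{T}}$'' computation (this is how Hubery--Krause also argue), and your observation in (b) that the complete exceptional sequence is irrelevant is accurate: reducedness follows from the pseudo-root condition alone.
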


\smallskip
\noindent
It is well-known that we have an action of the braid group on the set of exceptional sequences in $(\Gamma, K)$; see for instance \cite[Proposition 2.6]{HuberyKrause}.

\begin{proposition}\label{P:braidGroupActionRoots}
The braid group $B_r$ with the standard generators $\sigma_1, \dots, \sigma_{r-1}$ and relations $\sigma_i \sigma_j = \sigma_j \sigma_i$ for $1 \le i, j \le r-1$ such that $\big|i-j\big| \ge 2$ and $\sigma_i \sigma_{i+1} \sigma_i = \sigma_{i+1} \sigma_i \sigma_{i+1}$ for $1 \le i \le r-2$ acts on the set of exceptional sequences of length $r$ in $(\Gamma, K)$ by the rules
\begin{equation*}
\left\{
\begin{array}{l}
\sigma_i(\gamma_1, \dots, \gamma_{r}) = (\gamma_1, \dots, \gamma_{i-1}, 
\gamma_{i+1}, s_{\gamma_{i+1}}(\gamma_i), \gamma_{i+2}, \dots, \gamma_r)\\
\sigma_i^{-1}(\gamma_1, \dots, \gamma_{r}) = (\gamma_1, \dots, \gamma_{i-1}, 
s_{\gamma_{i}}(\gamma_{i+1}),
\gamma_{i},  \gamma_{i+2}, \dots, \gamma_r)
\end{array}
\right.
\end{equation*}
\end{proposition}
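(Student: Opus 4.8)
The plan is to establish three facts and then invoke the presentation of $B_r$: that each $\sigma_i$ and each $\sigma_i^{-1}$ preserves the set of exceptional sequences of length $r$; that $\sigma_i$ and $\sigma_i^{-1}$ are mutually inverse bijections of this set; and that the $\sigma_i$ satisfy the braid relations $\sigma_i\sigma_j=\sigma_j\sigma_i$ for $|i-j|\ge 2$ and $\sigma_i\sigma_{i+1}\sigma_i=\sigma_{i+1}\sigma_i\sigma_{i+1}$. Since $B_r$ is the group presented by the $\sigma_i$ subject to exactly these relations, together these yield a homomorphism from $B_r$ to the symmetric group on the set of exceptional sequences, i.e.\ the claimed action, and $\sigma_i^{-1}$ then automatically acts by the stated formula.

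First I would check well-definedness. Writing $\delta:=\sigma_i(\gamma_1,\dots,\gamma_r)$, the only entry that changes to something new is $\delta_{i+1}=s_{\gamma_{i+1}}(\gamma_i)$, which lies in $\Pi$ by the first half of \eqref{E:Action1}. To see that $\delta$ is again exceptional I would expand $s_{\gamma_{i+1}}(\gamma_i)=\gamma_i-\lambda\,\gamma_{i+1}$ with $\lambda=2(\gamma_i,\gamma_{i+1})/(\gamma_{i+1},\gamma_{i+1})\in\ZZ$: for every pair of positions other than $\{i,i+1\}$, bilinearity reduces the required vanishing $\langle\delta_a,\delta_b\rangle=0$ (for $b<a$) to the corresponding vanishing for the original sequence, while for the pair $\{i,i+1\}$ one must check $\langle s_{\gamma_{i+1}}(\gamma_i),\gamma_{i+1}\rangle=0$. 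This last identity is where the triangular shape of the Gram matrix enters: since $\langle\gamma_{i+1},\gamma_i\rangle=0$ one has $(\gamma_i,\gamma_{i+1})=\langle\gamma_i,\gamma_{i+1}\rangle$ and $(\gamma_{i+1},\gamma_{i+1})=2\langle\gamma_{i+1},\gamma_{i+1}\rangle$, so the reflection term cancels $\langle\gamma_i,\gamma_{i+1}\rangle$ exactly. The verification for $\sigma_i^{-1}$ is entirely symmetric, the analogous identity being $\langle\gamma_i,s_{\gamma_i}(\gamma_{i+1})\rangle=0$.

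Next, bijectivity: composing $\sigma_i$ after $\sigma_i^{-1}$ sends the middle pair $\bigl(s_{\gamma_i}(\gamma_{i+1}),\gamma_i\bigr)$ to $\bigl(\gamma_i,s_{\gamma_i}^2(\gamma_{i+1})\bigr)=(\gamma_i,\gamma_{i+1})$ using $s_\alpha^2=\mathbbm{1}$, and leaves all other entries untouched, so $\sigma_i\sigma_i^{-1}=\sigma_i^{-1}\sigma_i=\mathrm{id}$. For the braid relations, far commutativity $\sigma_i\sigma_j=\sigma_j\sigma_i$ for $|i-j|\ge 2$ is clear, since the two maps change entries only in the disjoint position sets $\{i,i+1\}$ and $\{j,j+1\}$. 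For $\sigma_i\sigma_{i+1}\sigma_i=\sigma_{i+1}\sigma_i\sigma_{i+1}$ it suffices to take $r=3$ and $i=1$; writing the sequence as $(\alpha,\beta,\gamma)$ and applying the formulas, both triple compositions produce a tuple of the form $\bigl(\gamma,\,s_\gamma(\beta),\,\ast\bigr)$, with the third component equal to $s_\gamma s_\beta(\alpha)$ on one side and $s_{s_\gamma(\beta)}s_\gamma(\alpha)$ on the other; these agree because the conjugation relation in \eqref{E:Action1} gives $s_{s_\gamma(\beta)}=s_\gamma s_\beta s_\gamma^{-1}=s_\gamma s_\beta s_\gamma$, whence $s_{s_\gamma(\beta)}s_\gamma(\alpha)=s_\gamma s_\beta(\alpha)$.

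I do not expect a genuine obstacle here: the proposition is a direct verification resting entirely on the Lemma preceding it. The one subtle point is the identity $\langle s_{\gamma_{i+1}}(\gamma_i),\gamma_{i+1}\rangle=0$, which uses the upper-triangularity of the Gram matrix of an exceptional sequence in an essential way and would fail for an arbitrary tuple of pseudo-roots; and in the braid computation one must be careful that $K$ is in general non-symmetric, so that only the ordering convention $\langle\gamma_a,\gamma_b\rangle=0$ for $b<a$ may be used, all of the symmetry being carried by the reflections $s_\alpha$ through \eqref{E:Action1}.
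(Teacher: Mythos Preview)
Your verification is correct and complete: each $\sigma_i$ preserves exceptional sequences by the computation you give (with the key identity $\langle s_{\gamma_{i+1}}(\gamma_i),\gamma_{i+1}\rangle=0$ relying on $\langle\gamma_{i+1},\gamma_i\rangle=0$), the two formulas are inverse to one another via $s_\alpha^2=\mathbbm{1}$, and the braid relation follows from the conjugation identity \eqref{E:Action1}. The paper does not supply its own proof but refers to \cite[Proposition 2.6]{HuberyKrause}; your direct computation is the standard argument one finds there.
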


\smallskip
\noindent
On the group-theoretic level, this braid group action admits the following description.

\begin{proposition} Let $G$ be a group and $T \subseteq G$ a subset closed
under conjugation. Then for any $r \in \NN$ the braid group $B_r$ acts on the set $T^r$ by the so-called \emph{Hurwitz action}: 
\begin{equation*}
\left\{
\begin{array}{l}
\sigma_i(g_1, \dots, g_r) = (g_1, \dots, g_{i-1}, 
g_{i+1}, g_{i+1}^{-1} g_i g_{i+1} , g_{i+2}, \dots, g_r)\\
\sigma_i^{-1}(g_1, \dots, g_{r}) = (g_1, \dots, g_{i-1}, 
g_i g_{i+1} g_i^{-1},
g_{i},  g_{i+2}, \dots, g_r)
\end{array}
\right.
\end{equation*}
for any $1 \le i \le r-1$. 
\end{proposition}

\begin{definition}\label{D:Coxeter datum} Let $(\Gamma, K)$ be a bilinear lattice and $R = (\alpha_1, \dots, \alpha_n)$ be a complete exceptional sequence. Then we get the following notions (relative to the choice of $R$).
\begin{enumerate}
\item[(a)] $W := \llangle s_{\alpha_1}, \dots, s_{\alpha_n}\rrangle \subseteq \mathsf{O}(\Gamma, B)$ is the corresponding  \emph{reflection group}.
\item[(b)] $\Phi: = \bigl\{w(\alpha_i) \, \big| \, 1 \le i \le n, w \in W \bigr\}$ is the set of \emph{real roots} of $(\Gamma, K)$. 
\item[(c)] $T:= \bigl\{s_\gamma \, \big|\, \gamma \in \Phi\bigr\}$ is the \emph{set of reflections} of $W$, whereas $S := \bigl\{s_{\alpha_1}, \dots, s_{\alpha_n}\bigr\}$ is the set of \emph{simple reflections}.
\item[(d)] In what follows, we shall call $(W,S)$ a \emph{generalized Coxeter datum} and $(W,T,c)$ a \emph{generalized dual Coxeter datum}, where $c$ is the Coxeter element defined via (\ref{E:CoxeterDefinition}).
\end{enumerate}
We have the analogous notions if $R=(\alpha_1, \dots, \alpha_n)$ are non-isotropic vectors in a vector space with symmetric bilinear form $(V,B)$. In this case we define the Coxeter element as a product of simple reflections $c:= s_{\alpha_1} \dots s_{\alpha_n}$. The elements of $R$ are sometimes called \emph{simple roots}.
\end{definition}

\begin{remark}\label{R:CoxData1} Note the following facts.
\begin{enumerate}
\item[(a)] Because of (\ref{E:Action1}), the  set of reflections $T$ admits the following description: $T = \bigl\{wsw^{-1} \, \big| \, w \in W, s \in S \bigr\}$. 
\item[(b)] For reflection groups defined via complete exceptional sequences in a bilinear lattice $(\Gamma,K)$, we have $\Phi \subseteq \Pi$ (i.e.~any real root is a pseudo-root). In particular, whenever $\alpha, \beta \in \Phi$ are such that $s_\alpha = s_\beta$ then we have: $\alpha = \pm \beta$; see Proposition \ref{P:HKLemmas}.
\item[(c)] For any $t \in T$ we have $t^2 = \mathbbm{1}$. On the other hand, there might be elements $g\in W$ such that $g^2 = \mathbbm{1}$, which are not contained in $T$.
\item[(d)] If $R = (\alpha_1, \dots, \alpha_n)$ and $R' = (\alpha'_1, \dots, \alpha'_n)$ are two complete exceptional sequences in $(\Gamma, K)$ which belong to the same orbit of the braid group $B_n$, then for the corresponding sets from Definition \ref{D:Coxeter datum} we have $(W, \Phi, T) = (W', \Phi', T')$.
\item[(e)] Let $R = (\alpha_1, \dots, \alpha_n)$ be a complete exceptional sequence in $(\Gamma, K)$, $V$ the real hull of $\Gamma$ and $B$ the symmetrization of $K$, viewed as a bilinear form on $V$. Then $R$ might be viewed as a tuple in $V$. Both interpretations of $R$ in Definition \ref{D:Coxeter datum} lead to the same reflection group, set of real roots, simple reflections and reflections. Importantly, the Coxeter element defined via Definition \ref{D:Coxeter element} and the Coxeter element defined via Definition \ref{D:Coxeter datum} coincide by Proposition \ref{P:HKLemmas}.
\end{enumerate}
\end{remark}

\begin{definition}
Let $(W,T,c)$ be a generalized dual Coxeter datum as in Definition \ref{D:Coxeter datum} and $w \in W$. Then the \emph{reflection length} $\ell_T(w)$ of $w$ is the minimal number $r \in \NN$, for which there exist $t_1, \dots, t_r \in T$ such that $w = t_1 \dots t_r$. The \emph{absolute order} $\le_T$ on $W$ is defined by the rule
\begin{equation*}
u \le_T v \quad \mbox{\rm provided} \quad \ell_T(u) + \ell_T(u^{-1} v) = \ell_T(v).
\end{equation*}
\end{definition}

\begin{definition} Let $(\Gamma, K)$ be a bilinear lattice which admits a complete exceptional sequence $R$ and let $(W,T,c)$ be the associated generalized dual Coxeter datum. Then
\begin{equation*}
\mathsf{NC}_T(W, c) := \bigl\{w \in W \, \, \big| \, \mathbbm{1} \le_T w \le_T c  \bigr\}
\end{equation*}
is the associated poset of \emph{non-crossing partitions}, which is one of the main objects of study of this work. 
\end{definition}

\begin{remark} Extending Remark \ref{R:CoxData1} note that for any two complete exceptional sequences $R$ and $R'$ in $(\Gamma, K)$ from the same braid group orbit, we get the same poset of non-crossing partitions $\mathsf{NC}_T(W, c)$.
\end{remark}

\subsection{Estimates of the reflection length}
Let $V$ be a finite--dimensional real vector space and $B = ( -,\,-): \; V \times V \lar \RR$ a symmetric bilinear form (of any signature). We denote by
$$\mathsf{O}(V, B) := \bigl\{f \in \Aut(V) \, \big| \, \bigl(f(x), f(y)\bigr) = (x, y)\; \mbox{for all}\, x, y \in V\}$$
the corresponding group of isometries. For any $f \in \End(V)$ we define its fixed space by $\mathsf{Fix}(f) = \bigl\{v \in V \, \big| \, f(v) = v \bigr\}$. A vector $v \in V$ is called \emph{non-isotropic} if $(v, v) \ne 0$. Analogous to (\ref{E:Reflection}), for  any such $v$, we have the associated reflection
\begin{equation*}
s_v:V\lar V, \; x \mapsto x - 2 \dfrac{(x, v)}{(v, v)} v.
\end{equation*}
\begin{remark}
For a non-isotropic vector $v \in V$, we shall denote $v^\sharp = \dfrac{2}{(v, v)} v$, which is sometimes called ``dual vector'' (see \cite{SaitoI}) since $\bigl(v^\sharp\bigr)^\sharp = v$. In these terms we have $$s_v(x) = x - (x,v^\sharp) v.$$
\end{remark}

\begin{lemma}\label{L:ResultsRealReflect} For any non-isotropic vector $v \in V$ the following results are true.
\begin{enumerate}
\item[(a)] We have $s_v^2 = \mathbbm{1}$ and $s_v \in \mathsf{O}(V, B)$.
\item[(b)] If $v' \in V$ is another non-isotropic vector then $s_v = s_{v'}$ if and only if $v = \lambda v'$ for some $\lambda \in \RR^\ast$. 
\item[(c)] For any $f \in \mathsf{O}(V, B)$ we have $f s_v f^{-1} = s_{f(v)}$. 
\item[(d)] $\mathsf{Fix}(s_v) = \llangle v\rrangle_{\RR}^\perp$ and 
$\mathsf{det}(s_v) = -1$. 
\end{enumerate}
\end{lemma}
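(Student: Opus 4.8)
The plan is to verify all four statements by direct computation, using the rewriting $s_v(x) = x - (x, v^\sharp)\,v$ with $v^\sharp = \tfrac{2}{(v,v)}\,v$ recorded in the preceding remark; note that the non-isotropy hypothesis $(v,v) \ne 0$ is precisely what makes $v^\sharp$, and hence the entire formula, well defined, and it also guarantees $v \ne 0$.

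For part (a), I would first observe $s_v(v) = v - 2\tfrac{(v,v)}{(v,v)}v = -v$, and then expand $s_v\bigl(s_v(x)\bigr) = s_v(x) - (x, v^\sharp)\,s_v(v) = x - (x,v^\sharp)v + (x,v^\sharp)v = x$, which gives $s_v^2 = \mathbbm{1}$. The isometry property follows by expanding $\bigl(s_v(x), s_v(y)\bigr)$ via bilinearity and symmetry: after substituting $(z,v^\sharp) = \tfrac{2(z,v)}{(v,v)}$, the three terms involving both $(x,v)$ and $(y,v)$ sum to $-\tfrac{2(x,v)(y,v)}{(v,v)} - \tfrac{2(x,v)(y,v)}{(v,v)} + \tfrac{4(x,v)(y,v)}{(v,v)} = 0$, leaving $\bigl(s_v(x),s_v(y)\bigr) = (x,y)$.

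For part (b): if $v = \lambda v'$ with $\lambda \in \RR^\ast$, then $v^\sharp = \tfrac{2\lambda}{\lambda^2(v',v')}\,v' = \tfrac{1}{\lambda}(v')^\sharp$, hence $(x,v^\sharp)\,v = \bigl(x,(v')^\sharp\bigr)\,v'$ for all $x$ and the reflections coincide. Conversely, evaluating $s_v = s_{v'}$ at $v$ gives $-v = s_{v'}(v) = v - \bigl(v,(v')^\sharp\bigr)v'$, so $2v = \bigl(v,(v')^\sharp\bigr)v'$; since $v \ne 0$ the scalar $\bigl(v,(v')^\sharp\bigr)$ is nonzero, so $v$ is a nonzero multiple of $v'$. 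For part (c), since $f \in \mathsf{O}(V,B)$ one has $\bigl(f^{-1}(x), v^\sharp\bigr) = \bigl(x, f(v^\sharp)\bigr)$ and, because $f$ preserves $B$, $f(v^\sharp) = \tfrac{2}{(v,v)}f(v) = \tfrac{2}{(f(v),f(v))}f(v) = \bigl(f(v)\bigr)^\sharp$ (here $f(v)$ is again non-isotropic); therefore $f s_v f^{-1}(x) = f\bigl(f^{-1}(x) - (f^{-1}(x), v^\sharp)v\bigr) = x - \bigl(x, (f(v))^\sharp\bigr)f(v) = s_{f(v)}(x)$.

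For part (d), the inclusion $\llangle v\rrangle_{\RR}^\perp \subseteq \mathsf{Fix}(s_v)$ is immediate from the formula, and conversely $s_v(x) = x$ forces $(x,v^\sharp)v = 0$, hence (as $v \ne 0$) $(x,v^\sharp) = 0$ and so $(x,v) = 0$; thus $\mathsf{Fix}(s_v) = \llangle v\rrangle_{\RR}^\perp$. The one point worth a moment of care --- precisely because $B$ is allowed to be degenerate --- is the determinant. I would deduce it from the \emph{direct sum} decomposition $V = \llangle v\rrangle_{\RR} \oplus \llangle v\rrangle_{\RR}^\perp$, which holds because $v$ is non-isotropic: every $x$ splits as $\tfrac{(x,v)}{(v,v)}v + \bigl(x - \tfrac{(x,v)}{(v,v)}v\bigr)$ with the second summand in $\llangle v\rrangle_{\RR}^\perp$, while $\llangle v\rrangle_{\RR} \cap \llangle v\rrangle_{\RR}^\perp = 0$ since $\lambda v \in \llangle v\rrangle_{\RR}^\perp$ forces $\lambda(v,v) = 0$. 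As $s_v$ acts by $-\mathbbm{1}$ on the line $\llangle v\rrangle_{\RR}$ and by $\mathbbm{1}$ on $\llangle v\rrangle_{\RR}^\perp = \mathsf{Fix}(s_v)$, we conclude $\mathsf{det}(s_v) = -1$. I do not anticipate a genuine obstacle here; the only thing to watch is to never invoke non-degeneracy of $B$ --- every step above uses solely $(v,v) \ne 0$.
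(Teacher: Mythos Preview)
Your proof is correct in every detail; each of the four parts is verified by the natural direct computation, and you were right to flag that part~(d) requires the splitting $V = \llangle v\rrangle_{\RR} \oplus \llangle v\rrangle_{\RR}^\perp$ rather than any non-degeneracy of $B$. The paper itself offers no proof of this lemma---it is stated as a standard fact---so there is nothing to compare against.
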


\begin{lemma}\label{L:EstimateCodim}
For any $f_1, f_2 \in \End(V)$ we have
\begin{equation*}
\mathsf{cod}\bigl(\mathsf{Fix}(f_1 f_2)\bigr) \le \mathsf{cod}\bigl(\mathsf{Fix}(f_1)\bigr) + \mathsf{cod}\bigl(\mathsf{Fix}(f_2)\bigr),
\end{equation*}
where $\mathsf{cod}(U)$ denotes the codimension of a vector subspace $U \subseteq V$. 
\end{lemma}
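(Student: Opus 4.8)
The plan is to prove the inequality
$$\mathsf{cod}\bigl(\mathsf{Fix}(f_1 f_2)\bigr) \le \mathsf{cod}\bigl(\mathsf{Fix}(f_1)\bigr) + \mathsf{cod}\bigl(\mathsf{Fix}(f_2)\bigr)$$
by passing to fixed spaces. First I would observe the elementary containment
$$\mathsf{Fix}(f_1) \cap \mathsf{Fix}(f_2) \subseteq \mathsf{Fix}(f_1 f_2),$$
which is immediate: if $f_1(v) = v$ and $f_2(v) = v$, then $f_1 f_2(v) = f_1(v) = v$. Taking codimensions and using that codimension is order-reversing under inclusion, it suffices to bound $\mathsf{cod}\bigl(\mathsf{Fix}(f_1) \cap \mathsf{Fix}(f_2)\bigr)$ from above.

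\smallskip
The key step is then the standard dimension inequality for subspaces of a finite-dimensional vector space: for any two subspaces $U_1, U_2 \subseteq V$,
$$\mathsf{cod}(U_1 \cap U_2) \le \mathsf{cod}(U_1) + \mathsf{cod}(U_2).$$
This follows from the identity $\dim(U_1 + U_2) + \dim(U_1 \cap U_2) = \dim U_1 + \dim U_2$ together with $\dim(U_1 + U_2) \le \dim V$; rearranging and subtracting from $\dim V$ twice gives exactly the claimed bound. Applying this with $U_i = \mathsf{Fix}(f_i)$ and combining with the containment from the first step completes the argument:
$$\mathsf{cod}\bigl(\mathsf{Fix}(f_1 f_2)\bigr) \le \mathsf{cod}\bigl(\mathsf{Fix}(f_1) \cap \mathsf{Fix}(f_2)\bigr) \le \mathsf{cod}\bigl(\mathsf{Fix}(f_1)\bigr) + \mathsf{cod}\bigl(\mathsf{Fix}(f_2)\bigr).$$

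\smallskip
There is essentially no obstacle here — both ingredients are routine linear algebra, and the only thing to be slightly careful about is that the statement is for arbitrary endomorphisms $f_1, f_2 \in \End(V)$ rather than isometries, so one must not invoke anything specific to $\mathsf{O}(V,B)$; the proof above uses only linearity, which is fine. (The lemma is presumably stated in this generality precisely so it can later be applied iteratively to products of reflections, e.g.\ to bound $\mathsf{cod}\bigl(\mathsf{Fix}(s_{v_1} \cdots s_{v_r})\bigr) \le r$, which in turn gives lower bounds on reflection length via $\ell_T(w) \ge \mathsf{cod}\bigl(\mathsf{Fix}(w)\bigr)$.) An easy induction then extends the two-factor case to $\mathsf{cod}\bigl(\mathsf{Fix}(f_1 \cdots f_r)\bigr) \le \sum_{i=1}^r \mathsf{cod}\bigl(\mathsf{Fix}(f_i)\bigr)$, should the finer statement be needed downstream.
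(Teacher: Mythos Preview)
Your proof is correct and follows essentially the same approach as the paper: both use the containment $\mathsf{Fix}(f_1)\cap\mathsf{Fix}(f_2)\subseteq\mathsf{Fix}(f_1 f_2)$ and then bound $\mathsf{cod}(U_1\cap U_2)$ by $\mathsf{cod}(U_1)+\mathsf{cod}(U_2)$. The only cosmetic difference is that the paper justifies this last bound via the embedding $V/(U_1\cap U_2)\hookrightarrow V/U_1\oplus V/U_2$, whereas you use the equivalent dimension formula for $U_1+U_2$.
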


\begin{proof} Let $U_i = \mathsf{Fix}(f_i)$ for $i = 1, 2$. It is clear that $U_1 \cap U_2 \subseteq \mathsf{Fix}(f_1 f_2)$. As a consequence, 
$\mathsf{cod}\bigl(\mathsf{Fix}(f_1 f_2)\bigr)  \le \mathsf{cod}\bigl(U_1 \cap U_2)$. Since we have an  embedding $V/(U_1 \cap U_2) \lar V/U_1 \oplus V/U_2$, comparing the dimensions of both sides, we get the stated inequality. 
\end{proof}

\begin{lemma}
Let $v_1, \dots, v_m \in V$ be a family of linearly independent non-isotropic vectors. Then for any $x \in V$ we have
$$
s_{v_1} \dots s_{v_m}(x) = x \, \Longleftrightarrow s_{v_i}(x) = x \; \mbox{\rm for all} \; 1 \le i \le m.
$$
\end{lemma}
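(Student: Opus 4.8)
The plan is to prove the equivalence by induction on $m$; the backward implication is immediate (if every $s_{v_i}$ fixes $x$, then so does their composite), so only the forward implication requires work, and the base case $m=1$ is a tautology.

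First I would record an elementary observation: for any finite family of non-isotropic vectors $u_1,\dots,u_k\in V$ and any $z\in V$, one has $s_{u_1}\cdots s_{u_k}(z)-z\in\llangle u_1,\dots,u_k\rrangle_{\RR}$. This follows by a short induction from the formula $s_u(z)=z-(z,u^\sharp)u$ recorded in the Remark preceding Lemma~\ref{L:ResultsRealReflect}: applying $s_{u_k}$ changes $z$ by a scalar multiple of $u_k$, and if $s_{u_{j+1}}\cdots s_{u_k}(z)$ already lies in $z+\llangle u_{j+1},\dots,u_k\rrangle_{\RR}$, then applying $s_{u_j}$ keeps it in $z+\llangle u_j,\dots,u_k\rrangle_{\RR}$, since $s_{u_j}$ moves $z$ and each $u_l$ only by a multiple of $u_j$.

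For the induction step, assume the statement for families of size $m-1$, let $w=s_{v_1}\cdots s_{v_m}$ satisfy $w(x)=x$, and set $y:=s_{v_2}\cdots s_{v_m}(x)$. On the one hand $s_{v_1}(y)=w(x)=x$, so by the reflection formula $x-y=s_{v_1}(y)-y\in\llangle v_1\rrangle_{\RR}$. On the other hand, the preliminary observation applied to $v_2,\dots,v_m$ gives $y-x\in\llangle v_2,\dots,v_m\rrangle_{\RR}$. Since $v_1,\dots,v_m$ are linearly independent, $\llangle v_1\rrangle_{\RR}\cap\llangle v_2,\dots,v_m\rrangle_{\RR}=0$, hence $x=y$. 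Therefore $s_{v_1}(x)=x$ and $s_{v_2}\cdots s_{v_m}(x)=x$; applying the induction hypothesis to the (still linearly independent and non-isotropic) family $v_2,\dots,v_m$ yields $s_{v_i}(x)=x$ for all $2\le i\le m$, which together with $s_{v_1}(x)=x$ completes the argument.

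The proof is essentially routine, and I do not anticipate a genuine obstacle. The one point worth isolating is the mechanism by which linear independence enters: the ``partial displacement'' $x-y$ lies simultaneously in $\llangle v_1\rrangle_{\RR}$ (because the outermost reflection fixes $x$) and in $\llangle v_2,\dots,v_m\rrangle_{\RR}$ (because an interior product of reflections displaces vectors only within the span of its reflecting vectors), so it must vanish, which is precisely what lets the induction propagate. Note that nothing here uses non-degeneracy of $B$, so the statement holds for symmetric forms of arbitrary signature as claimed.
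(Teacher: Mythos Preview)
Your proof is correct and follows essentially the same approach as the paper: induction on $m$, using that a product of reflections displaces a vector only within the span of the reflecting vectors, then invoking linear independence to force the displacement to vanish. The only cosmetic difference is that you peel off the outermost reflection $s_{v_1}$ while the paper peels off the innermost one $s_{v_m}$; the underlying mechanism is identical.
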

\begin{proof} The converse implication is obvious. We prove the direct implication by induction on $m$. The case $m = 1$ is again obvious. To prove the induction step, note that
$$
s_{v_1} \dots s_{v_m}(x) = s_{v_1} \dots s_{v_{m-1}}\left(x - 2 \dfrac{(x, v_m)}{(v_m, v_m)} v_m \right) = x - 2 \dfrac{(x, v_m)}{(v_m, v_m)} v_m + u
$$
for some $u \in \llangle v_1, \dots, v_{m-1}\rrangle_{\RR}$. If $s_{v_1} \dots s_{v_m}(x) = x$ then $(x, v_m) v_m \in \llangle v_1, \dots, v_{m-1}\rrangle_{\RR}$. Since $v_1, \dots, v_m$ are linearly independent, it follows that $(x, v_m) = 0$, hence $s_{v_m}(x) = x$. Applying the assumption of induction, we conclude that $s_{v_i}(x) = x$ for all $1 \le i \le m$.
\end{proof}

\begin{corollary}\label{C:FixPoints}
Let $(v_1, \dots, v_n)$ be a basis of $V$ consisting of non-isotropic vectors and $c:= s_{v_1} \dots s_{v_n}$. Then we have
$$
\mathsf{Fix}(c) = \mathsf{Rad}(B) \;\;  \mbox{\rm and}\;\;  \mathsf{cod}\bigl(\mathsf{Fix}(c)\bigr) = n - 
\mathsf{dim}_{\RR}\bigl(\mathsf{Rad}(B)\bigr).
$$
\end{corollary}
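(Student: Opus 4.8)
The plan is to prove the two assertions by first establishing $\mathsf{Fix}(c) = \mathsf{Rad}(B)$ through a double inclusion, and then reading off the codimension from this identification. The key leverage is the preceding lemma: it lets one pass from the single equation $c(x) = x$ for the product $c = s_{v_1} \cdots s_{v_n}$ to the system of equations $s_{v_i}(x) = x$, $1 \le i \le n$, precisely because the vectors $v_1, \dots, v_n$ are linearly independent. The basis hypothesis will be used in two distinct roles: linear independence, so that the lemma applies, and spanning, so that orthogonality to all the $v_i$ upgrades to membership in $\mathsf{Rad}(B)$.

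For the inclusion $\mathsf{Rad}(B) \subseteq \mathsf{Fix}(c)$: if $v \in \mathsf{Rad}(B)$, then $(v, v_i) = 0$ for each $i$, so $s_{v_i}(v) = v - 2\frac{(v, v_i)}{(v_i, v_i)} v_i = v$, and composing the reflections gives $c(v) = v$. For the reverse inclusion, suppose $c(x) = x$. Since $(v_1, \dots, v_n)$ is a basis, the $v_i$ are linearly independent and non-isotropic, so the preceding lemma yields $s_{v_i}(x) = x$ for every $i$; as $v_i \ne 0$, the reflection formula $s_{v_i}(x) = x - 2\frac{(x,v_i)}{(v_i,v_i)}v_i$ then forces $(x, v_i) = 0$ for all $i$. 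Because $v_1, \dots, v_n$ span $V$, this means $(x, w) = 0$ for every $w \in V$, i.e.\ $x \in \mathsf{Rad}(B)$. Hence $\mathsf{Fix}(c) = \mathsf{Rad}(B)$, and the codimension formula follows at once, since $\mathsf{cod}\bigl(\mathsf{Fix}(c)\bigr) = n - \dim\bigl(\mathsf{Fix}(c)\bigr) = n - \dim\bigl(\mathsf{Rad}(B)\bigr)$.

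I do not anticipate any genuine obstacle: all the real content is carried by the preceding lemma (the argument, via linear independence, that a product of reflections in independent non-isotropic vectors can fix a vector only if each individual factor does). What remains here is purely bookkeeping — the two easy inclusions and the elementary dimension count.
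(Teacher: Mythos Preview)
Your proof is correct and follows precisely the approach the paper intends: the statement is recorded as an immediate corollary of the preceding lemma (fixed points of a product of reflections in linearly independent non-isotropic vectors are exactly the common fixed points of the factors), combined with the observation $\mathsf{Fix}(s_{v_i}) = \llangle v_i\rrangle^\perp$ from Lemma~\ref{L:ResultsRealReflect}(d). The paper gives no further details beyond labeling it a corollary, and your write-up supplies exactly the routine bookkeeping that justifies that label.
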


\smallskip
\noindent
Now, let $(\Gamma, K)$ be a bilinear lattice with a complete exceptional sequence $R = (\alpha_1, \dots, \alpha_n)$, $B$ be the symmetrization of $K$  and  $(W,T,c)$ be the associated generalized dual Coxeter datum. We denote by $V = \RR \otimes_{\ZZ} \Gamma$ the real hull of $\Gamma$. Abusing the notation, we denote the extension of $B$ on $V$ by the same letter. Moreover, we have natural inclusions $W \subseteq \mathsf{O}(\Gamma, B) \subset \mathsf{O}(V, B)$.

\begin{lemma}\label{L:estimatelength}
For any $w \in W$ we have $\ell_T(w) \ge \mathsf{cod}\bigl(\mathsf{Fix}(w)\bigr)$. 
\end{lemma}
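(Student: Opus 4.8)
The plan is to prove the inequality $\ell_T(w) \ge \mathsf{cod}\bigl(\mathsf{Fix}(w)\bigr)$ by induction on the reflection length $r = \ell_T(w)$, using only two facts established earlier: every reflection $t \in T$ has the form $s_\gamma$ for a real root $\gamma \in \Phi \subseteq \Pi$, so by Lemma~\ref{L:ResultsRealReflect}(d) its fixed space has codimension one; and the subadditivity of codimension of fixed spaces under composition, which is Lemma~\ref{L:EstimateCodim}.

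First I would dispose of the base case: if $\ell_T(w) = 0$ then $w = \mathbbm{1}$, so $\mathsf{Fix}(w) = V$ has codimension $0$, and the inequality holds with equality. For the inductive step, suppose $\ell_T(w) = r \ge 1$ and write $w = t_1 \cdots t_r$ with each $t_i \in T$; then $w = t_1 \cdot w'$ where $w' = t_2 \cdots t_r$ has $\ell_T(w') \le r - 1$. By Lemma~\ref{L:EstimateCodim} applied to $f_1 = t_1$ and $f_2 = w'$,
\begin{equation*}
\mathsf{cod}\bigl(\mathsf{Fix}(w)\bigr) \le \mathsf{cod}\bigl(\mathsf{Fix}(t_1)\bigr) + \mathsf{cod}\bigl(\mathsf{Fix}(w')\bigr) \le 1 + (r-1) = r,
\end{equation*}
where the bound $\mathsf{cod}\bigl(\mathsf{Fix}(t_1)\bigr) = 1$ comes from Lemma~\ref{L:ResultsRealReflect}(d) (using that $t_1 = s_\gamma$ for some non-isotropic $\gamma \in \Phi$, non-isotropic because $\gamma$ is a pseudo-root, hence $\langle\gamma,\gamma\rangle > 0$ and therefore $(\gamma,\gamma) = 2\langle\gamma,\gamma\rangle > 0$), and the bound $\mathsf{cod}\bigl(\mathsf{Fix}(w')\bigr) \le r-1$ is the induction hypothesis applied to $w'$ (whose reflection length is at most $r-1$, so strictly less than $r$).

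I do not expect any genuine obstacle here: the statement is a routine consequence of the two cited lemmas, and the only minor points to be careful about are (i) that reflections in $T$ act on $V$ with one-dimensional $(-1)$-eigenspace, which follows since every element of $T$ is $s_\gamma$ for a real root $\gamma$, and real roots are pseudo-roots with $\langle\gamma,\gamma\rangle > 0$, hence non-isotropic for the symmetrized form $B$; and (ii) that one should run the induction on reflection length rather than on a fixed factorization, so that the sub-factorization $w'$ genuinely has smaller length. One could alternatively phrase this without induction, directly estimating $\mathsf{cod}\bigl(\mathsf{Fix}(t_1\cdots t_r)\bigr) \le \sum_{i=1}^r \mathsf{cod}\bigl(\mathsf{Fix}(t_i)\bigr) = r$ by iterating Lemma~\ref{L:EstimateCodim}, which is perhaps cleaner; taking the minimum over all length-$r$ factorizations then gives the claim.
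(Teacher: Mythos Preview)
Your proof is correct and takes essentially the same approach as the paper: both use Lemma~\ref{L:EstimateCodim} and Lemma~\ref{L:ResultsRealReflect}(d) to bound the codimension of the fixed space of a product of reflections. The paper opts for the direct iterated estimate you describe at the end rather than the induction, but the two are equivalent.
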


\begin{proof}
Let $r = \ell_T(w)$ and $t_1, \dots, t_r \in T$ be such that $w = t_1 \dots t_r$. Then we have
$$\mathsf{cod}\bigl(\mathsf{Fix}(w)\bigr) = \mathsf{cod}\bigl(\mathsf{Fix}(t_1 \dots t_r)\bigr) \le \sum\limits_{i= 1}^r \mathsf{cod}\bigl(\mathsf{Fix}(t_i)\bigr) = r,$$
where we used Lemma \ref{L:EstimateCodim} and the last statement of Lemma \ref{L:ResultsRealReflect}. 
\end{proof}

\begin{proposition}\label{P:EstimatesLengthCoxeter} The following results are true.
\begin{enumerate}
\item[(a)] $\ell_T(c) \ge \mathsf{cod}\bigl(\mathsf{Fix}(c)\bigr) = n -\mathsf{rk}(I)$, where $I = \mathsf{Rad}(B)$. 
\item[(b)] $\ell_T(c) \equiv n \; \mathsf{mod} \; 2$. 
\end{enumerate}
\end{proposition}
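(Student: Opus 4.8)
Part (a) is essentially immediate from the preceding lemmas. Indeed, by Lemma \ref{L:estimatelength} applied to $w = c$, we have $\ell_T(c) \ge \mathsf{cod}\bigl(\mathsf{Fix}(c)\bigr)$; and since $c = s_{\alpha_1} \dots s_{\alpha_n}$ with $(\alpha_1, \dots, \alpha_n)$ a basis of $V$ consisting of non-isotropic vectors (each $\alpha_i$ is a pseudo-root, hence $(\alpha_i,\alpha_i) > 0 \ne 0$), Corollary \ref{C:FixPoints} gives $\mathsf{Fix}(c) = \mathsf{Rad}(B)$ and $\mathsf{cod}\bigl(\mathsf{Fix}(c)\bigr) = n - \dim_{\RR}\mathsf{Rad}(B) = n - \mathsf{rk}(I)$. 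Combining these yields the claim. (Here one uses that $\mathsf{rk}(I) = \dim_\RR(I \otimes_\ZZ \RR) = \dim_\RR \mathsf{Rad}(B)$, since $I = \mathsf{Rad}(B) \cap \Gamma$ spans $\mathsf{Rad}(B)$ over $\RR$; this is a standard fact about the radical of an integral bilinear form and its extension to the real hull.)

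For part (b), the plan is to use the determinant as a $\ZZ/2\ZZ$-valued invariant. By Lemma \ref{L:ResultsRealReflect}(d), every reflection $s_v$ (for $v$ non-isotropic) has $\det(s_v) = -1$, and in particular every $t \in T$ satisfies $\det(t) = -1$. Hence if $w = t_1 \dots t_r$ is any reflection factorization of an element $w \in W$, then $\det(w) = (-1)^r$; in particular the parity of $r$ is determined by $w$, so $\ell_T(w) \equiv r \pmod 2$ for every such factorization, and $(-1)^{\ell_T(w)} = \det(w)$. Applying this to $w = c = s_{\alpha_1} \dots s_{\alpha_n}$, which is itself a product of $n$ reflections, we get $\det(c) = (-1)^n$, and therefore $(-1)^{\ell_T(c)} = (-1)^n$, i.e.\ $\ell_T(c) \equiv n \pmod 2$.

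I do not expect any genuine obstacle here: both parts follow by assembling results already established in the excerpt (Lemmas \ref{L:ResultsRealReflect}, \ref{L:EstimateCodim}, \ref{L:estimatelength} and Corollary \ref{C:FixPoints}). The only point requiring a word of care is the identification $\mathsf{cod}\bigl(\mathsf{Fix}(c)\bigr) = n - \mathsf{rk}(I)$ with $I = \mathsf{Rad}(B)$ taken inside $\Gamma$ rather than inside $V$: one must check that the radical of the integral form, tensored up to $\RR$, is exactly the radical of the extended form, so that its integral rank equals its real dimension. This is routine — the inclusion $\mathsf{Rad}(B)_\Gamma \otimes_\ZZ \RR \subseteq \mathsf{Rad}(B)_V$ is clear, and equality follows because $B$ has a matrix with integer entries in a $\ZZ$-basis of $\Gamma$, so the real solution space of the homogeneous system defining the radical is defined over $\QQ$ and hence spanned by integral vectors. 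Everything else is a direct substitution.
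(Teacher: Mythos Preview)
Your proposal is correct and follows essentially the same route as the paper: part (a) combines Lemma~\ref{L:estimatelength} with Corollary~\ref{C:FixPoints}, and part (b) is the determinant parity argument via Lemma~\ref{L:ResultsRealReflect}(d). The only addition is your remark that $\mathsf{rk}(I) = \dim_{\RR}\mathsf{Rad}(B)$, which the paper leaves implicit.
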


\begin{proof} Recall that by Proposition \ref{P:HKLemmas}, the Coxeter element is given by $c = s_{\alpha_1} \dots s_{\alpha_n}$. The first result is a consequence of Lemma \ref{L:estimatelength} combined with Corollary \ref{C:FixPoints}. To prove the second statement, note that by Lemma \ref{L:ResultsRealReflect} (d) we have: $\mathsf{det}(c) = (-1)^n = (-1)^r$, where $r = \ell_T(c)$. 
\end{proof}

\begin{corollary}\label{C:reflectionLength}
Let $(\Gamma, K)$ be a bilinear lattice with a complete exceptional sequence $R$ and $I$ be the radical of $B$. Assume that $\mathsf{rk}(I) = 0$ or $1$. Then we have $\ell_T(c) = \mathsf{rk}(\Gamma)$. 
\end{corollary}

\begin{remark}
The techniques to establish estimates for the length of an element of a reflection group are well-known and date back to a work of Scherk \cite[Theorem 1]{Scherk};  see also \cite[Theorem 260.1]{SnapperTroyer}. In our setting, we do not put any assumptions on the signature of the bilinear form $B$. Other works on reflection length include, for instance, \cite{BradyMcCammond} and \cite{McCammondPaolini}.
\end{remark}

\section{Exceptional sequences in hereditary and derived categories}\label{S:ExceptCollections}
Exceptional sequences were originally introduced by the Moscow school of vector bundles; see \cite{Helices}.  After the axiomatic treatment in the setting of bilinear lattices in the previous section, we now recall the original context of hereditary abelian and derived categories.

Let $\kk$ be any field and $\cD$ be a $\kk$-linear triangulated category such that for any  $E, F \in \mathsf{Ob}(\cD)$ we have $\dim_{\kk}\bigl(\Hom_{\cD}^\ast(E, F)\bigr) < \infty$, where 
$\Hom^\ast_{\cD}(E, F) = \oplus_{p \in \ZZ} \Hom_{\cD}(E, F[p])$.

\begin{definition} An object $E \in \mathsf{Ob}(\cD)$ is called \emph{exceptional} if the following conditions are satisfied:
\begin{enumerate}[(i)]
\item $\End_{\cD}(E)$ is a skew field (hence, a finite--dimensional  division algebra over $\kk$).
\item $\Hom_{\cD}(E, E[p]) = 0$ for any $p \ne 0$. 
\end{enumerate}
\end{definition}

\begin{definition}
A family of objects $(E_1, \dots E_r)$ of $\cD$ is called an \emph{exceptional sequence} if the following conditions are satisfied:
\begin{enumerate}[(i)]
\item For any $1\le i \le r$ the object $E_i$ is exceptional.
\item $\Hom_{\cD}^\ast(E_i, E_j) = 0$ for any $1 \le j < i \le r$. 
\end{enumerate}
If $r = 2$ then $(E_1, E_2)$ is called an \emph{exceptional pair}. 
An exceptional sequence $(E_1, \dots E_r)$ is \emph{full} if $\cD$ is the smallest triangulated subcategory of $\cD$ containing all elements $E_1, \dots E_r$ of the sequence.
\end{definition}

\smallskip
\noindent
If $\cD$ admits a full exceptional sequence $(E_1, \dots, E_n)$ then the Grothendieck group 
$\Gamma := K_0(\cD)$ is free of rank $n$  and the classes $[E_1], \dots, [E_n]$ form a basis of $\Gamma$. Let $K:\Gamma\times\Gamma\lar\ZZ$ be the Euler form, i.e. for $E,F\in\cD$ we have
$$K([E],[F])=\sum_{p\in\ZZ}(-1)^p\dim_{\kk}\Hom_{\cD}(E, F[p]).$$
Then $(\Gamma, K)$ is a bilinear lattice in the sense of Definition \ref{D:BilLattice}.

\begin{lemma}\label{L:ClassIsPseudoRoot}
Let $E$ be an exceptional object in $\cD$. Then its class $[E] \in \Gamma$ is a pseudo-root. 
\end{lemma}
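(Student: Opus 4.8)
The plan is to verify the two defining conditions of a pseudo-root directly from the hypotheses on an exceptional object $E$. First I would compute $\langle [E], [E]\rangle = K([E], [E]) = \sum_{p \in \ZZ} (-1)^p \dim_{\kk} \Hom_{\cD}(E, E[p])$. Since $E$ is exceptional, $\Hom_{\cD}(E, E[p]) = 0$ for all $p \ne 0$, so only the $p = 0$ term survives and $\langle [E],[E]\rangle = \dim_{\kk}\End_{\cD}(E)$. As $\End_{\cD}(E)$ is a finite-dimensional division algebra over $\kk$, this dimension is a positive integer; hence condition (i), namely $\langle [E],[E]\rangle > 0$, holds, and moreover $\langle [E],[E]\rangle = \dim_{\kk}\End_{\cD}(E) =: d_E$.

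For condition (ii) I need to show that for every $\gamma \in \Gamma$ the ratios $\langle [E], \gamma\rangle / d_E$ and $\langle \gamma, [E]\rangle / d_E$ are integers. Since $[E_1], \dots, [E_n]$ is a basis of $\Gamma$, and both arguments of $K$ are additive, it suffices to check this for $\gamma = [F]$ with $F$ an arbitrary object of $\cD$ (indeed for $F$ running over a set of objects whose classes span $\Gamma$, e.g. the $E_i$). The key point is that for any such $F$, the graded $\kk$-vector space $\Hom_{\cD}^\ast(E, F) = \bigoplus_p \Hom_{\cD}(E, F[p])$ carries a right action of the division ring $\End_{\cD}(E)$, making each $\Hom_{\cD}(E, F[p])$ a right $\End_{\cD}(E)$-module, hence a free $\End_{\cD}(E)$-module (division rings have only free modules). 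Therefore $\dim_{\kk}\Hom_{\cD}(E, F[p])$ is divisible by $d_E = \dim_{\kk}\End_{\cD}(E)$ for each $p$, and consequently $\langle [E], [F]\rangle = \sum_p (-1)^p \dim_{\kk}\Hom_{\cD}(E, F[p])$ is divisible by $d_E$. The same argument applied to $\Hom_{\cD}^\ast(F, E)$, which is a left $\End_{\cD}(E)$-module, shows $d_E \mid \langle [F], [E]\rangle$. This gives condition (ii).

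Combining the two parts, $[E]$ satisfies $\langle [E],[E]\rangle > 0$ and the required divisibility, so $[E] \in \Pi$. The only mild subtlety — and the step I would be most careful about — is the module-theoretic observation that $\dim_{\kk}$ of a $\Hom$-space between $E$ and anything is a multiple of $d_E$; this is where exceptionality of $E$ (specifically that $\End_{\cD}(E)$ is a skew field, not merely local) is essential. Everything else is a formal consequence of additivity of the Euler form and the vanishing $\Hom_{\cD}(E, E[p]) = 0$ for $p \ne 0$.
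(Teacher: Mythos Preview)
Your proposal is correct and follows essentially the same argument as the paper: both use that $\End_{\cD}(E)$ is a finite-dimensional division algebra over $\kk$, so every $\Hom$-space to or from $E$ is a free (right or left) module over it, forcing the required divisibility of Euler pairings. You are slightly more explicit than the paper in spelling out condition~(i) and in noting that it suffices to check condition~(ii) on classes $[F]$ of objects, but the substance is identical.
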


\begin{proof} The endomorphism algebra $\Lambda = \End_{\cD}(E)$ is a finite--dimensional division algebra over $\kk$. As a consequence, for any object $F \in \mathsf{Ob}(\cD)$ the morphism spaces $\Hom_{\cD}(E, F)$ (respectively, 
$\Hom_{\cD}(F, E)$) are free right (respectively, left) $\Lambda$-modules of finite rank. Hence, $\dfrac{\langle [E], [F]\rangle}{\langle [E], [E]\rangle} $ and $ \dfrac{\langle [F], [E]\rangle}{\langle [E], [E]\rangle}$ are integers for any $F \in \mathsf{Ob}(\cD)$.
\end{proof}

\smallskip
\noindent
Next, we want to define mutations of exceptional sequences. Let $E$ be an exceptional object in $\cD$, $\Lambda = \End_{\cD}(E)$ and $\llangle E\rrangle$ be the triangulated subcategory of $\cD$ generated by $E$. Then any object of $\llangle E\rrangle$ has the form $\bigoplus\limits_{i \in \ZZ} E^{\oplus m_i}[-i]$, where all but finitely many multiplicities $m_i$ are zero. Moreover, the category $\llangle E\rrangle$ is equivalent to the category $\mathsf{mod}^{\ZZ}\mbox{--}\Lambda$ of finite--dimensional graded right $\Lambda$-modules. According to \cite[Theorem 3.2]{Bondal}, $\llangle E\rrangle$ is an \emph{admissible subcategory} of $\cD$, which means that the embedding functor $I:\llangle E\rrangle\lar\cD$ has right and left adjoint functors. For the following concrete descriptions of the corresponding adjunction counit and adjunction unit respectively; see \cite{Bondal}.

\smallskip
\noindent
Let $E$ be an exceptional object in $\cD$ and $\Lambda = \End_{\cD}(E)$. Let $\Phi$ and $\Psi$ be the right and left adjoint functors respectively to the embedding functor $I:\llangle E\rrangle\lar\cD$. The corresponding adjunction counit $\xi:I\Phi\lar\mathsf{Id}_{\cD}$ and adjunction unit $\eta:\mathsf{Id}_{\cD}\lar I\Psi$ admit the following concrete descriptions. Let $F$ be any object of $\cD$.

For any $i \in \ZZ$, the morphism space $\Hom_{\cD}\bigl(E[-i], F\bigr)$ has a natural structure of  a right $\Lambda$--module. We put  $m_i = \dim_{\Lambda} 
\Hom_{\cD}\bigl(E[-i], F\bigr)$ and choose a basis 
$\bigl(\phi^{(i)}_1, \dots, \phi^{(i)}_{m_i}\bigr)$ of $\Hom_{\cD}\bigl(E[-i], F\bigr)$ over $\Lambda$. The adjunction counit morphism $I\Phi(F)\stackrel{\xi_F}\lar F$ (with respect to the adjoint pair $(I, \Phi)$) can be identified with the ``evaluation map''
\begin{equation*}
\bigoplus\limits_{i \in \ZZ} E^{\oplus m_i}[-i] \xrightarrow{\oplus_{i \in \ZZ}\bigl(\phi^{(i)}_1, \dots, \phi^{(i)}_{m_i}\bigr)} F.
\end{equation*}

Dually, for any $i \in \ZZ$, the morphism space 
$\Hom_{\cD}\bigl(F, E[i]\bigr) \cong \Hom_{\cD}\bigl(F[-i], E\bigr)$ is a left  $\Lambda$--module. We put  $n_i = \dim_{\Lambda} 
\Hom_{\cD}\bigl(F, E[i]\bigr)$ and choose a basis 
$\bigl(\psi^{(i)}_1, \dots, \psi^{(i)}_{n_i}\bigr)$ of $\Hom_{\cD}\bigl(F, E[i]\bigr)$ over $\Lambda$. The adjunction unit morphism $F \stackrel{\eta_F}\lar ~I\Psi(F)$ (with respect to the adjoint pair $(\Psi,I)$) can be identified with  the  ``coevaluation  map''
\begin{equation*}
F 
\xrightarrow{\oplus_{i \in \ZZ}\left(\begin{array}{c} \psi^{(i)}_1 \\ \vdots \\ \psi^{(i)}_{n_i}\end{array} \right)}
\bigoplus\limits_{i \in \ZZ} E^{\oplus n_i}[i].
\end{equation*}

\begin{definition}
For any exceptional object $E$ and any $F\in\mathsf{Ob}(\cD)$ we define the \emph{left mutation} $L_E(F)$ and the \emph{right mutation} $R_E(F)$ via the following distinguished triangles.
\begin{equation*}
I\Phi(F)\stackrel{\xi_F}\lar F\lar L_E(F)\lar I\Phi(F)[1]
\quad\mbox{\rm and}\quad
F\stackrel{\eta_F}\lar I\Psi(F)\lar R_E(F)[1]\lar F[1],
\end{equation*}
where $I:\llangle E\rrangle\lar\cD$ is the embedding functor and $\xi$ and $\eta$ are the counit and unit with respect to the adjoint pairs $(I,\Phi)$ and $(\Psi,I)$ respectively.
\end{definition}

\smallskip
\noindent
It is clear that we have the following equalities in the Grothendieck group $K_0(\cD)$:
\begin{equation*}
\bigl[L_E(F)\bigr] = 
[F] - \frac{\langle E, F\rangle}{\langle E, E\rangle}[E] \quad \mbox{\rm and} \quad 
 \bigl[R_E(F)\bigr] = 
[F] - \frac{\langle F, E\rangle}{\langle E, E\rangle}[E].
\end{equation*}

\smallskip
\noindent
For the following statements, we refer to \cite[Assertion 2.1]{Bondal} as well as \cite[Lemma 3.2]{KussinMeltzer}.
\begin{lemma}\label{L:Mutate} 
The following results are true.
\begin{enumerate}
\item[(a)] Let $(E, F)$ be an exceptional pair in $\cD$. Then the pair   $\bigl(L_E(F), E\bigr)$ is also exceptional and $\End_{\cD}\bigl(L_E(F)\bigr) \cong \End_{\cD}(F)$. Moreover,
$$\bigl[L_E(F)\bigr] = [F] - 2 \frac{(E, F)}{(E, E)}[E].$$
\item[(b)] Let $(G, E)$ be an exceptional pair in $\cD$. Then  the pair $\bigl(E, R_E(G)\bigr)$ is also exceptional and $\End_{\cD}\bigl(R_E(G)\bigr) \cong \End_{\cD}(G)$. Moreover,
$$\bigl[R_E(G)\bigr] =  [G] - 2 \frac{(E, G)}{(E, E)}[E].$$
\end{enumerate}
\end{lemma}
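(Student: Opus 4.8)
The plan is to prove part (a) from the defining triangle and obtain part (b) by passing to the opposite category. So fix an exceptional pair $(E,F)$, write $\Lambda = \End_\cD(E)$, and recall the triangle
$$I\Phi(F) \xrightarrow{\ \xi_F\ } F \lar L_E(F) \lar I\Phi(F)[1], \qquad I\Phi(F) = \bigoplus_{i\in\ZZ} E^{\oplus m_i}[-i],\quad m_i = \dim_\Lambda\Hom_\cD\bigl(E[-i], F\bigr),$$
where $\xi_F$ is the counit of the adjunction $(I, \Phi)$ for the admissible subcategory $\llangle E\rrangle$. Two facts will be used throughout. First, since $(E,F)$ is an exceptional pair, $\Hom^\ast_\cD(F, E) = 0$, whence $\langle [F], [E]\rangle = 0$. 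Second, $I\Phi(F)$ is a finite direct sum of shifts of $E$, so for any object $Y$ the graded spaces $\Hom^\ast_\cD\bigl(I\Phi(F), Y\bigr)$ and $\Hom^\ast_\cD\bigl(Y, I\Phi(F)\bigr)$ are built from $\Hom^\ast_\cD(E, Y)$ and $\Hom^\ast_\cD(Y, E)$ respectively.

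The first step is to check that $\bigl(L_E(F), E\bigr)$ is an exceptional pair. By the standard description of the adjunction bijection, the counit $\xi_F$ induces an isomorphism $\Hom^\ast_\cD\bigl(E, I\Phi(F)\bigr) \xrightarrow{\ \sim\ } \Hom^\ast_\cD(E, F)$; feeding this into the long exact sequence obtained by applying $\Hom^\ast_\cD(E, -)$ to the triangle gives $\Hom^\ast_\cD\bigl(E, L_E(F)\bigr) = 0$, which is the semiorthogonality half of the claim. It remains to see that $L_E(F)$ is itself exceptional. Apply $\Hom^\ast_\cD\bigl(-, L_E(F)\bigr)$ to the triangle: by $\Hom^\ast_\cD\bigl(E, L_E(F)\bigr) = 0$ and the second fact above, the morphism $F \to L_E(F)$ induces an isomorphism $\Hom^\ast_\cD\bigl(L_E(F), L_E(F)\bigr) \xrightarrow{\ \sim\ } \Hom^\ast_\cD\bigl(F, L_E(F)\bigr)$. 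Then apply $\Hom^\ast_\cD(F, -)$ to the triangle: by $\Hom^\ast_\cD(F, E) = 0$ and the second fact, the same morphism induces an isomorphism $\Hom^\ast_\cD(F, F) \xrightarrow{\ \sim\ } \Hom^\ast_\cD\bigl(F, L_E(F)\bigr)$. Composing the two identifications gives a graded isomorphism $\Hom^\ast_\cD\bigl(L_E(F), L_E(F)\bigr) \cong \Hom^\ast_\cD(F, F)$; since it is realized by conjugation with $F \to L_E(F)$, it respects composition, so in degree zero it is a $\kk$-algebra isomorphism $\End_\cD\bigl(L_E(F)\bigr) \cong \End_\cD(F)$ (so $\End_\cD(L_E(F))$ is a skew field), and in nonzero degree it yields $\Hom_\cD\bigl(L_E(F), L_E(F)[p]\bigr) \cong \Hom_\cD\bigl(F, F[p]\bigr) = 0$. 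Hence $L_E(F)$ is exceptional with the asserted endomorphism algebra, and $\bigl(L_E(F), E\bigr)$ is an exceptional pair.

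The class computation is then immediate. The triangle gives $[L_E(F)] = [F] - [I\Phi(F)] = [F] - \bigl(\sum_{i}(-1)^i m_i\bigr)[E]$, and taking $\kk$-dimensions exactly as in the proof of Lemma \ref{L:ClassIsPseudoRoot} (the $\Hom_\cD(E[-i], F)$ are free $\Lambda$-modules, and $\langle [E], [E]\rangle = \dim_\kk \Lambda$ since $E$ is exceptional) shows $\sum_i (-1)^i m_i = \langle [E], [F]\rangle / \langle [E], [E]\rangle$, which is the formula displayed before the statement. Since $\langle [F], [E]\rangle = 0$ we have $(E,F) = \langle [E], [F]\rangle$ and $(E,E) = 2\langle [E], [E]\rangle$, so this equals $2(E,F)/(E,E)$, giving the stated form. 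Finally, (b) follows by applying (a) in $\cD^{\mathrm{op}}$: an exceptional pair $(G, E)$ in $\cD$ becomes the exceptional pair $(E, G)$ in $\cD^{\mathrm{op}}$, the left mutation computed there is $R_E(G)$, the Euler form of $\cD^{\mathrm{op}}$ is the transpose of that of $\cD$ so its symmetrization is still $(-,-)$, and translating (a) back gives (b).

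The main obstacle is the homological bookkeeping of the middle step: one must work with the full graded $\Hom$-complexes (not just degree zero) and verify that the composite identification of endomorphism algebras is genuinely multiplicative, which holds because the connecting morphism $F \to L_E(F)$ intertwines the two actions. All of this is classical — the statement is \cite[Assertion 2.1]{Bondal} and \cite[Lemma 3.2]{KussinMeltzer} — so in the write-up one may either cite those results or reproduce the argument above.
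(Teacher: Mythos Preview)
Your argument is correct and is precisely the classical one; the paper itself does not give a proof but simply refers to \cite[Assertion 2.1]{Bondal} and \cite[Lemma 3.2]{KussinMeltzer}, so your write-up supplies the details that the paper omits.
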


\smallskip
\noindent
For a proof of the next key result, we refer to \cite[Assertion 2.3]{Bondal}.
\begin{theorem}
The braid group $B_r$ acts on the set of exceptional sequences of length $r$ by the followings rules:
\begin{equation}\label{E:BraidGroup}
\begin{array}{l}
\sigma_i(E_1, \dots, E_r) = \bigl(E_1, \dots, E_{i-1}, E_{i+1}, R_{E_{i+1}}(E_i), E_{i+2}, \dots, E_r\bigr) \\
\sigma_i^{-1}(E_1, \dots, E_r) = \bigl(E_1, \dots, E_{i-1}, L_{E_{i}}(E_{i+1}), E_i,  E_{i+2}, \dots, E_r\bigr).
\end{array}
\end{equation}
\end{theorem}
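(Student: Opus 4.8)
The statement to prove is the Bondal-type theorem: the braid group $B_r$ acts on the set of exceptional sequences of length $r$ in a triangulated category $\cD$ via the left/right mutation formulas \eqref{E:BraidGroup}. Let me sketch a proof plan.

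\textbf{Plan of proof.} The argument proceeds in three stages: (1) showing each $\sigma_i$ and $\sigma_i^{-1}$ genuinely maps exceptional sequences to exceptional sequences; (2) showing $\sigma_i$ and $\sigma_i^{-1}$ are mutually inverse operations; (3) verifying the braid relations. For stage (1), the key input is Lemma \ref{L:Mutate}: given an exceptional pair $(E,F)$, the pair $\bigl(L_E(F), E\bigr)$ is again exceptional (and dually for $R$). To promote this from pairs to sequences of length $r$, I would show that mutation of the $i$-th and $(i+1)$-st terms leaves the orthogonality relations $\Hom_{\cD}^\ast(E_j, E_k) = 0$ for $j > k$ intact for all pairs \emph{not} among $\{i, i+1\}$. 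This reduces to the observation that $L_{E_{i}}(E_{i+1})$ lies in the triangulated subcategory $\llangle E_i, E_{i+1}\rrangle$ generated by $E_i$ and $E_{i+1}$ (immediate from the defining triangle, since $I\Phi(E_{i+1}) \in \llangle E_i\rrangle$), so that for $j < i$ we have $\Hom^\ast_{\cD}\bigl(E_j, L_{E_i}(E_{i+1})\bigr) = 0$ and for $k > i+1$ we have $\Hom^\ast_{\cD}\bigl(L_{E_i}(E_{i+1}), E_k\bigr) = 0$, using the long exact $\Hom$-sequences attached to the mutation triangle together with the original vanishing hypotheses. The analogous statement holds for $R_{E_{i+1}}(E_i)$.

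For stage (2), I would prove $L_E R_E \cong \mathsf{Id}$ and $R_E L_E \cong \mathsf{Id}$ on the relevant objects, so that $\sigma_i \sigma_i^{-1}$ and $\sigma_i^{-1}\sigma_i$ act as the identity on exceptional sequences. This is a consequence of the adjunction/semiorthogonal decomposition picture: $L_E(F)$ is characterized (up to isomorphism) as the unique object in ${}^\perp\llangle E\rrangle$ fitting into a triangle $I\Phi(F) \to F \to L_E(F) \to$, and applying $R_E$ recovers $F$ because $F$ is recovered from its components in the semiorthogonal decomposition $\cD = \langle {}^\perp\llangle E\rrangle, \llangle E\rrangle\rangle$ restricted to $\llangle E, F\rrangle$; here one uses that $\llangle E\rrangle$ is admissible (cited from \cite[Theorem 3.2]{Bondal}). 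Concretely, within the pair one checks $R_E(L_E(F)) \cong F$ by an octahedral-axiom diagram chase on the two mutation triangles.

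For stage (3), the commuting relation $\sigma_i\sigma_j = \sigma_j\sigma_i$ for $|i-j|\ge 2$ is essentially formal: the mutations involve disjoint pairs of indices and the operations on $(E_1, \dots, E_r)$ are "local," so they commute. The braid relation $\sigma_i\sigma_{i+1}\sigma_i = \sigma_{i+1}\sigma_i\sigma_{i+1}$ is the genuine content, and it reduces — after discarding the unaffected terms — to a statement about exceptional \emph{triples} $(E, F, G)$: one must check that both sides produce the same (ordered) triple. I would verify this by computing both composites explicitly; the standard approach is to reduce to the identity $L_E L_F \cong L_{L_E(F)} L_E$ (and the mirror identity for right mutations), which is proven via an octahedron built from the defining triangles of $L_E(F)$, $L_F(G)$ and $L_E(L_F(G))$. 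This last identity is the technical heart.

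\textbf{Main obstacle.} The delicate point is the braid relation, specifically the naturality identity $L_E L_F \cong L_{L_E(F)}L_E$ together with keeping careful track of which object occupies which slot in the triple (it is easy to get the objects right but the \emph{order} or the precise isomorphism class of intermediate mutants wrong). One also needs to be slightly careful about the difference between mutating in a pair versus mutating a term of a longer sequence — strictly speaking $L_E(F)$ in Lemma \ref{L:Mutate} is computed with respect to $\llangle E\rrangle$ inside $\cD$, and one must confirm this agrees with the mutation computed inside the subcategory $\llangle E, F\rrangle$, which it does because $\llangle E, F\rrangle$ is itself admissible and the adjoints restrict compatibly. Since all of this is carried out in \cite[Assertion 2.3]{Bondal} (to which the paper explicitly defers), in the write-up I would state the reduction to exceptional triples, invoke the octahedral-axiom computation of $L_E L_F \cong L_{L_E(F)}L_E$, and cite Bondal for the full verification rather than reproducing the diagram chase.
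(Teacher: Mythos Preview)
Your proposal is correct and aligns with the paper's treatment: the paper gives no argument of its own and simply refers to \cite[Assertion 2.3]{Bondal}, and your three-stage sketch (mutations preserve exceptionality via Lemma \ref{L:Mutate} and long exact sequences; $L_E, R_E$ are mutually inverse via the semiorthogonal decomposition; the braid relation reduces to $L_E L_F \cong L_{L_E(F)} L_E$ on triples) is precisely the standard Bondal argument that the paper is citing. Your closing remark that you would ultimately cite Bondal rather than reproduce the octahedral diagram chase is exactly what the paper does.
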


For this reason exceptional sequences of a fixed length are important. In particular, the exceptional sequences of maximal length are of importance for us.
\begin{definition}
    Assume that $\cD$ admits a full exceptional sequence, so $K_0(\cD) \cong \ZZ^n$ for some $n \in \NN$. An exceptional sequence in $\cD$ of length $n$ is called \emph{complete}.
\end{definition}

\begin{remark} It was recently shown  in \cite{Krah} that in general a complete exceptional sequence need not be full. In particular, the braid group action (\ref{E:BraidGroup}) on the set of complete exceptional sequences is in general not transitive. An example of such a case  was also given in another recent work \cite{ChangHaidenSchroll}. However, in the categories $\Coh(\XX)$ for exceptional hereditary curves $\XX$, i.e.~the categories we are interested in, the notions of fullness and completeness coincide; see \cite{KussinMeltzer}.
\end{remark}

\begin{definition}
A triangulated subcategory $\sE$ of the category $\sD$ is called
\begin{itemize}
\item[(a)] \emph{thick} if it is closed under direct summands,
\item[(b)] \emph{exceptional} if it is generated by  an exceptional sequence $(E_1, \dots, E_r)$ in $\sD$.
\end{itemize} 
\end{definition}

In abelian categories, we have the following analogue.
\begin{definition} Let $\sA$ be an abelian category and $\sS$ be a full additive subcategory. Then $\sS$ is called \emph{thick} if it is closed under direct summands and has \emph{two out of three property}: an exact sequence
$$0 \lar X' \lar X \lar X'' \lar 0$$
lies in $\sS$ if two out of $X, X', X''$ are in $\sS$.
\end{definition}

\smallskip
\noindent
Let us now shift our focus to hereditary abelian categories. Proofs of the following results can, for instance, be found in \cite[Remark 4.4.16 and Proposition 4.4.17]{KrauseBook}.

\begin{proposition}\label{P:BookHenning} Let $\sA$ be a \emph{hereditary} abelian category and $\sD = D^b(\sA)$ be its derived category. 
\begin{enumerate}
\item[(a)] Let $\sS$ be a thick subcategory of $\sA$. Then $\sS$ is also abelian and hereditary.
\item[(b)] The correspondence $\sS \mapsto D^b(\sS)$ establishes a bijection between thick subcategories of $\sA$ (in the exact sense) and $\sD$ (in the triangulated sense).
\end{enumerate}
\end{proposition}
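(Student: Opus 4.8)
I would prove (a) first and deduce (b) from it. Throughout, write $\sD := D^b(\sA)$, view $\sA$ as the heart of the standard $t$-structure on $\sD$ with cohomology functors $H^i\colon\sD\to\sA$, and record the consequence of $\sA$ being hereditary that will be used repeatedly: every $Z\in\sD$ is (non-canonically) isomorphic to $\bigoplus_i H^i(Z)[-i]$, and the cone of a morphism $f\colon X\to Y$ of $\sA$ is $\coker f\oplus(\ker f)[1]$.

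For (a), the plan is to reduce the claim that $\sS$ is abelian to the assertion that $\sS$ is closed under kernels and cokernels of its own morphisms (formed in $\sA$). Granting this, $\sS$ is an abelian category in which kernels, cokernels and images are computed as in $\sA$ — they lie in $\sS$ by hypothesis, and are the right universal objects there by fullness — so the inclusion $\sS\hookrightarrow\sA$ is exact. The reduction is formal: for $f\colon X\to Y$ in $\sS$, applying two-out-of-three to $0\to\ker f\to X\to\im f\to 0$ and $0\to\im f\to Y\to\coker f\to 0$ shows that the three membership statements $\ker f\in\sS$, $\im f\in\sS$, $\coker f\in\sS$ are equivalent, so it suffices to establish any one of them. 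This last point is the only non-formal step, and it is exactly here that hereditariness of $\sA$ is needed in an essential way; one route is to pass to the triangle $X\xrightarrow{f}Y\to Z\to X[1]$ in $\sD$, where hereditariness forces $Z\cong\coker f\oplus(\ker f)[1]$, and to analyse how $\sS$ sits inside $\sD$ (cf.\ \cite[Remark 4.4.16]{KrauseBook}). I expect this to be the main obstacle. Once $\sS$ is an abelian subcategory with exact inclusion, its hereditariness is automatic: being extension-closed, $\sS$ has $\Ext^1_{\sS}(M,N)\xrightarrow{\;\sim\;}\Ext^1_{\sA}(M,N)$ for $M,N\in\sS$, and right-exactness of $\Ext^1_{\sA}(M,-)$ — which is precisely hereditariness of $\sA$ — passes to $\Ext^1_{\sS}(M,-)$, i.e.\ $\Ext^2_{\sS}(M,-)=0$.

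For (b), part (a) makes $D^b(\sS)$ meaningful, and since $\sS\hookrightarrow\sA$ is exact and $\sS$ is extension-closed, the canonical functor $D^b(\sS)\to\sD$ is fully faithful (it induces isomorphisms on $\Ext^i$ for all $i$: for $i\le 1$ by the above, for $i\ge 2$ because both sides vanish), so $D^b(\sS)$ identifies with a full triangulated subcategory of $\sD$. I would then show that its essential image is $\sT_{\sS}:=\{Z\in\sD \mid H^i(Z)\in\sS\text{ for all }i\}$: the inclusion $\subseteq$ is immediate, and $\supseteq$ follows from the splitting $Z\cong\bigoplus_i H^i(Z)[-i]$. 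Next, $\sT_{\sS}$ is a thick triangulated subcategory of $\sD$: it is closed under shifts and, as $\sS$ is summand-closed, under direct summands; and it is closed under cones because in the cohomology long exact sequence of a triangle $A\to B\to C\to A[1]$ with $A,B\in\sT_{\sS}$, each $H^i(C)$ is an extension of $\ker(H^{i+1}A\to H^{i+1}B)$ by $\coker(H^iA\to H^iB)$, both of which lie in $\sS$ by (a). In the opposite direction, to a thick triangulated $\sT\subseteq\sD$ I associate $\sT\cap\sA:=\{H\in\sA \mid H[0]\in\sT\}$, which is a thick subcategory of $\sA$: a short exact sequence of $\sA$ extends to a triangle in $\sD$, so two-out-of-three follows from $\sT$ being triangulated, and closure under direct summands is clear.

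Finally I would check that $\sS\mapsto\sT_{\sS}$ and $\sT\mapsto\sT\cap\sA$ are mutually inverse and order-preserving, hence poset isomorphisms. One composite is $\sT_{\sS}\cap\sA=\{H\in\sA \mid H^i(H[0])\in\sS\ \forall i\}=\sS$. For the other, given a thick $\sT$ and $Z\in\sD$, the splitting $Z\cong\bigoplus_i H^i(Z)[-i]$ together with closure of $\sT$ under shifts, finite direct sums and direct summands yields $Z\in\sT\Longleftrightarrow H^i(Z)[0]\in\sT\ \forall i\Longleftrightarrow H^i(Z)\in\sT\cap\sA\ \forall i\Longleftrightarrow Z\in\sT_{(\sT\cap\sA)}$, whence $\sT_{(\sT\cap\sA)}=\sT$. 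Monotonicity of both assignments is evident. As flagged, the one genuinely delicate point in the whole argument is the closure of $\sS$ under images in part (a).
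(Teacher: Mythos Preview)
The paper does not give a proof of this proposition at all; it simply cites \cite[Remark 4.4.16 and Proposition 4.4.17]{KrauseBook}. Your outline therefore already goes further than the paper, and it is correct throughout: the reductions in (a), the argument that $\sS$ inherits hereditariness once it is an exact abelian subcategory, and the entire treatment of (b) are all sound. The single step you flag as the obstacle---closure of $\sS$ under images---is precisely the non-formal input, and you defer it to the very reference the paper defers the whole proposition to, so there is nothing to compare.

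If you want to close that gap yourself rather than cite Krause: given $f\colon X\to Y$ in $\sS$ with kernel $K$, image $I$ and cokernel $C$, hereditariness of $\sA$ makes $\Ext^1_{\sA}(C,X)\to\Ext^1_{\sA}(C,I)$ surjective, so the class of $0\to I\to Y\to C\to 0$ lifts to an extension $0\to X\to E\to C\to 0$; the induced map $q\colon E\to Y$ is then an epimorphism with kernel $K$. Now form the pullback $P=E\times_C Y$. The second projection $P\to Y$ is epi with kernel $X$, so $P\in\sS$ by two-out-of-three. On the other hand the composite $P\to E\xrightarrow{q} Y$ is epi with kernel $\{(e,y):e\in K,\ y\in I\}\cong K\oplus I$, so $K\oplus I\in\sS$ by two-out-of-three, and hence $K,I\in\sS$ by closure under summands.
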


\begin{definition}
Let $\sA$ be an $\Ext$-finite $\kk$-linear hereditary abelian category.
\begin{itemize}
    \item[(a)] Let $E_1, \dots, E_r\in\sA$. We call $(E_1, \dots, E_r)$ an \emph{exceptional sequence} in $\sA$ if $(E_1, \dots, E_r)$ is an exceptional sequence in $D^b(\sA)$.
    \item[(b)] Let $\sS$ be a thick subcategory of $\sA$. Then $\sS$ is called \emph{exceptional} if there exist an exceptional sequence $(E_1, \dots, E_r)$ in $\sA$ such that $\sS$ is the smallest thick subcategory of $\sA$ containing $E_1, \dots, E_r$.
\end{itemize}
\end{definition}

\begin{remark} Note that Proposition \ref{P:BookHenning} also implies that the assignment  $\sS \mapsto D^b(\sS)$ establishes a bijection between exceptional thick subcategories of $\sA$ (in the exact sense) and $\sD$ (in the triangulated sense).
\end{remark}

\smallskip
\noindent 
Proofs of the following results can be found in the survey article 
\cite[Section 5]{LenzingSurvey}; see also  references therein for the original works. 

\begin{theorem}\label{T:HereditaryKey} Let $\sA$ be an $\Ext$-finite $\kk$-linear hereditary abelian category.
\begin{enumerate}
\item[(a)] Let $E$ be an indecomposable object in $\sA$ such that $\Ext^1_{\sA}(E, E) = 0$, i.e.~$E$ is rigid. Then $E$ is exceptional, i.e.~$ \End_{\sA}(E)$ is a skew field.
\item[(b)] Each exceptional object $E$ in $\sA$ is determined by its class $[E]$ in $K_0(\sA)$. 
\item[(c)] Let $E$, $F$ be two exceptional objects in $\sA$ and assume 
$\Ext^1_{\sA}(F, E) = 0$. Then at most one of the vector spaces $\Hom_{\sA}(E, F)$ and $\Ext^1_{\sA}(E, F)$ is non-zero. 
\end{enumerate}
\end{theorem}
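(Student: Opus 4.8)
The plan is to treat the three parts in the order (a), (c), (b), since the first two feed into the last. Throughout I would invoke the classical Happel--Ringel lemma: in a hereditary abelian category, if $X$ and $Y$ are indecomposable and $\Ext^1_{\sA}(Y,X) = 0$, then every nonzero morphism $X \to Y$ is a monomorphism or an epimorphism (its proof factors the morphism through its image and is a short diagram chase using heredity, i.e.\ $\Ext^2_{\sA} = 0$). For (a): since $\sA$ is $\Ext$-finite (hence $\Hom$-finite) and $\kk$-linear it is Krull--Schmidt, so for indecomposable $E$ the algebra $\Lambda := \End_{\sA}(E)$ is finite-dimensional and local, and I must show $\rad(\Lambda) = 0$. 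If $0 \ne f \in \rad(\Lambda)$, then $f$ is nilpotent; replacing it by a suitable power I may assume $f \ne 0$ and $f^2 = 0$, whence $0 \ne \im(f) \subseteq \ker(f) \subsetneq E$ and $f$ is neither mono nor epi, contradicting Happel--Ringel applied with $X = Y = E$ (using $\Ext^1_{\sA}(E,E) = 0$). Hence $\Lambda$ is a skew field.

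For (c): note that $E$ and $F$ are indecomposable, their endomorphism algebras being skew fields. It suffices to prove the implication $\Hom_{\sA}(E,F) \ne 0 \Rightarrow \Ext^1_{\sA}(E,F) = 0$. Given $0 \ne \phi \colon E \to F$, the hypothesis $\Ext^1_{\sA}(F,E) = 0$ and Happel--Ringel make $\phi$ mono or epi. If $\phi$ is epi with kernel $K$, then applying $\Hom_{\sA}(E,-)$ to $0 \to K \to E \to F \to 0$ gives the exact fragment $\Ext^1_{\sA}(E,E) \to \Ext^1_{\sA}(E,F) \to \Ext^2_{\sA}(E,K)$ with vanishing outer terms, so $\Ext^1_{\sA}(E,F) = 0$. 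If $\phi$ is mono with cokernel $C$, then applying $\Hom_{\sA}(-,F)$ to $0 \to E \to F \to C \to 0$ gives $\Ext^1_{\sA}(F,F) \to \Ext^1_{\sA}(E,F) \to \Ext^2_{\sA}(C,F)$, again with vanishing outer terms, so $\Ext^1_{\sA}(E,F) = 0$.

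For (b): suppose for contradiction that $E \not\cong F$, and set $\gamma := [E] = [F]$. By (a), $\langle \gamma, \gamma \rangle = \dim_{\kk} \End_{\sA}(E) > 0$; hence $\langle [E],[F] \rangle = \langle \gamma, \gamma \rangle > 0$ forces $\Hom_{\sA}(E,F) \ne 0$, and symmetrically $\Hom_{\sA}(F,E) \ne 0$. If some $\phi \in \Hom_{\sA}(E,F)$ and $\psi \in \Hom_{\sA}(F,E)$ satisfy $\psi\phi \ne 0$, then $\psi\phi \in \End_{\sA}(E)^{\times}$ (a skew field), so $\phi$ is a split monomorphism into the indecomposable $F$, hence an isomorphism --- a contradiction. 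Thus $\psi\phi = 0$ for all such $\phi,\psi$, and then $\phi\psi = 0$ as well, since $(\phi\psi)^2 = \phi(\psi\phi)\psi = 0$ in the skew field $\End_{\sA}(F)$. Next, if $\Ext^1_{\sA}(F,E) = 0$ (the case $\Ext^1_{\sA}(E,F) = 0$ being symmetric under swapping $E,F$), pick $0 \ne \phi \colon E \to F$; by Happel--Ringel it is mono or epi, and its cokernel, respectively kernel, has class $[F]-[E] = 0$, respectively $[E]-[F] = 0$. The corresponding short exact sequence is non-split (as $E$ and $F$ are indecomposable), so its $\Ext^1$ class is nonzero, and then the Euler form --- evaluated against $\gamma$, using that the kernel/cokernel has class $0$ --- produces a nonzero, hence after composing invertible, morphism between $E$ and $F$, contradicting the vanishing of all compositions.

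This leaves what I expect to be the main obstacle: the case $\Ext^1_{\sA}(E,F) \ne 0$ and $\Ext^1_{\sA}(F,E) \ne 0$ with every composition between $E$ and $F$ vanishing. Here one must extract a contradiction from rigidity alone. I would study the trace subobjects $J := \sum_{\psi \in \Hom_{\sA}(F,E)} \im(\psi) \subseteq E$ and (dually) $J' \subseteq F$: vanishing of all compositions gives $\Hom_{\sA}(E,F) = \Hom_{\sA}(E/J,F)$ and $\Hom_{\sA}(F,E) = \Hom_{\sA}(F/J',E)$, and one can then play the Euler forms of $[J]$ and $[J']$ against the non-split extensions witnessing $\Ext^1_{\sA}(E,F), \Ext^1_{\sA}(F,E) \ne 0$; alternatively, a minimal-counterexample induction on $\dim_{\kk}$, pushing the class $\gamma$ down along the image factorisation of a nonzero morphism, should close it. Essentially all the genuine content of Theorem~\ref{T:HereditaryKey} sits in this last step; the needed arguments are assembled in the references of \cite[Section~5]{LenzingSurvey}.
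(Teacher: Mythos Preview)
The paper does not give its own proof of this theorem: it simply cites \cite[Section~5]{LenzingSurvey} and the original references therein. So there is no in-paper argument to compare against, and your proposal should be judged on its own.

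Your proofs of (a) and (c) are correct and are exactly the standard Happel--Ringel arguments.

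For (b) there are two issues.

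\emph{First}, your treatment of the sub-case ``$\Ext^1_{\sA}(F,E)=0$'' is not convincing as written: the sentence about the Euler form ``producing a nonzero, hence invertible, morphism between $E$ and $F$'' is not justified, and in fact cannot succeed in that form, since you have already shown all compositions vanish. However, this sub-case can be closed cleanly: once $\Ext^1_{\sA}(F,E)=0$, part (c) together with $\Hom_{\sA}(E,F)\ne 0$ gives $\Ext^1_{\sA}(E,F)=0$ as well, so Happel--Ringel applies in \emph{both} directions. Then any nonzero $\phi\colon E\to F$ and $\psi\colon F\to E$ are each mono or epi, and a two-line case check shows that one of $\psi\phi$, $\phi\psi$ is nonzero (if $\psi$ is mono then $\psi\phi=0$ forces $\phi=0$; if $\psi$ is epi then $\phi\psi=0$ forces $\phi=0$), contradicting the vanishing of compositions.

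\emph{Second}, and more seriously, the remaining case $\Ext^1_{\sA}(E,F)\ne 0\ne\Ext^1_{\sA}(F,E)$ is, as you yourself say, where the real content lies, and you have not proved it. Your suggestions (trace subobjects, minimal counterexample) are reasonable heuristics but do not constitute an argument; in particular, the trace-subobject route needs a further invariant to decrease under passage to $E/J$, and it is not obvious what plays that role in an abstract hereditary category without a length or dimension function. The classical proof (going back to Happel--Ringel for module categories, and extended in the references behind \cite[Section~5]{LenzingSurvey}) does require a genuine inductive or structural step beyond what you have written. Until that gap is filled, (b) remains unproved in your proposal.
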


\begin{remark} Let $\sA$ be as in Theorem \ref{T:HereditaryKey} above and $\sD = D^b(\sA)$. It is well-known that for any $X \in \mathsf{Ob}(\sD)$ we have a (non-canonical) isomorphism  $X \cong \oplus_{i \in \ZZ} H^i(X)[-i]$; see for instance \cite[Proposition 4.4.15]{KrauseBook}. In particular, any indecomposable object in $\sD$ is of the form $A[i]$, where $A$ is an indecomposable object in $\sA$ and $i \in \ZZ$. This allows us to talk about the braid group action on exceptional sequences in $\sA$.

Let $(E, F)$ be an exceptional pair in $\sA$ and $\Lambda = \End_{\sA}(E)$.  Let  
    $\overline{L}_E(F)$ be the only non-vanishing cohomology of the complex $L_E(F)$ in $\sD$. Then $\overline{L}_E(F)$ is defined by one of the  following short exact sequences:
\[
0 \lar \overline{L}_EF \lar  \Hom_\sA(E,F)\otimes_{\Lambda}E\lar  F
\lar 0
\]
\[
0\lar \Hom_\sA(E,F)\otimes_{\Lambda}E\lar F\lar \overline{L}_EF\lar 0
\]
\[
0\lar F\lar \overline{L}_EF\lar \Ext_{\sA}^1(E,F)\otimes_{\Lambda}E\lar 0
\]
The dual statement holds for $R_F(E)$. In what follows, we shall identify 
$L_E(F)$ with  $\overline{L}_E(F)$ (respectively, $R_E(F)$ with  $\overline{R}_E(F)$)  and speak about the braid group action  on the set of exceptional sequences of a given length $r$ in $\sA$. 
\end{remark}

\section{Exceptional hereditary curves}\label{S:ExceptionalHereditary}
In this section, we investigate the hereditary abelian category that is the main object of interest of this paper: Coherent sheaves on exceptional hereditary curves. We begin by recalling generalities on non-commutative curves. We then focus on homogeneous exceptional curves and their tilting with tame bimodules. Finally, we collect some combinatorial data and establish crucial results on exceptional sequences in $\Coh(\XX)$.
\subsection{Generalities on non-commutative curves}
In what follows we refer to \cite{Reiner} for the notion of an \emph{order} in a central simple algebra and to \cite{ArtindeJong, BurbanDrozdGavran, BurbanDrozd} for basic results on  non-commutative curves and the corresponding categories of (quasi-)coherent sheaves.

Let $\kk$ be any field and let $X$ be a curve over $\kk$, i.e.~a reduced quasi-projective equidimensional scheme of finite type over $\kk$ of Krull dimension one. We denote by $X_\circ$ the set of closed points of $X$ and $\kO$ the structure sheaf of $X$.

\begin{definition} A \emph{non-commutative curve} over $\kk$ is a ringed space $\XX = (X, \kA)$, where $X$ is a curve as above and $\kA$ is a sheaf of $\kO_X$-orders (i.e.~$\kA(U)$ is an $\kO(U)$-order for any open affine subset $U \subseteq X$), which is coherent as a sheaf of $\kO_X$-modules. Such $\XX$ is called
\begin{enumerate}
\item[(a)] \emph{central} if the stalk $\kO_x$ is the center of $\kA_x$,
\item[(b)] \emph{homogeneous} if the order $\kA_x$ is maximal,
\item[(c)] \emph{hereditary} if the order $\kA_x$ is hereditary
\end{enumerate}
for each closed point $x \in X_\circ$. A non-commutative curve $\XX$ is called \emph{complete} if $X$ is integral and proper (hence projective) over $\kk$.
\end{definition}
From now on, completeness will always be assumed. For such $\XX$, the category $\Coh(\XX)$ is abelian, noetherian, and $\Ext$--finite. Moreover, $\Coh(\XX)$ is hereditary if $\XX$ is hereditary.

\begin{remark}
Without loss of generality, one may assume $\XX=(X,\kA)$ to be central; see \cite[Remark 2.14]{BurbanDrozd}. Moreover, if a central curve $\XX=(X,\kA)$ is hereditary then $X$ is automatically regular; see \cite[Theorem 2.6]{Harada}. 
\end{remark}

Let $\kK$ be the sheaf of rational functions on $X$ and $\KK := \kK(X)$ be its  field   of rational functions. Then $\FF_{\XX} := \Gamma(X, \kK \otimes_\kO \kA)$ is a central simple algebra over $\KK$, called (non-commutative) function field of $\XX$. We denote by $\eta = \eta_{\XX} := \bigl[\FF_{\XX}\bigr]$ the  corresponding class in the Brauer group $\mathsf{Br}(\KK)$ of the field $\KK$. 

For any $\kF \in \Coh(\XX)$ we have a left  $\FF_{\XX}$-module $\Gamma(X, \kK \otimes_\kO \kF)$.  Hence, we define the  \emph{rank} of $\kF$ by the formula
\begin{equation}\label{E:RankFunction}
\mathsf{rk}(\kF) := \mathsf{length}_{\FF_{\XX}}\bigl(\Gamma(X, \kK \otimes_\kO \kF)\bigr).
\end{equation}
Note that we get a group homomorphism $\mathsf{rk}:K_0(\XX)\lar\ZZ$.

Next, we denote by  $\Tor(\XX)$ the category of torsion coherent sheaves on $\XX$, which is the full subcategory of $\Coh(\XX)$ consisting of finite length objects. Alternatively,  $\Tor(\XX)$ can be defined as the category of coherent sheaves on $\XX$ of rank zero.  It  splits into a union of blocks.
\begin{equation*}
\Tor(\XX) = \bigvee\limits_{x \in X_\circ} \Tor_x(\XX).
\end{equation*}
For any $x \in X_\circ$ the category  $\Tor_x(\XX)$ is equivalent to the category of finite--length modules over  the order $\widehat{\kA}_x$ (which is the completion of $\kA_x$). 

\begin{remark} The natural inclusion functor $D^b\bigl(\Tor(\XX)\bigr) \lar D^b\bigl(\Coh(\XX)\bigr)$ is fully faithful. Next, consider the Serre quotient category $\Coh(\XX)/\Tor(\XX)$. Then the functor
$$\Gamma(X, \kK \otimes_\kR \,-\,): \Coh(\XX)/\Tor(\XX) \lar \FF_{\XX}\mathrm{-}\mathsf{mod}$$
is an equivalence of categories.
\end{remark}

Next, we  denote by $\VB(\XX)$ the full subcategory of the category $\Coh(\XX)$ consisting of locally projective objects, i.e.~those $\kB \in \Coh(\XX)$ for which each stalk $\kB_x$ is a projective module over $\kR_x$ for any $x \in X_\circ$. Objects of $\VB(\XX)$ are called \emph{vector bundles} and vector bundles of rank one are called \emph{line bundles}. For any $\kB \in \VB(\XX)$, $\kZ \in \Tor(\XX)$ and $i \ge 1$, we automatically have the following vanishing
\begin{equation*}
\Hom_{\XX}(\kZ, \kB) = 0 = \Ext^i_{\XX}(\kB, \kZ).
\end{equation*}

\begin{lemma}\label{L:SectionStructureSheaf}
Let $\XX = (X, \kA)$ be a complete non-commutative curve over $\kk$ such that $\FF = \FF_{\XX}$ is a skew field. Let $\kL := \kA$ viewed as an object of $\VB(\XX)$. Then
\begin{equation}\label{E:SectionsStructureSheaf}
\ff := \bigl(\End_{\XX}(\kL)\bigr)^\circ \cong \Gamma(X, \kA)
\end{equation}
is a finite-dimensional division algebra over $\kk$, which is a subalgebra of $\FF$. 
\end{lemma}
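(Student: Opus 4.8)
The plan is to compute $\End_{\XX}(\kL)$ explicitly, to realise $\Gamma(X, \kA)$ as a finite-dimensional $\kk$-algebra, to embed it into the skew field $\FF$, and then to invoke the classical fact that a finite-dimensional unital subalgebra of a division algebra is again a division algebra.

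First I would identify $\End_{\XX}(\kL)$. Since $\Coh(\XX)$ is the category of coherent sheaves of (left) $\kA$-modules, a morphism $\kL \lar \kL$ is a morphism of sheaves of $\kA$-modules, so $\End_{\XX}(\kL) = \Gamma\bigl(X, \mathcal{H}om_{\kA}(\kA, \kA)\bigr)$. Over any affine open $U \subseteq X$ with $\kA(U) = A$, a left $A$-module endomorphism of $A$ is right multiplication by a unique element of $A$, and composition of such corresponds to the reversed product; hence $\mathcal{H}om_{\kA}(\kA, \kA) \cong \kA^\circ$ canonically as sheaves of rings. Taking global sections yields $\End_{\XX}(\kL) \cong \Gamma(X, \kA)^\circ$ as $\kk$-algebras, so $\ff = \bigl(\End_{\XX}(\kL)\bigr)^\circ \cong \Gamma(X, \kA)$, which already establishes the asserted isomorphism.

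Next I would check the remaining properties of $\Gamma(X, \kA)$. As $\XX$ is complete, $X$ is proper over $\kk$ and $\kA$ is coherent, so $\Gamma(X, \kA) = H^0(X, \kA)$ is finite-dimensional over $\kk$ by finiteness of the cohomology of coherent sheaves on proper schemes; it contains the identity of $\kA$, hence is a unital $\kk$-algebra. Moreover, since $X$ is integral (completeness includes integrality) and $\kA$, being a sheaf of orders, is torsion-free as an $\kO$-module, the canonical morphism of sheaves of rings $\kA \lar \kK \otimes_{\kO} \kA$, $a \mapsto 1 \otimes a$, is injective; applying the left-exact functor $\Gamma(X, -)$ gives an injective homomorphism of $\kk$-algebras $\Gamma(X, \kA) \hookrightarrow \Gamma(X, \kK \otimes_{\kO} \kA) = \FF_{\XX} = \FF$. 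Thus $\Gamma(X, \kA)$ is, up to isomorphism, a unital $\kk$-subalgebra of the skew field $\FF$.

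It then remains to see that such a subalgebra is itself a skew field, which is the one genuinely non-formal step. Let $R \subseteq \FF$ be a unital $\kk$-subalgebra with $\dim_{\kk} R < \infty$ and let $a \in R$ be nonzero. Left multiplication $\lambda_a \colon R \lar R$ is $\kk$-linear and injective because $\FF$ has no zero divisors, hence surjective by finite-dimensionality, so there is $b \in R$ with $ab = 1$; similarly there is $b' \in R$ with $b' a = 1$, and then $b' = b'(ab) = (b'a)b = b$, so $a$ is invertible in $R$. Applying this with $R = \Gamma(X, \kA)$ shows that $\ff \cong \Gamma(X, \kA)$ is a finite-dimensional division $\kk$-algebra, completing the proof. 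I expect the only points needing care to be the bookkeeping of left/right conventions in the computation of $\End_{\XX}(\kL)$ and the torsion-freeness of $\kA$ used for the embedding into $\FF$; the division-algebra conclusion is then the standard argument recalled above.
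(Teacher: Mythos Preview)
Your proof is correct and follows essentially the same line as the paper: compute $\End_{\XX}(\kL)$ via the sheaf isomorphism $\mathcal{H}om_{\kA}(\kA,\kA)\cong\kA^\circ$, then embed $\Gamma(X,\kA)$ into $\FF$ using torsion-freeness. The only notable difference is in the final step: you give the elementary direct argument that a finite-dimensional unital subalgebra of a skew field is a skew field (injective multiplication on a finite-dimensional space is surjective), whereas the paper argues via Wedderburn--Artin, observing that the embedding into $\FF$ forces $\ff$ to have no nilpotents (hence semisimple) and no nontrivial idempotents (hence a single division factor). Your route is slightly more self-contained; the paper's makes the structural reason explicit. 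Both are standard and equally valid here.
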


\begin{proof}
First note that we have an isomorphism of $\kO$--algebras $\kA^\circ \cong \mathit{End}_{\kA}(\kL)$. Hence
$$ \ff = 
\bigl(\End_{\kA}(\kL)\bigr)^\circ = 
\bigl(\End_{\XX}(\kL)\bigr)^\circ \cong \bigl(\Gamma(X, \kA^\circ)\bigr)^\circ \cong \Gamma(X, \kA).
$$
Next, the canonical morphism of $\kk$-algebras
$$
\ff^\circ = 
\End_{\kA}(\kL) \lar \End_{\kK \otimes_\kO \kA}\bigl(\kK \otimes_\kO \kL\bigr) = \FF^\circ
$$
is injective. It follows that $\ff$ has no non-trivial nilpotent elements, hence it is a finite-dimensional semi-simple $\kk$-algebra. Moreover, $\ff$ has no non-trivial idempotents. Hence, by the Wedderburn--Artin theorem, it is a division algebra, as claimed. 
\end{proof}

From now on, we assume $\XX = (X, \kA)$ to be hereditary. In this case, the following results are true; see for example \cite[Theorem A.4]{NaeghvdBergh} or \cite[Proposition 6.14]{YekutieliZhang}.
\begin{itemize}
\item[(a)] The curve $X$ is regular. As in the case of regular commutative curves, for any $\kF\in\Coh(\XX)$ there exist unique $\kB\in\VB(\XX)$ and $\kZ \in \Tor(\XX)$ such that $\kF\cong\kB\oplus\kZ$.
\item[(b)] Let $\mathit{\Omega} = \mathit{\Omega}_{\XX} := \mathit{Hom}_{X}\bigl(\kA, \mathit{\Omega}_{X}\bigr)$, where $\mathit{\Omega}_X$ is the dualizing sheaf of $X$. Then the \emph{Auslander--Reiten translate}
\begin{equation}\label{E:ARTranslate}
\tau := \mathit{\Omega} \otimes_{\kA} \, -\, : \; \Coh(\XX) \lar \Coh(\XX)
\end{equation}
is an auto-equivalence of $\Coh(\XX)$. It restricts to auto-equivalences of its full subcategories $\VB(\XX)$, $\Tor(\XX)$ as well as $\Tor_x(\XX)$ for any $x \in X_\circ$. Moreover, for any $\kF, \kG \in \Coh(\XX)$ there are bifunctorial isomorphisms
\begin{equation}\label{E:ARDuality}
\Hom_{\XX}(\kF, \kG) \cong \Ext^1_{\XX}\bigl(\kG, \tau(\kF)\bigr)^\ast.
\end{equation}
In other words, $\tau[1]$ is a Serre functor of the derived category $D^b\bigl(\Coh(\XX)\bigr)$.
\end{itemize}

\subsection{Homogeneous Curves}
Homogeneous curves provide an important class of hereditary curves. Actually, one has the following key result. Let $\EE = (X, \kR)$ and $\EE' = (X', \kR')$ be two homogeneous curves. Then the corresponding categories of coherent sheaves $\Coh(\EE)$ and  $\Coh(\EE')$ are equivalent if and only if there exists an isomorphism $f:X\lar X'$ such that $f^\ast(\eta') = \eta \in \mathsf{Br}(\KK)$, where $f^\ast : \kK(X') \longrightarrow \kK(X)$ is the field homomorphism induced by $f$, and $\eta$ and $\eta'$ are the Brauer classes of $\EE$ and $\EE'$, respectively; see \cite[Proposition 1.9.1]{ArtindeJong} or \cite[Corollary 7.9]{BurbanDrozd}. Therefore, a homogeneous curve $\EE$ can without loss of generality be assumed to be \emph{minimal}, meaning that $\FF_{\EE}$ is a skew field. 

\begin{lemma}\label{L:GrothGroup}
Let $\EE = (X, \kR)$ be a minimal homogeneous curve, $\kL := \kR$ (viewed as an object of $\Coh(\EE)$), $\Gamma := K_0(\EE)$ be the Grothendieck group of $\Coh(\EE)$, $\alpha := [\kL]$ be the class of $\kL$  and $\Gamma'$ be the subgroup of $\Gamma$ generated by the classes of torsion coherent sheaves. Then $\Gamma = \llangle \alpha, \Gamma'\rrangle$ and $\Gamma' = \mathsf{Ker}(\mathsf{rk}:\Gamma\lar\ZZ)$.
\end{lemma}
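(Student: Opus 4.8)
The plan is to first show that $\Gamma' = \ker(\mathsf{rk})$, and then deduce the generation statement $\Gamma = \llangle \alpha, \Gamma'\rrangle$ from it together with the fact that $\mathsf{rk}(\kL) = 1$.

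First I would establish the inclusion $\Gamma' \subseteq \ker(\mathsf{rk})$: this is immediate because torsion sheaves have rank zero by definition (they are exactly the rank-zero coherent sheaves, as recalled just before Lemma \ref{L:SectionStructureSheaf}), and $\mathsf{rk}: K_0(\EE) \lar \ZZ$ is a group homomorphism by (\ref{E:RankFunction}). For the reverse inclusion, I would use the structure theory of $\Coh(\EE)$ for a hereditary curve: every coherent sheaf $\kF$ splits as $\kF \cong \kB \oplus \kZ$ with $\kB \in \VB(\EE)$ and $\kZ \in \Tor(\EE)$, so in $K_0$ every class is a $\ZZ$-combination of classes of vector bundles and classes of torsion sheaves. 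Hence it suffices to show that for any vector bundle $\kB$, the class $[\kB] - \mathsf{rk}(\kB)\cdot \alpha$ lies in $\Gamma'$. Here $\kL = \kR$ is a line bundle (it has rank one, being the structure sheaf), and on a homogeneous (in particular hereditary, genus-zero-type) curve any vector bundle of rank $r$ admits a filtration by line bundles, or at least one can choose a generic nonzero map $\kL^{\oplus r} \hookrightarrow \kB$ (using $\mathsf{rk}(\kB) = r$ together with the fact that $\Hom_{\EE}(\kL, -)$ detects rank after tensoring with $\kK$) whose cokernel $\kZ$ is torsion; then $[\kB] = r\alpha + [\kZ]$ in $K_0(\EE)$, so $[\kB] - r\alpha \in \Gamma'$. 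This gives $\ker(\mathsf{rk}) \subseteq \Gamma'$, hence equality.

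With $\Gamma' = \ker(\mathsf{rk})$ in hand, the decomposition $\Gamma = \llangle \alpha, \Gamma' \rrangle$ follows formally: since $\mathsf{rk}(\alpha) = \mathsf{rk}(\kL) = 1$, the homomorphism $\mathsf{rk}$ is surjective and $\alpha$ is a section-generator, so for any $\gamma \in \Gamma$ we have $\gamma - \mathsf{rk}(\gamma)\alpha \in \ker(\mathsf{rk}) = \Gamma'$, whence $\gamma \in \llangle \alpha, \Gamma'\rrangle$. In fact this also shows $\Gamma \cong \ZZ\alpha \oplus \Gamma'$.

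The main obstacle I expect is the reverse inclusion $\ker(\mathsf{rk}) \subseteq \Gamma'$, specifically producing, for an arbitrary vector bundle $\kB$ of rank $r$, an embedding $\kL^{\oplus r} \hookrightarrow \kB$ (or a line-bundle filtration) with torsion cokernel. On a minimal homogeneous curve one can argue that $\Hom_{\EE}(\kL, \kB(n)) \neq 0$ for some twist, or work rank by rank reducing to the rank-one case where every line bundle differs from $\kL$ by a torsion class (equivalently, $\Pic$ modulo the class of $\kL$ injects into $\Gamma'$); alternatively one can invoke the known computation that $K_0(\EE) \cong \ZZ^2$ with $\Gamma'$ of rank one, reducing everything to a rank count. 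I would make this step precise using the standard fact (available from the cited references on homogeneous curves) that every vector bundle is a successive extension of line bundles, which immediately yields $[\kB] \equiv r\alpha \pmod{\Gamma'}$.
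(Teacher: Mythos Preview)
Your overall plan is fine and essentially equivalent to the paper's, just with the two conclusions swapped: the paper first proves $\Gamma = \llangle \alpha, \Gamma'\rrangle$ by induction on rank and then reads off $\Gamma' = \ker(\mathsf{rk})$, while you aim for $\Gamma' = \ker(\mathsf{rk})$ first. Since your argument for $\ker(\mathsf{rk}) \subseteq \Gamma'$ amounts precisely to showing $[\kB] \equiv \mathsf{rk}(\kB)\,\alpha \pmod{\Gamma'}$ for every vector bundle $\kB$, the two orders carry the same content.

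The difficulty you flag is indeed the whole point, and two of your three proposed resolutions do not work here. A direct embedding $\kL^{\oplus r} \hookrightarrow \kB$ with torsion cokernel need not exist: already for a line bundle $\kB = \kL_{[m]}$ sitting strictly inside $\kL$ one can have $\Hom_{\EE}(\kL,\kB)=0$. And invoking $K_0(\EE)\cong\ZZ^2$ is circular: that isomorphism is established later in the paper (in the proof of Lemma~\ref{L:decompositionCompanionBundle}) \emph{using} the present lemma.

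What remains---your ``twist'' / ``filtration by line bundles'' idea---is exactly the paper's argument, but it is not a black-box citation; it has to be carried out. The paper does it as follows: fix a closed point $x$, and for each $m$ let $\kL_{[m]}$ be the kernel of a surjection $\kL \twoheadrightarrow \kS_{[m]}$ onto the length-$m$ torsion sheaf at $x$, so that $\kL_{[m]} \cong \kL \otimes_{\kO} \kI_{[m]}$ for an ideal sheaf $\kI_{[m]}$ and $[\kL_{[m]}] \in \llangle \alpha,\Gamma'\rrangle$. Then
\[
\Hom_{\EE}(\kL_{[m]},\kX)\;\cong\;\Gamma\bigl(X,\,\kX\otimes_{\kO}\kI_{[m]}^{\vee}\bigr),
\]
which is nonzero for $m\gg 0$ by ampleness on the (regular, proper) commutative base $X$. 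Any nonzero map from a line bundle into a vector bundle is a monomorphism, so one obtains $0\to\kL_{[m]}\to\kX\to\overline{\kX}\to 0$ with $\mathsf{rk}(\overline{\kX})=\mathsf{rk}(\kX)-1$, and induction on rank finishes the generation statement. This is the precise mechanism you should use in place of the unproven ``every vector bundle is a successive extension of line bundles''.
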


\begin{proof}
Let $\kX \in \Coh(\EE)$. We show by induction on $\mathsf{rk}(\kX)$ that $[\kX] \in \llangle \alpha, \Gamma'\rrangle \subseteq \Gamma$. Without loss of generality, one may assume $\kX$ to be locally free. Let $x \in X_\circ$ be any point and $\kS_{[m]}$ be an indecomposable torsion sheaf of length $m$ supported at $x$ (which is unique up to isomorphism). We have an epimorphism $\kL \twoheadarrow \kS_{[m]}$ which defines a short exact sequence
\begin{equation}\label{E:AmpleSequence}
0 \lar \kL_{[m]} \lar \kL \lar \kS_{[m]} \lar 0.
\end{equation}
Since $\kL_{[m]}$ is a line bundle on $\EE$, there exists a sheaf of ideals $\kI_{[m]} \subset \kO$ such that $\kL_{[m]} \cong \kL \otimes_\kO \kI_{[m]}$. Moreover, we have an isomorphism of vector spaces over $\kk$:
\begin{equation}\label{E:SomeIsomorphism}
\Hom_{\EE}(\kL_{[m]}, \kX) \cong \Gamma\bigl(X, \kX \otimes_\kO \kI_{[m]}^\vee\bigr).
\end{equation}
It follows that $\Hom_{\EE}(\kL_{[m]},\kX)\ne0$ for sufficiently large $m\in\NN$. Since any non-zero morphism $f:\kL_{[m]}\lar\kX$ is automatically a monomorphism, we get a short exact sequence
$$ 0 \lar \kL_{[m]} \stackrel{f}\lar \kX \lar \overline\kX \lar 0.$$
It is clear that $\mathsf{rk}(\overline\kX) = \mathsf{rk}(\kX)-1$, hence  $[\overline\kX] \in \llangle \alpha, \Gamma'\rrangle$ by the induction hypothesis.  Moreover, $[\kL_{[m]}] \in \llangle \alpha, \Gamma'\rrangle$ by the construction of $\kL_{[m]}$. Hence, $[\kX] \in \llangle \alpha, \Gamma'\rrangle$, as asserted. It follows that $\gamma \in \Gamma$ belongs to $\Gamma'$ if and only if $\mathsf{rk}(\gamma) = 0$.
\end{proof}

\begin{definition} A minimal complete homogeneous curve $\EE = (X, \kR)$ is called \emph{exceptional} if  $H^1(X, \kR) = 0$.
\end{definition}

\begin{lemma}\label{L:ExcCurvesBasics} Let $\EE = (X, \kR)$ be a minimal exceptional homogeneous curve and $g$ the genus of $X$. Then the following statements hold.
\begin{itemize}
\item[(a)] The line bundle $\kL$ is rigid, i.e.~$\Ext^1_{\EE}(\kL, \kL) = 0$. 
\item[(b)] We have $g = 0$.
\end{itemize}
\end{lemma}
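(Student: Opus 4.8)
The plan is to exploit the only two pieces of structure we have available: the hypothesis $H^1(X,\kR)=0$ together with the duality from the hereditary theory, and the Riemann--Roch type bookkeeping coming from the rank function on $\Coh(\EE)$. For part (a), I would observe that $\kL = \kR$ and that by Lemma \ref{L:SectionStructureSheaf} the algebra $\ff = \bigl(\End_{\EE}(\kL)\bigr)^\circ \cong \Gamma(X,\kR)$ is a finite-dimensional division algebra over $\kk$; in particular $\Hom_{\EE}(\kL,\kL)$ is a nonzero finite-dimensional $\kk$-vector space. The key point is that $\Ext^1_{\EE}(\kL,\kL)$ fits into the long exact sequence computing the hypercohomology of $\kL^\vee\otimes\kL = \mathit{End}_{\EE}(\kL)$; more concretely, since $\EE$ is homogeneous the global $\Ext$-groups of $\kR$ with itself are computed by the sheaf cohomology $H^i(X,\kR)$ (as $\mathit{End}_{\kR}(\kR)\cong\kR^\circ$, and $H^i(X,\kR)\cong H^i(X,\kR^\circ)$ via an anti-automorphism, or one passes through a maximal order argument). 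Hence $\Ext^1_{\EE}(\kL,\kL)\cong H^1(X,\kR)=0$ by the exceptionality hypothesis, which is exactly rigidity. One should be slightly careful here to justify the identification $\Ext^1_{\EE}(\kL,\kL)\cong H^1(X,\kR)$ rather than just $H^1$ of something Morita-equivalent; I expect this is the one genuinely delicate step, and it can be handled by noting that $\kL$ is locally projective, so $\Ext^i_{\EE}(\kL,-)$ is the derived functor of $\Hom_{\EE}(\kL,-)=\Gamma(X,-\otimes_{\kR}\kL^\vee)$-type functor and there is no local-to-global spectral sequence contribution.

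For part (b), the strategy is to combine (a) with the structure of $\Coh(\EE)$ as a hereditary category with Serre duality, and then deduce a constraint on the genus. By Serre duality \eqref{E:ARDuality} on $D^b\bigl(\Coh(\EE)\bigr)$, we have $\Ext^1_{\EE}(\kL,\kL)\cong\Hom_{\EE}\bigl(\tau(\kL),\kL\bigr)^\ast\cong\Hom_{\EE}\bigl(\mathit{\Omega}_{\EE}\otimes_{\kR}\kL,\kL\bigr)^\ast$ where $\mathit{\Omega}_{\EE}=\mathit{Hom}_X(\kR,\mathit{\Omega}_X)$. Part (a) gives $\Hom_{\EE}\bigl(\mathit{\Omega}_{\EE}\otimes_{\kR}\kL,\kL\bigr)=0$. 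On the other hand, if $g\ge 1$ then $\mathit{\Omega}_X$ has a nonzero global section (indeed $\deg\mathit{\Omega}_X = 2g-2\ge 0$, and for $g\ge 1$ the canonical bundle is effective), hence $\mathit{\Omega}_{\EE}=\mathit{Hom}_X(\kR,\mathit{\Omega}_X)$ admits a nonzero map from $\kR^\vee$-type sheaf, and one produces a nonzero morphism $\mathit{\Omega}_{\EE}\otimes_{\kR}\kL\to\kL$, contradicting the vanishing. The cleanest way to organize this: a nonzero global section of $\mathit{\Omega}_X$ is a nonzero map $\kO_X\to\mathit{\Omega}_X$, which induces $\kR = \mathit{Hom}_X(\kR,\kO_X\otimes\kR)\to\mathit{Hom}_X(\kR,\mathit{\Omega}_X)=\mathit{\Omega}_{\EE}$, i.e. a nonzero element of $\Hom_{\EE}(\kL,\mathit{\Omega}_{\EE})\cong\Hom_{\EE}(\mathit{\Omega}_{\EE}^\vee\otimes\kL,\kL)$ — but I need to match this with the $\tau$-twisted $\Hom$ above, so I would instead directly argue via the Auslander--Reiten formula in the form $\Ext^1_{\EE}(\kL,\kL)\cong D\,\overline{\Hom}_{\EE}(\kL,\tau\kL)$ and show $\Hom_{\EE}(\kL,\tau^{-1}\kL)\ne 0$ when $g\ge 1$, using that $\tau^{-1}$ corresponds to twisting by $\mathit{\Omega}_{\EE}^\vee$, a bundle of strictly positive ``degree'' when $g\ge 1$, so it has sections.

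An alternative and perhaps more robust route for (b) which I would keep as a backup: use the Euler form. From (a), $\langle[\kL],[\kL]\rangle = \dim_{\kk}\Hom_{\EE}(\kL,\kL) - \dim_{\kk}\Ext^1_{\EE}(\kL,\kL) = \dim_{\kk}\ff > 0$, so $[\kL]$ is a pseudo-root (this is consistent with Lemma \ref{L:ClassIsPseudoRoot}). The Euler form on a hereditary curve of genus $g$ has a radical whose rank grows with $g$ (the Jacobian contributes $2g$ to the numerical Grothendieck group's non-numerical part, or: the bilinear form fails to be ``unimodular-like'' exactly when $g>0$), and the existence of a rigid line bundle $\kL$ with $\mathsf{rk}(\kL)=1$ forces the geometric genus to vanish — concretely, on a smooth projective curve of genus $g$, a line bundle $\kL$ has $\mathrm{ext}^1(\kL,\kL)=\dim H^1(X,\kO_X)=g$ (and this persists in the twisted order setting up to a factor of $\dim_\kk\ff$ which is positive), so rigidity immediately gives $g=0$. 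This last computation is the one I would actually write out, since it is short and self-contained: $\Ext^1_{\EE}(\kL,\kL)\cong H^1\bigl(X,\mathit{End}_{\EE}(\kL)\bigr)\cong H^1(X,\kR)$, and an unravelling shows $\dim_\kk H^1(X,\kR) \ge (\dim_\kk\ff)\cdot g \ge g$ (using that $\kR$ is a rank-$(\dim_\kk\ff)^2$ torsion-free $\kO_X$-module, hence a successive extension of line bundles on $X$, each contributing $g$ to $h^1$ minus correction terms, which for a carefully chosen filtration yields the bound). Thus $H^1(X,\kR)=0$ forces $g=0$. The main obstacle in either approach is the same: cleanly relating the $\Ext$-groups on the non-commutative curve $\EE$ to honest sheaf cohomology on the commutative curve $X$, and I would resolve it by the remark already in the text that $\kL$ is locally projective (so there are no higher local $\mathit{Ext}$-sheaves and $\Ext^i_{\EE}(\kL,-)=H^i\bigl(X,-\otimes_{\kR}\mathit{Hom}_X(\kL,\kR)\bigr)$), reducing everything to $H^i(X,\kR)$.
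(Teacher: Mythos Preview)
Your argument for part (a) is correct and matches the paper: the identification $\Ext^1_{\EE}(\kL,\kL)\cong H^1\bigl(X,\mathit{End}_{\kR}(\kL)\bigr)=H^1(X,\kR^\circ)=H^1(X,\kR)=0$ is exactly what is used.

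For part (b), however, neither of your two routes goes through as written. In the Serre-duality approach, the displayed identification ``$\kR=\mathit{Hom}_X(\kR,\kO_X\otimes\kR)$'' is false (the right-hand side is $\mathit{End}_{\kO_X}(\kR)$, not $\kR$), and the subsequent matching of $\tau$-twists is never carried out; if you chase it correctly, Serre duality on $X$ gives $H^0(X,\mathit{\Omega}_{\EE})\cong H^1(X,\kR)^\ast=0$, which is just the hypothesis again and yields nothing unless you supply an \emph{independent} nonzero section of $\mathit{\Omega}_{\EE}$ --- for which you would need precisely the reduced trace map that you never invoke. In the ``backup'' approach, the claimed inequality $\dim_\kk H^1(X,\kR)\ge(\dim_\kk\ff)\cdot g$ is not justified: a filtration of $\kR$ by line bundles gives an \emph{upper} bound $h^1(\kR)\le\sum h^1(\kL_i)$, not a lower one, and individual line bundles of large degree have $h^1=0$ regardless of $g$. (The asserted $\kO_X$-rank ``$(\dim_\kk\ff)^2$'' is also not correct in general.)

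The missing ingredient is the reduced trace pairing $\kR\times\kR\to\kO_X$, which gives an injection $\kR\hookrightarrow\kR^\vee$ with torsion cokernel; then $H^1(X,\kR^\vee)=0$ as well, and applying Riemann--Roch to \emph{both} $\kR$ and $\kR^\vee$ (using $\deg\kR^\vee=-\deg\kR$, $\mathrm{rk}\,\kR^\vee=\mathrm{rk}\,\kR$) yields $0<\chi(\kR)+\chi(\kR^\vee)=2(1-g)\,\mathrm{rk}_{\kO_X}(\kR)$, forcing $g=0$. This is the paper's argument (due to Artin--de Jong). Your Serre-duality idea can in fact be completed along similar lines: from $H^1(X,\kR)=0$ one gets $H^0(X,\kR^\vee\otimes\mathit{\Omega}_X)=0$; if $g\ge 1$ a nonzero section of $\mathit{\Omega}_X$ gives $\kR^\vee\hookrightarrow\kR^\vee\otimes\mathit{\Omega}_X$, hence $H^0(X,\kR^\vee)=0$; but the trace gives $\kR\hookrightarrow\kR^\vee$, so $H^0(X,\kR)=0$, contradicting the unit section. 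Either way, the trace is essential.
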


\begin{proof} (a) The rigidity of $\kL$ follows from the isomorphisms
\begin{equation*}
\Ext^1_{\EE}(\kL, \kL) \cong H^1\bigl(X, \mathit{End}{\kR}(\kL)\bigr) = H^1(X, \kR^\circ) = H^1(X, \kR) = 0.
\end{equation*}

\smallskip
\noindent
b) The corresponding statement is due to Artin and de Jong \cite[Proposition 4.2.4]{ArtindeJong}, and we include a proof for the reader's convenience. Since $\EE$ is central and homogeneous, the commutative curve $X$ is regular. Hence, $\kR$ is locally free, viewed 
as an $\kO_X$--module. Let $\mathsf{tr}: \FF \times \FF \lar \KK$ be the reduced trace map; see \cite[Section 9a]{Reiner}.
 There is the corresponding  $\kO_X$-bilinear trace pairing $\mathsf{tr}: \kR \times \kR \lar \kO_X$ inducing a short exact sequence
$$
0 \lar \kR \lar \kR^\vee \lar \kT \lar 0
$$
in the category $\Coh(X)$. 
Note that $\mathsf{Supp}(\kT) = \bigl\{x \in X_\circ \, \big| \,  \kR_x \text{ is not Azumaya}\bigr\}$ is the discriminant locus of $\kR$ (see \cite[Section 10]{Reiner}), so $\mathsf{Supp}(\kT)$ is a finite set. Since $H^1(X, \kR) = 0 = H^1(X, \kT)$, we also obtain $H^1(X, \kR^\vee) = 0$. By the Riemann--Roch theorem we have
$$
\left\{
\begin{array}{l}
0 < \chi(\kR) = \mathsf{deg}(\kR) + (1-g) \mathsf{rk}(\kR) \\
0 < \chi(\kR) = \mathsf{deg}(\kR^\vee) + (1-g) \mathsf{rk}(\kR^\vee).
\end{array}
\right.
$$
Since $\mathsf{rk}(\kR^\vee) = \mathsf{rk}(\kR) > 0$ and $\mathsf{deg}(\kR^\vee) = -\mathsf{deg}(\kR)$, it follows that $(1-g)\mathsf{rk}(\kR) > 0$, hence $g = 0$, as claimed.
\end{proof}

For the rest of this subsection, let $\EE=(X,\kR)$ be a minimal exceptional homogeneous curve and $\ff =  \Gamma(X, \kR)$ be the corresponding $\kk$-algebra of global sections of $\kR$ (recall that $\ff \cong \bigl(\End_{\XX}(\kL)\bigr)^\circ$ is a division algebra; see Lemma \ref{L:SectionStructureSheaf}).

\begin{definition}
Let $\kS$ be any simple object in $\Tor(\EE)$. Then $\Ext^1_{\EE}(\kS, \kL)$ has a natural structure of a right $\ff$--module  and  we put $m := \mathsf{dim}_{\ff} \bigl(\Ext^1_{\EE}(\kS, \kL)\bigr)$. Since $\EE$ is homogeneous, we have $\tau(\kS) \cong \kS$. It follows from (\ref{E:ARDuality}) that
$$\Ext^1_{\EE}(\kS, \kL)^\ast \cong \Hom_{\EE}(\kL, \kS) \cong \Gamma(X, \kS) \ne 0,$$
implying that  $m \ge 1$. Let $(\omega_1, \dots, \omega_m)$ be a basis of $\Ext^1_{\EE}(\kS, \kL)$ over $\ff$. The \emph{companion bundle} $\overline{\kL}$ of $\kL$ (corresponding to  $\kS$) is defined by an exact triangle
\begin{equation*}
\kL^{\oplus m} \lar \overline{\kL} \lar \kS \xrightarrow{\left(\begin{smallmatrix} \omega_1 \\ \vdots \\ \omega_m\end {smallmatrix} \right)} \kL^{\oplus m}[1]
\end{equation*}
in $D^b\bigl(\Coh(\EE)\bigr)$ or, equivalently, by the corresponding  couniversal extension
\begin{equation}\label{E:companion}
0 \lar \kL^{\oplus m} \lar \overline{\kL} \stackrel{\pi}\lar \kS \lar 0
\end{equation}
in $\Coh(\EE)$.
\end{definition}

\begin{lemma}\label{L:rigidVectorBundle}
Let $\kS \in \Tor(\EE)$ be any simple object and $\overline{\kL}$ the companion bundle of $\kL$ corresponding to $\kS$. Then $\kL \oplus \overline\kL$ is a rigid vector bundle on $\EE$.
\end{lemma}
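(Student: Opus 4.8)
The goal is to show that $\kL \oplus \overline{\kL}$ is rigid, i.e.\ that $\Ext^1_{\EE}(\kL \oplus \overline{\kL}, \kL \oplus \overline{\kL}) = 0$. This decomposes into four vanishing statements: $\Ext^1_{\EE}(\kL, \kL)$, $\Ext^1_{\EE}(\kL, \overline{\kL})$, $\Ext^1_{\EE}(\overline{\kL}, \kL)$, and $\Ext^1_{\EE}(\overline{\kL}, \overline{\kL})$. The first is handled immediately by Lemma \ref{L:ExcCurvesBasics}(a). The plan is to attack the remaining three by applying $\Hom_{\EE}(-, \kL)$ and $\Hom_{\EE}(\kL, -)$ (and finally $\Hom_{\EE}(\overline{\kL}, -)$) to the defining short exact sequence \eqref{E:companion}, namely $0 \to \kL^{\oplus m} \to \overline{\kL} \xrightarrow{\pi} \kS \to 0$, and tracking the resulting long exact sequences, using the vanishing $\Ext^i_{\EE}(\kB, \kZ) = 0$ for $\kB$ a vector bundle and $\kZ$ torsion together with hereditarity (so all $\Ext^i$ vanish for $i \ge 2$).

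First I would treat $\Ext^1_{\EE}(\kL, \overline{\kL})$. Applying $\Hom_{\EE}(\kL, -)$ to \eqref{E:companion} gives an exact piece $\Ext^1_{\EE}(\kL, \kL^{\oplus m}) \to \Ext^1_{\EE}(\kL, \overline{\kL}) \to \Ext^1_{\EE}(\kL, \kS)$. The left term vanishes by rigidity of $\kL$, and the right term vanishes because $\kL$ is a vector bundle and $\kS$ is torsion; hence $\Ext^1_{\EE}(\kL, \overline{\kL}) = 0$. Next, for $\Ext^1_{\EE}(\overline{\kL}, \kL)$, I would apply $\Hom_{\EE}(-, \kL)$ to \eqref{E:companion}, obtaining $\Ext^1_{\EE}(\kS, \kL) \to \Ext^1_{\EE}(\overline{\kL}, \kL) \to \Ext^1_{\EE}(\kL^{\oplus m}, \kL) = 0$ (rigidity of $\kL$ again). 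So the surjectivity of $\Ext^1_{\EE}(\kS, \kL) \to \Ext^1_{\EE}(\overline{\kL}, \kL)$ is automatic; the key point is that the connecting map $\Hom_{\EE}(\kL^{\oplus m}, \kL) \to \Ext^1_{\EE}(\kS, \kL)$ is surjective, which is precisely the couniversality of the extension \eqref{E:companion}: by construction the classes $\omega_1, \dots, \omega_m$ form an $\ff$-basis of $\Ext^1_{\EE}(\kS, \kL)$, and the connecting homomorphism sends the standard basis of $\Hom_{\EE}(\kL^{\oplus m}, \kL) \cong \ff^{\oplus m}$ (as a left module) onto exactly these classes. Hence $\Ext^1_{\EE}(\overline{\kL}, \kL) = 0$.

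Finally, for $\Ext^1_{\EE}(\overline{\kL}, \overline{\kL})$, I would apply $\Hom_{\EE}(\overline{\kL}, -)$ to \eqref{E:companion}: the relevant segment is $\Ext^1_{\EE}(\overline{\kL}, \kL^{\oplus m}) \to \Ext^1_{\EE}(\overline{\kL}, \overline{\kL}) \to \Ext^1_{\EE}(\overline{\kL}, \kS) \to \Ext^2_{\EE}(\overline{\kL}, \kL^{\oplus m}) = 0$. The left term is $\bigl(\Ext^1_{\EE}(\overline{\kL}, \kL)\bigr)^{\oplus m} = 0$ by the previous step, so it remains to show $\Ext^1_{\EE}(\overline{\kL}, \kS) = 0$. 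For this I would apply $\Hom_{\EE}(-, \kS)$ to \eqref{E:companion}, getting $\Hom_{\EE}(\kS, \kS) \xrightarrow{\pi^\ast} \Hom_{\EE}(\overline{\kL}, \kS) \to \Hom_{\EE}(\kL^{\oplus m}, \kS) \to \Ext^1_{\EE}(\kS, \kS) \to \Ext^1_{\EE}(\overline{\kL}, \kS) \to \Ext^1_{\EE}(\kL^{\oplus m}, \kS) = 0$, so I need the map $\Hom_{\EE}(\kL^{\oplus m}, \kS) \to \Ext^1_{\EE}(\kS, \kS)$ to be surjective. Since $\EE$ is homogeneous, $\tau(\kS) \cong \kS$, so by the Auslander--Reiten formula \eqref{E:ARDuality} we have $\Ext^1_{\EE}(\kS, \kS) \cong \Hom_{\EE}(\kS, \kS)^\ast$, which has the same $\kk$-dimension as $\Hom_{\EE}(\kL, \kS) \cong \Gamma(X, \kS)$; a dimension count along the two long exact sequences (using that the connecting map $\Hom_{\EE}(\kL^{\oplus m}, \kL) \to \Ext^1_{\EE}(\kS, \kL)$ from the previous paragraph was an isomorphism, which pins down $\Hom_{\EE}(\overline{\kL}, \kL)$ and hence, by a parallel computation, forces the desired surjectivity) yields $\Ext^1_{\EE}(\overline{\kL}, \kS) = 0$.

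The main obstacle I anticipate is the last step: carefully extracting the surjectivity of $\Hom_{\EE}(\kL^{\oplus m}, \kS) \to \Ext^1_{\EE}(\kS, \kS)$ from the couniversality of \eqref{E:companion}. The cleanest way is probably to reinterpret couniversality functorially: \eqref{E:companion} realizes $\overline{\kL}$ as the universal extension killing $\Ext^1_{\EE}(-, \kL)$ from $\kS$, which formally says that the connecting map $\Hom_{\EE}(\kL^{\oplus m}, \kL) \to \Ext^1_{\EE}(\kS, \kL)$ is an isomorphism (not merely surjective) of $\ff$-modules, and then transporting this via the AR-duality $\Ext^1_{\EE}(\kS, \kL) \cong \Ext^1_{\EE}(\kS, \tau\kS)^{\!} $... — one should be slightly careful here, since $\tau \kL \ne \kL$ in general, so the identification of $\Ext^1_{\EE}(\kS, \kL)$ with $\Hom$-spaces of torsion sheaves requires \eqref{E:ARDuality} applied with $\kF = \kS$, $\kG = \kL$, giving $\Hom_{\EE}(\kS, \kL) = 0$ and $\Ext^1_{\EE}(\kS, \kL) \cong \Hom_{\EE}(\kL, \tau^{-1}\kS)^\ast \cong \Hom_{\EE}(\kL, \kS)^\ast$ again using $\tau \kS \cong \kS$. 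Assembling these identifications and checking the diagram commutes is the bookkeeping-heavy core of the argument; everything else is a formal diagram chase using hereditarity and the bundle/torsion $\Ext$-vanishing.
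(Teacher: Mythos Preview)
Your treatment of $\Ext^1_{\EE}(\kL,\overline{\kL})$ and $\Ext^1_{\EE}(\overline{\kL},\kL)$ is exactly what the paper does. The gap is in the last step, and it stems from omitting something that is actually part of the lemma: the statement asserts $\kL\oplus\overline{\kL}$ is a rigid \emph{vector bundle}, so you must first check $\overline{\kL}\in\VB(\EE)$. The paper does this by a short direct argument: if $\overline{\kL}\cong\kB\oplus\kZ$ with $\kZ\neq 0$ torsion, then either the restriction $\kZ\to\kS$ of $\pi$ is an isomorphism (forcing the sequence \eqref{E:companion} to split, contradicting its construction) or $\ker(\pi)\cong\kL^{\oplus m}$ acquires a nonzero torsion subobject (impossible).

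Once $\overline{\kL}$ is a vector bundle, the vanishing $\Ext^1_{\EE}(\overline{\kL},\kS)=0$ is immediate from the general fact $\Ext^i_{\EE}(\kB,\kZ)=0$ for $\kB\in\VB(\EE)$, $\kZ\in\Tor(\EE)$, $i\ge 1$. Then applying $\Hom_{\EE}(\overline{\kL},-)$ to \eqref{E:companion} finishes in one line. Your attempt to get $\Ext^1_{\EE}(\overline{\kL},\kS)=0$ by a dimension count does not close: the long exact sequence for $\Hom_{\EE}(-,\kS)$ gives $\dim\Ext^1_{\EE}(\overline{\kL},\kS)=\dim\Hom_{\EE}(\overline{\kL},\kS)-m\dim\Hom_{\EE}(\kL,\kS)$, which is exactly the Euler form identity $\langle[\overline{\kL}],[\kS]\rangle=m\langle[\kL],[\kS]\rangle+\langle[\kS],[\kS]\rangle$ (using $\langle[\kS],[\kS]\rangle=0$) and therefore tautological. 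The ``parallel computation'' you allude to --- transporting the fact that $\delta:\Hom_{\EE}(\kL^{\oplus m},\kL)\to\Ext^1_{\EE}(\kS,\kL)$ is an isomorphism over to the connecting map into $\Ext^1_{\EE}(\kS,\kS)$ --- does not follow formally; the two connecting maps are governed by different composition pairings and AR duality alone does not convert one into the other. So the missing idea is precisely the vector-bundle check, which both completes the statement and trivializes the step you were worried about.
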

\begin{proof}
It is not hard to see that $\overline{\kL} \in \VB(\EE)$. Indeed, if this is not the case then $\overline{\kL} \cong \kB \oplus \kZ$, where $\kB \in \VB(\XX)$ and $0 \not\cong \kZ \in \Tor(\EE)$. We have an epimorphism $\pi = (\pi' \, \pi''): \kB \oplus \kZ \lar  \kS$. If $\pi''$ is an isomorphism then the short exact sequence (\ref{E:companion}) splits, which contradicts the construction of this sequence.  Otherwise, $\mathsf{Ker}(\pi) \cong \kL^{\oplus m}$ contains a subobject from $\Tor(\EE)$, which again yields a contradiction. Thus it remains to show that $\kL \oplus \overline\kL$ is rigid.

Applying $\Hom_{\EE}(\kL, \,-\,)$ to (\ref{E:companion}), we get an exact sequence
$$\Ext^1_{\EE}\bigl(\kL, \kL^{\oplus m}\bigr) \lar \Ext^1_{\EE}(\kL, \overline{\kL}) \lar \Ext^1_{\EE}(\kL, \kS).$$
Since $\Ext^1_{\EE}(\kL, \kL) = 0 = \Ext^1_{\EE}(\kL, \kS)$, we conclude that  $\Ext^1_{\EE}(\kL, \overline{\kL}) = 0$, too. Now we apply to (\ref{E:companion}) the functor $\Hom_{\EE}(\,-\,,\kL)$.  Since $\Hom_{\EE}(\kS, \kL) = 0 = \Ext^1_{\EE}(\kL^{\oplus m}, \kL)$, we get an exact sequence
$$0  \lar \Hom_{\EE}(\overline\kL, \kL) \lar \Hom_{\EE}(\kL^{\oplus m}, \kL) \stackrel{\delta}\lar \Ext^1_{\EE}(\kS, \kL) \lar \Ext^1_{\EE}(\overline\kL, \kL) \lar 0.$$
By construction, $\delta$ is an isomorphism of $\ff$--modules, hence $\Hom_{\EE}(\overline\kL, \kL) = 0 = \Ext^1_{\EE}(\overline\kL, \kL)$. Finally, applying to (\ref{E:companion}) the functor $\Hom_{\EE}(\overline\kL, \,-\,)$ and using the vanishing $\Ext^1_{\EE}(\overline\kL, \kL) = 0 = \Ext^1_{\EE}(\overline\kL, \kS)$, we conclude that $\Ext^1_{\EE}(\overline\kL, \overline\kL) = 0$, too.
\end{proof}

Again, let $\kS \in \Tor(\EE)$ be any simple object and $\overline{\kL} = \kG_1^{\oplus p_1} \oplus \dots \oplus \kG_t^{\oplus p_t}$ the decomposition of the corresponding companion bundle into pairwise non-isomorphic indecomposable vector bundles. Now, put $\kF = \kL \oplus \kG_1 \oplus \dots \oplus \kG_t$ and $\Lambda := \bigl(\End_{\EE}(\kF)\bigr)^\circ$.

\begin{lemma}\label{L:derivedEquivalence}
We have an exact equivalence of triangulated categories
\begin{equation*}
\sT:D^b\bigl(\mathsf{Coh}(\EE)\bigr)\lar D^b\bigl(\Lambda\mbox{--}\mathsf{mod}\bigr).
\end{equation*}
\end{lemma}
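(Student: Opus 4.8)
The strategy is the standard tilting argument: the object $\kF = \kL \oplus \kG_1 \oplus \dots \oplus \kG_t$ is a tilting object in $\Coh(\EE)$, and once this is established, Lemma \ref{L:derivedEquivalence} follows from the general tilting theorem (Bondal's theorem, or Happel's version for hereditary categories): the derived functor $\RHom_{\EE}(\kF, -)$ induces the desired exact equivalence $\sT : D^b\bigl(\Coh(\EE)\bigr) \lar D^b\bigl(\Lambda\text{--}\mathsf{mod}\bigr)$, where $\Lambda = \bigl(\End_{\EE}(\kF)\bigr)^\circ$. So the real content is to verify that $\kF$ is a tilting object, i.e.\ that it satisfies the three conditions: (i) $\Ext^1_{\EE}(\kF, \kF) = 0$ (rigidity — and since $\Coh(\EE)$ is hereditary, this is the only self-extension vanishing to check); (ii) $\kF$ generates $D^b\bigl(\Coh(\EE)\bigr)$ as a triangulated category; and (iii) $\End_{\EE}(\kF)$ is finite-dimensional over $\kk$, which is immediate since $\Coh(\EE)$ is $\Ext$-finite.

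First I would dispose of rigidity. By Lemma \ref{L:rigidVectorBundle}, $\kL \oplus \overline{\kL}$ is a rigid vector bundle, and $\kF$ is obtained from $\kL \oplus \overline{\kL} = \kL \oplus \kG_1^{\oplus p_1} \oplus \dots \oplus \kG_t^{\oplus p_t}$ by replacing each isotypic block $\kG_i^{\oplus p_i}$ with a single copy $\kG_i$; since rigidity of a direct sum is equivalent to the vanishing of $\Ext^1$ between all pairs of indecomposable summands, $\kF$ is rigid as well. Hence $\Ext^1_{\EE}(\kF,\kF)=0$, and as $\Coh(\EE)$ is hereditary there are no higher self-extensions.

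The main work is generation. Here I would argue that $\llangle \kL, \kG_1, \dots, \kG_t \rrangle$ — the thick (triangulated) subcategory of $D^b\bigl(\Coh(\EE)\bigr)$ generated by the summands of $\kF$ — is all of $D^b\bigl(\Coh(\EE)\bigr)$. The key point is that this subcategory already contains $\kL$ and, by the defining couniversal sequence \eqref{E:companion} together with the fact that $\overline{\kL}$ lies in $\add(\kG_1 \oplus \dots \oplus \kG_t)$, it contains the simple torsion sheaf $\kS$. Since $\EE$ is homogeneous, every simple torsion sheaf $\kS_x$ supported at a closed point $x \in X_\circ$ is $\tau$-periodic with $\tau(\kS_x) \cong \kS_x$; I would use that all simple torsion sheaves at a fixed point, and across points, are related by the structure of $\Tor_x(\EE)$ together with the sequences \eqref{E:AmpleSequence} relating line bundles of the form $\kL_{[m]}$ to $\kL$ via torsion quotients — so, modulo identifying which simple torsion sheaf $\kS$ was chosen, the subcategory contains all torsion sheaves of finite length at every point (using Proposition \ref{P:BookHenning} and that $\Tor(\EE) = \bigvee_x \Tor_x(\EE)$). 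Once the subcategory contains $\kL$ and all of $\Tor(\EE)$, the short exact sequences $0 \to \kL_{[m]} \to \kL \to \kS_{[m]} \to 0$ and, more generally, the inductive argument on rank used in Lemma \ref{L:GrothGroup} (every vector bundle admits a monomorphism from a suitable $\kL_{[m]}$ with torsion cokernel of strictly smaller rank) shows the subcategory contains every vector bundle, hence all of $\Coh(\EE)$, hence all of $D^b\bigl(\Coh(\EE)\bigr)$.

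The step I expect to be the main obstacle is precisely the claim that the triangulated subcategory generated by $\kL$ and the indecomposable summands $\kG_i$ of the companion bundle $\overline{\kL}$ contains \emph{all} simple torsion sheaves, not merely the single $\kS$ used to build $\overline{\kL}$, and hence all torsion sheaves at \emph{every} closed point of $X$. This requires understanding how the categories $\Tor_x(\EE)$ for varying $x$ are captured; the cleanest route is to invoke that $\kL$ together with the twists $\kL_{[m]}$ (equivalently, all line bundles) already generate $\Coh(\EE)$ in a suitable sense, reduce the generation statement to the statement that $\overline{\kL}$ recovers $\kL_{[m_0]}$ for some specific $m_0$ from \eqref{E:companion} with $\kZ$ absent, and then bootstrap using ampleness. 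An alternative, possibly shorter, route is to directly cite the relevant result of Artin--de Jong or Kussin that $\kL \oplus \overline{\kL}$ (equivalently $\kF$) is a tilting bundle on an exceptional homogeneous curve, in which case Lemma \ref{L:derivedEquivalence} is essentially a restatement; I would likely do the verification explicitly for completeness but lean on those references for the generation step if the direct argument becomes unwieldy.
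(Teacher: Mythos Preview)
Your overall strategy---verify that $\kF$ is a tilting object and invoke a standard tilting theorem---matches the paper's, and your treatment of rigidity is fine. The divergence is in the generation step, and the obstacle you flag is genuine: your sketch does not give a clean argument for why $\llangle \kF \rrangle$ contains the simple torsion sheaves at \emph{every} closed point of $X$, only at $x = \mathsf{supp}(\kS)$; the vague appeal to ``structure of $\Tor_x(\EE)$ together with the sequences \eqref{E:AmpleSequence}'' does not produce $\kS_y$ for $y \ne x$.

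The paper sidesteps this entirely. Rather than proving $\llangle \kF \rrangle = D^b(\Coh(\EE))$ directly, it passes to the unbounded category $D(\Qcoh(\EE))$ and shows that $\kF$ is a \emph{compact generator}, i.e.\ $\kF^\perp = 0$. For this one only needs that every nonzero quasi-coherent $\kX$ admits some $k$ with $\Hom_{\EE}(\kL_{[k]}, \kX) \ne 0$---precisely the ampleness statement \eqref{E:SomeIsomorphism}, which uses only the torsion sheaves $\kS_{[k]}$ at the \emph{single} point $x$. Keller's theorem then gives $D(\Qcoh(\EE)) \simeq D(\Lambda\text{--}\mathsf{Mod})$, and restricting to compact objects yields the bounded equivalence. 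For this last restriction the paper first checks $\mathsf{gl.dim}(\Lambda) < \infty$ by showing $\Lambda$ is directed (no cycles of nonzero maps among the $\kG_i$, using that nonzero maps between rigid indecomposables are mono or epi); your proposal does not address this, but it is needed to identify $\Perf(\Lambda)$ with $D^b(\Lambda\text{--}\mathsf{mod})$.
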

\begin{proof}
By Lemma \ref{L:rigidVectorBundle}, we know that all $\kG_i$ are rigid and moreover $\Ext_{\EE}^1(\kG_i, \kG_j) = 0$ for all $1 \le i, j \le t$. According to \cite[Proposition 5.1]{LenzingSurvey}, any non-zero morphism $\kG_i \lar \kG_j$ is either an epimorphism or a monomorphism. Using this fact, one can easily show  by induction that for any $r \ge 2$ there are no ``cycles'' of morphisms $\kG_{i_1} \stackrel{\phi_1}\lar \kG_{i_2} \stackrel{\phi_2}\lar \dots \lar \kG_{i_r} \stackrel{\phi_r}\lar \kG_{i_{r+1}} = \kG_{i_{1}}$ with $\kG_{i_k} \not\cong \kG_{i_{k+1}}$ and $\phi_k \ne 0$ for all $1\le k \le r$. It follows that $\Lambda$ is directed, hence $\mathsf{gl.dim}(\Lambda) < \infty$.

Let $\llangle \kF\rrangle$ be the thick subcategory of $D^b\bigl(\Coh(\EE)\bigr)$ generated by $\kF$ and $x = \mathsf{supp}(\kS)$ be the support of $\kS$. For any $k \in \NN$, let $\kS_{[k]} \in \Tor_x(\EE)$  be an indecomposable object of length $k$ (which is unique up to  isomorphism). It is clear that $\kS_{[k]} \in \llangle \kF\rrangle$. Hence, $\kL_{[k]}$ (defined by (\ref{E:AmpleSequence})) belongs to $\llangle \kF\rrangle$ as well. For any coherent (and, as a consequence, for any quasi-coherent) sheaf $\kX$ on $\EE$ there exists $k \in \NN$ such that $\Hom_{\EE}(\kL_{[k]}, \kX)  \ne 0$; see (\ref{E:SomeIsomorphism}). Hence, $\kF$ is a \emph{compact object} in the unbounded derived category $\widetilde\cD = D\bigl(\mathsf{QCoh}(\EE)\bigr)$ of quasi-coherent sheaves on $\EE$ which \emph{compactly generates} $\widetilde\cD$, i.e.
$$\kF^\perp := \left\{\kX^\bu \in \Ob(\widetilde{\cD}) \, \Big| \, \Hom_{\widetilde{\cD}}(\kF, \kX^\bu[n]) = 0 \;  \mbox{for all} \;  n \in \ZZ \right\} = 0.$$
Since $\Ext^1_{\EE}(\kF, \kF) = 0$, $\kF$ is a tilting object in $\widetilde\cD$ in the sense of \cite{Keller} and we get an exact equivalence of triangulated categories 
\begin{equation}\label{E:EQUnbounded}
\sT= \mathsf{RHom}_{\EE}(\kF, \,-\,): D\bigl(\mathsf{QCoh}(\EE)\bigr) \lar D\bigl(\Lambda\mbox{--}\mathsf{Mod}\bigr).
\end{equation}
Moreover, since  $\mathsf{gl.dim}(\Lambda) < \infty$, we get a restricted exact equivalence 
\begin{equation*}
\sT:D^b\bigl(\mathsf{Coh}(\EE)\bigr)\lar D^b\bigl(\Lambda\mbox{--}\mathsf{mod}\bigr)
\end{equation*}
of the triangulated subcategories of compact objects of both sides of (\ref{E:EQUnbounded}); see for instance \cite[Proposition 2.3]{KrauseStableDerived}.
\end{proof}

\begin{lemma}\label{L:decompositionCompanionBundle}
Let $\kS \in \Tor(\EE)$ be any simple object. The corresponding companion bundle has the form $ \overline\kL\cong\kG^{\oplus p}$ for some indecomposable $\kG \in \VB(\EE)$ and some multiplicity $p\in\NN$.
\end{lemma}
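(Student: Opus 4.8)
The plan is to show that the number of pairwise non-isomorphic indecomposable summands of $\overline\kL$ equals $\mathsf{rk}(K_0(\EE)) - 1$, and then to compute that $\mathsf{rk}(K_0(\EE)) = 2$.

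Write $\overline\kL \cong \kG_1^{\oplus p_1} \oplus \dots \oplus \kG_t^{\oplus p_t}$ with the $\kG_i$ pairwise non-isomorphic indecomposable and $p_i \ge 1$. By Lemma~\ref{L:rigidVectorBundle} the bundle $\overline\kL$ is rigid, so each $\kG_i$ is a rigid indecomposable object and hence exceptional by Theorem~\ref{T:HereditaryKey}(a); moreover $\kG_i \not\cong \kL$ since $\Hom_\EE(\overline\kL, \kL) = 0$ (shown in the proof of Lemma~\ref{L:rigidVectorBundle}). The assertion thus amounts to $t = 1$. To reduce it, apply Lemma~\ref{L:derivedEquivalence} to $\kF = \kL \oplus \kG_1 \oplus \dots \oplus \kG_t$: the indecomposable projective modules over $\Lambda = \bigl(\End_\EE(\kF)\bigr)^\circ$ are $\Hom_\EE(\kF, \kL)$ and the $\Hom_\EE(\kF, \kG_i)$, which are pairwise non-isomorphic, so $D^b\bigl(\Coh(\EE)\bigr) \simeq D^b\bigl(\Lambda\mbox{--}\mathsf{mod}\bigr)$ gives $\mathsf{rk}(K_0(\EE)) = \mathsf{rk}\bigl(K_0(\Lambda\mbox{--}\mathsf{mod})\bigr) = t+1$.

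For the rank computation I would argue as follows. By Lemma~\ref{L:GrothGroup}, $\Gamma := K_0(\EE)$ is generated by $[\kL]$ together with the subgroup $\Gamma'$ spanned by the classes of torsion sheaves. Since $\EE$ is homogeneous, every block $\Tor_x(\EE)$ is a homogeneous tube: the completed order $\widehat{\kR}_x$ is maximal, hence Morita equivalent to a (possibly noncommutative) complete discrete valuation ring, so $\Tor_x(\EE)$ is uniserial with a single simple object $\kS_x$ and $K_0\bigl(\Tor_x(\EE)\bigr) = \ZZ[\kS_x]$; thus $\Gamma'$ is spanned by $\bigl\{[\kS_x] \mid x \in X_\circ\bigr\}$. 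Using the Auslander--Reiten duality (\ref{E:ARDuality}) and $\tau(\kS_x) \cong \kS_x$, one computes $\langle [\kS_x], [\kS_z]\rangle = 0$ for all $x, z \in X_\circ$ (the supports differ, or else $\langle[\kS_x],[\kS_x]\rangle = \dim_\kk\End_\EE(\kS_x) - \dim_\kk\Ext^1_\EE(\kS_x,\kS_x) = 0$), while $\langle [\kL], [\kS_x]\rangle = \dim_\kk\Gamma(X,\kS_x) =: f_x > 0$ and $\langle [\kS_x], [\kL]\rangle = -\dim_\kk\Ext^1_\EE(\kS_x,\kL) = -\dim_\kk\Hom_\EE(\kL,\kS_x) = -f_x$. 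Hence for any $x, y$ the functional $\bigl\langle \tfrac{1}{f_x}[\kS_x] - \tfrac{1}{f_y}[\kS_y],\, -\,\bigr\rangle$ on $\Gamma \otimes \QQ$ kills the spanning set $\{[\kL]\} \cup \{[\kS_z]\}$, hence is zero; non-degeneracy of the Euler form (which holds since $D^b\bigl(\Coh(\EE)\bigr) \simeq D^b\bigl(\Lambda\mbox{--}\mathsf{mod}\bigr)$ with $\Lambda$ of finite global dimension) then forces $\tfrac{1}{f_x}[\kS_x] = \tfrac{1}{f_y}[\kS_y]$ in $\Gamma\otimes\QQ$. Therefore $\Gamma'\otimes\QQ$ is one-dimensional and $\mathsf{rk}(\Gamma) = 1 + \mathsf{rk}(\Gamma') = 2$.

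Combining the two steps gives $t + 1 = 2$, so $t = 1$ and $\overline\kL \cong \kG^{\oplus p}$ with $\kG := \kG_1$ an indecomposable (indeed exceptional) vector bundle. The step I expect to be the main obstacle is the computation $\mathsf{rk}(K_0(\EE)) = 2$: it requires knowing precisely that each torsion block is a rank-one homogeneous tube and combining this with non-degeneracy of $K$; the genus-zero hypothesis of Lemma~\ref{L:ExcCurvesBasics}(b) enters here implicitly, through the existence of the tilting bundle of Lemma~\ref{L:derivedEquivalence}.
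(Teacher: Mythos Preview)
Your proof is correct and follows essentially the same route as the paper: both arguments use the derived equivalence of Lemma~\ref{L:derivedEquivalence} to see that $\Gamma=K_0(\EE)$ is free of finite rank with non-degenerate Euler form, then combine Lemma~\ref{L:GrothGroup} with the vanishing $\langle\gamma_1,\gamma_2\rangle=0$ on $\Gamma'$ and non-degeneracy to deduce $\mathsf{rk}(\Gamma')=1$, hence $\mathsf{rk}(\Gamma)=2$ and $t=1$. The only cosmetic difference is that you phrase the key step via the functional $\bigl\langle\tfrac{1}{f_x}[\kS_x]-\tfrac{1}{f_y}[\kS_y],-\bigr\rangle$, whereas the paper observes directly that $\gamma\mapsto\langle\alpha,\gamma\rangle$ is injective on $\Gamma'$.
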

\begin{proof}
As a consequence of Lemma \ref{L:derivedEquivalence}, $\Gamma := K_0(\EE) \cong K_0(\Lambda)$ is a free abelian group of finite rank. Recall that the homomorphism of abelian groups 
$$\Gamma \lar \Hom_{\ZZ}(\Gamma, \ZZ), \; \gamma \mapsto \langle\gamma, \,-\,\rangle$$
is injective, where $\langle -\,,\,-\rangle$ is the Euler form on $\Gamma$; see for instance \cite[Section III.1.3]{HappelTriangulated}. By Lemma \ref{L:GrothGroup} we have $\Gamma = \llangle \alpha, \Gamma'\rrangle$, where $\alpha = [\kL]$ and $\Gamma'$ is the subgroup of $\Gamma$ generated by the classes of torsion sheaves. Since $\EE$ is homogeneous, we have $\langle\gamma_1, \gamma_2\rangle = 0$ for any $\gamma_1, \gamma_2 \in \Gamma'$. As a consequence,  for any $\gamma_1, \gamma_2 \in \Gamma'$ we have $\gamma_1 = \gamma_2$  if and only if $\langle\alpha, \gamma_1\rangle = \langle\alpha, \gamma_2\rangle$. Hence, there exist $k_1, k_2 \in \ZZ$ such that $k_1 \gamma_1 = k_2 \gamma_2$. Since $\Gamma' = \mathsf{Ker}({\mathsf{rk}}:\Gamma\lar \ZZ)$ and $\Gamma$ is a free abelian group of finite rank, it follows that $\Gamma' \cong \ZZ$ and  $\Gamma  \cong \ZZ^2$. As a consequence, $\kF=\kL\oplus\kG$ and thus $\overline{\kL}$ is isomorphic to $\kG^{\oplus p}$ for some indecomposable $\kG \in \VB(\EE)$ and some multiplicity $p\in\NN$.
\end{proof}

We want to specify our choice of $\kS$ in (\ref{E:companion}). To this end, we first need the following construction.

\begin{definition}
For any $x \in X_\circ$ we shall denote by $\kS_x$ the unique (up to isomorphisms) simple object of $\Tor_x(\EE)$  and put $w_x := \bigl[\kS_x\bigr] \in \Gamma'$, where $\Gamma'$ is as in Lemma \ref{L:decompositionCompanionBundle}. It is clear that $\Hom_{\EE}(\kL, \kS_x) \ne 0$. As a consequence, $\Ext^1_{\EE}(\kS_x, \kL) \cong \bigl(\Hom_{\EE}(\kL, \kS_x)\bigr)^\ast \ne 0$. We denote by $\kL\bigl([x]\bigr)$ the middle term of any non-split extension
\begin{equation}\label{E:ExtensionByDivisor}
0 \lar \kL \lar \kL\bigl([x]\bigr) \lar \kS_x \lar 0. 
\end{equation}
It is not difficult to see that $\kL\bigl([x]\bigr)$ is torsion free, hence a line bundle; see the proof of Lemma \ref{L:rigidVectorBundle} for a detailed treatment of a similar statement. Since $\bigl[\kL\bigl([x]\bigr)\bigr] = \alpha + w_x$, it follows from Theorem \ref{T:HereditaryKey}(b) that the isomorphism class of $\kL\bigl([x]\bigr)$ does not depend on the choice of a non-split extension (\ref{E:ExtensionByDivisor}).

\noindent
Moreover, we denote by $\kL\bigl(q[x]\bigr)$ the line bundle obtained by iterating the above construction $q$ times (replacing $\kL$ and $\kL\bigl([x]\bigr)$ by $\kL\bigl((q-1)[x]\bigr)$ and $\kL\bigl(q[x]\bigr)$ respectively).
\end{definition}

\begin{lemma}\label{L:torsionGeneratingSublattice}
There exists a simple torsion sheaf $\kS$ such that $[\kS]$ generates $\Gamma'$.
\end{lemma}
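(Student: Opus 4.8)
The plan is to combine the structure theory of the torsion category $\Tor(\EE)$ with the rank–two description of $\Gamma = K_0(\EE)$ established in Lemma \ref{L:decompositionCompanionBundle}. Recall from Lemma \ref{L:GrothGroup} that $\Gamma' = \mathsf{Ker}(\mathsf{rk}) \cong \ZZ$, so it suffices to exhibit one simple torsion sheaf $\kS$ whose class $[\kS] \in \Gamma'$ is \emph{not} a proper multiple of any element of $\Gamma'$, i.e.~is a generator. For each closed point $x \in X_\circ$, the class $w_x = [\kS_x] \in \Gamma'$ is a positive multiple of the fixed generator of $\Gamma' \cong \ZZ$; write $w_x = e_x \cdot w$ where $w$ is the generator and $e_x \ge 1$. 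The claim is then that $\min_{x} e_x = 1$, witnessed by some point. First I would relate $e_x$ to the local datum at $x$: since $\Tor_x(\EE)$ is the category of finite-length modules over the completed hereditary order $\widehat{\kR}_x$, and $\EE$ is homogeneous (so $\widehat{\kR}_x$ is a \emph{maximal} order), the Euler pairing $\langle \alpha, w_x \rangle = \langle [\kL], [\kS_x]\rangle = \dim_{\kk}\Hom_{\EE}(\kL, \kS_x) = \dim_{\kk}\Gamma(X,\kS_x)$ computes precisely the ``multiplicity'' of $x$ in the discriminant, and this is minimized (equal to the smallest nonzero value) at a point where $\widehat{\kR}_x$ is Azumaya over $\widehat{\kO}_x$, i.e.~outside the (finite) ramification locus.

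Concretely, the key steps are as follows. Step 1: Identify $\Gamma'$ with $\ZZ$ via $\gamma \mapsto \langle \alpha, \gamma\rangle$, using that $\langle \alpha, - \rangle$ separates elements of $\Gamma'$ (this was shown inside the proof of Lemma \ref{L:decompositionCompanionBundle}). Under this identification, $w_x$ corresponds to the integer $d_x := \langle \alpha, w_x\rangle = \dim_{\kk}\Gamma(X,\kS_x)$, and $\Gamma'$ is generated by $\gcd\{d_x : x \in X_\circ\}$ — in fact $\Gamma' = \llangle w_x : x \in X_\circ \rrangle$ because every torsion class is a $\ZZ$-combination of the $w_x$ (each $[\kZ]$ for $\kZ \in \Tor_x(\EE)$ is a multiple of $w_x$ since $\Tor_x(\EE)$ is uniserial, $\kS_x$ being its unique simple). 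Step 2: Choose a closed point $x_0 \in X_\circ$ lying outside the discriminant locus $\mathsf{Supp}(\kT)$ of $\kR$ (nonempty complement, as $\mathsf{Supp}(\kT)$ is finite and $X$ is an infinite integral curve); at such a point $\widehat{\kR}_{x_0} \cong \Mat_{d}(\widehat{\kO}_{x_0})$ for $d = \sqrt{\dim_{\KK}\FF}$, but since $\FF = \FF_{\EE}$ is a skew field of index $d$ and $\widehat{\kR}_{x_0}$ is maximal, Morita theory identifies $\Tor_{x_0}(\EE)$ with finite-length $\widehat{\kO}_{x_0}$-modules, under which $\kS_{x_0}$ corresponds to the residue field $k(x_0)$. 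Step 3: Compute $d_{x_0} = \dim_{\kk}\Gamma(X,\kS_{x_0}) = [k(x_0):\kk]$. Now I would argue that this value is the \emph{minimum} of $\{d_x\}$, hence equals $\gcd\{d_x\}$ up to the contribution of other points — more carefully, I would instead directly show $[\kS_{x_0}]$ is primitive in $\Gamma'$ by pairing against $\alpha$: if $[\kS_{x_0}] = k \gamma$ for $\gamma \in \Gamma'$ and $k \ge 2$, then $\gamma = \tfrac{1}{k}[\kS_{x_0}]$ would have to be realizable and one derives a contradiction with the indecomposability/length structure of $\Tor_{x_0}(\EE)$, since every class in $\Gamma'$ arising from a sheaf supported at $x_0$ is a \emph{nonnegative} multiple of $w_{x_0} = [\kS_{x_0}]$, forcing $w_{x_0}$ itself to be the generator at least after localizing the problem.

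The technical heart — and the step I expect to be the main obstacle — is Step 3, namely proving that $w_{x_0} = [\kS_{x_0}]$ is genuinely a \emph{generator} of the global $\Gamma'$, not merely a generator of the ``local'' sublattice $\llangle w_{x_0}\rrangle$. The subtlety is that a priori $d_{x_0} = [k(x_0):\kk]$ need not be $1$, and $\gcd\{d_x : x \in X_\circ\}$ could conceivably be smaller than every individual $d_x$. The resolution I would pursue: since $X$ is a genus-zero curve (Lemma \ref{L:ExcCurvesBasics}) with a $\kk$-rational point available (or, if not, one reduces to the Brauer–Severi description where $X$ is determined by the quaternion-type class and always carries points of bounded degree), one can find a point $x_0$ with $[k(x_0):\kk]$ equal to the index-related minimal value, and more importantly one shows that the degree function $\deg: \Gamma' \to \ZZ$ (suitably normalized using $\mathit{\Omega}_{\EE}$ and Riemann–Roch on $\EE$) sends $w_{x_0}$ to a generator by a direct Riemann–Roch computation on the non-commutative curve $\EE$: $\chi(\kL(q[x_0])) - \chi(\kL) = q \cdot \langle\alpha, w_{x_0}\rangle$ grows in unit steps relative to the normalized degree, pinning down that $w_{x_0}$ generates. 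Alternatively — and this is perhaps the cleanest route — I would invoke the derived equivalence of Lemma \ref{L:derivedEquivalence} with $\Lambda$ a tame hereditary algebra of rank $2$ (the ``$\widetilde{A}_1$-type'' situation once we pass to the exceptional setting), where $\Gamma' \cong \ZZ$ is generated by the class of a simple regular module in a homogeneous tube, and the class $[\kS_{x_0}]$ of the generic simple torsion sheaf corresponds exactly to such a generator. I would then transport this back along $\sT$ to conclude.
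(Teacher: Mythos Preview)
Your proposal correctly isolates the issue --- showing that the subgroup of $\ZZ$ generated by $\{d_x = \langle\alpha,w_x\rangle : x \in X_\circ\}$ is actually \emph{attained} by some single $d_x$ --- but neither of the two resolutions you offer for the ``main obstacle'' closes the gap. Route 2 (transport through the derived equivalence with $\Lambda$ and invoke the structure of regular modules over a tame bimodule algebra) is circular in the paper's logical order: at this stage $\Lambda$ is only known to be a directed finite-global-dimension algebra with two simples (Lemmas \ref{L:derivedEquivalence} and \ref{L:decompositionCompanionBundle}); the identification of $\Lambda$ as a tame bimodule algebra is Theorem \ref{T:Lenzing}, which \emph{uses} the present lemma via Lemma \ref{L:companionIndecomposable}. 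Route 1 (choose an Azumaya point $x_0$) gives you $d_{x_0} = [k(x_0):\kk]$, but nothing forces this to equal $\gcd\{d_x\}$; different Azumaya points give different residue degrees, and your Riemann--Roch sketch does not supply the missing divisibility.

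The paper's argument is a direct Euclidean-algorithm trick that avoids all of this. Pick $x$ with $\overline{\deg}(w_x) := \langle\alpha,w_x\rangle$ \emph{minimal} among all closed points (no Azumaya condition needed). For any other $y$, write $\overline{\deg}(w_y) = q\,\overline{\deg}(w_x) + r$ with $0 \le r < \overline{\deg}(w_x)$. If $r \ne 0$, the line bundles $\kL'' := \kL(q[x])$ and $\kL' := \kL([y])$ satisfy $\langle [\kL''],[\kL']\rangle = \langle\alpha,\alpha\rangle + r > 0$, so there is a nonzero (hence injective) morphism $\kL'' \to \kL'$; its cokernel $\kZ$ is a nonzero torsion sheaf with $\overline{\deg}([\kZ]) = r$, and any simple subquotient of $\kZ$ contradicts the minimality of $\overline{\deg}(w_x)$. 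Hence every $w_y$ lies in $\llangle w_x\rrangle$, and since the $w_y$ generate $\Gamma'$, so does $w_x$. No Brauer--Severi description, no rational-point hypothesis, and no Dlab--Ringel theory are required.
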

\begin{proof}
Let $w \in \Gamma'$ be such that $\Gamma' = \llangle w\rrangle$. To show that there is an $x\in X_\circ$ such that $w=w_x$, consider the group homomorphism $\overline{\mathsf{deg}}:\Gamma \lar \ZZ$ defined by $\gamma \mapsto \langle \alpha, \gamma\rangle$.

For any $y \in X_\circ$ we have $\overline{\mathsf{deg}}(w_y) > 0$. Let $x \in X_\circ$ be such that $\overline{\mathsf{deg}}(w_x)$ is minimal. We claim that $\Gamma' = \llangle w_x\rrangle$. Indeed, for any $y \in X_\circ$ there exist unique $q \in \NN$ and $0 \le r < \overline{\mathsf{deg}}(w_x)$ such that $\overline{\mathsf{deg}}(w_y) = q \,\overline{\mathsf{deg}}(w_x) + r$. It suffices to show that $r = 0$. 

We put $\kL':= \kL\bigl([y]\bigr)$ and $\kL'':= \kL\bigl(q[x]\bigr)$. Then $[\kL'] = \alpha + w_y$, $[\kL''] = \alpha + q w_x$. As $\EE$ is homogeneous, we have $\langle w_x,  w_y\rangle=0$ and $\langle w_x, \alpha \rangle=-\langle \alpha, w_x\rangle$. Thus
$$\bigl\langle \kL'', \kL'\bigr\rangle = \langle \alpha + q w_x, \alpha + w_y\rangle = \langle \alpha, \alpha\rangle + \langle \alpha, w_y \rangle - q \langle \alpha, w_x\rangle = 
\langle \alpha, \alpha\rangle + r > 0.$$
Assume that $r \ne 0$. Then $\kL' \not \cong \kL''$. On the other hand, we have a non-zero morphism $g:\kL''\lar \kL'$, which is automatically a monomorphism since $\kL'$ and $\kL''$ are line bundles. Consider the corresponding short exact sequence
$$0 \lar \kL'' \stackrel{g}\lar \kL' \lar \kZ \lar 0.$$
Then $[\kZ] = w_y - q w_x$ and $\kZ \in \Tor(\EE)$. It follows that $\overline{\mathsf{deg}}([\kZ]) = r < \overline{\mathsf{deg}}(w_x)$, contradicting the minimality of $\overline{\mathsf{deg}}(w_x)$.
\end{proof}

By Lemma \ref{L:torsionGeneratingSublattice}, we know that a simple torsion sheaf $\kS$ with $[\kS] = w$ exists. Let us specify this choice in (\ref{E:companion}). We get a stronger version of Lemma \ref{L:decompositionCompanionBundle}.
\begin{lemma}\label{L:companionIndecomposable}
Let $\kS$ be a simple torsion sheaf such that $w=[\kS]$ generates $\Gamma'$ and let $\overline\kL$ be the corresponding companion bundle. Then $\overline{\kL}$ is indecomposable.
\end{lemma}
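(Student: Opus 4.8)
The plan is to settle this entirely at the level of the Grothendieck group $\Gamma = K_0(\EE)$, exploiting the special choice of $\kS$. By Lemma \ref{L:decompositionCompanionBundle} we already know $\overline{\kL} \cong \kG^{\oplus p}$ for some indecomposable $\kG \in \VB(\EE)$ and some $p \in \NN$; since $\rk(\overline{\kL}) = m \ge 1$ by the couniversal extension (\ref{E:companion}), necessarily $p \ge 1$, so it suffices to prove $p = 1$.

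First I would observe that $(\alpha, w)$ is a $\ZZ$-basis of $\Gamma$, where $\alpha = [\kL]$ and $w = [\kS]$. Indeed, by Lemma \ref{L:GrothGroup} the rank function fits into a short exact sequence $0 \lar \Gamma' \lar \Gamma \xrightarrow{\rk} \ZZ \lar 0$; here $\Gamma' = \ZZ w$, because $w$ generates $\Gamma'$ by hypothesis and $\Gamma' \cong \ZZ$ by the proof of Lemma \ref{L:decompositionCompanionBundle}. As $\rk(\alpha) = \rk(\kL) = 1$, the assignment $n \mapsto n\alpha$ splits this sequence, so $\Gamma = \ZZ\alpha \oplus \ZZ w$.

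Next, from the defining couniversal extension (\ref{E:companion}) one computes $[\overline{\kL}] = m\alpha + w$; in particular the $w$-coordinate of $[\overline{\kL}]$ with respect to the basis $(\alpha, w)$ equals $1$. On the other hand, writing $[\kG] = a\alpha + b w$ with $a, b \in \ZZ$, the identity $[\overline{\kL}] = p\,[\kG]$ yields $pb = 1$, and since $p \ge 1$ this forces $p = 1$. Hence $\overline{\kL} = \kG$ is indecomposable.

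The argument is very short, and no serious obstacle is expected; the only point that genuinely requires care is the first one, namely that the hypothesis on $\kS$ — that $[\kS]$ \emph{generates} $\Gamma'$, as provided by Lemma \ref{L:torsionGeneratingSublattice} — is precisely what makes the $w$-coordinate of $[\overline{\kL}]$ primitive, and thus obstructs $[\overline{\kL}]$ from being a proper multiple of any class in $\Gamma$. For a general simple torsion sheaf the companion bundle may genuinely be decomposable, which is exactly why Lemma \ref{L:decompositionCompanionBundle} could only be stated in its weaker form. One should also make sure all the integrality inputs are in place ($p \ge 1$ from $m \ge 1$, and $a, b \in \ZZ$ from the basis property), but these are immediate.
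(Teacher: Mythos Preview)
Your proof is correct and is essentially identical to the paper's argument: both reduce to Lemma \ref{L:decompositionCompanionBundle}, observe that $(\alpha, w)$ is a $\ZZ$-basis of $\Gamma$, and then compare the $w$-coordinate of $[\overline{\kL}] = m\alpha + w$ with that of $p[\kG]$ to force $p = 1$. You supply slightly more detail on why $(\alpha, w)$ is a basis, which the paper simply asserts, but the logic is the same.
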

\begin{proof}
We already know by Lemma \ref{L:decompositionCompanionBundle} that $\overline\kL \cong \kG^{\oplus p}$, where $\kG$ is an indecomposable rigid vector bundle on $\EE$. We only need to prove that $p = 1$.

Since $(\alpha, w)$  is a basis of $\Gamma$,  we can find $k, l \in \ZZ$ such that $[\kG] = k \alpha + l w\in \Gamma$. It follows that $p[\kG] = k p \alpha   + l p w$.  On the other hand, the  short exact sequence (\ref{E:companion}) implies the equality $p[\kG] = m \alpha  + w$. Since $\alpha$ and $w$ are linearly independent, it follows that $p = 1$.
\end{proof}

We can summarize the previous discussion on $\kL$ and its companion bundle $\overline\kL$ with the following result.
\begin{theorem}\label{T:Lenzing}
Let $\EE = (X, \kR)$ be a minimal  exceptional homogeneous curve. Then there exists a tilting object $\kF \in \VB(\EE)$ such that 
\begin{equation}\label{E:TameBimodule}
\Lambda := \bigl(\End_{\EE}(\kF)\bigr)^\circ \cong
\left(
\begin{array}{cc}
\ff & \mathbbm{w} \\
0   & \gg \\
\end{array}
\right),
\end{equation}
where $\mathbbm{f}$ and $\mathbbm{g}$ are finite--dimensional division algebras over $\kk$, and $\mathbbm{w}$ is a \emph{tame} $(\ff$--$\gg)$-bimodule (meaning that $\dim_{\ff}(\mathbbm{w}) \cdot \dim_{\gg}(\mathbbm{w}) = 4$; see  \cite{RingelBimod, DlabRingel}). 
\end{theorem}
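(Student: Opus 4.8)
The plan is to take the required tilting bundle to be $\kF := \kL \oplus \overline{\kL}$, where $\overline{\kL}$ is the companion bundle of $\kL$ attached to a simple torsion sheaf $\kS$ whose class $w := [\kS]$ generates $\Gamma'$. Such a $\kS$ exists by Lemma~\ref{L:torsionGeneratingSublattice}; for this choice $\overline{\kL}$ is indecomposable by Lemma~\ref{L:companionIndecomposable}, and $\kF$ is a rigid vector bundle by Lemma~\ref{L:rigidVectorBundle}. Moreover, the arguments in the proofs of Lemmas~\ref{L:derivedEquivalence}, \ref{L:decompositionCompanionBundle} and~\ref{L:companionIndecomposable} already show that $\kF$ is a tilting object, that $\Lambda := \bigl(\End_\EE(\kF)\bigr)^\circ$ has finite global dimension and satisfies $D^b\bigl(\Coh(\EE)\bigr) \simeq D^b(\Lambda\mbox{--}\mathsf{mod})$, and that $\bigl([\kL], [\overline{\kL}]\bigr)$ is a $\ZZ$-basis of $\Gamma = K_0(\EE)$ (so $\Gamma$ is free of rank two). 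It then remains to put $\Lambda$ in the asserted triangular form and to prove that the bimodule occurring in it is tame.

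For the shape of $\Lambda$, decompose $\End_\EE(\kF)$ along the two summands of $\kF$. The block $\Hom_\EE(\overline{\kL}, \kL)$ vanishes (this was shown inside the proof of Lemma~\ref{L:rigidVectorBundle}), so $\End_\EE(\kF)$ is a triangular matrix algebra and $\Lambda = \bigl(\End_\EE(\kF)\bigr)^\circ$ has the form $\left(\begin{smallmatrix} \ff & \mathbbm{w} \\ 0 & \gg \end{smallmatrix}\right)$, where $\ff := \bigl(\End_\EE(\kL)\bigr)^\circ$ (the division algebra $\Gamma(X,\kR)$ of Lemma~\ref{L:SectionStructureSheaf}), $\gg := \bigl(\End_\EE(\overline{\kL})\bigr)^\circ$, and $\mathbbm{w} := \Hom_\EE(\kL, \overline{\kL})$ with its natural $(\ff,\gg)$-bimodule structure. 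By Lemma~\ref{L:SectionStructureSheaf}, $\ff$ is a finite-dimensional division algebra over $\kk$; since $\overline{\kL}$ is indecomposable and rigid, Theorem~\ref{T:HereditaryKey}(a) shows it is exceptional, so $\gg$ is a finite-dimensional division algebra over $\kk$ as well.

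Finally, to prove that $\mathbbm{w}$ is tame, i.e.~that $\dim_\ff(\mathbbm{w}) \cdot \dim_\gg(\mathbbm{w}) = 4$, I would exploit the degeneracy of the symmetrization $B$ of the Euler form $K$ on $\Gamma \cong \ZZ^2$. Since $\EE$ is homogeneous we have $\tau\kS \cong \kS$, so Auslander--Reiten duality~(\ref{E:ARDuality}) gives $\langle w, \gamma\rangle = -\langle \gamma, w\rangle$ for every $\gamma \in \Gamma$; hence $(w, \gamma) = 0$ for all $\gamma$, so $0 \ne w \in \mathsf{Rad}(B)$ and therefore $\det(B) = 0$. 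On the other hand, using $\Ext^1_\EE(\kL, \kL) = 0 = \Ext^1_\EE(\kL, \overline{\kL})$ together with $\Hom_\EE(\overline{\kL}, \kL) = 0 = \Ext^1_\EE(\overline{\kL}, \kL)$ (all established in the proof of Lemma~\ref{L:rigidVectorBundle}), the Gram matrix of $B$ in the basis $\bigl([\kL], [\overline{\kL}]\bigr)$ is
\begin{equation*}
B = \begin{pmatrix} 2\dim_\kk(\ff) & \dim_\kk(\mathbbm{w}) \\ \dim_\kk(\mathbbm{w}) & 2\dim_\kk(\gg) \end{pmatrix},
\end{equation*}
so $\det(B) = 4\,\dim_\kk(\ff)\,\dim_\kk(\gg) - \bigl(\dim_\kk(\mathbbm{w})\bigr)^2$. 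As $\mathbbm{w}$ is free on both sides over the division algebras $\ff$ and $\gg$, we have $\dim_\kk(\mathbbm{w}) = \dim_\ff(\mathbbm{w})\cdot\dim_\kk(\ff) = \dim_\gg(\mathbbm{w})\cdot\dim_\kk(\gg)$, and substituting this into $\det(B) = 0$ yields precisely $\dim_\ff(\mathbbm{w})\cdot\dim_\gg(\mathbbm{w}) = 4$. Within this theorem the one step that requires genuine care is the degeneracy $w \in \mathsf{Rad}(B)$, which is exactly where the homogeneity of $\EE$ is used essentially; everything else is assembly of the lemmas already proved, the real difficulty — the indecomposability of $\overline{\kL}$ and the tilting/derived-equivalence statement — having been dealt with there.
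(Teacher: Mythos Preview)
Your proof is correct and follows the paper's construction exactly: the same tilting bundle $\kF = \kL \oplus \overline{\kL}$, the same identification of $\ff$, $\gg$, $\mathbbm{w}$, and the same appeal to the prior lemmas. The only difference is in the final tameness computation. The paper introduces the parameters $\kappa = \dim_\kk(\ff)$ and $\varepsilon = \dim_\ff\Ext^1_\EE(\kS,\kL)$, computes the Euler pairings on the basis $(\alpha, w)$, and then reads off $\dim_\kk(\gg) = \kappa\varepsilon^2$ and $\dim_\kk(\mathbbm{w}) = 2\kappa\varepsilon$, whence $\dim_\ff(\mathbbm{w}) = 2\varepsilon$ and $\dim_\gg(\mathbbm{w}) = 2/\varepsilon$; this forces $\varepsilon \in \{1,2\}$ and the product equals $4$. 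Your determinant argument via $w \in \mathsf{Rad}(B)$ is a slicker packaging of the same identity $\langle w,\gamma\rangle = -\langle\gamma,w\rangle$ and gets straight to $\dim_\ff(\mathbbm{w})\cdot\dim_\gg(\mathbbm{w}) = 4$ without naming $\varepsilon$. The paper's route has the side benefit of isolating the invariant $\varepsilon \in \{1,2\}$, which is used throughout the rest of the paper (e.g.\ in the symbol $\sigma_\XX$ and the Gram matrix of Theorem~\ref{T:ExceptionalMain}); yours is more economical if one only wants the statement of the theorem.
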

\begin{proof}
Let $\kS$ be a simple torsion sheaf such that $w=[\kS]$ generates $\Gamma'$, which exists by Lemma \ref{L:torsionGeneratingSublattice}. Let $\overline\kL$ be the companion bundle of $\kL$ corresponding to $\kS$. Then $\overline\kL$ is indecomposable by Lemma \ref{L:companionIndecomposable}. Moreover, $\kF=\kL\oplus\overline\kL$ is a tilting bundle by the construction in Lemma \ref{L:derivedEquivalence}. It is clear that its endomorphism algebra $\Lambda = \bigl(\End_{\EE}(\kF)\bigr)^\circ$ has the form (\ref{E:TameBimodule}), where $\ff=\bigl(\End_{\EE}(\kL)\bigr)^\circ$, $\gg=\bigl(\End_{\EE}(\overline{\kL})\bigr)^\circ$ and $\mathbbm{w}=\Hom_{\EE}(\kL,\overline\kL)$. Note that by Theorem \ref{T:HereditaryKey}, $\ff$ and $\gg$ are finite--dimensional division algebras over $\kk$.

\smallskip
\noindent
It remains to show that $\mathbbm{w}$ is a tame bimodule. We put
\begin{equation}\label{E:KappaEpsilon}
\varepsilon:= \mathsf{dim}_{\ff}\bigl(\Ext^1_{\EE}(\kS, \kL)\bigr) = 
\frac{\mathsf{dim}_{\kk}\bigl(\Ext^1_{\EE}(\kS, \kL)\bigr)}{\mathsf{dim}_{\kk}\bigl(\End_{\EE}(\kL)\bigr)} \quad \mbox{\rm and} \quad \kappa:= \mathsf{dim}_{\kk}\bigl(\End_{\EE}(\kL)\bigr) = \mathsf{dim}_{\kk}(\ff).
\end{equation}
Now use the fact that all morphisms from torsion sheaves to vector bundles vanish to obtain $\langle w, \alpha\rangle = \bigl\langle  [\kS], [\kL]\bigr\rangle = - \kappa \varepsilon$. Further, $w\in\Gamma'$ for a homogeneous curve $\EE$, so $\langle w, w\rangle = \bigl\langle  [\kS], [\kS]\bigr\rangle = 0$ and $\langle  \alpha, w \bigr\rangle=-\langle w,\alpha \bigr\rangle = \kappa \varepsilon$. Finally, $\langle\alpha, \alpha \rangle = \bigl\langle  [\kL], [\kL]\bigr\rangle = \kappa$ by definition. Note that $\varepsilon$ is the value of $m$ in (\ref{E:companion}) if we specify $\kS$ to be a simple torsion sheaf such that $[\kS]$ generates $\Gamma'$. It follows that $[\overline\kL]=\varepsilon\alpha+w$. Thus we get
\begin{equation*}
\mathsf{dim}_{\kk}(\gg) = \bigl\langle  [\overline\kL], [\overline\kL]\bigr\rangle = \bigl\langle  \varepsilon \alpha  + w, \varepsilon \alpha  + w \bigr\rangle = \kappa\varepsilon^2
\end{equation*}
and
\begin{equation*}
\mathsf{dim}_{\kk}(\mathbbm{w}) = \bigl\langle  [\kL], 
[\overline\kL]\bigr\rangle = \bigl\langle  \alpha, 
\varepsilon \alpha  + w\bigr\rangle = 2 \kappa \varepsilon. 
\end{equation*}
Note that $\mathsf{dim}_{\ff}(\mathbbm{w}) = \dfrac{2 \kappa \varepsilon}{\kappa} = 2 \varepsilon$ and $\mathsf{dim}_{\gg}(\mathbbm{w}) = \dfrac{2 \kappa \varepsilon}{\kappa \varepsilon^2} = \dfrac{2}{\varepsilon}$. Hence, $\varepsilon \in \{1, 2\}$ and 
$\mathsf{dim}_{\ff}(\mathbbm{w}) \cdot \mathsf{dim}_{\gg}(\mathbbm{w}) = 4$, as asserted.
\end{proof}

\begin{corollary} Let $\EE = (X, \kR)$ be a minimal exceptional homogeneous curve over $\kk$, $\eta \in \mathsf{Br}(\KK)$ be the corresponding Brauer class and 
$\EE' = (X, \kR')$ be another minimal homogeneous curve  such that $\eta_{\EE'} = \eta$. Then we have: $H^1(X, \kR') = 0$. In other words, the condition for $\eta \in \mathsf{Br}(\KK)$ to be exceptional is well-defined. Accordingly, any homogeneous (not necessarily minimal) curve corresponding to $\eta$ will be called \emph{exceptional}. 
\end{corollary}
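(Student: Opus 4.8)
The plan is to move the statement through the equivalence of categories determined by the pair $(X,\eta)$ and reduce it to the fact that \emph{every} line bundle on the given exceptional curve $\EE$ is rigid.

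Since $\EE=(X,\kR)$ and $\EE'=(X,\kR')$ share the same (necessarily regular) curve $X$ and $\eta_{\EE'}=\eta=\eta_{\EE}$, the criterion of \cite[Proposition~1.9.1]{ArtindeJong} (see also \cite[Corollary~7.9]{BurbanDrozd}), applied with $f=\id_X$, yields an equivalence of abelian categories $\sG\colon\Coh(\EE')\lar\Coh(\EE)$. Such a $\sG$ preserves objects of finite length, hence restricts to $\Tor(\EE')\lar\Tor(\EE)$; consequently it preserves the rank function (the unique additive function on $K_0$ that vanishes on torsion classes, is primitive, and is non-negative on objects) and the subcategory of torsion-free objects, so it carries line bundles on $\EE'$ to line bundles on $\EE$. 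In particular $\kM:=\sG(\kR')$ is a line bundle on $\EE$ and $\Ext^1_{\EE'}(\kR',\kR')\cong\Ext^1_{\EE}(\kM,\kM)$. Since $\kR'$ is locally projective over itself, the computation in the proof of Lemma~\ref{L:ExcCurvesBasics}(a) (with $\kR'$ in place of $\kR$) gives $\Ext^1_{\EE'}(\kR',\kR')\cong H^1\bigl(X,\mathit{End}_{\kR'}(\kR')\bigr)=H^1(X,\kR')$. Hence everything reduces to showing that the line bundle $\kM$ is rigid.

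For the key point, I would prove that \emph{every} line bundle on $\EE$ is rigid. Put $\alpha:=[\kR]\in\Gamma=K_0(\EE)$ and let $w$ generate the subgroup $\Gamma'$ of torsion classes; since $\EE$ is exceptional we have $\Gamma\cong\ZZ^2$ with basis $(\alpha,w)$ (see the proof of Lemma~\ref{L:decompositionCompanionBundle}), and since $\EE$ is homogeneous $w\in\mathsf{Rad}(B)$. Let $\kN$ be any line bundle on $\EE$. By the Serre duality (\ref{E:ARDuality}), $\Ext^1_{\EE}(\kN,\kN)\cong\Hom_{\EE}(\kN,\tau\kN)^{\ast}$, so assume towards a contradiction that there is a nonzero $f\colon\kN\lar\tau\kN$. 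As a nonzero map between line bundles on an integral curve, $f$ is a monomorphism with torsion cokernel $\kZ$, so $[\kZ]=[\tau\kN]-[\kN]=c\,[\kN]-[\kN]$ in $\Gamma$. Applying $\overline{\mathsf{deg}}(\,-\,):=\langle\alpha,\,-\,\rangle$ together with the defining relation (\ref{E:CoxeterDefinition}) of $c$ (with the roles of $\alpha,\beta$ played by $[\kN],\alpha$), so that $\langle\alpha,c[\kN]\rangle=-\langle[\kN],\alpha\rangle$, and writing $[\kN]=\alpha+kw$ (possible as $\mathsf{rk}(\kN)=1$) and using $w\in\mathsf{Rad}(B)$, we obtain
\[
\overline{\mathsf{deg}}([\kZ])=-\langle[\kN],\alpha\rangle-\langle\alpha,[\kN]\rangle=-B([\kN],\alpha)=-B(\alpha,\alpha)=-2\langle\alpha,\alpha\rangle<0,
\]
the last inequality because $\alpha$ is a pseudo-root. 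On the other hand $\kZ\neq 0$: otherwise $f$ is an isomorphism, so $[\kN]\in\mathsf{Fix}(c)=\mathsf{Rad}(B)\cap\Gamma=\ZZ w\subseteq\mathsf{Ker}(\mathsf{rk})$, contradicting $\mathsf{rk}(\kN)=1$. But a nonzero torsion sheaf $\kZ$ satisfies $\overline{\mathsf{deg}}([\kZ])=\dim_\kk\Hom_{\EE}(\kR,\kZ)=\dim_\kk\Gamma(X,\kZ)>0$ (the higher $\Ext$ of $\kR$ into $\kZ$ vanishing since $\kR$ is a vector bundle and $\kZ$ torsion), a contradiction. Hence $\Hom_{\EE}(\kN,\tau\kN)=0$ and $\kN$ is rigid.

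Applying this to $\kN=\kM$ yields $H^1(X,\kR')=\Ext^1_{\EE'}(\kR',\kR')\cong\Ext^1_{\EE}(\kM,\kM)=0$, which is the claim; the final assertion about non-minimal curves then follows since any homogeneous curve with Brauer class $\eta$ has coherent-sheaf category equivalent to that of a minimal one, so the condition $H^1=0$ depends only on $\eta$. The step I expect to require the most care is the rigidity of line bundles on $\EE$ — specifically getting the sign of $\overline{\mathsf{deg}}([\kZ])$ right and ruling out the degenerate case $\kN\cong\tau\kN$ — together with the routine but non-vacuous verification that the equivalence $\sG$ is compatible with ranks and with the torsion / torsion-free decomposition.
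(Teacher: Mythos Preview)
Your proof is correct and follows the same overall architecture as the paper's: transfer the question through the equivalence $\Coh(\EE')\simeq\Coh(\EE)$ determined by $(X,\eta)$, and reduce to the rigidity of an arbitrary line bundle on~$\EE$. Where you diverge is in how you establish that rigidity. The paper invokes Theorem~\ref{T:Lenzing} to pass to the tame hereditary algebra $\Lambda$ and then quotes the classification result of Dlab--Ringel \cite{DlabRingel} that all indecomposable preprojective and preinjective $\Lambda$-modules are rigid, concluding that every indecomposable vector bundle on $\EE$ is rigid. You instead give a direct, self-contained argument: using Serre duality to rewrite $\Ext^1_{\EE}(\kN,\kN)$ as $\Hom_{\EE}(\kN,\tau\kN)^\ast$, and then showing via the Coxeter relation~(\ref{E:CoxeterDefinition}) and the fact that $\Gamma'\subseteq\mathsf{Rad}(B)$ that any nonzero $\kN\to\tau\kN$ would force its torsion cokernel to have negative $\overline{\mathsf{deg}}$, a contradiction.

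Your route is more elementary in that it avoids the external input from \cite{DlabRingel} and does not require the full strength of the tilting theorem; it uses only Lemma~\ref{L:GrothGroup}, homogeneity (giving $\Gamma'\subseteq\mathsf{Rad}(B)$), and Serre duality. The paper's route, by contrast, yields the stronger conclusion that \emph{every} indecomposable vector bundle (not just line bundles) on $\EE$ is rigid, which is not needed here but is of independent interest. Note also that your argument does not actually require $\Gamma\cong\ZZ^2$: writing $[\kN]-\alpha\in\Gamma'\subseteq\mathsf{Rad}(B)$ (immediate from $\mathsf{rk}(\kN)=1$ and Lemma~\ref{L:GrothGroup}) already suffices for the degree computation, so the appeal to Lemma~\ref{L:decompositionCompanionBundle} can be dropped.
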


\begin{proof} We have an equivalence of categories $\Coh(\EE') \simeq \Coh(\EE)$, which restricts to an equivalence  $\VB(\EE') \simeq \VB(\EE)$ sending  $\kL'$ to some line bundle $\kN \in \VB(\EE)$. By \cite{DlabRingel}, all indecomposable preprojective and preinjective $\Lambda$-modules are rigid, where $\Lambda$ 
is the $\kk$-algebra defined by (\ref{E:TameBimodule}). As a consequence, any indecomposable object of $\VB(\EE)$ and $\VB(\EE')$ is rigid, too. Hence, we have:
$$
H^1(X, \kR') \cong \Ext^1_{\EE'}(\kL', \kL') \cong  \Ext^1_{\EE}(\kN, \kN) = 0,
$$
as asserted.
\end{proof}

\begin{remark} Let $X$ be a regular proper curve over $\kk$ of genus zero. Then the zero class $0 \in \mathsf{Br}(\KK)$ is exceptional. Moreover,  Theorem \ref{T:Lenzing} is true for all exceptional hereditary curves and not only for the minimal ones.
\end{remark}

\begin{remark}  The statement of Theorem \ref{T:Lenzing} goes back to a work of Lenzing \cite[Theorem 4.5]{LenzingExceptionalCurve}, where the corresponding proof was briefly sketched. Some of our arguments are inspired by the proof of \cite[Proposition 4.1]{LenzingdelaPena}.
\end{remark}

\subsection{Combinatorial Parameters}
In this section, we review some combinatorial parameters that will be important later. More precisely, we give a sheaf theoretic interpretation of the parameters appearing in the symbol of Lenzing \cite{LenzingKTheory}. This symbol is used in Section \ref{S:ReflectionGroups} to define the reflection groups associated to the categories $\Coh(\XX)$.

\smallskip\noindent
First, consider the homogeneous case $\EE=(X,\kR)$. Recall that by Lemma \ref{L:derivedEquivalence} we have a derived equivalence $\sT:D^b\bigl(\mathsf{Coh}(\EE)\bigr)\lar D^b\bigl(\Lambda\mbox{--}\mathsf{mod}\bigr)$.
\begin{definition}
For any $x\in X_\circ$, let $\kS_x\in\Tor_x(\EE)$ be the corresponding simple object and $S_x:=\Hom_{\EE}(\kF,\kS_x)\in \Lambda\mbox{--}\mathsf{mod}$ be its image under ~$\sT$. Then $S_x=\begin{bmatrix}\mathbbm{u}_x \\ \mathbbm{v}_x\end{bmatrix}$ is a simple regular $\Lambda$--module (in the sense of \cite{DlabRingel}), where
\begin{equation}\label{E:UandV}
\mathbbm{u}_x = \Hom_{\EE}(\kL, \kS_x) \quad \mbox{\rm and} \quad \mathbbm{v}_x = \Hom_{\EE}(\overline{\kL}, \kS_x).
\end{equation}
It is clear that $\dd_x:= \bigl(\End_{\EE}(\kS_x)\bigr)^\circ \cong \bigl(\End_{\Lambda}(S_x)\bigr)^\circ$ is a finite--dimensional division algebra over $\kk$. Moreover, $\dd_x \cong \widehat{\kR}_x/\kJ_x$, where $\kJ_x = \mathsf{rad}(\widehat{\kR}_x)$. Let $\varepsilon$ be as in (\ref{E:KappaEpsilon}). We define the \emph{parameters associated to $x\in X_\circ$} by
\begin{equation}\label{E:LenzingsParameters}
e_x := \dim_{\dd_x}\bigl(\Hom_{\EE}(\kL, \kS_x)\bigr), \;
f_x :=  \frac{1}{\varepsilon}\dim_{\ff}\bigl(\Hom_{\EE}(\kL, \kS_x)\bigr)  \;  \mbox{\rm and} \;
d_x := e_x f_x.
\end{equation}
\end{definition}

Let us give an interpretation of $f_x$ as well as dimension formulas using the combinatorial data.
\begin{lemma}
We have an isomorphism of groups
$$\mathsf{deg}:\Gamma'\lar \ZZ, \; \gamma \mapsto \frac{1}{\kappa \varepsilon}\langle \alpha, \gamma\rangle,$$
where $\alpha=[\kL]$. For any $x\in X_\circ$, we have $f_x=\mathsf{deg}([S_x])=\mathsf{dim}_{\gg}\bigl(\Hom_{\EE}(\overline\kL, \kS_x)\bigr)$. Moreover, we have the dimension formulas
\begin{equation}\label{E:DimensionD}
\dim_{\kk}(\mathbbm{u}_x)=\kappa\varepsilon f_x, \;\; \dim_{\kk}(\mathbbm{v}_x)=\kappa\varepsilon^2 f_x\;\;  \mbox{\rm and} \; \;\dim_{\kk}(\dd_x) = \frac{\kappa\varepsilon f_x}{e_x}.
\end{equation}
\end{lemma}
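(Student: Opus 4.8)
The plan is to read everything off from the description of the Euler form on $\Gamma = K_0(\EE)$ already assembled in the proof of Theorem \ref{T:Lenzing}, combined with the vanishing $\Ext^i_{\EE}(\kB, \kZ) = 0$ for $\kB \in \VB(\EE)$, $\kZ \in \Tor(\EE)$ and $i \ge 1$. First I would verify that $\mathsf{deg}$ is well-defined and an isomorphism: by Lemmas \ref{L:decompositionCompanionBundle} and \ref{L:companionIndecomposable} the group $\Gamma'$ is infinite cyclic, generated by $w = [\kS]$, and $\langle \alpha, w\rangle = \kappa\varepsilon$ (from the proof of Theorem \ref{T:Lenzing}); hence $\gamma \mapsto \frac{1}{\kappa\varepsilon}\langle\alpha,\gamma\rangle$ is a group homomorphism $\Gamma' \lar \ZZ$ taking the generator $w$ to $1$, so it is an isomorphism.

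Next I would compute the Euler pairing $\langle\alpha, w_x\rangle = \langle[\kL],[\kS_x]\rangle$. Since $\kL \in \VB(\EE)$ and $\kS_x \in \Tor(\EE)$, the $\Ext^1$-term vanishes and this equals $\dim_\kk\Hom_{\EE}(\kL,\kS_x) = \dim_\kk(\mathbbm{u}_x)$. Rewriting $\dim_\kk(\mathbbm{u}_x) = \kappa\,\dim_\ff(\mathbbm{u}_x)$ (as $\ff$ is a division algebra of $\kk$-dimension $\kappa$) and using the definition $f_x = \frac{1}{\varepsilon}\dim_\ff(\mathbbm{u}_x)$, I obtain simultaneously $\mathsf{deg}(w_x) = \frac{1}{\kappa\varepsilon}\dim_\kk(\mathbbm{u}_x) = f_x$ and the first dimension formula $\dim_\kk(\mathbbm{u}_x) = \kappa\varepsilon f_x$. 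The identity $\mathsf{deg}([S_x]) = f_x$ then follows because the equivalence $\sT$ identifies $[\kS_x] = w_x$ with $[S_x]$ in the Grothendieck group.

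For the remaining formulas I would feed in the data $[\overline\kL] = \varepsilon\alpha + w$ and $\dim_\kk(\gg) = \kappa\varepsilon^2$ (both from the proof of Theorem \ref{T:Lenzing}) together with $\langle w, w_x\rangle = 0$ (as $w, w_x$ lie in $\Gamma'$ and $\EE$ is homogeneous). Since $\overline\kL \in \VB(\EE)$, the same Ext-vanishing gives $\dim_\kk(\mathbbm{v}_x) = \langle[\overline\kL],[\kS_x]\rangle = \varepsilon\langle\alpha, w_x\rangle = \kappa\varepsilon^2 f_x$; dividing by $\dim_\kk(\gg) = \kappa\varepsilon^2$ yields $\dim_\gg(\mathbbm{v}_x) = f_x$. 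Finally, $\mathbbm{u}_x = \Hom_{\EE}(\kL,\kS_x)$ is also a module over the division algebra $\dd_x$ with $e_x = \dim_{\dd_x}(\mathbbm{u}_x)$ by definition, so $\dim_\kk(\mathbbm{u}_x) = e_x\,\dim_\kk(\dd_x)$, and comparing with $\dim_\kk(\mathbbm{u}_x) = \kappa\varepsilon f_x$ gives $\dim_\kk(\dd_x) = \kappa\varepsilon f_x/e_x$.

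I do not expect a genuine obstacle: the argument is bookkeeping with the Euler form and with dimensions over the division algebras $\ff$, $\gg$, $\dd_x$. The only points requiring mild care are keeping track of the left/right module structures on $\mathbbm{u}_x$ and $\mathbbm{v}_x$ (irrelevant for the $\kk$-dimension counts, since all three rings are division algebras) and using that $w$ is specifically the torsion class generating $\Gamma'$, so that $[\overline\kL] = \varepsilon\alpha + w$ holds with this particular $\varepsilon$ rather than for an arbitrary $w_x$.
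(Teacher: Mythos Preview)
Your proposal is correct and follows essentially the same route as the paper: both arguments compute the Euler pairings $\langle[\kL],[\kS_x]\rangle$ and $\langle[\overline\kL],[\kS_x]\rangle$ using the $\Ext$-vanishing for vector bundles against torsion sheaves, translate between $\kk$-dimensions and dimensions over the division algebras $\ff$, $\gg$, $\dd_x$, and invoke the facts $[\overline\kL]=\varepsilon\alpha+w$, $\dim_\kk(\gg)=\kappa\varepsilon^2$, and $\langle w,w_x\rangle=0$ established in the proof of Theorem~\ref{T:Lenzing}. The only cosmetic difference is that the paper argues that $\mathsf{deg}$ is an isomorphism by exhibiting a point $x$ with $f_x=1$ (via Lemma~\ref{L:torsionGeneratingSublattice}), whereas you use directly that $\Gamma'$ is cyclic on $w$ with $\langle\alpha,w\rangle=\kappa\varepsilon$; these are the same fact (your citation should really be Lemma~\ref{L:torsionGeneratingSublattice} for $w$ being a generator, not Lemma~\ref{L:companionIndecomposable}).
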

\begin{proof}
It is clear that the map $\Gamma'\lar \ZZ$, $ \gamma \mapsto \frac{1}{\kappa \varepsilon}\langle \alpha, \gamma\rangle$ is a morphism of groups. Now let $x\in X_\circ$ and note that $\bigl\langle [\kL], [\kS_x]\bigr\rangle = \kappa\varepsilon f_x$, where $\kappa$ is as defined in (\ref{E:KappaEpsilon}). Therefore 
$$\mathsf{dim}_{\gg}\bigl(\Hom_{\EE}(\overline\kL, \kS_x)\bigr) = 
\dfrac{\bigl\langle [\overline\kL], [\kS_x]\bigr\rangle}{\kappa\varepsilon^2} = 
\dfrac{\bigl\langle \varepsilon[\kL] + [\kS], [\kS_x]\bigr\rangle}{\kappa\varepsilon^2} = \dfrac{\bigl\langle [\kL], [\kS_x]\bigr\rangle}{\kappa\varepsilon} = f_x,$$
which, in particular, implies that $f_x=\mathsf{deg}([S_x]) \in \NN$. In fact, it was shown in Lemma \ref{L:torsionGeneratingSublattice} that there exists $x\in X_\circ$ such that $f_x=1$, so $\mathsf{deg}$ is an isomorphism.

From the above discussion, we immediately deduce the following dimension formulas for $\dim_{\kk}(\mathbbm{u}_x)$ and $\dim_{\kk}(\mathbbm{v}_x)$. Finally, $\kappa\varepsilon f_x=\dim_{\kk}\bigl(\Hom_{\EE}(\kL, \kS_x)\bigr)=e_x \, \dim_{\kk}(\dd_x)$, from which we conclude the formula for $\dim_{\kk}(\dd_x)$.
\end{proof}

We now turn from homogeneous curves to the general case.
\begin{definition}
Let $\XX = (X, \kH)$ be a hereditary curve, $\KK = \kk(X)$ and $\eta = \bigl[\FF_{\XX}\bigr] \in \mathsf{Br}(\KK)$.  We have the following notions describing local properties of  $\XX$.
\begin{enumerate}
\item[(a)] For any $x \in X_\circ$, let ${O}_x$ be the \emph{completion} of the stalk of $\kO$ at $x$ and ${\KK}_x$ be its quotient field. Then we have: ${O}_x \cong \kk_x\llbracket w\rrbracket$ and ${\KK}_x \cong \kk_x\llbrace w\rrbrace$, where $\kk_x$ is the residue field of $O_x$.
\item[(b)] We have a natural field extension $\KK \lar {\KK}_x$, which induces a group homomorphism $\mathsf{Br}(\KK) \lar \mathsf{Br}({\KK}_x)$. Let $\eta_x$ be the image of the class $\eta$ under this map. Then $\eta_x$ defines a uniquely determined  skew field $\FF_x$, whose center is ${\KK}_x$.
\end{enumerate}
\end{definition}

\begin{remark}\label{R:HereditaryOrders}
Let $\XX = (X, \kH)$ be a hereditary curve and $x\in X_\circ$. Let $R_x$ be the maximal order in $\FF_x$. In fact, $R_x$  is the integral closure of ${O}_x$  in $\FF_x$ (see \cite[Theorem 12.8]{Reiner}), so $R_x$ is actually unique. Let $\widehat{\kH}_x$ be the completion of the stalk of $\kH$ at the point $x$. Then $\widehat{\kH}_x$ is Morita equivalent to 
\begin{equation}\label{E:Hereditary}
H_x:=  \left[
\begin{array}{cccc}
R_x & R_x & \dots & R_x \\
J_x & R_x & \dots & R_x\\
\vdots & \vdots & \ddots & \vdots \\J_x  & J_x & \dots & R_x\\
\end{array}
\right] \subseteq \mathsf{Mat}_{\rho(x)}(R_x)
\end{equation}
for some $\rho(x) \in \NN$, where $J_x$ is the Jacobson radical of $R_x$; see \cite[Theorem 39.14]{Reiner}. In fact, $\rho(x)$ is the number of pairwise non-isomorphic simple objects of $\Tor_x(\XX)$.
\end{remark}

This $\rho$ provides the final important combinatorial parameter, which we record in the following definition.
\begin{definition}\label{D:weightFunction}
The map $\rho:X_{\circ}\lar \NN$ given by the number of pairwise non-isomorphic simple objects of $\Tor_x(\XX)$ is called the \emph{weight function} of $\XX$. It defines a finite set
$$\mathfrak{E}_{\rho} := \bigl\{x \in X_\circ \, \big| \, \rho(x) \ge 2 \bigr\}.$$
\end{definition}

\begin{remark}
Let $\XX = (X, \kH)$ be a hereditary curve and $x\in X_\circ$. If $\eta_x = 0$ then $R_x = O_x$ and
$$
H_x  \cong  \left[
\begin{array}{cccc}
\kk_x\llbracket w\rrbracket & \kk_x\llbracket w\rrbracket  & \dots & \kk_x\llbracket w\rrbracket \\
(w) & \kk_x\llbracket w\rrbracket & \dots & \kk_x\llbracket w\rrbracket \\
\vdots & \vdots & \ddots & \vdots \\ (w)  & (w)  & \dots & \kk_x\llbracket w\rrbracket\\
\end{array}
\right],
$$
which is  isomorphic to the arrow completion  of the path algebra of the cyclic quiver 
\begin{equation*}
\begin{array}{c}
\xymatrix{ & \stackrel{\rho(x)}{\circ}  \ar[rr] && \circ  \ar[rd]\\
\stackrel{1}\circ  \ar[ru] &         &   & \vdots  & \ar[ld] \circ\\
& \stackrel{2}\circ  \ar[lu] & & \circ \ar[ll] & \\
}
\end{array}
\end{equation*}
over the field $\kk_x$. 
\end{remark}

\begin{lemma}\cite[Corollary 7.9]{BurbanDrozd}\label{L:Weighting} Given a datum $(X, \eta, \rho)$ as in Definition \ref{D:weightFunction} there exists an associated  hereditary curve $\XX$. Moreover, such $\XX$ is unique up to \emph{Morita equivalence}. Namely, let $(X', \eta', \rho')$ be another  datum as above and $\XX'$ be a  hereditary curve attached to it. Then the categories $\Coh(\XX)$ and $\Coh(\XX')$ are equivalent if and only if there exists an isomorphism $f:X \lar X'$ such that $f^\ast(\eta') = \eta \in \mathsf{Br}(\KK)$ and $\rho' f = \rho$.
\end{lemma}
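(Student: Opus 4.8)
The plan is to establish the three claims in turn: existence of a hereditary curve $\XX$ realizing the prescribed datum $(X,\eta,\rho)$; the stated equivalence criterion; and — as the special case $f=\id$ of that criterion — uniqueness up to Morita equivalence. Throughout one works locally on the one-dimensional base $X$ and then glues, the point being that all modifications occur at only finitely many closed points.

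\textbf{Existence.} First I would take a homogeneous curve $\EE=(X,\kR)$ whose Brauer class is $\eta$; such a sheaf of maximal $\kO_X$-orders in the central simple $\KK$-algebra representing $\eta$ exists by the Artin--de Jong classification (cf.\ \cite{ArtindeJong}), its completed stalks $\widehat{\kR}_x$ being Morita equivalent to the unique maximal order $R_x$ in $\FF_x$ (\cite[Theorem 12.8]{Reiner}). To build in the weights, for each $x$ with $\rho(x)\ge 2$ I would replace the stalk of $\kR$ by a hereditary suborder of the shape $H_x$ in (\ref{E:Hereditary}) with $\rho(x)$ blocks — concretely, take $\End$ of a length-$\rho(x)$ flag of full $\widehat{\kR}_x$-lattices in the formal neighbourhood of $x$ — leaving the stalks at all other points untouched (where $\rho(x)=1$ this prescription returns $R_x$, i.e.\ the maximal order). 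Since only finitely many stalks are altered, descent for coherent sheaves on the curve $X$ produces a global sheaf $\kH$ of $\kO_X$-orders; heredity being a stalk-local property, $\XX=(X,\kH)$ is a hereditary curve, and by Remark \ref{R:HereditaryOrders} and Definition \ref{D:weightFunction} its weight function is $\rho$ while its Brauer class is still $\eta$.

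\textbf{The criterion.} For the forward implication I would recover the triple $(X,\eta,\rho)$ from the category. The full subcategory $\Tor(\XX)\subseteq\Coh(\XX)$ of finite-length objects is intrinsic, as is its block decomposition $\Tor(\XX)=\bigvee_{x\in X_\circ}\Tor_x(\XX)$ and the Serre quotient $\Coh(\XX)/\Tor(\XX)\simeq\FF_\XX\text{-}\mathsf{mod}$; the latter determines $\FF_\XX$ up to Morita equivalence, hence its centre $\KK=\kk(X)$ as a $\kk$-field, hence $X$ (the regular proper genus-zero $\kk$-curve with this function field, which is unique), and hence also the class $\eta=[\FF_\XX]\in\mathsf{Br}(\KK)$. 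The blocks of $\Tor(\XX)$ correspond to the closed points of $X$, and by Definition \ref{D:weightFunction} $\rho(x)$ is precisely the number of simple objects in $\Tor_x(\XX)$; so any equivalence $\Coh(\XX)\simeq\Coh(\XX')$ yields an isomorphism $f\colon X\to X'$ with $f^\ast(\eta')=\eta$ and $\rho'f=\rho$. For the reverse implication, given such an $f$ I would transport $\kH'$ along $f$ to reduce to $X=X'$, $\eta=\eta'$, $\rho=\rho'$; then at every $x$ the completed orders $\widehat{\kH}_x$ and $\widehat{\kH}'_x$ are hereditary orders in $\FF_x$ of the same type $\rho(x)$, hence Morita equivalent (\cite[Theorem 39.14]{Reiner}) via a progenerator bimodule $P_x$ which agrees with $R_x$ wherever $\rho(x)=1$; assembling the $P_x$ into a global progenerator $(\kH,\kH')$-bimodule $\kP$ yields the equivalence $\kP\otimes_\kH-\colon\Coh(\XX)\to\Coh(\XX')$. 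Uniqueness of $\XX$ up to Morita equivalence is then the case $(X,\eta,\rho)=(X',\eta',\rho')$, $f=\id$.

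\textbf{Main obstacle.} The delicate parts are the two gluing steps: producing the global sheaf $\kH$ from the prescribed local orders, and — dually — assembling the local progenerators $P_x$ into a genuine global $(\kH,\kH')$-progenerator. Both are unobstructed because the base is a curve and only finitely many closed points are involved, but verifying that the flag-of-lattices modification is consistent across the formal neighbourhood, and that the resulting global bimodule is indeed a progenerator, is exactly where the real work lies; this is the content of \cite[Corollary 7.9]{BurbanDrozd}, on which I would ultimately rely.
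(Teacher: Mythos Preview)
The paper does not give its own proof of this lemma: it is stated with the citation \cite[Corollary 7.9]{BurbanDrozd} and no proof environment follows. Your sketch is a plausible outline of the argument behind that cited result, and you correctly identify at the end that the substantive work (the gluing of local orders and progenerators) is precisely what \cite[Corollary 7.9]{BurbanDrozd} supplies. So there is nothing to compare against; the paper simply defers to the external reference, as you ultimately do yourself.
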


\begin{remark} Let $\EE = (X, \kR)$ be a minimal homogeneous curve. Then for any 
$x \in X_\circ$, we have the following isomorphisms of vector spaces over $\kk$:
$$
\Hom_{\EE}(\kL, \kS_x) \cong 
\Gamma\bigl(X, \mathit{\Hom}_{\kR}(\kL, \kS_x)\bigr) \cong 
\Hom_{\widehat{\kR}_x}(\widehat{\kR}_x, U_x),
$$
where $U_x$ is the simple $\widehat{\kR}_x$--module. It is clear that $\dd_x \cong \bigl(\End_{\widehat{\kR}_x}(U_x)\bigr)^\circ$. In the notation of Remark \ref{R:HereditaryOrders}, we get $\widehat{\kR}_x \cong \Mat_{e_x}(R_x)$, providing another interpretation of the parameter $e_x$ defined by (\ref{E:LenzingsParameters}); see also \cite[Section 3]{KussinWeightedCurve}.
\end{remark}

\subsection{Complete Exceptional Sequences}
In this section, we discuss complete exceptional sequences for the category $\Coh(\XX)$, where $\XX$ is an exceptional hereditary curve. We construct the standard exceptional sequence, which will be employed in the subsequent sections. Moreover, we compare  the category $\Coh(\XX)$ to derived equivalent categories known in the literature. Finally, we recall some key properties of complete exceptional sequences in $\Coh(\XX)$ from \cite{KussinMeltzer}.

Let us recall how to associate to any hereditary curve a minimal homogeneous curve. We refer to \cite[Section 4]{BurbanDrozdGavran} for the proof and a more detailed treatment.
\begin{lemma}\label{L:MinorsFunctors}
Let $\XX$ be a hereditary curve, $\kP$ be any line bundle on $\XX$ and $\kR := 
\bigl(\mathit{End}_{\kH}(\kP)\bigr)^\circ$. Then $\EE := (X, \kR)$ is a minimal homogeneous curve and $\eta_{\EE} = \eta_{\XX}$, so the Morita type of $\EE$ is determined by $(X, \eta)$. We have the following functors:
\begin{itemize}
\item $\sG := \sHom_\kH(\kP, \,-\,)$ from  $\Coh(\XX)$ to $\Coh(\EE)$.
\item $\sF := \kP \otimes_\kR \,-\,$  from $\Coh(\EE)$ to $\Coh(\XX)$.
\end{itemize}
Note that $(\sF, \sG)$ is an adjoint pair of exact functors. Moreover, both $\sF$ and the induced derived functor $\sD\sF: D^b\bigl(\Coh(\EE)\bigr) \lar D^b\bigl(\Coh(\XX)\bigr)$ are fully faithful.
\end{lemma}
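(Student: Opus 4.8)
The plan is to follow \cite[Section 4]{BurbanDrozdGavran} and split the argument into four stages: (i) identify $\EE$ as a minimal homogeneous curve with $\eta_{\EE}=\eta_{\XX}$; (ii) set up the adjunction $(\sF,\sG)$ and check that both functors are exact; (iii) prove that $\sF$ is fully faithful on abelian categories; (iv) transport fullness and faithfulness to bounded derived categories.

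For stage (i) I would argue generically and locally. Generically, $\kR=\bigl(\mathit{End}_{\kH}(\kP)\bigr)^{\circ}$ is a coherent sheaf of $\kO_{X}$-algebras whose function algebra is $\bigl(\End_{\FF_{\XX}}(\KK\otimes_{\kO}\kP)\bigr)^{\circ}$; as $\kP$ has rank one, $\KK\otimes_{\kO}\kP$ is the simple $\FF_{\XX}$-module, so writing $\FF_{\XX}\cong\Mat_{n}(\DD)$ for a skew field $\DD$, this module is the column module $\DD^{n}$, whence $\End_{\FF_{\XX}}(\DD^{n})\cong\DD^{\circ}$ and $\FF_{\EE}\cong\DD$ is a skew field. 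This gives minimality of $\EE$ together with $\eta_{\EE}=[\DD]=[\Mat_{n}(\DD)]=\eta_{\XX}$ in $\mathsf{Br}(\KK)$. Locally, at $x\in X_{\circ}$ I would pass to the Morita model $\widehat{\kH}_{x}\sim H_{x}$ of Remark \ref{R:HereditaryOrders}: a rank-one projective lattice corresponds to one of the indecomposable projective $H_{x}$-lattices $\Lambda_{i}$, and a direct computation inside the tiled order $H_{x}$ shows $\End_{H_{x}}(\Lambda_{i})\cong R_{x}$, the unique maximal order in $\FF_{x}$. Hence $\widehat{\kR}_{x}$ is a maximal order for every $x$, so $\EE$ is homogeneous (and central, since $X$ is regular); that the Morita type of $\EE$ is independent of the choice of $\kP$ then follows from the classification of homogeneous curves by the pair $(X,\eta)$ recalled earlier; see \cite[Corollary 7.9]{BurbanDrozd}.

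For stages (ii) and (iii): the pair $(\sF,\sG)=\bigl(\kP\otimes_{\kR}-,\ \sHom_{\kH}(\kP,-)\bigr)$ is adjoint by the sheaf tensor--hom adjunction; $\sG$ is exact because $\kP$ is locally projective over $\kH$, and $\sF$ is exact because $\widehat{\kP}_{x}$ is free, hence flat, over $\widehat{\kR}_{x}$ --- every lattice over a maximal order over a complete discrete valuation ring is free. To prove $\sF$ fully faithful I would show that the adjunction unit $\kN\to\sG\sF(\kN)=\sHom_{\kH}(\kP,\kP\otimes_{\kR}\kN)$ is an isomorphism, which may be checked on completed stalks. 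There the natural map $\widehat{\kN}_{x}\to\Hom_{\widehat{\kH}_{x}}\!\bigl(\widehat{\kP}_{x},\,\widehat{\kP}_{x}\otimes_{\widehat{\kR}_{x}}\widehat{\kN}_{x}\bigr)$, $n\mapsto(p\mapsto p\otimes n)$, is a natural transformation between additive, right-exact, direct-sum-preserving functors of $\widehat{\kN}_{x}$ (here one uses that $\widehat{\kP}_{x}$ is finitely generated projective over $\widehat{\kH}_{x}$), and on $\widehat{\kN}_{x}=\widehat{\kR}_{x}$ it becomes the tautological identification $\widehat{\kR}_{x}=\bigl(\End_{\widehat{\kH}_{x}}\widehat{\kP}_{x}\bigr)^{\circ}\xrightarrow{\ \sim\ }\End_{\widehat{\kH}_{x}}\widehat{\kP}_{x}$; comparing a finite presentation of $\widehat{\kN}_{x}$ and applying the five lemma then gives an isomorphism in general. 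Finally, for stage (iv), since $\sF$ and $\sG$ are exact they induce triangulated functors on bounded derived categories, $(\sD\sF,\sD\sG)$ is still an adjoint pair, and the derived unit $\id\to\sD\sG\,\sD\sF$ coincides with the underived one, hence is an isomorphism; therefore $\sD\sF$ is fully faithful.

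The step I expect to be the main obstacle is the local structure theory used in stage (i) and in the exactness of $\sF$: that a rank-one locally projective $\kH$-module becomes, after completion at each $x\in X_{\circ}$, an indecomposable projective lattice over $\widehat{\kH}_{x}$ whose endomorphism ring is a maximal order, and over which the lattice is free. This is the one place where genuine input from the theory of lattices over hereditary orders over complete discrete valuation rings (as in Reiner) is required, rather than the formal category theory used in the remaining steps.
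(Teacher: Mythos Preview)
Your proposal is correct and follows precisely the route the paper takes: the paper does not give its own proof of this lemma but simply refers to \cite[Section~4]{BurbanDrozdGavran}, and you are spelling out the argument from that reference. The four stages you outline---identifying $\EE$ via the generic and local structure of $\kR$, the tensor--hom adjunction with exactness coming from local projectivity/freeness of $\kP$, the unit isomorphism checked on completed stalks, and the passage to derived categories via exactness---are exactly what that source does, and your identification of the local structure theory over hereditary/maximal orders as the only substantive input is accurate.
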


\begin{definition} A hereditary curve $\XX = (X, \kH)$ is called \emph{exceptional} if $X$ has genus zero and $\eta_\XX$ is an exceptional Brauer class. 
\end{definition}

The following standard exceptional sequence is used in the construction of the reflection group associated to $\Coh(\XX)$ for its nice computational properties. In fact, the construction does not depend on the choice of a complete exceptional sequence, as we will see in Section ~\ref{S:OrderPreservingBijections}.
\begin{definition}
Let $\XX$ be an exceptional hereditary curve with the associated  datum $(X,\eta,\rho)$ and $\mathfrak{E}_{\rho}=:\bigl\{x_1, \dots, x_t\bigr\}$. Let $\kP$ be a line bundle on $\XX$ and $\EE$ be the corresponding exceptional minimal homogeneous curve as in Lemma \ref{L:MinorsFunctors}. For any $1\le i\le t$, we put $p_i:=\rho(x_i)$ and denote by $\kS_i^{(0)},\dots,\kS_i^{(p_i-1)}$ the simple objects of $\Tor_{x_i}(\XX)$ such that $\Hom_{\XX}\bigl(\kP,\kS_i^{(0)}\bigr)\ne0$ and $\tau\bigl(\kS_{i}^{(j)}\bigr)\cong \kS_{i}^{(j-1)}$ for all $j\in\ZZ_{p_i}$. Let $\kS\in\Coh(\EE)$ be a torsion sheaf such that its class $[\kS]$ in $K_0(\EE)$ generates the subgroup $\Gamma'$ of $K_0(\EE)$ generated by the classes of all torsion sheaves in $\Coh(\EE)$. Let $\overline{\kL}\in\VB(\EE)$ be the companion bundle of $\kL$ corresponding to $\kS$ and let $\overline{\kP}=\sF(\overline{\kL})\in\VB(\XX)$. Then we call
\begin{equation}\label{E:StandardCollection}
\bigl(\kS_t^{(p_t-1)}, \dots,\kS_t^{(1)},  \dots, \kS_1^{(p_1-1)}, \dots,\kS_1^{(1)}, \kP, \overline\kP\bigr)
\end{equation}
the \emph{standard exceptional sequence} in $D^b\bigl(\Coh(\XX)\bigr)$.
\end{definition}

The combinatorial parameters introduced in the previous subsection will appear in the following result. Recall that $\kappa$ and $\varepsilon$ are the parameters of $\EE$ from Theorem \ref{T:Lenzing}, whereas $e_i=e_{x_i}$ and $f_i=f_{x_i}$ are given by the formula (\ref{E:LenzingsParameters}) for all $1\le i\le t$.

\begin{theorem}\label{T:ExceptionalMain} Let $\XX$ be an exceptional hereditary curve. Then the standard exceptional sequence (\ref{E:StandardCollection}) is indeed a complete exceptional sequence in $D^b\bigl(\Coh(\XX)\bigr)$. Its Gram matrix with respect to the Euler form is given by
\begin{equation}\label{E:GramMatrix}
\left[ \begin{array}{@{}c@{}|@{}c@{}|@{}c@{}|@{}c@{}|@{}c@{}|@{}c@{}}
\begin{array}{@{}c@{}@{}c@{}@{}c@{}@{}c@{}}
\frac{\kappa\varepsilon f_t}{e_t} & \frac{-\kappa\varepsilon f_t}{e_t} &  &  0 \\ 
 & \frac{\kappa\varepsilon f_t}{e_t} & \ddots  &   \\ 
 & & \ddots & \frac{-\kappa\varepsilon f_t}{e_t}  \\ 
0 &     & & \frac{\kappa\varepsilon f_t}{e_t}  
\end{array} & \cdots & 0 & 0 & \begin{array}{@{}c@{}} 0 \vspace{1mm} \\ \vdots \vspace{2mm} \\ 0 \vspace{1mm} \\ -\kappa\varepsilon f_t
\end{array} & \begin{array}{@{}c@{}} 0 \vspace{1mm} \\ \vdots \vspace{2mm} \\ 0 \vspace{1mm} \\ -\kappa\varepsilon^2 f_t
\end{array} \\ \hline
\vdots & \ddots & \vdots & \vdots & \vdots & \vdots \\ \hline
0 & \cdots & \begin{array}{@{}c@{}@{}c@{}@{}c@{}@{}c@{}}
\frac{\kappa\varepsilon f_2}{e_2} & \frac{-\kappa\varepsilon f_2}{e_2} &  &  0 \\ 
 & \frac{\kappa\varepsilon f_2}{e_2} & \ddots  &   \\ 
 & & \ddots & \frac{-\kappa\varepsilon f_2}{e_2}  \\ 
0 &     & & \frac{\kappa\varepsilon f_2}{e_2}  
\end{array} & 0 & \begin{array}{@{}c@{}} 0 \vspace{1mm} \\ \vdots \vspace{2mm} \\ 0 \vspace{1mm} \\ -\kappa\varepsilon f_2
\end{array} & \begin{array}{@{}c@{}} 0 \vspace{1mm} \\ \vdots \vspace{2mm} \\ 0 \vspace{1mm} \\ -\kappa\varepsilon^2 f_2
\end{array} \\ \hline
0 & 0 & \cdots & \begin{array}{@{}c@{}@{}c@{}@{}c@{}@{}c@{}}
\frac{\kappa\varepsilon f_1}{e_1} & \frac{-\kappa\varepsilon f_1}{e_1} &  &  0 \\ 
 & \frac{\kappa\varepsilon f_1}{e_1} & \ddots  &   \\ 
 & & \ddots & \frac{-\kappa\varepsilon f_1}{e_1}  \\ 
0 &     & & \frac{\kappa\varepsilon f_1}{e_1}  
\end{array} & \begin{array}{@{}c@{}} 0 \vspace{1mm} \\ \vdots \vspace{2mm} \\ 0 \vspace{1mm} \\ -\kappa\varepsilon f_1
\end{array} & \begin{array}{@{}c@{}} 0 \vspace{1mm} \\ \vdots \vspace{2mm} \\ 0 \vspace{1mm} \\ -\kappa\varepsilon^2 f_1
\end{array} \\ \hline
\begin{array}{@{}c@{}@{}c@{}@{}c@{}@{}c@{}} 0 \hspace{4mm} & \cdots \hspace{4mm} & 0 \hspace{5mm} & 0
\end{array} & \cdots & \begin{array}{@{}c@{}@{}c@{}@{}c@{}@{}c@{}} 0 \hspace{4mm} & \cdots \hspace{4mm} & 0 \hspace{5mm} & 0
\end{array} & \begin{array}{@{}c@{}@{}c@{}@{}c@{}@{}c@{}} 0 \hspace{4mm} & \cdots \hspace{4mm} & 0 \hspace{5mm} & 0
\end{array} & \kappa & 2\kappa\varepsilon\\ \hline
\begin{array}{@{}c@{}@{}c@{}@{}c@{}@{}c@{}} 0 \hspace{4mm} & \cdots \hspace{4mm} & 0 \hspace{5mm} & 0
\end{array} & \cdots & \begin{array}{@{}c@{}@{}c@{}@{}c@{}@{}c@{}} 0 \hspace{4mm} & \cdots \hspace{4mm} & 0 \hspace{5mm} & 0
\end{array} & \begin{array}{@{}c@{}@{}c@{}@{}c@{}@{}c@{}} 0 \hspace{4mm} & \cdots \hspace{4mm} & 0 \hspace{5mm} & 0
\end{array} & 0 & \kappa\varepsilon^2
	\end{array}\right]
\end{equation}
\end{theorem}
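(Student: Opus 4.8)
The plan is to establish three things in turn: that \eqref{E:StandardCollection} is an exceptional sequence, that it is complete, and that its Gram matrix is \eqref{E:GramMatrix}. The workhorse throughout will be the adjoint pair $(\sF,\sG)$ attached to the line bundle $\kP$ in Lemma~\ref{L:MinorsFunctors}: since $\sF$ and its derived functor $\sD\sF$ are exact and fully faithful with $\sF(\kL)\cong\kP$ and $\sF(\overline{\kL})=\overline{\kP}$, every $\RHom$ among $\kP$ and $\overline{\kP}$ on $\XX$ will be computed on the homogeneous curve $\EE$, where Theorem~\ref{T:Lenzing} and the dimension formulas \eqref{E:KappaEpsilon}, \eqref{E:DimensionD} apply. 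The other ingredients I will use are Serre duality \eqref{E:ARDuality} together with the $\tau$-periodicity $\tau\kS_i^{(j)}\cong\kS_i^{(j-1)}$ of the simples at $x_i$, the vanishing $\Hom_\XX(\kZ,\kB)=0=\Ext^{\ge 1}_\XX(\kB,\kZ)$ for $\kB\in\VB(\XX)$, $\kZ\in\Tor(\XX)$, and the behaviour of $\sG$ on the simples at the special points --- namely $\sG\kS_i^{(j)}=0$ for $j\ne 0$ and $\sG\kS_i^{(0)}\cong\kS^{\EE}_{x_i}$, which I would read off from the local structure of the hereditary order $\widehat{\kH}_{x_i}$ (cf.~\cite{BurbanDrozdGavran}), the normalization $\Hom_\XX(\kP,\kS_i^{(0)})\ne 0$ pinning down which simple survives under $\sG$.

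For the exceptional sequence axioms: each $\kS_i^{(j)}$ is simple with division endomorphism algebra, and since $p_i\ge 2$ one has $\tau\kS_i^{(j)}=\kS_i^{(j-1)}\not\cong\kS_i^{(j)}$, so $\Ext^1_\XX(\kS_i^{(j)},\kS_i^{(j)})=0$ by \eqref{E:ARDuality}; the rigidity and division property of $\kP$ and $\overline{\kP}$ follow from Lemma~\ref{L:ExcCurvesBasics}, Lemma~\ref{L:rigidVectorBundle} and full faithfulness of $\sF$. The vanishing $\Hom^{\ast}_\XX(E_i,E_j)=0$ for $j<i$ I would check case by case: within one tube, $\Ext^1_\XX(\kS_i^{(a)},\kS_i^{(b)})\cong\Hom_\XX(\kS_i^{(b)},\kS_i^{(a-1)})^{\ast}$ is nonzero only when $b\equiv a-1\pmod{p_i}$, which cannot occur for $a<b$ in $\{1,\dots,p_i-1\}$; between different tubes all Hom- and Ext-groups vanish by disjoint support; against $\kP$ and $\overline{\kP}$ one combines $\Ext^{\ge 1}_\XX(\text{bundle},\text{torsion})=0$ with $\Hom_\XX(\kP,\kS_i^{(j)})=0=\Hom_\XX(\overline{\kP},\kS_i^{(j)})$ for $j\ne 0$ (via $\sG$); and $\Hom^{\ast}_\XX(\overline{\kP},\kP)\cong\Hom^{\ast}_\EE(\overline{\kL},\kL)=0$ by Lemma~\ref{L:rigidVectorBundle}.

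For completeness: since every torsion sheaf has a composition series, $K_0^{\mathrm{tor}}(\XX)$ is generated by the classes of the simple torsion sheaves. Using $[\sF\kS^{\EE}_{x_i}]=\sum_{j=0}^{p_i-1}[\kS_i^{(j)}]$, the degree identity $[\kS^{\EE}_{x}]=f_x[\kS]$ on $\EE$ (Lemma~\ref{L:torsionGeneratingSublattice}), and $[\sF\kS]=[\overline{\kP}]-\varepsilon[\kP]$ (from $[\overline{\kL}]=\varepsilon[\kL]+[\kS]$), I would successively place $[\sF\kS]$, each $[\kS_i^{(0)}]$, each $[\kS_x]$ at a homogeneous point, and hence all of $K_0^{\mathrm{tor}}(\XX)$, in the subgroup generated by the classes of \eqref{E:StandardCollection}; adding $[\kP]$ (of rank one) then gives all of $K_0(\Coh(\XX))$. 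Since the Gram matrix of any exceptional sequence is upper triangular with positive diagonal, these classes are $\ZZ$-linearly independent, so they form a $\ZZ$-basis and the sequence has length $\rk K_0(\Coh(\XX))$, i.e.~it is complete (equivalently full, by \cite{KussinMeltzer}).

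For the Gram matrix: the entries $\langle E_i,E_j\rangle=\dim_\kk\Hom_\XX(E_i,E_j)-\dim_\kk\Ext^1_\XX(E_i,E_j)$ with $i>j$ vanish by the exceptional property; the diagonal equals $\dim_\kk\dd_{x_i}=\tfrac{\kappa\varepsilon f_i}{e_i}$, $\dim_\kk\ff=\kappa$ and $\dim_\kk\gg=\kappa\varepsilon^2$ (the last two via $\sF$ and Theorem~\ref{T:Lenzing}); within a tube $\langle\kS_i^{(a)},\kS_i^{(a-1)}\rangle=-\dim_\kk\dd_{x_i}$ and every other tube entry is $0$ by \eqref{E:ARDuality}; $\langle\kP,\overline{\kP}\rangle=\dim_\kk\Hom_\XX(\kP,\overline{\kP})=\dim_\kk\mathbbm{w}=2\kappa\varepsilon$; and the torsion-to-bundle entries $\langle\kS_i^{(j)},\kP\rangle=-\dim_\kk\Hom_\XX(\kP,\kS_i^{(j-1)})$, $\langle\kS_i^{(j)},\overline{\kP}\rangle=-\dim_\kk\Hom_\XX(\overline{\kP},\kS_i^{(j-1)})$ vanish for $j\ge 2$ and, for $j=1$ (using $\sG\kS_i^{(0)}\cong\kS^{\EE}_{x_i}$ and $\sF$), equal $-\langle[\kL],[\kS^{\EE}_{x_i}]\rangle_\EE=-\kappa\varepsilon f_i$ and $-\langle[\overline{\kL}],[\kS^{\EE}_{x_i}]\rangle_\EE=-\kappa\varepsilon^2 f_i$ --- exactly the entries of \eqref{E:GramMatrix}. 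I expect the main obstacle to be the local analysis at the special points $x_i$: pinning down the $\tau$-orbit structure of the simples of $\Tor_{x_i}(\XX)$ and the behaviour of $\sF$ and $\sG$ on them (in particular $\sG\kS_i^{(j)}=0$ for $j\ne 0$ and $\sG\kS_i^{(0)}\cong\kS^{\EE}_{x_i}$), which rests on the structure theory of hereditary orders; granted this, everything else is a routine application of Serre duality and bilinearity of the Euler form, with the numerical values transported from $\EE$.
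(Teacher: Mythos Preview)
Your proof is correct and the Gram-matrix computation is essentially identical to the paper's: both use Serre duality \eqref{E:ARDuality}, the adjunction $(\sF,\sG)$, the identification $\sG\kS_i^{(0)}\cong\kS_{x_i}^{\EE}$, and the dimension formulas from \eqref{E:DimensionD} and Theorem~\ref{T:Lenzing}.

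The approaches diverge on the first two parts. For the exceptional property, you check the vanishing $\Hom^\ast(E_i,E_j)=0$ case by case (within a tube, between tubes, torsion versus bundles, $\overline\kP$ versus $\kP$), whereas the paper invokes a semi-orthogonal decomposition $D^b(\Coh(\XX))=\bigl\langle\mathsf{Ker}(\sD\sG),\mathsf{Im}(\sD\sF)\bigr\rangle$ from \cite{BurbanDrozdGavran}, so semi-orthogonality of the two blocks is automatic and it only remains to identify each block with a known algebra. For completeness, you argue at the level of $K_0$: you show the given classes generate $K_0(\XX)$ by expressing every simple torsion class (and one rank-one class) in their span, and deduce linear independence from the triangular shape of the Gram matrix. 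The paper instead gets \emph{fullness} directly by identifying $\mathsf{Ker}(\sD\sG)$ with $D^b(L\text{--}\mathsf{mod})$ for an explicit product $L$ of upper-triangular matrix rings over the $\dd_i$ (again citing \cite{BurbanDrozdGavran}), and reading off the full exceptional sequence in each block. Your route is more self-contained but demands the extra ingredients you flag yourself (the filtration identity $[\sF\kS_{x_i}^{\EE}]=\sum_j[\kS_i^{(j)}]$ and an analogue of Lemma~\ref{L:GrothGroup} for $\XX$), which ultimately rest on the same local order-theoretic input that the paper imports wholesale via the cited semi-orthogonal decomposition. The paper's route is shorter and yields the stronger conclusion (full, not just complete) at the price of a black-box citation.
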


\begin{proof} By \cite[Theorem 4.5]{BurbanDrozdGavran}, we have a semi-orthogonal decomposition
\begin{equation}\label{E:KeySemiOrth}
D^b\bigl(\Coh(\XX)\bigr) = \bigl\langle \mathsf{Ker}(\sD\sG), \mathsf{Im}(\sD \sF)\bigr\rangle.
\end{equation}
It follows from Theorem \ref{T:Lenzing} that $\mathsf{Im}(\sD \sF) = \llangle \kP, \overline\kP\rrangle$. Moreover, since $\sD\sF$ is fully faithful, the pair $(\kP, \overline\kP)$ is  exceptional. Next, for any $1 \le i \le t$, let $H_i = H_{x_i}$  be given by (\ref{E:Hereditary}) and
\begin{equation*}
I_i:=  \left[
\begin{array}{cccc}
R_i & R_i & \dots & R_i \\
J_i & J_i & \dots & J_i\\
\vdots & \vdots & \ddots & \vdots \\J_i  & J_i & \dots & J_i\\
\end{array}
\right] \subseteq \mathsf{Mat}_{p_i}(R_i),
\end{equation*}
where $R_i = R_{x_i}$ and  $J_i = J_{x_i}$. Then we have 
$$
L_i:= H_i/I_i \cong \left[
\begin{array}{cccc}
\dd_i & \dd_i & \dots & \dd_i \\
0 & \dd_i & \dots & \dd_i\\
\vdots & \vdots & \ddots & \vdots \\0  & 0 & \dots & \dd_i\\
\end{array}
\right] \subseteq \mathsf{Mat}_{p_i-1}(\dd_i),
$$
where $\dd_i = \dd_{x_i}$. We put $L := L_t \times \dots \times L_1$. It follows from \cite[Theorem 4.6]{BurbanDrozdGavran} that
$$D^b(L\mbox{--}\mathsf{mod}) \simeq \mathsf{Ker}(\sD\sG) = \llangle \kS_t^{(1)}, \dots, \kS_t^{(p_t-1)}, \dots, \kS_1^{(1)}, \dots, \kS_1^{(p_1-1)} \rrangle.$$
It is clear that $\bigl(\kS_i^{(p_i-1)}, \dots, \kS_i^{(1)}\bigr)$ is a full exceptional sequence in each block $D^b(L_i\mbox{\textsf{--mod}})$ of the category $D^b(L\mbox{\textsf{--mod}})$. This implies that (\ref{E:StandardCollection}) is indeed a full and thus complete exceptional sequence, as asserted.

Let us now compute the Gram matrix of the standard exceptional sequence. First, note that $\bigl\langle \bigl[\kS_i^{(j)}\bigr], \, \bigl[\kS_{i'}^{(j')}\bigr]\bigr\rangle = 0$ for all $1\le i\ne i'\le t$ and $1\le j\le p_i-1$, $1 \le j'\le p_{i'}-1$. Let $1\le i\le t$ and $1\le j,k\le p_i-1$. It is clear that
\begin{equation*}
\Hom_{\XX}(\kS_i^{(j)}, \kS_i^{(k)}) \cong \left\{
\begin{array}{cc}
\dd_i^\circ & \mbox{\rm if} \; j=k \\
0 & \mbox{\rm otherwise}.
\end{array}
\right.
\end{equation*}
Using (\ref{E:ARDuality}), we conclude that 
\begin{equation*}
\Ext^1_{\XX}(\kS_i^{(j)}, \kS_i^{(k)}) \cong \left\{
\begin{array}{cc}
\dd_i & \mbox{\rm if} \; k=j-1\\
0 & \mbox{\rm otherwise}.
\end{array}
\right.
\end{equation*}
Expressing the dimensions with the combinatorial data as in (\ref{E:DimensionD}) gives 
$$\bigl\langle \bigl[\kS_i^{(j)}\bigr], \bigl[\kS_i^{(k)}\bigr]\bigr\rangle=\left\{
\begin{array}{cl}
\frac{\kappa\varepsilon f_i}{e_i} & \mbox{if} \; j=k\\
- \frac{\kappa\varepsilon f_i}{e_i} & \mbox{if} \; k=j-1\\
0 & \mbox{otherwise}.
\end{array}
\right.$$
Since the functor  $\sD\sF:D^b\bigl(\Coh(\EE)\bigr)\lar D^b\bigl(\Coh(\XX)\bigr)$ is fully faithful, it follows from Theorem ~\ref{T:Lenzing} that the Gram matrix of the exceptional pair $(\kP, \overline\kP)$ is given by
$$\left(
\begin{array}{cc}
\bigl\langle [\kP], [\kP]\bigr\rangle & \bigl\langle [\kP], [\overline\kP]\bigr\rangle \\
\bigl\langle [\overline\kP], [\kP]\bigr\rangle & \bigl\langle [\overline\kP], [\overline\kP]\bigr\rangle
\end{array}
\right)=\left(
\begin{array}{cc}
\kappa & 2\kappa\varepsilon \\
0 & \kappa\varepsilon^2
\end{array}
\right).$$
For any $\kZ\in\Tor(\XX)$ and $\kB\in\VB(\XX)$, we have the vanishing $\Hom_{\XX}(\kZ,\kB)=0$. Using (\ref{E:ARDuality}), we get further vanishings
$$\Ext^1_{\XX}\bigl(\kS_i^{(j)},\kP\bigr)^\ast\cong\Hom_{\XX}\bigl(\kP, \tau(\kS_i^{(j)})\bigr)=\Hom_{\XX}\bigl(\kP,\kS_i^{(j-1)}\bigr)=0$$
for any $2\le j\le p_i-1$. As a consequence,
$$\bigl\langle[\kS_i^{(j)}],[\kP]\bigr\rangle=0\quad\mbox{\rm for all}\;1\le i\le t\;\mbox{\rm and}\;2\le j\le p_i-1.$$
Analogously, we get $\bigl\langle[\kS_i^{(j)}],[\overline\kP]\bigr\rangle=0$ for all $1\le i\le t$ and $2\le j\le p_i-1$. Since $\kP=\sF(\kL)$, $\sG(\kS_i^{(0)})=\kS_i$ and $(\sF, \sG)$ is an adjoint pair, we have the following isomorphisms of vector spaces over $\kk$:
$$\Ext^1_{\XX}\bigl(\kS_i^{(1)}, \kP\bigr)^\ast \cong  \Hom_{\XX}(\kP, \kS_i^{(0)})\cong\Hom_{\EE}(\kL, \kS_i).$$
In a similar vein, we have $\Ext^1_{\XX}\bigl(\kS_i^{(1)}, \overline{\kP}\bigr)^\ast \cong  
\Hom_{\EE}(\overline{\kL},\kS_i)$. From (\ref{E:DimensionD}) we conclude that 
$$\bigl\langle[\kS_i^{(1)}],[\kP]\bigr\rangle=-\kappa\varepsilon f_i\;\;\mbox{\rm and}\;\;\bigl\langle[\kS_i^{(1)}],[\overline\kP]\bigr\rangle=-\kappa\varepsilon^2 f_i\quad\mbox{\rm for all}\;1\le i\le t.$$
This concludes the proof.
\end{proof}

\begin{remark} In \cite[Theorem 3.12]{Burban}, it was shown that $D^b\bigl(\Coh(\XX)\bigr)$ admits a natural tilting complex $\kH^\bu$ such that $\widetilde\Sigma := \bigl(\End_{D^b(\XX)}(\kH^\bu)\bigr)^\circ$ is the \emph{squid algebra} from \cite{RingelCrawleyBoevey}.
There are further important finite-dimensional algebras, namely the \emph{canonical algebra} of Ringel $\Sigma$ and the \emph{Coxeter--Dynkin algebra} $\widehat{\Sigma}$, for which we have exact equivalences
\begin{equation}\label{E:CanonicalSquid}
 D^b\bigl(\widetilde\Sigma\mbox{--}\mathsf{mod}\bigr)\simeq D^b\bigl(\Sigma\mbox{--}\mathsf{mod}\bigr)\simeq D^b\bigl(\widehat\Sigma\mbox{--}\mathsf{mod}\bigr);
\end{equation}
see \cite{RingelCrawleyBoevey} and \cite{Perniok} for a detailed treatment.
\end{remark}

\begin{remark} In the case $\kk = \bar{\kk}$,  the theory of exceptional hereditary curves admits a significant simplification. First, we automatically have $X = \PP^1_\kk$. By a Theorem of Tsen,  $\mathsf{Br}\bigl(\kk(X)\bigr) = 0$; see \cite[Proposition 6.2.3 and Theorem 6.2.8]{GilleSzamuely}.  The tilted algebra $\Lambda$ given by (\ref{E:TameBimodule}) is the path algebra of the Kronecker quiver: $
\Lambda = \kk\bigl[\xymatrix{
\bu  \ar@/^/[r] \ar@/_/[r]  & \bu
}\bigr]
$. An exceptional curve $\EE$ in this case is a weighted projective line of Geigle and Lenzing \cite{GeigleLenzingWeightedCurves} (a connection between the original formalism of \cite{GeigleLenzingWeightedCurves} with the setting of non-commutative curves was elaborated by Chan and Ingalls in \cite{ChanIngalls}).
An exceptional hereditary curve $\XX = (\PP^1_\kk, 0, \rho)$ is determined (up to Morita equivalence) by its weight function $\rho:\PP^1_\kk\lar\NN$. Let $\mathfrak{E}_\rho = \left\{x_1, \dots, x_t\right\}$ be its special locus with $x_i = (\alpha_i : \beta_i)$ for $1 \le i \le t$. For the exact equivalence $\sT:D^b\bigl(\mathsf{Coh}(\EE)\bigr)\lar D^b\bigl(\Lambda\mbox{--}\mathsf{mod}\bigr)$ from Lemma \ref{L:derivedEquivalence}, we have
$\sT\bigl(\kS_{x_i}\bigr) \cong 
\xymatrix{
\kk  \ar@/^/[r]^{\alpha_i} \ar@/_/[r]_{\beta_i}  & \kk
}
$ and 
$\dd_i = \bigl(\mathsf{End}_{X}(\kS_{x_i})\bigr)^\circ \cong \kk$. The squid algebra $\Sigma$ is isomorphic to the path algebra of the following quiver 
$$
\kk\left[
\begin{array}{c}
\xymatrix{
                                    &     & \bu  \ar[r] & \bu \dots \bu \ar[r]^-{c_1^{(p_1-1)}} & \bu  \\
\bu  \ar@/^/[r]^{u} \ar@/_/[r]_{v}  & \bu \ar[r]^-{c_{i}^{(1)}} 
\ar[ru]^-{c_{1}^{(1)}} \ar[rd]_-{c_{t}^{(1)}} & \bu  \ar[r] & \bu \dots   \bu \ar[r]^-{c_i^{(p_i-1)}} & \bu \\
                                    &     & \bu \ar[r] & \bu  \dots  \bu \ar[r]^-{c_t^{(p_t-1)}} & \bu }
                                    \end{array}\right]
$$
subject to the relations $c_i^{(1)}(\beta_i u - \alpha_i v) = 0$ for all $1 \le i \le t$. 
\end{remark}

\smallskip
\noindent
We have constructed the standard exceptional sequence, which will be used in the upcoming definition of the reflection groups in Section \ref{S:ReflectionGroups}. We will now recall some key properties of exceptional sequences in $\Coh(\XX)$ from Kussin and Meltzer \cite{KussinMeltzer}. In particular, these results imply that any choice of a complete exceptional sequence defines the same associated reflection group.
\begin{lemma}\cite[Lemma 3.5]{KussinMeltzer}\label{L:ExtendabilityCohX}
    Any exceptional sequence $(E_1,\dots,E_p,F_r,\dots,F_r)$ in $\Coh(\XX)$ can be enlarged to a complete exceptional sequence $(E_1,\dots,E_p,H_1,\dots,H_q,F_r,\linebreak \dots,F_r)$.
\end{lemma}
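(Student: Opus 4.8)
*Any exceptional sequence $(E_1,\dots,E_p,F_1,\dots,F_q)$ in $\Coh(\XX)$ can be enlarged to a complete exceptional sequence.*

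The plan is to reduce the statement to the known fact that in a $\Hom$-finite $\kk$-linear triangulated category $\cD$ admitting a full exceptional sequence, an exceptional sequence of length $n-1$ can always be completed by one object — provided one can localize the problem to a suitable admissible subcategory. First I would pass to the derived category $\cD = D^b\bigl(\Coh(\XX)\bigr)$, which by the remark following Theorem~\ref{T:ExceptionalMain} (citing \cite{KussinMeltzer}) has the property that completeness and fullness of exceptional sequences coincide; thus it suffices to produce objects $H_1,\dots,H_s$ (with $p+q+s=n=\rk K_0(\XX)$) extending the given sequence to a \emph{full} one in $\cD$. Since $\Coh(\XX)$ is hereditary, by the remark after Theorem~\ref{T:HereditaryKey} every indecomposable object of $\cD$ is a shift of an object of $\Coh(\XX)$, so any full exceptional sequence obtained in $\cD$ can be shifted back into $\Coh(\XX)$; hence it is harmless to work in $\cD$.

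The key induction step is the following: given an exceptional sequence $(G_1,\dots,G_m)$ in $\cD$ with $m<n$, let $\llangle G_1,\dots,G_m\rrangle$ be the triangulated subcategory it generates. By Bondal's theorem \cite[Theorem 3.2]{Bondal} this subcategory is admissible, so we have a semiorthogonal decomposition $\cD=\bigl\langle \kC,\ \llangle G_1,\dots,G_m\rrangle\bigr\rangle$ with $\kC$ the right orthogonal. One checks that $\kC$ is itself a triangulated category with a full exceptional sequence of length $n-m\ge 1$: indeed $K_0(\kC)$ has rank $n-m$ by additivity of Grothendieck groups on semiorthogonal decompositions, and $\kC$ inherits from $\cD$ the structure of a $\Hom$-finite $\kk$-linear triangulated category whose bilinear lattice admits a complete exceptional sequence (any complete exceptional sequence of $\cD$ refines compatibly, or: apply the argument below with $\cD$ replaced by $\kC$). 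Picking any exceptional object $H_1\in\kC$ — which exists, e.g. as the first term of a complete exceptional sequence of $\kC$ — the tuple $(H_1,G_1,\dots,G_m)$ is exceptional in $\cD$ because $H_1$ lies in the right orthogonal of $\llangle G_1,\dots,G_m\rrangle$, hence $\Hom^\ast_{\cD}(H_1,G_j)=0$ for all $j$. Wait — to insert the new objects \emph{between} $E_p$ and $F_1$ rather than at the front, I would instead take $\kC$ to be an appropriate "middle" admissible piece: run the completion of $(E_1,\dots,E_p)$ first to a full sequence, then use the braid group action (Proposition~\ref{P:braidGroupActionRoots} and the $\cD$-version in~\eqref{E:BraidGroup}) to transport the terms $F_1,\dots,F_q$, which already sit orthogonally, into the tail positions; the combinatorics here is exactly as in \cite[Lemma 3.5]{KussinMeltzer}.

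Iterating the induction step $n-p-q$ times produces the desired full exceptional sequence $(E_1,\dots,E_p,H_1,\dots,H_s,F_1,\dots,F_q)$ in $\cD$, and passing back to $\Coh(\XX)$ via the hereditary structure finishes the proof. The main obstacle is the bookkeeping needed to guarantee that the newly inserted objects land \emph{strictly between} the $E$'s and the $F$'s rather than merely somewhere in a full sequence: this requires either (i) building two nested admissible subcategories — the one generated by $(E_1,\dots,E_p)$ and its interaction with the perpendicular of $(F_1,\dots,F_q)$ — and extracting the "gap" as an admissible triangulated category with its own full exceptional sequence, or (ii) completing crudely and then applying a sequence of braid moves $\sigma_i^{\pm 1}$ to shuffle the inserted terms into place without disturbing the prescribed order of the $E_i$ and $F_j$; the second is cleaner but one must verify that such a shuffle is always realizable, which is a purely combinatorial statement about the symmetric-group quotient of the braid action. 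Since this is precisely the content of the cited result \cite[Lemma 3.5]{KussinMeltzer}, I would ultimately defer the delicate ordering argument to that reference and present only the reduction to $D^b\bigl(\Coh(\XX)\bigr)$ together with the semiorthogonal-decomposition step as the substance of the proof.
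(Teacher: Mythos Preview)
The paper does not give its own proof of this lemma; it is stated with a bare citation to \cite[Lemma 3.5]{KussinMeltzer}. So there is no paper proof to compare against, only the argument in the external reference.

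Your outline has the right shape but a genuine gap at the load-bearing step. You write that the orthogonal $\kC$ ``is itself a triangulated category with a full exceptional sequence of length $n-m \ge 1$'' and that one can therefore pick an exceptional object $H_1 \in \kC$; but this does not follow from the rank of $K_0(\kC)$ alone, and it is exactly the nontrivial content of the Kussin--Meltzer argument. What they prove (and what the present paper invokes in the proof of Lemma~\ref{L:PerpendicularCalculus}) is a structural result: for a single exceptional object $E$ in $\Coh(\XX)$, the perpendicular $E^\perp$ is again equivalent to $H\text{--}\mathsf{mod}$ for a hereditary algebra $H$, or to $\Coh(\XX')$ for another exceptional hereditary curve $\XX'$, or to a product of such. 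This is established by a concrete case analysis on whether $E$ is a torsion sheaf or a vector bundle, and it is what guarantees inductively that the orthogonal again contains exceptional objects and admits complete exceptional sequences. Without this input there is no abstract reason the orthogonal of an exceptional sequence in a general $\Hom$-finite triangulated category should contain any exceptional object at all.

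Once that structural fact is available, your option~(i) --- working in the double orthogonal sitting between the $E$'s and the $F$'s --- is exactly how the insertion is carried out, and no braid-shuffling is needed. Your option~(ii) would also work but is unnecessarily indirect. In short: the reduction to $D^b(\Coh(\XX))$ and the semiorthogonal framework are fine, but the sentence ``one checks that $\kC$ \ldots'' hides the whole proof, and you ultimately acknowledge this by deferring to the cited reference.
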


\begin{lemma}\label{L:PerpendicularCalculus}
    For any complete exceptional sequence $(E_1,\dots,E_n)$ in $\Coh(\XX)$ and any $1\leq r\leq n$, we have
    $$\llangle E_1,\dots,E_r\rrangle=\llangle E_{r+1},\dots,E_n\rrangle^\perp.$$
\end{lemma}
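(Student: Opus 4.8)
The plan is to prove the two inclusions separately, after first rewriting the right--hand side. Since $\Coh(\XX)$ is hereditary, for objects $Y,Z$ of the heart one has $\Ext^p_{\XX}(Y,Z)=0$ for $p\notin\{0,1\}$, so for a single object $C$ the Geigle--Lenzing right perpendicular class $C^\perp=\{X\in\Coh(\XX)\mid\Hom_{\XX}(C,X)=0=\Ext^1_{\XX}(C,X)\}$ equals $\{X\mid\Hom^\ast_{D^b(\XX)}(C,X)=0\}$. From the long exact $\Hom_{\XX}(C,-)$--$\Ext^1_{\XX}(C,-)$ sequence one checks that $C^\perp$ is closed under direct summands and satisfies two--out--of--three, hence is a thick subcategory of $\Coh(\XX)$; likewise, for fixed $X$ the class $\{C\mid\Hom^\ast_{D^b(\XX)}(C,X)=0\}$ is thick. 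Consequently $\llangle E_{r+1},\dots,E_n\rrangle^\perp=\bigcap_{i>r}E_i^\perp=\{X\in\Coh(\XX)\mid\Hom^\ast_{D^b(\XX)}(E_i,X)=0\ \text{for all }r<i\le n\}$, being an intersection of thick subcategories. For the inclusion ``$\subseteq$'': for each $j\le r$ and each $i>r$ we have $\Hom^\ast_{D^b(\XX)}(E_i,E_j)=0$ because $(E_1,\dots,E_n)$ is exceptional and $i>j$; since $\{C\mid\Hom^\ast(C,E_j)=0\}$ is thick and contains $E_{r+1},\dots,E_n$, it contains $\llangle E_{r+1},\dots,E_n\rrangle$, i.e.\ $E_j\in\llangle E_{r+1},\dots,E_n\rrangle^\perp$. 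As the latter is itself thick and contains $E_1,\dots,E_r$, it contains the thick subcategory $\llangle E_1,\dots,E_r\rrangle$ they generate.

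For ``$\supseteq$'' I would pass to $\sD:=D^b\bigl(\Coh(\XX)\bigr)$. By \cite{KussinMeltzer} a complete exceptional sequence in $\Coh(\XX)$ is full, so $\sD$ is generated by $E_1,\dots,E_n$; since each $\llangle E_i\rrangle$ is an admissible (hence thick) triangulated subcategory of $\sD$ by \cite{Bondal}, cutting the sequence at position $r$ yields a semiorthogonal decomposition $\sD=\langle\llangle E_1,\dots,E_r\rrangle_{\sD},\,\llangle E_{r+1},\dots,E_n\rrangle_{\sD}\rangle$, the semiorthogonality $\Hom^\ast_{\sD}\bigl(\llangle E_{r+1},\dots,E_n\rrangle_{\sD},\llangle E_1,\dots,E_r\rrangle_{\sD}\bigr)=0$ being exactly the defining property of the exceptional sequence (here $i>j$ for the relevant index pairs). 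From this decomposition one reads off $\llangle E_1,\dots,E_r\rrangle_{\sD}=\{Y\in\sD\mid\Hom^\ast_{\sD}(E_i,Y)=0\ \text{for all }i>r\}$. Now let $X\in\llangle E_{r+1},\dots,E_n\rrangle^\perp$; by the description of the perpendicular category from the first paragraph, $X$ satisfies $\Hom^\ast_{\sD}(E_i,X)=0$ for all $i>r$, so $X\in\llangle E_1,\dots,E_r\rrangle_{\sD}$. Finally I would identify $\llangle E_1,\dots,E_r\rrangle_{\sD}\cap\Coh(\XX)$ with the abelian subcategory $\llangle E_1,\dots,E_r\rrangle$: by the Remark following Proposition \ref{P:BookHenning}, $\sS\mapsto D^b(\sS)$ is a bijection between exceptional thick subcategories of $\Coh(\XX)$ and of $\sD$; writing $\llangle E_1,\dots,E_r\rrangle_{\sD}=D^b(\sS)$ for the corresponding $\sS$, each $E_i$ ($i\le r$) lies in $D^b(\sS)\cap\Coh(\XX)=\sS$, so $\sS\supseteq\llangle E_1,\dots,E_r\rrangle$, while $D^b\bigl(\llangle E_1,\dots,E_r\rrangle\bigr)$ is a thick triangulated subcategory containing $E_1,\dots,E_r$ and hence all of $\llangle E_1,\dots,E_r\rrangle_{\sD}=D^b(\sS)$; thus $\sS=\llangle E_1,\dots,E_r\rrangle$ and $X\in\llangle E_1,\dots,E_r\rrangle$.

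The main obstacle is precisely this last paragraph: one must invoke that ``complete'' really means ``full'' in $\Coh(\XX)$ (the cited result of Kussin--Meltzer), which is what makes the semiorthogonal decomposition available, and one must match its orientation with the sign convention for exceptional sequences so that $\llangle E_1,\dots,E_r\rrangle_{\sD}$ — rather than $\llangle E_{r+1},\dots,E_n\rrangle_{\sD}$ — appears as the right perpendicular of $\llangle E_{r+1},\dots,E_n\rrangle_{\sD}$. Once these two points are pinned down, the remaining ingredients (the two--out--of--three verifications, thickness of perpendicular categories, and the passage between $\Coh(\XX)$ and $\sD$ via Proposition \ref{P:BookHenning} and its Remark) are routine.
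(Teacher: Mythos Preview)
Your proof is correct but takes a genuinely different route from the paper's. The paper argues by induction on $n-r$: it invokes the structural result from \cite{KussinMeltzer} that for a single exceptional object $E$ the perpendicular $E^\perp$ is again a hereditary category of the same kind (either $H$--$\mathsf{mod}$ or $\Coh(\XX')$, or a product thereof) with $K_0$ of rank $n-1$ in which complete sequences are again full, so the induction hypothesis applies inside $E_n^\perp$. You instead work globally in $D^b\bigl(\Coh(\XX)\bigr)$: from Kussin--Meltzer you only extract ``complete $=$ full'', obtain the semiorthogonal decomposition $\sD=\bigl\langle\llangle E_1,\dots,E_r\rrangle_\sD,\llangle E_{r+1},\dots,E_n\rrangle_\sD\bigr\rangle$, read off the triangulated perpendicular, and then descend to $\Coh(\XX)$ via the thick-subcategory bijection of Proposition~\ref{P:BookHenning}. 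Your approach is arguably more economical in what it needs from \cite{KussinMeltzer} and is conceptually clean, at the cost of the bookkeeping in your last paragraph matching $\llangle E_1,\dots,E_r\rrangle_\sD\cap\Coh(\XX)$ with $\llangle E_1,\dots,E_r\rrangle$; the paper's inductive argument avoids the derived detour entirely but leans on the finer structure theorem for $E^\perp$.
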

\begin{proof}
    Let $E$ be an exceptional object in $\Coh(\XX)$. A key fact in \cite{KussinMeltzer} is that the (right) perpendicular subcategory $E^\perp$ is a category with Grothendieck group of rank $n-1$ in which every complete sequence is full, i.e. $E^\perp$ is isomorphic either to $H\mbox{--}\mathsf{mod}$ for some hereditary algebra $H$ or to $\Coh(\XX')$ for some exceptional hereditary curve $\XX'$ (or a product of them). In particular, the perpendicular calculus follows by induction.
\end{proof}

\begin{theorem}\cite[Theorem 1.1]{KussinMeltzer}\label{T:KussinMeltzer}
The braid group $B_n$ acts transitively  on the set of complete exceptional sequences in $\Coh(\XX)$, where $\XX$ is an exceptional hereditary curve. In particular, any complete exceptional sequence can be mutated to the standard exceptional sequence (\ref{E:StandardCollection}).
\end{theorem}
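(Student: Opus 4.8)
The theorem is \cite[Theorem 1.1]{KussinMeltzer}; here I outline the strategy one would follow. The argument runs by induction on $n = \rk\bigl(K_0(\Coh(\XX))\bigr)$. For the base case $n = 2$ the curve $\XX = \EE$ is a minimal exceptional homogeneous curve, and by Theorem \ref{T:Lenzing} we have $D^b\bigl(\Coh(\EE)\bigr) \simeq D^b\bigl(\Lambda\text{--}\mathsf{mod}\bigr)$ for the tame hereditary algebra $\Lambda$ of \eqref{E:TameBimodule}, which has two non-isomorphic simple modules; transitivity of the braid action on complete exceptional pairs for a connected finite-dimensional hereditary algebra with two simples is classical (Crawley--Boevey, Ringel). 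One also uses that in $\Coh(\XX)$ a complete exceptional sequence is automatically full, so ``complete'' and ``full'' may be used interchangeably, and that every object of $D^b\bigl(\Coh(\XX)\bigr)$ is a direct sum of shifts of objects of $\Coh(\XX)$, which lets one transport the question from $D^b\bigl(\Coh(\XX)\bigr)$ to $\Coh(\XX)$ itself.

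For the inductive step, fix the standard exceptional sequence $\mathbf{E}^{\mathrm{std}} = \bigl(\dots, \kP, \overline\kP\bigr)$ of \eqref{E:StandardCollection} and let $\mathbf{E} = (E_1, \dots, E_n)$ be an arbitrary complete exceptional sequence. It suffices to show that $\mathbf{E}$ can be brought, by a finite sequence of mutations, to a complete exceptional sequence whose terminal object is $\overline\kP$. Once this is achieved, the truncations $(E_1, \dots, E_{n-1})$ and $(\dots, \kP)$ are, by Lemma \ref{L:PerpendicularCalculus}, complete exceptional sequences in the perpendicular category $\kC := \overline\kP^{\,\perp} = \llangle\overline\kP\rrangle^\perp$. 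By the structural fact recalled in the proof of Lemma \ref{L:PerpendicularCalculus}, $\kC$ is a finite product of module categories of hereditary algebras and of categories $\Coh(\XX')$ for exceptional hereditary curves $\XX'$, each factor of rank strictly less than $n$, and in each factor completeness again coincides with fullness. Applying the inductive hypothesis factor by factor shows that the two truncations lie in the same $B_{n-1}$-orbit, and since the braid moves $\sigma_1, \dots, \sigma_{n-2} \in B_n$ act on $\mathbf{E}$ without disturbing $E_n$ and realize exactly the mutations inside $\kC$, we conclude $\mathbf{E} \sim \mathbf{E}^{\mathrm{std}}$.

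The main obstacle is thus the normalization of the terminal object: any complete exceptional sequence can be mutated so that its last term becomes the prescribed exceptional bundle $\overline\kP$. The plan is to proceed in two stages. First, one mutates $\mathbf{E}$ so that $E_n$ is a vector bundle rather than a torsion sheaf: a torsion terminal object can be moved out of the last slot using right mutations, exploiting that $\Tor(\XX)$ is the kernel of $\Coh(\XX) \to \FF_{\XX}\text{--}\mathsf{mod}$ and that $\Hom_\XX(\kZ, \kB) = 0$ for a torsion sheaf $\kZ$ and a bundle $\kB$, which constrains the admissible mutations. Second, among complete exceptional sequences ending in a vector bundle, one normalizes the terminal bundle by descending induction on its rank: if $E_n$ has rank $> 1$, a suitable mutation with $E_{n-1}$ yields a complete exceptional sequence with terminal bundle of strictly smaller rank, and once one reaches a line bundle, any two line bundles are linked by a chain of mutations (passing through the bundles $\kL\bigl([x]\bigr)$ and using the autoequivalences of $\Coh(\XX)$ induced by line-bundle twists, which send braid orbits to braid orbits). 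Carrying out this bookkeeping carefully — in particular tracking the interaction of the twists with the braid orbit and treating the exceptional points $x_1, \dots, x_t$ uniformly — is the technical core of \cite{KussinMeltzer}, to which we refer for the full details.
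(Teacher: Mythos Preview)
The paper does not supply its own proof of this theorem: it is stated purely as a citation of \cite[Theorem~1.1]{KussinMeltzer}, with no argument given. Your outline is a reasonable sketch of the strategy actually used in \cite{KussinMeltzer} (induction via perpendicular categories, reducing to a normalization of the last term of the sequence), and you correctly flag that the technical core lies there.

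One caution about your outline: the step ``any two line bundles are linked by a chain of mutations (passing through the bundles $\kL([x])$ and using the autoequivalences of $\Coh(\XX)$ induced by line-bundle twists, which send braid orbits to braid orbits)'' is not, as stated, a valid reduction. An autoequivalence $\Phi$ of $\Coh(\XX)$ certainly carries the braid orbit of $\mathbf{E}$ bijectively onto the braid orbit of $\Phi(\mathbf{E})$, but that is not the same as showing $\mathbf{E}$ and $\Phi(\mathbf{E})$ lie in a \emph{common} orbit. So knowing that some twist sends your terminal line bundle to $\overline\kP$ does not by itself put your sequence in the orbit of the standard one; you still need an honest chain of mutations connecting them. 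In \cite{KussinMeltzer} this is handled by a direct rank/degree reduction argument internal to mutations, not by invoking autoequivalences. Since you already defer to that reference for the details, this is more a warning about how the sketch is phrased than a fatal gap.
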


\noindent
Lemma \ref{L:ExtendabilityCohX} and Theorem \ref{T:KussinMeltzer} imply the following result. 

\begin{corollary}
Let $\XX$ be an exceptional hereditary curve. The standard exceptional sequence (\ref{E:StandardCollection}) defines a complete exceptional sequence of pseudo-roots in the Grothendieck group $\Gamma$ of $\Coh(\XX)$. Thus (\ref{E:StandardCollection}) gives rise to a set of real roots $\Phi \subset \Gamma$. Let $E \in \Coh(\XX)$ be an exceptional object. Then we have $[E] \in \Phi$, i.e.~the class of $E$ in $\Gamma$  is a real root. 
\end{corollary}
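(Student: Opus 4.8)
The plan is to deduce both assertions from results already assembled in this section, the only substantial input being the braid transitivity of Kussin--Meltzer (Theorem~\ref{T:KussinMeltzer}). For the first two sentences I would argue as follows. By Theorem~\ref{T:ExceptionalMain} the standard sequence \eqref{E:StandardCollection}, say $(E_1, \dots, E_n)$ with $n = \rk(\Gamma)$, is a full exceptional sequence in $D^b\bigl(\Coh(\XX)\bigr)$; in particular each $E_i$ is exceptional, so $[E_i] \in \Pi$ by Lemma~\ref{L:ClassIsPseudoRoot}, and $\langle [E_i], [E_j]\rangle = 0$ for $j < i$. Fullness forces $[E_1], \dots, [E_n]$ to be a basis of $K_0\bigl(\Coh(\XX)\bigr) = \Gamma$, so $\bigl([E_1], \dots, [E_n]\bigr)$ is a complete exceptional sequence of pseudo-roots in the sense of Definition~\ref{D:CompleteExcSeq}, and Definition~\ref{D:Coxeter datum} attaches to it the datum $(W, T, c)$ together with the set of real roots $\Phi \subseteq \Gamma$.

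For the last assertion, let $E$ be an exceptional object of $\Coh(\XX)$. First I would use Lemma~\ref{L:ExtendabilityCohX} to enlarge the length-one exceptional sequence $(E)$ to a complete exceptional sequence $R_E = (E, H_1, \dots, H_{n-1})$ in $\Coh(\XX)$; passing to Grothendieck classes yields a complete exceptional sequence of pseudo-roots $\bigl([E], [H_1], \dots, [H_{n-1}]\bigr)$ whose associated set of real roots contains its first entry $[E]$ (take $w = \mathbbm{1}$ in Definition~\ref{D:Coxeter datum}(b)). By Theorem~\ref{T:KussinMeltzer} there is $\beta \in B_n$ with $\beta \cdot R_E$ equal to \eqref{E:StandardCollection}. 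Applying the class map and using the mutation formula $\bigl[\overline{L}_E(F)\bigr] = \pm\bigl([F] - 2\tfrac{(E,F)}{(E,E)}[E]\bigr)$ of Lemma~\ref{L:Mutate} (and its dual for $\overline{R}$), one checks that the class map intertwines the braid action on exceptional sequences in $\Coh(\XX)$ with the braid action on sequences of pseudo-roots of Proposition~\ref{P:braidGroupActionRoots}, up to a sign in each entry; hence $\beta$ carries $\bigl([E], [H_1], \dots, [H_{n-1}]\bigr)$ to the sequence of classes of \eqref{E:StandardCollection} up to signs in the individual entries. Neither a braid mutation (Remark~\ref{R:CoxData1}(d)) nor a coordinate-wise sign change alters the associated set of real roots -- the latter because $s_{-\gamma} = s_\gamma$, so $W$ and $\Phi$ are invariant under $\gamma \mapsto -\gamma$, and $-\gamma = s_\gamma(\gamma)$ shows $\Phi = -\Phi$. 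Therefore the set of real roots of $\bigl([E], [H_1], \dots, [H_{n-1}]\bigr)$ equals $\Phi$, and so $[E] \in \Phi$.

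Given that Theorem~\ref{T:KussinMeltzer} is quoted, the only step needing care is the sign bookkeeping in transporting the braid action along the class map: in a hereditary category one mutates with the reduced functors $\overline{L}, \overline{R}$, which may differ from $L, R$ by a shift and hence twist individual classes by a sign. I expect this to be essentially the whole of the difficulty, and it dissolves once one notes that every object in play -- pseudo-roots, reflections, the reflection group, the real root system -- is stable under negation of roots.
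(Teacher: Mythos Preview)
Your proof is correct and follows precisely the route the paper intends: the corollary is stated as an immediate consequence of Lemma~\ref{L:ExtendabilityCohX} and Theorem~\ref{T:KussinMeltzer}, and your argument unwinds exactly that---extend $(E)$ to a complete exceptional sequence, move it to the standard one by a braid, and invoke Remark~\ref{R:CoxData1}(d). Your explicit sign bookkeeping (arising because the abelian mutations $\overline{L},\overline{R}$ may differ from $L,R$ by a shift) is a detail the paper passes over in silence, but your resolution via $s_{-\gamma}=s_\gamma$ and $-\gamma=s_\gamma(\gamma)\in\Phi$ is exactly right and costs nothing.
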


\section{Reflection groups of canonical type}\label{S:ReflectionGroups}
In this section we give a definition and discuss first properties of an interesting class of discrete groups which we call \emph{reflection groups of canonical type}. Moreover, we introduce two related groups: the quotient Coxeter group, which as the name suggests is a Coxeter group, and the hyperbolic extension, which is a central extension of the reflection group of canonical type.
\subsection{Introduction to reflection groups of canonical type}
We define \emph{reflection groups of canonical type} via some combinatorial data. These turn out to be precisely the reflection groups arising from categories of coherent sheaves $\Coh(\XX)$ on an exceptional hereditary curve $\XX$; see Proposition \ref{P:ReflectionGroupForCategory}. Moreover, we shall divide reflection groups of canonical type into three cases according to some underlying geometrical property.

\begin{definition}\label{D:Symbol}
Let $t \in \NN$, $\varepsilon \in \bigl\{1, 2\bigr\}$ and  $p_1, \dots, p_t \in \NN_{\ge 2}$. Next, let $d_1, \dots, d_t; f_1, \dots, f_t \in \NN$ be such that $f_i \mid d_i$ for all $1\le i \le t$. 
Following Lenzing \cite{LenzingKTheory}, we call the following table 
\begin{equation}\label{E:Symbol}
\sigma = \left( \begin{array}{ccc|c} 
            p_1 & \dots & p_t &\\
            d_1 & \dots & d_t & \varepsilon\\
            f_1 & \dots & f_t & \end{array} \right)
\end{equation}
a \emph{symbol}. For any $1 \le i \le t$, we put $e_i := \dfrac{d_i}{f_i}$ and set 
$n := \left(\sum\limits_{i=1}^t (p_i-1)\right) +2$. Moreover,
\begin{equation*}
\Omega := \bigl\{(i, j)\, \big| \, 1 \le i \le t, 1 \le j \le p_i-1 \bigr\} \quad \mbox{and} \quad
\overline\Omega = \Omega \sqcup \bigl\{0, 0^\ast\bigr\}.
\end{equation*}
\end{definition}

\begin{definition}\label{D:CanonicalLattice}
The symbol $\sigma$  determines a \emph{canonical bilinear lattice} $(\Gamma, K)$ defined as follows. Let $\Gamma$ be the free abelian group of rank $n$ generated by the tuple
\begin{equation}\label{E:SimpleRoots}
R:= \bigl(\alpha_{(t, p_t-1)}, \dots, \alpha_{(t, 1)},\dots,\alpha_{(1, p_1-1)}, \dots, \alpha_{(1,1)}, \alpha_0, \alpha_{0^\ast}\bigr)
\end{equation} of elements $\alpha_\omega\in\Gamma$ for $\omega \in \overline\Omega$.
Let $K$ be the bilinear form on $\Gamma$ given by the Gram matrix (\ref{E:GramMatrix}) with respect to $R$ for some $\kappa \in \NN$ such that $\frac{\kappa \varepsilon f_i}{e_i} \in \NN$ for all $1 \le i \le t$. It is easy to see that elements of $R$ are pseudo-roots forming  a complete exceptional sequence in $\Gamma$.  Let $B = K + K^t \in \mathsf{Mat}_{n}(\ZZ)$ be the symmetrization of $K$.

The complete exceptional sequence $R$ in $(\Gamma,K)$ defines a reflection group $W\subset\mathsf{O}(\Gamma,B)$ as well as the associated notions $c$, $S$, $T$ and $\Phi$ via Definitions \ref{D:Coxeter element} and \ref{D:Coxeter datum}. We call $W$ a \emph{reflection group of canonical type}.
\end{definition}

\begin{definition}\label{D:rankFunction}
Let $(\Gamma,K)$ be a canonical bilinear lattice with basis (\ref{E:SimpleRoots}). We define the \emph{rank function} on $\Gamma$ as the group homomorphism $\rk:\Gamma\to\ZZ$ with
$$\rk\left(\lambda_0\alpha_0+\lambda_{0^*}\alpha_{0^*}+\sum_{(i,j)\in\Omega}\lambda_{(i,j)}\alpha_{(i,j)}\right)=\lambda_0+\varepsilon\lambda_{0^*}.$$
\end{definition}

We will show that some information in the symbol $\sigma$ is superfluous if we are only interested in $W$, $c$, $S$ and $T$. To this end, let us introduce a reduced version of $\sigma$.
\begin{definition}
Let $\sigma$ be a symbol as in (\ref{E:Symbol}). We call the table
\begin{equation*}
\bar{\sigma} = \left( \begin{array}{ccc} 
            p_1 & \dots & p_t \\
            \varepsilon d_1 & \dots & \varepsilon d_t\\
             \end{array} \right)
\end{equation*}
the \emph{reduced symbol} of $\sigma$ and set
\begin{equation*}
\delta = \delta_\sigma:= 
\left(\sum\limits_{i = 1}^t \varepsilon d_i\Bigl(1- \frac{1}{p_i}\Bigr)\right) -2.
\end{equation*}
Note that $\delta$ actually depends solely on $\bar\sigma$. 
\end{definition}

We now show that the reduced symbol determines all relevant data.

\begin{proposition}\label{P:ReductionCaseEpsilon1} Let $\sigma$ and $\sigma'$ be two symbols such that $\bar\sigma = \bar\sigma'$. Then the corresponding data $(W, c,  S, T)$ and $(W', c',  S', T')$ can be naturally identified.
\end{proposition}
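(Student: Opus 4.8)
The plan is to reduce the assertion to the matrix of \emph{Cartan integers} of the chosen complete exceptional sequence and then to transport this matrix from $\sigma$ to $\sigma'$ by a suitable rescaling of the basis. Since $\bar\sigma=\bar\sigma'$ forces $t=t'$ and $p_i=p_i'$ for all $i$, the two canonical bilinear lattices have the same rank $n$ and the same index set $\overline\Omega$, and after the obvious identification of bases we may take $\Gamma=\Gamma'=\ZZ^{\overline\Omega}$ with one fixed basis $(\alpha_\omega)_{\omega\in\overline\Omega}$ ordered as in (\ref{E:SimpleRoots}); only the symmetrizations $B$ and $B'$ of the forms attached to $(\sigma,\kappa)$ and $(\sigma',\kappa')$ (for whatever admissible $\kappa,\kappa'$ were chosen) may differ. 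Writing $s_\omega$, $s'_\omega$ for the reflections in $\alpha_\omega$ with respect to $B$, $B'$, regarded as elements of $\GL(V)$ with $V=\RR\otimes_\ZZ\Gamma$ the real hull, it is enough to exhibit $\psi\in\GL(V)$ with $\psi s_\omega\psi^{-1}=s'_\omega$ for every $\omega\in\overline\Omega$. Conjugation by such a $\psi$ then carries $W=\langle s_\omega:\omega\in\overline\Omega\rangle$ onto $W'$, takes $S$ to $S'$ and hence $T=\{wsw^{-1}:w\in W,\ s\in S\}$ to $T'$ by Remark \ref{R:CoxData1}(a), and takes $c=s_{\omega_1}\cdots s_{\omega_n}$ to $c'=s'_{\omega_1}\cdots s'_{\omega_n}$ because the order of the factors is the one fixed by (\ref{E:SimpleRoots}) for both symbols; this is the sought natural identification, and since $\ell_T$ and $\le_T$ depend only on $T$ it restricts to an isomorphism $\mathsf{NC}_{T}(W,c)\cong\mathsf{NC}_{T'}(W',c')$ of posets.

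The point that makes this work is that $s_\omega$ is encoded entirely by the Cartan integers $A_{\nu\omega}:=2(\alpha_\nu,\alpha_\omega)/(\alpha_\omega,\alpha_\omega)\in\ZZ$ (integrality holds because each $\alpha_\omega$ is a pseudo-root; cf.\ (\ref{E:Reflection})), via $s_\omega(\alpha_\nu)=\alpha_\nu-A_{\nu\omega}\alpha_\omega$; hence a rescaling $\psi(\alpha_\omega)=\mu_\omega\alpha_\omega$ of the basis conjugates $s_\omega$ to the map acting on the basis by $\alpha_\nu\mapsto\alpha_\nu-(\mu_\nu^{-1}A_{\nu\omega}\mu_\omega)\alpha_\omega$. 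The first concrete step is thus to read off $A=(A_{\nu\omega})$ from the Gram matrix (\ref{E:GramMatrix}) and $B=K+K^t$. With $e_i=d_i/f_i$, one finds: $A_{\omega\omega}=2$; $A_{(i,j),(i,j\pm1)}=-1$; $A_{0,0^\ast}=2/\varepsilon$ and $A_{0^\ast,0}=2\varepsilon$; $A_{(i,1),0}=-\varepsilon f_i$, $A_{0,(i,1)}=-e_i$, $A_{(i,1),0^\ast}=-f_i$, $A_{0^\ast,(i,1)}=-\varepsilon e_i$; and all other entries vanish. (In particular $A$ is independent of $\kappa$, every entry of (\ref{E:GramMatrix}) being a multiple of $\kappa$, so $(W,c,S,T)$ is a fortiori independent of the choice of $\kappa$.) I expect this computation---keeping the ordering of the basis straight and respecting the asymmetry of $K$ versus $B$---to be the only genuinely error-prone part; everything else is formal.

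It remains to choose $\psi$ and verify the conjugation identity. Using $\bar\sigma=\bar\sigma'$, i.e.\ also $\varepsilon d_i=\varepsilon'd_i'$, I would set $\mu_0:=1$, $\mu_{0^\ast}:=\varepsilon/\varepsilon'$ and $\mu_{(i,j)}:=e_i'/e_i$ for all $j$, and let $\psi(\alpha_\omega)=\mu_\omega\alpha_\omega$. An entrywise check gives $\mu_\nu^{-1}A_{\nu\omega}\mu_\omega=A'_{\nu\omega}$, where $A'$ is the Cartan matrix of $\sigma'$: the diagonal entries and the block entries $-1$ are unchanged, and each of the six remaining nonzero entries is a single substitution using only $\varepsilon d_i=\varepsilon'd_i'$ and $d_i=e_if_i$---for instance $\mu_{(i,1)}^{-1}A_{(i,1),0}\mu_0=-\varepsilon f_i\cdot e_i/e_i'=-\varepsilon d_i/e_i'=-\varepsilon'd_i'/e_i'=-\varepsilon'f_i'=A'_{(i,1),0}$, and likewise for $A_{0,(i,1)}$, $A_{(i,1),0^\ast}$, $A_{0^\ast,(i,1)}$, $A_{0,0^\ast}$ and $A_{0^\ast,0}$. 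Since $\psi s_\omega\psi^{-1}$ and $s'_\omega$ then agree on the basis, they coincide, and the reduction of the first paragraph completes the proof.
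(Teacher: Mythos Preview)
Your proof is correct and takes essentially the same route as the paper: both arguments reduce to a diagonal rescaling of the basis $(\alpha_\omega)$. The paper normalizes each $\alpha_\omega$ to unit $B$-length, observes that the resulting Gram matrix depends only on $\bar\sigma$, and then uses $s_{\lambda v}=s_v$ to conclude that the simple reflections literally coincide under the identification of real hulls; you instead keep the original basis, compute the Cartan integers $A_{\nu\omega}$, and exhibit an explicit diagonal $\psi$ with $\mu_\nu^{-1}A_{\nu\omega}\mu_\omega=A'_{\nu\omega}$, hence $\psi s_\omega\psi^{-1}=s'_\omega$. These are two bookkeepings of the same rescaling: your $\mu_\omega$ differ from the paper's unit-length scalars only by an overall constant on each side, and both yield the identification of $(W,c,S,T)$ with $(W',c',S',T')$.
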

\begin{proof} Let $(\Gamma, K)$ be the bilinear lattice defined by $\sigma$. We normalize all elements of $R$ to be of length one with respect to the symmetrization $B$ of $K$. Then the corresponding Gram matrix is of the following shape:
\begin{equation}\label{E:MatrixRescaled}
 [B]=
\left[ \begin{array}{@{}c@{}|@{}c@{}|@{}c@{}|@{}c@{}|@{}c@{}|@{}c@{}}
\begin{array}{@{}c@{}@{}c@{}@{}c@{}@{}c@{}}
1 \hspace{2mm} & -\frac{1}{2} &  & \hspace{2mm} 0 \\
-\frac{1}{2} \hspace{2mm} & 1 & \ddots  &   \\
 & \ddots & \ddots & \hspace{2mm} -\frac{1}{2}  \\
0 \hspace{2mm} &  & -\frac{1}{2} & \hspace{2mm} 1
\end{array} & \cdots & 0 & 0 & \begin{array}{@{}c@{}} 0 \vspace{1mm} \\ \vdots \vspace{2mm} \\ 0 \vspace{1mm} \\ -\frac{\sqrt{\varepsilon d_t}}{2}
\end{array} & \begin{array}{@{}c@{}} 0 \vspace{1mm} \\ \vdots \vspace{2mm} \\ 0 \vspace{1mm} \\ -\frac{\sqrt{\varepsilon d_t}}{2}
\end{array} \\ \hline
\vdots & \ddots & \vdots & \vdots & \vdots & \vdots \\ \hline
0 & \cdots & \begin{array}{@{}c@{}@{}c@{}@{}c@{}@{}c@{}}
1 \hspace{2mm} & -\frac{1}{2} &  & \hspace{2mm} 0 \\
-\frac{1}{2} \hspace{2mm} & 1 & \ddots  &   \\
 & \ddots & \ddots & \hspace{2mm} -\frac{1}{2}  \\
0 \hspace{2mm} &  & -\frac{1}{2} & \hspace{2mm} 1
\end{array} & 0 & \begin{array}{@{}c@{}} 0 \vspace{1mm} \\ \vdots \vspace{2mm} \\ 0 \vspace{1mm} \\ -\frac{\sqrt{\varepsilon d_2}}{2}
\end{array} & \begin{array}{@{}c@{}} 0 \vspace{1mm} \\ \vdots \vspace{2mm} \\ 0 \vspace{1mm} \\ -\frac{\sqrt{\varepsilon d_2}}{2}
\end{array} \\ \hline
0 & 0 & \cdots & \begin{array}{@{}c@{}@{}c@{}@{}c@{}@{}c@{}}
1 \hspace{2mm} & -\frac{1}{2} &  & \hspace{2mm} 0 \\
-\frac{1}{2} \hspace{2mm} & 1 & \ddots  &   \\
 & \ddots & \ddots & \hspace{2mm} -\frac{1}{2}  \\
0 \hspace{2mm} &  & -\frac{1}{2} & \hspace{2mm} 1
\end{array} & \begin{array}{@{}c@{}} 0 \vspace{1mm} \\ \vdots \vspace{2mm} \\ 0 \vspace{1mm} \\ -\frac{\sqrt{\varepsilon d_1}}{2}
\end{array} & \begin{array}{@{}c@{}} 0 \vspace{1mm} \\ \vdots \vspace{2mm} \\ 0 \vspace{1mm} \\ -\frac{\sqrt{\varepsilon d_1}}{2}
\end{array} \\ \hline
\begin{array}{@{}c@{}@{}c@{}@{}c@{}@{}c@{}} \hspace{4mm} 0 \hspace{2mm} & \cdots \hspace{4mm} & 0 & \hspace{2mm} -\frac{\sqrt{\varepsilon d_t}}{2}
\end{array} & \cdots & \begin{array}{@{}c@{}@{}c@{}@{}c@{}@{}c@{}} \hspace{4mm} 0 \hspace{2mm} & \cdots \hspace{4mm} & 0 & \hspace{2mm} -\frac{\sqrt{\varepsilon d_2}}{2}
\end{array} & \begin{array}{@{}c@{}@{}c@{}@{}c@{}@{}c@{}} \hspace{4mm} 0 \hspace{2mm} & \cdots \hspace{4mm} & 0 & \hspace{2mm} -\frac{\sqrt{\varepsilon d_1}}{2}
\end{array} & 1 & 1\\ \hline
\begin{array}{@{}c@{}@{}c@{}@{}c@{}@{}c@{}} \hspace{4mm} 0 \hspace{2mm} & \cdots \hspace{4mm} & 0 & \hspace{2mm} -\frac{\sqrt{\varepsilon d_t}}{2}
\end{array} & \cdots & \begin{array}{@{}c@{}@{}c@{}@{}c@{}@{}c@{}} \hspace{4mm} 0 \hspace{2mm} & \cdots \hspace{4mm} & 0 & \hspace{2mm} -\frac{\sqrt{\varepsilon d_2}}{2}
\end{array} & \begin{array}{@{}c@{}@{}c@{}@{}c@{}@{}c@{}} \hspace{4mm} 0 \hspace{2mm} & \cdots \hspace{4mm} & 0 & \hspace{2mm} -\frac{\sqrt{\varepsilon d_1}}{2}
\end{array} & 1 & 1
	\end{array}\right]
\end{equation}
We see that $B$ only depends on the reduced symbol $\bar\sigma$. Let $(\Gamma', K')$ be the bilinear lattice defined by $\sigma'$. It is clear that we can naturally identify the real spans of $\Gamma$ and $\Gamma'$ with a common real vector space $V = \RR^n$  so that the rescaled bases $R$ and $R'$ get identified with the standard basis of $V$ and the induced pairing $V \times V \lar \RR$ is given by the matrix (\ref{E:MatrixRescaled}). Since for any non-isotropic vector $v \in V$ and $\lambda \in \RR^\ast$, we have $s_v = s_{\lambda v} \in \mathsf{O}(V, B)$, it follows that $(W, c,  S, T)$ and $(W', c',  S', T')$ coincide.
\end{proof}

\begin{proposition}\label{P:TypeDistinction} Let $\sigma$ be a symbol and $\Gamma$ the corresponding canonical bilinear lattice with symmetrized bilinear form $B$. The signature of $B$ is given by the expression
\[
		\left\lbrace{\begin{array}{@{}lr}
			(n-1,1,0)&\text{if }\delta<0,\\
			(n-2, 2,0)&\text{if }\delta=0,\\
			(n-2, 1,1)&\text{if }\delta>0.
		\end{array}}\right.
	\]
These cases are called domestic, tubular and wild, respectively.
\end{proposition}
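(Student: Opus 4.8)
The plan is to reduce everything to the explicit rescaled Gram matrix \eqref{E:MatrixRescaled} and then exploit its block structure. Rescaling each vector of $R$ to unit $B$-length is a congruence transformation (conjugation of the Gram matrix by a positive diagonal matrix), so by Sylvester's law of inertia the signature of $B$ equals that of \eqref{E:MatrixRescaled}; in particular it depends only on $\bar\sigma$, consistent with Proposition \ref{P:ReductionCaseEpsilon1}. From now on I would work in $V = \RR^n$ equipped with the symmetric form whose matrix in the standard basis is \eqref{E:MatrixRescaled}, writing $\alpha_{(i,j)}$ for $(i,j)\in\Omega$, and $\alpha_0$, $\alpha_{0^\ast}$, for this basis.

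First I would isolate the codimension-two subspace $U := \llangle \alpha_{(i,j)} \mid (i,j)\in\Omega\rrangle_{\RR}$. On $U$ the form $B$ is block diagonal, the block indexed by $i$ being $\tfrac12$ times the Cartan matrix of type $A_{p_i-1}$ (the tridiagonal matrix with $1$ on the diagonal and $-\tfrac12$ off it); each such block is positive definite, e.g.\ because its $j\times j$ leading principal submatrices are $\tfrac12$ times Cartan matrices of type $A_j$, with determinants $\tfrac{j+1}{2^j}>0$. Hence $B|_U$ is positive definite, in particular nondegenerate, so $V = U \perp U^\perp$ is a $B$-orthogonal direct sum with $\dim U^\perp = 2$, and the signature of $B$ equals $(n-2,0,0)$ plus the signature of $B|_{U^\perp}$. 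This reduces the problem to identifying a rank-two form.

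Next I would compute $B|_{U^\perp}$. By inspection of \eqref{E:MatrixRescaled}, both $\alpha_0$ and $\alpha_{0^\ast}$ pair with every $\alpha_{(i,j)}$ by the same number, namely $-\tfrac{\sqrt{\varepsilon d_i}}{2}$ if $j=1$ and $0$ otherwise; hence their $B$-orthogonal projections onto $U$ coincide, say both equal $w\in U$. Then $p := \alpha_0 - w$ and $p^\ast := \alpha_{0^\ast} - w$ lie in $U^\perp$ and are linearly independent modulo $U$, so they form a basis of $U^\perp$; since $p, p^\ast$ are $B$-orthogonal to $U\ni w$ and $B(\alpha_0,\alpha_0)=B(\alpha_{0^\ast},\alpha_{0^\ast})=B(\alpha_0,\alpha_{0^\ast})=1$, all three entries of the Gram matrix of $B|_{U^\perp}$ with respect to $(p,p^\ast)$ equal $1 - B(w,\alpha_0)$. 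It remains to evaluate $B(w,\alpha_0) = B(w,w)$, which, as $w$ is a sum of vectors lying in the mutually $B$-orthogonal blocks, is the sum over $i$ of the squared $B$-norm of the projection $w_i$ of $\alpha_0$ onto the $i$-th block. Inside the $i$-th block, $w_i$ solves the tridiagonal system $\tfrac12 C(A_{p_i-1})\,x = c\,u$, where $c = -\tfrac{\sqrt{\varepsilon d_i}}{2}$ and $u$ is the standard basis vector at the position of $\alpha_{(i,1)}$ (the last one in the block); the first $p_i-2$ equations force $x$ to be an arithmetic progression through the origin, $x_k = k x_1$, and the last equation gives $x_1 = \tfrac{2c}{p_i}$, so the block contributes $c\,x_{p_i-1} = \tfrac{2(p_i-1)c^2}{p_i} = \tfrac{\varepsilon d_i}{2}\bigl(1-\tfrac1{p_i}\bigr)$ (this is just the inverse Cartan matrix entry $(C(A_m)^{-1})_{m,m} = \tfrac{m}{m+1}$ in disguise). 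Summing over $i$ and recalling the definition of $\delta$ gives $B(w,\alpha_0) = \tfrac12\bigl(\sum_i \varepsilon d_i(1-\tfrac1{p_i})\bigr) = \tfrac{\delta}{2}+1$, so the Gram matrix of $B|_{U^\perp}$ is $-\tfrac{\delta}{2}\,\smtr{1 & 1 \\ 1 & 1}$.

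Finally, $\smtr{1 & 1 \\ 1 & 1}$ has eigenvalues $2$ and $0$, so $B|_{U^\perp}$ has eigenvalues $-\delta$ and $0$, with signature $(1,1,0)$, $(0,2,0)$ or $(0,1,1)$ (positive, zero, negative) according as $\delta<0$, $\delta=0$ or $\delta>0$; adding the contribution $(n-2,0,0)$ of $U$ yields exactly the three cases in the statement, which are the domestic, tubular and wild cases. I do not expect a genuine obstacle: the only step with mathematical content is the projection computation, and that is the classical type-$A$ inverse Cartan matrix; the real care is bookkeeping — getting the normalisation constants $-\tfrac{\sqrt{\varepsilon d_i}}{2}$ and the unit-length rescaling right when passing from \eqref{E:GramMatrix} to \eqref{E:MatrixRescaled}.
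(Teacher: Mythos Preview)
Your argument is correct and constitutes a genuinely different route from the paper's. The paper first observes that $a=\alpha_{0^\ast}-\varepsilon\alpha_0$ lies in $\mathsf{Rad}(B)$, then restricts to the hyperplane $V_\circ=\llangle\alpha_\omega\mid\omega\in\Omega\cup\{0\}\rrangle$ and computes the determinant of $B|_{V_\circ}$ by explicit row/column operations (multiplying by a carefully chosen upper triangular matrix and then clearing the last row), finding this determinant to be a positive multiple of $-\tfrac{\delta}{2}$. You instead take the $B$-orthogonal complement of the positive-definite type-$A$ block $U$ and compute the full $2\times2$ Gram matrix on $U^\perp$ via projection, using the classical inverse-Cartan-matrix entry $(C(A_m)^{-1})_{m,m}=\tfrac{m}{m+1}$. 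Your approach is more symmetric in $\alpha_0,\alpha_{0^\ast}$ and makes the ever-present one-dimensional kernel (coming from the rank-one matrix $\smtr{1&1\\1&1}$) emerge from the computation rather than being an input; the paper's approach, on the other hand, singles out the radical element $a$ explicitly, which is convenient because $a$ plays a central role throughout the rest of the paper. Both computations are of comparable length and difficulty.
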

\begin{proof}
    Let us consider $B:V\times V\lar\RR$, where $V$ is the vector space with basis $R$. Clearly, this does not change the signature of $B$. First, note that $\{\alpha_\omega|\omega\in\Omega\}$ forms a basis for $\overline V:=\llangle\alpha_\omega\,\,|\omega\in\Omega\rrangle\subset V$ and is the simple system for a root system of type $A_{p_t-1}\times\dots\times A_{p_1-1}$. Thus the bilinear form $B$ restricted to the $(n-2)$--dimensional subspace $\overline V\subset V$ is positive definite.
    
    Since $\llangle\alpha_0^*-\varepsilon\alpha_0\rrangle\subseteq\Rad_{B}$, we know that $\Rad_B$ is at least one--dimensional. Moreover, set $V_\circ:=\llangle\alpha_\omega|\omega\in\Omega\cup\{0\}\rrangle$. Then $\llangle\alpha_0^*-\varepsilon\alpha_0\rrangle\not\subseteq V_\circ$ implies that the signature of $B$ on $V$ is determined by the signature of $B\big|_{V_\circ\times V_\circ}$. But $B\big|_{\overline V\times\overline V}$ is positive definite for the one codimensional subspace $\overline V$ of $V_\circ$. Thus, the signature of $B|_{V_\circ\times V_\circ}$ is controlled by the determinant of $B\big|_{V_\circ\times V_\circ}$, i.e. the determinant of the matrix (\ref{E:MatrixRescaled}) without the last row and column, which we call $M$.
    
    Let us use our knowledge on the $(n-2)\times(n-2)$ minor of type $A_{p_t-1}\times\dots\times A_{p_1-1}$ to define a block diagonal matrix $N$. First, define the blocks $N^{(i)}$ to be upper triangular matrices of size $p_i-1$ with $N^{(i)}_{k,l}=\frac{k}{l}$ for $k\leq l$. Now $N$ consists of the blocks $N^{(t)},\dots,N^{(1)}$ and a 1 in the bottom right corner.
    
    The product $MN$ has the same determinant as $M$ and is almost lower triangular. We only need to eliminate the entries $-\frac{\sqrt{\varepsilon d_i}}{2}$. To this end, note that the diagonal entries on the row with $-\frac{\sqrt{\varepsilon d_i}}{2}$ are given by $1-\frac{1}{2}\frac{p_i-2}{p_i-1}=\frac{p_i}{2(p_i-1)}$. Finally, multiply $MN$ by the matrix that has ones on the diagonal, $\frac{\sqrt{\varepsilon d_i}}{2}\frac{2(p_i-1)}{p_i}$ on the appropriate entries and zeros elsewhere. We obtain a lower triangular $(n-1)\times(n-1)$ matrix with positive values on the first $n-2$ diagonal entries and
    \[1-\sum_{i=1}^t\frac{\varepsilon d_i}{2}\frac{p_i-1}{p_i}=-\frac{1}{2}\left(-2+\sum_{i=1}^t\varepsilon d_i\left(1-\frac{1}{p_i}\right)\right)=-\frac{1}{2}\delta\]
    on the last diagonal entry. Therefore, the determinant is positive if $\delta<0$, zero if $\delta=0$, and negative if $\delta>0$. This concludes the proof.
\end{proof}

\begin{proposition}\label{P:categoryInterpretation}
Let $\XX$ be an exceptional hereditary curve. Let $(E_1,\dots,E_n)$ be a complete exceptional sequence in $\Coh(\XX)$ and let $\tau$ be the Auslander--Reiten translate given by (\ref{E:ARTranslate}). Then the Grothendieck group $\Gamma = K_0(\XX)$ equipped with the Euler form $K:\Gamma \times \Gamma \lar \ZZ$ is a canonical bilinear lattice in the sense of Definition ~\ref{D:CanonicalLattice}. Moreover, $([E_1],\dots,[E_n])$ is a complete exceptional sequence in  $(\Gamma, K)$ and the automorphism induced by the Auslander--Reiten translate $\tau$ is the Coxeter element $c$ defined via ~(\ref{E:CoxeterDefinition}).
\end{proposition}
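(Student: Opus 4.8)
The plan is to bootstrap everything from the standard exceptional sequence (\ref{E:StandardCollection}), using the braid transitivity of Theorem \ref{T:KussinMeltzer}. First I would record the structural facts: since $\XX$ is an exceptional hereditary curve, $\cA = \Coh(\XX)$ is an $\Ext$-finite $\kk$-linear hereditary abelian category and $\cD = D^b(\cA)$ is $\Hom$-finite, so Theorem \ref{T:ExceptionalMain} applies. It tells us the standard exceptional sequence is a complete exceptional sequence in $\cD$; hence $\Gamma = K_0(\XX)$ is free of rank $n$, the classes of the standard sequence form a basis of $\Gamma$, and the Gram matrix of the Euler form $K$ with respect to this basis is (\ref{E:GramMatrix}). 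That matrix is block upper triangular with invertible diagonal blocks — the $A_{p_i-1}$-type blocks (upper triangular with diagonal entries $\kappa\varepsilon f_i/e_i \ne 0$) and the $2\times 2$ block of determinant $\kappa^2\varepsilon^2 \ne 0$ — so $K$ is non-degenerate and $(\Gamma, K)$ is a bilinear lattice in the sense of Definition \ref{D:BilLattice}.

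To see that it is a \emph{canonical} bilinear lattice, I would read off the symbol $\sigma_\XX$ from the curve: set $t := |\mathfrak{E}_\rho|$, $p_i := \rho(x_i) \ge 2$, $\varepsilon$ the parameter of Theorem \ref{T:Lenzing} (so $\varepsilon \in \{1,2\}$), $\kappa := \dim_\kk(\ff)$, and $d_i := e_i f_i$ with $f_i$ as in (\ref{E:LenzingsParameters}). Then $f_i \mid d_i$ holds trivially, and $\kappa\varepsilon f_i/e_i = \dim_\kk(\dd_{x_i}) \in \NN$ by the dimension formula (\ref{E:DimensionD}); thus the Gram matrix (\ref{E:GramMatrix}) is exactly the defining Gram matrix of Definition \ref{D:CanonicalLattice} for the symbol $\sigma_\XX$, so $(\Gamma, K)$ is the canonical bilinear lattice attached to $\sigma_\XX$. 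By Lemma \ref{L:ClassIsPseudoRoot} together with the vanishings $\Hom^\ast_\cD(E_i,E_j) = 0$ for $j < i$, the classes of the standard sequence form a complete exceptional sequence in $(\Gamma, K)$ in the sense of Definition \ref{D:CompleteExcSeq}.

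For an arbitrary complete exceptional sequence $(E_1, \dots, E_n)$ in $\Coh(\XX)$, Theorem \ref{T:KussinMeltzer} places it in the braid orbit of the standard sequence, and the key point is that passing to Grothendieck classes intertwines the braid action (\ref{E:BraidGroup}) on exceptional sequences of objects with the braid action of Proposition \ref{P:braidGroupActionRoots} on exceptional sequences in $(\Gamma, K)$: by Lemma \ref{L:Mutate}, for an exceptional pair $(G,E)$ one has $[R_E(G)] = [G] - 2\frac{(E,G)}{(E,E)}[E] = s_{[E]}([G])$, which matches $\sigma_i$ index for index, and analogously for left mutations and $\sigma_i^{-1}$. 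Since ``complete exceptional sequence in $(\Gamma, K)$'' is a $B_n$-invariant notion (each generator replaces a basis by another basis), it follows that $([E_1], \dots, [E_n])$ is a complete exceptional sequence in $(\Gamma, K)$.

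Finally, $\tau$ is an auto-equivalence of $\Coh(\XX)$, hence of $\cD$, and so induces a group automorphism $c_\tau$ of $\Gamma$ with $c_\tau([E]) = [\tau(E)]$. I would verify condition (\ref{E:CoxeterDefinition}) by a Serre-duality sign chase: since $\tau[1]$ is a Serre functor of $\cD$ (see (\ref{E:ARDuality})), $\Hom_\cD(E, F[p]) \cong \Hom_\cD(F, \tau(E)[1-p])^\ast$ for all $E,F$ and all $p$; summing alternating dimensions, with the reindexing $q = 1-p$ (which flips the overall sign), yields $K([E],[F]) = -K([F], [\tau(E)])$, i.e.\ $\langle\alpha,\beta\rangle + \langle\beta, c_\tau(\alpha)\rangle = 0$ for all $\alpha,\beta \in \Gamma$. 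By the uniqueness statement following Definition \ref{D:Coxeter element}, $c_\tau = c$; equivalently, by Proposition \ref{P:HKLemmas}(a), $c = s_{[E_1]} \cdots s_{[E_n]}$ for the complete exceptional sequence of classes just produced. No step is a deep obstacle — the proposition is essentially an assembly of Theorems \ref{T:Lenzing}, \ref{T:ExceptionalMain} and \ref{T:KussinMeltzer} — but two points demand care: the bookkeeping that matches (\ref{E:GramMatrix}) with the symbol parameters, so that the combinatorial invariants $\kappa, \varepsilon, e_i, f_i$ of $\XX$ occupy exactly the slots required by Definition \ref{D:Symbol}; and, above all, the sign in the Serre-duality computation, since it is precisely the shift $[1]$ in $\tau[1]$ that converts the Serre-duality identity into the defining equation of the Coxeter element.
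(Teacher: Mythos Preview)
Your proof is correct and follows essentially the same route as the paper: both invoke Theorem \ref{T:ExceptionalMain} to exhibit the Gram matrix (\ref{E:GramMatrix}) and identify $(\Gamma,K)$ as canonical, then verify the Coxeter condition (\ref{E:CoxeterDefinition}) for the map induced by $\tau$ via the Auslander--Reiten duality (\ref{E:ARDuality}). The only minor deviation is that for the claim that an \emph{arbitrary} complete exceptional sequence yields a complete exceptional sequence of classes, you argue via braid transitivity (Theorem \ref{T:KussinMeltzer}) and the compatibility of Lemma \ref{L:Mutate} with Proposition \ref{P:braidGroupActionRoots}, whereas the paper simply asserts this is ``clear from the definition'' --- implicitly using that complete coincides with full in $\Coh(\XX)$, so that the classes automatically form a basis of $K_0$.
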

\begin{proof}
By Theorem \ref{T:ExceptionalMain}, we know that $\Gamma = K_0(\XX)$ is a free abelian group of rank $n = \left(\sum\limits_{i=1}^t (p_i-1)\right) +2$. The Euler form $K:\Gamma \times \Gamma \lar \ZZ$ is non-degenerate, hence $(\Gamma, K)$ is a bilinear lattice. It is clear from the definition of complete exceptional sequences $(E_1,\dots,E_n)$ in $\Coh(\XX)$ that $([E_1],\dots,[E_n])$ is a complete exceptional sequence in $(\Gamma,K)$. Next, we use the standard exceptional sequence (\ref{E:StandardCollection}). Put
$$
\alpha_{0} = [\kP], \alpha_{0^\ast} = [\overline\kP] \; \mbox{\rm and} \;
\alpha_{(i, j)} = \bigl[\kS_i^{(j)}\bigr] \; \mbox{\rm for} \; 1 \le i \le t 
\; \mbox{\rm and} \; 1 \le j \le p_i -1.
$$
By Theorem \ref{T:ExceptionalMain}, we know that
\begin{equation*}
R= \bigl(\alpha_{(t, p_t-1)}, \dots, \alpha_{(t, 1)}, \dots, \alpha_{(1, p_1-1)}, \dots, \alpha_{(1, 1)}, \alpha_0, \alpha_{0^\ast}\bigr)
\end{equation*}
is a complete exceptional sequence in $(\Gamma, K)$. Thus, $(\Gamma, K)$ is a canonical bilinear lattice in the sense of Definition \ref{D:CanonicalLattice}.

Let $c:\Gamma \lar \Gamma$ be the automorphism induced by the Auslander--Reiten translate $\tau$ given by (\ref{E:ARTranslate}). It follows from (\ref{E:ARDuality}) that $c$ is a Coxeter element of $(\Gamma, K)$, i.e.~it satisfies the condition (\ref{E:CoxeterDefinition}).
\end{proof}

\begin{remark}
    Let $\XX$ be an exceptional hereditary curve, and let $(\Gamma,K)$ be its Grothendieck group equipped with the Euler form. Because of the equivalence (\ref{E:CanonicalSquid}), $(\Gamma, K)$ is a canonical bilinear lattice in the sense of \cite{LenzingKTheory}.

    Moreover, Proposition \ref{P:categoryInterpretation} gives a categorical interpretation of the group homomorphism $\rk:\Gamma\to\ZZ$ introduced in Definition \ref{D:rankFunction}. It is induced by the rank function defined in (\ref{E:RankFunction}). The kernel
    $$\mathsf{Ker}(\mathsf{rk}:\Gamma\lar\ZZ)=\llangle \alpha_{0^*}-\varepsilon\alpha_0,\alpha_{(i,j)}\,|\,(i,j)\in\Omega\rrangle$$
    can be interpreted as the subgroup of $\Gamma$ generated by the classes of torsion coherent sheaves.
\end{remark}

The reflection groups of canonical type do not only arise from a representation theoretical point of view. The domestic and tubular types are already well studied, as they are affine Coxeter groups and elliptic Weyl groups, respectively. A more detailed discussion of these groups is provided in Appendix \ref{S:Comparison}.

\subsection{Quotient Coxeter group}\label{SubsectionQuotientCoxGrp}
Coxeter groups have been extensively studied in the literature. In particular, the Hurwitz transitivity of reduced reflection factorizations of Coxeter elements in Coxeter groups as well as some generalizations are very well understood. The fact that $W$ admits a quotient group, which is a Coxeter group, is particularly advantageous. It enables the application of established results from Coxeter group theory. In this section we investigate the aforementioned quotient group.

Let $\sigma$ be a symbol, $(\Gamma,K)$ the canonical bilinear lattice and $(W,S)$ the associated generalized Coxeter datum. Recall that for any $1 \le i \le t$, we have
\begin{equation*}
(\alpha_{(i, 1)}^\sharp,\alpha_0)=-e_i\quad\mbox{\rm and}\quad(\alpha_{(i, 1)},\alpha_0^\sharp)=-\varepsilon f_i,
\end{equation*}
whereas for $\alpha_{0^\ast}$ we have the formulas
\begin{equation*}
(\alpha_{(i, 1)}^\sharp,\alpha_{0^*})=-\varepsilon e_i\quad\mbox{\rm and}\quad(\alpha_{(i,1)},\alpha_{0^*}^\sharp)=-f_i.
\end{equation*}

Note that the element $a:=\alpha_{0^\ast}-\varepsilon\alpha_{0}$ belongs to the radical of the form $B$. Moreover, define
\begin{equation*}
\Gamma_\circ := \llangle \alpha_{(t, p_t-1)}, \dots, \alpha_{(t, 1)}, \dots, \alpha_{(1, p_1-1)}, \dots, \alpha_{(1, 1)}, \alpha_0\rrangle \subset \Gamma.
\end{equation*}
It is clear that $\Gamma = \Gamma_\circ \oplus \llangle a\rrangle$. Note that $\bigl(\Gamma_\circ, K\big|_{\Gamma_\circ \times \Gamma_\circ}\bigr)$ is again a bilinear lattice equipped with  a complete exceptional sequence 
\begin{equation*}
R_\circ = \bigl(\alpha_{(t, p_t-1)}, \dots, \alpha_{(t, 1)}, \dots, \alpha_{(1, p_1-1)}, \dots, \alpha_{(1, 1)}, \alpha_0 \bigr).
\end{equation*}
Thus we obtain the associated generalized Coxeter datum $\bigl(W_\circ, S_\circ\bigr)$ with its set of real roots $\Phi_\circ$.

\begin{lemma}\label{epi}
Through the decomposition of $\Gamma=\Gamma_\circ\oplus\llangle a\rrangle$, we have the following results.
\begin{enumerate}
\item[(a)] The projection $\Gamma_\circ\oplus\llangle a\rrangle\twoheadarrow \Gamma_\circ$ induces a split group epimorphism $p:W\twoheadarrow W_\circ$.
\item[(b)] For any $\beta\in\Phi$ there exist unique $d \in\{1,\varepsilon\}$, $\beta_\circ\in\Phi_\circ$ and $k\in\ZZ$ such that $\beta=d \beta_\circ+ka$.
\end{enumerate}
\end{lemma}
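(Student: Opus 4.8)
The plan is to use the two structural facts recorded just before the statement: that $a=\alpha_{0^\ast}-\varepsilon\alpha_0$ lies in $\Rad_B$, and that $\Gamma=\Gamma_\circ\oplus\llangle a\rrangle$. From $a\in\Rad_B$ it follows that every reflection $s_\beta$ with $\beta\in\Phi$ fixes $a$, since $s_\beta(a)=a-(a,\beta^\sharp)\beta=a$; hence every element of $W$ fixes $a$, so $W$ acts on the quotient $\bar\Gamma:=\Gamma/\llangle a\rrangle$, yielding a homomorphism $p\colon W\to\Aut(\bar\Gamma)$ with $q\circ w=p(w)\circ q$, where $q\colon\Gamma\twoheadrightarrow\bar\Gamma$ is the projection. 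Because $\Gamma=\Gamma_\circ\oplus\llangle a\rrangle$, the restriction $j:=q|_{\Gamma_\circ}\colon\Gamma_\circ\xrightarrow{\sim}\bar\Gamma$ is an isomorphism of abelian groups, through which I will identify $\Aut(\bar\Gamma)$ with $\Aut(\Gamma_\circ)$; under this identification $p$ is the map in the statement.

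For part (a), first I would note that for $\omega\in\Omega\cup\{0\}$ the reflection $s_{\alpha_\omega}$ preserves $\Gamma_\circ$ and restricts there to the reflection of the sublattice $(\Gamma_\circ,K|_{\Gamma_\circ\times\Gamma_\circ})$, so that $p(s_{\alpha_\omega})$ becomes $s_{\alpha_\omega}\in W_\circ$. The one genuine computation is $p(s_{\alpha_{0^\ast}})=p(s_{\alpha_0})$: writing $\alpha_{0^\ast}=\varepsilon\alpha_0+a$ and using $a\in\Rad_B$ gives $(x,\alpha_{0^\ast})=\varepsilon(x,\alpha_0)$ and $(\alpha_{0^\ast},\alpha_{0^\ast})=\varepsilon^2(\alpha_0,\alpha_0)$, hence $(x,\alpha_{0^\ast}^\sharp)=\tfrac1\varepsilon(x,\alpha_0^\sharp)$ and $s_{\alpha_{0^\ast}}(x)-s_{\alpha_0}(x)=-(x,\alpha_{0^\ast}^\sharp)\,a\in\llangle a\rrangle$, the coefficient being an integer because $\alpha_{0^\ast}$ is a pseudo-root. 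Therefore $p(W)=\langle p(s_{\alpha_\omega})\mid\omega\in\overline\Omega\rangle$ is carried by $j$ onto $W_\circ=\langle s_{\alpha_\omega}\mid\omega\in\Omega\cup\{0\}\rangle$, i.e.\ $p$ is onto $W_\circ$. For the splitting I would take $W':=\langle s_{\alpha_\omega}\mid\omega\in\Omega\cup\{0\}\rangle\subseteq W$; its generators all preserve $\Gamma_\circ$, so restriction defines a homomorphism $W'\to\mathsf{O}(\Gamma_\circ,B)$ with image $W_\circ$, and it is injective because an element of $W$ that fixes $\Gamma_\circ$ pointwise and fixes $a$ is the identity on $\Gamma=\Gamma_\circ\oplus\llangle a\rrangle$. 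Its inverse $W_\circ\xrightarrow{\sim}W'\hookrightarrow W$ is then a section of $p$, so $p$ is a split epimorphism.

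For part (b), existence is obtained by writing $\beta=w(\alpha_\omega)$ with $w\in W$ and $\omega\in\overline\Omega$ (possible since $R=(\alpha_\omega)_{\omega\in\overline\Omega}$): then $q(\beta)=p(w)\bigl(q(\alpha_\omega)\bigr)$, so $j^{-1}q(\beta)=\bigl(j^{-1}p(w)j\bigr)\bigl(j^{-1}q(\alpha_\omega)\bigr)$ with $j^{-1}p(w)j\in W_\circ$ by (a); if $\omega\in\Omega\cup\{0\}$ then $j^{-1}q(\alpha_\omega)=\alpha_\omega$, so $\beta_\circ:=j^{-1}q(\beta)\in W_\circ\cdot\alpha_\omega\subseteq\Phi_\circ$ and one sets $d:=1$, while if $\omega=0^\ast$ then $j^{-1}q(\alpha_{0^\ast})=\varepsilon\alpha_0$, so $j^{-1}q(\beta)=\varepsilon\beta_\circ$ with $\beta_\circ\in W_\circ\cdot\alpha_0\subseteq\Phi_\circ$ and one sets $d:=\varepsilon$; in both cases $q(\beta)=q(d\beta_\circ)$, so $\beta-d\beta_\circ\in\ker q=\llangle a\rrangle$ and $\beta=d\beta_\circ+ka$. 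For uniqueness, applying $q$ and the injectivity of $j$ turns $d\beta_\circ+ka=d'\beta_\circ'+k'a$ into $d\beta_\circ=d'\beta_\circ'$ in the torsion-free group $\Gamma_\circ$: if $\varepsilon=1$ then $d=d'=1$, whence $\beta_\circ=\beta_\circ'$ and $k=k'$; if $\varepsilon=2$ and $\{d,d'\}=\{1,2\}$, say $\beta_\circ=2\beta_\circ'$, then $s_{\beta_\circ}=s_{\beta_\circ'}$ with $\beta_\circ,\beta_\circ'\in\Phi_\circ$ pseudo-roots of $(\Gamma_\circ,K|_{\Gamma_\circ\times\Gamma_\circ})$, and since this sublattice carries the complete exceptional sequence $R_\circ$ its set of pseudo-roots is reduced by Proposition \ref{P:HKLemmas}, forcing $\beta_\circ=\pm\beta_\circ'$ and hence $\beta_\circ'=0$, which is impossible; thus $d=d'$, $\beta_\circ=\beta_\circ'$, $k=k'$. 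The whole argument is bookkeeping around the splitting $\Gamma=\Gamma_\circ\oplus\llangle a\rrangle$ together with $W$ fixing $a$; the only point that needs care is the congruence $s_{\alpha_{0^\ast}}\equiv s_{\alpha_0}\pmod{\llangle a\rrangle}$, which is exactly what makes $p$ land inside $W_\circ$ in (a) and what produces the coefficient $d=\varepsilon$ in (b).
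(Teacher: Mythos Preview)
Your proof is correct and follows essentially the same approach as the paper: both use that $a\in\Rad(B)$ to see that $W$ respects the decomposition $\Gamma=\Gamma_\circ\oplus\llangle a\rrangle$, identify $p(s_{\alpha_\omega})$ with the reflection $s_{\alpha_\omega}\in W_\circ$ (with the key case $\alpha_{0^\ast}=\varepsilon\alpha_0+a$), and invoke reducedness of $\Phi_\circ$ from Proposition~\ref{P:HKLemmas} for the uniqueness in (b). The only cosmetic difference is that the paper packages part (b) as an induction on $\ell_S(w)$ via an explicit formula for $s_\beta(\gamma)$, whereas you use the equivariance $q\circ w=p(w)\circ q$ established in (a) directly; these are the same argument in different clothing.
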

\begin{proof}
Let $\beta=\beta_\circ+ka,\gamma=\gamma_\circ+la\in\Gamma$ with $\beta_\circ,\gamma_\circ\in\Gamma_\circ$ and $k,l\in\ZZ$. Recall that $a\in\Rad(B)$.  We have
\begin{equation}\label{E:decompositionReflection}
    s_\beta(\gamma)=\gamma-(\gamma,\beta^\sharp)\beta=\gamma-(\gamma_\circ,\beta_\circ^\sharp)\beta=s_{\beta_\circ}(\gamma_\circ)+(l-(\gamma_\circ,\beta_\circ^\sharp)k)a
\end{equation}
This implies that the map $p:S\to S_\circ$ sending $s_\beta:\Gamma\to\Gamma$ to $s_{\beta_\circ}:\Gamma_\circ\to\Gamma_\circ$ extends to a morphism of groups. As $R$ maps surjectively onto $R_\circ$ (up to scalars), we get that $S$ maps surjectively onto the generating set $S_\circ$ of $W_\circ$ and hence that $p$ is an epimorphism. The inclusion $\Gamma_\circ\hookrightarrow\Gamma$ implies that $p$ splits.

Any $\beta\in\Phi$ decomposes uniquely into $\beta=\overline{\beta_\circ}+ka$ with $\overline{\beta_\circ}\in\Gamma_\circ$ and $k\in\ZZ$. We need to show that $\overline{\beta_\circ}= d \beta_\circ$ with $d \in\{1,\varepsilon\}$ and $\beta_\circ\in\Phi_\circ$. The fact that $\Phi_\circ$ is reduced (see Proposition \ref{P:HKLemmas}) gives the uniqueness. For any $\beta\in\Phi$, there exists $w\in W$, $\alpha\in R$ such that $\beta=w(\alpha)$. We prove the claim by induction on $l_S(w)$. For $l_S(w)=0$, note that we have
$$R=R_\circ\cup\{\varepsilon\alpha_0+a\}\subset(R_\circ+\ZZ a)\cup(\varepsilon R_\circ+\ZZ a)$$
where we consider $R$ and $R_\circ$ as sets. For the induction step apply (\ref{E:decompositionReflection}) using $R\subset(R_\circ+\ZZ a)\cup(\varepsilon R_\circ+\ZZ a)$.
\end{proof}

\begin{proposition}\label{P:CoxeterPositiveReduced}
    The datum $\bigl(W_\circ, S_\circ\bigr)$ is a Coxeter system. Moreover, the root system $\Phi_\circ$ is reduced and every root in $\Phi_\circ$ is either a non-negative or a non-positive linear combination with respect to the basis $R_\circ$.
\end{proposition}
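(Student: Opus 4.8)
The plan is to reduce the statement to the classical theory of Coxeter groups and their geometric representations, by exhibiting the symmetrized form $B$, restricted to $\Gamma_\circ$ and read off in the basis $R_\circ$, as a Coxeter-type form. First I would pass to $V_\circ := \RR \otimes_\ZZ \Gamma_\circ$ with the induced form, still denoted $B$, and normalize each vector of $R_\circ$ to have $B$-length one; this is harmless, since $s_v = s_{\lambda v}$ for $\lambda > 0$, so $(W_\circ, S_\circ)$ is unchanged, and rescaling simple roots by positive scalars affects neither the identity $\Phi_\circ = W_\circ \cdot R_\circ$ nor the sign pattern of coordinates relative to $R_\circ$. The Gram matrix of $B$ on $V_\circ$ with respect to $R_\circ$ is then precisely the matrix obtained from (\ref{E:MatrixRescaled}) by deleting the row and column indexed by $\alpha_{0^\ast}$: all diagonal entries are $1$, and the only nonzero off-diagonal entries are $(\alpha_{(i,j)}, \alpha_{(i,j+1)}) = -\tfrac{1}{2}$ inside each arm and $(\alpha_0, \alpha_{(i,1)}) = -\tfrac{1}{2}\sqrt{\varepsilon d_i}$ joining the central node $\alpha_0$ to the initial vertex of the $i$-th arm. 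In particular all off-diagonal entries are $\le 0$.

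Next I would introduce the Coxeter matrix $M$ on the index set $\Omega \cup \{0\}$ with $m_{\omega\omega'} = 2$ when $(\alpha_\omega, \alpha_{\omega'}) = 0$, $m_{\omega\omega'} = 3$ inside arms, $m_{0,(i,1)} = 3, 4, 6$ according as $\varepsilon d_i = 1, 2, 3$, and $m_{0,(i,1)} = \infty$ when $\varepsilon d_i \ge 4$; by construction $(\alpha_\omega, \alpha_{\omega'}) = -\cos(\pi/m_{\omega\omega'})$ whenever $m_{\omega\omega'} < \infty$ and $(\alpha_\omega, \alpha_{\omega'}) \le -1$ whenever $m_{\omega\omega'} = \infty$. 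Since $R_\circ$ is a basis of $V_\circ$ (being a complete exceptional sequence in $\Gamma_\circ$), the data $(V_\circ, R_\circ, B)$ together with the reflections $s_\alpha$, $\alpha \in R_\circ$, form a geometric realization of the Coxeter group $W(M)$, and by the faithfulness of the geometric representation of a Coxeter group — Tits' theorem, together with its standard extension allowing Gram entries $\le -1$ on the pairs labelled $\infty$, equivalently Vinberg's theory of linear reflection groups with simplicial fundamental chamber — the assignment $s \mapsto s_\alpha$ identifies $W(M)$ with $W_\circ$, so $(W_\circ, S_\circ)$ is a Coxeter system. The two remaining assertions are then formal: $\Phi_\circ$ is reduced by Remark~\ref{R:CoxData1}(b) applied to the bilinear lattice $(\Gamma_\circ, K|_{\Gamma_\circ \times \Gamma_\circ})$, because every element of $\Phi_\circ$ is a pseudo-root and, by Proposition~\ref{P:HKLemmas}(b), the set of pseudo-roots of a bilinear lattice carrying a complete exceptional sequence is reduced; and, under the identification $W_\circ \cong W(M)$, the set $\Phi_\circ = W_\circ \cdot R_\circ$ is the set of roots $\{w(\alpha) : w \in W(M),\ \alpha \in R_\circ\}$ of the Coxeter system in its geometric realization, each of which is, by the classical theory of root systems of Coxeter groups, a non-negative or a non-positive combination of the simple roots $R_\circ$ — a property unaffected by the preliminary rescaling.

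I expect the only non-bookkeeping input to be this faithfulness statement, and the delicate point there is the wild case: when $\delta > 0$ the form $B|_{V_\circ}$ is Lorentzian and some entries $-\tfrac{1}{2}\sqrt{\varepsilon d_i}$ may be strictly smaller than $-1$, so $(V_\circ, R_\circ, B)$ is not the literal Tits representation and one must invoke its extension to forms that are sufficiently negative on the $\infty$-labelled pairs. In the domestic and tubular cases this does not occur: there $B|_{V_\circ}$ is positive (semi-)definite, which already forces $\varepsilon d_i \le 4$ for every $i$ (otherwise the rank-$2$ minor on $\{\alpha_0, \alpha_{(i,1)}\}$ would have negative determinant), so the Gram matrix is literally a Tits form and the classical theorem suffices. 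An alternative to the whole argument, for anyone preferring to cite the hereditary case directly, would be to identify $(\Gamma_\circ, K|_{\Gamma_\circ})$ with the bilinear lattice of a perpendicular category $\overline{\kP}^\perp \cong H\mbox{--}\mathsf{mod}$ inside $\Coh(\XX)$ for a connected hereditary algebra $H$ (using Lemma~\ref{L:PerpendicularCalculus} and the dichotomy recalled from \cite{KussinMeltzer}) and then appeal to the Weyl-group description of \cite{HuberyKrause}; the geometric-representation argument above, however, is self-contained and applies to every canonical bilinear lattice, not only to those arising from a curve.
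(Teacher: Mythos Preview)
Your argument is correct and runs parallel to the paper's, but with a different packaging of the external input. The paper simply observes that $(\Gamma_\circ, K|_{\Gamma_\circ \times \Gamma_\circ})$ together with $R_\circ$ is a \emph{generalized Cartan lattice} in the sense of Hubery--Krause and cites their Lemma~2.7 and the discussion after their Lemma~3.1, which already contain the Coxeter-system statement, reducedness, and the positive/negative dichotomy. You instead normalize the simple roots, read off the rescaled Gram matrix from (\ref{E:MatrixRescaled}) with the $\alpha_{0^\ast}$-row and column deleted, and recognize it as the form of a (generalized) geometric representation of a Coxeter system, invoking Tits--Vinberg faithfulness and the standard root-sign theory directly. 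Both routes ultimately rest on the same circle of ideas; yours is more explicit about \emph{which} Coxeter matrix arises and where the subtlety sits (the possible entries $-\tfrac12\sqrt{\varepsilon d_i}<-1$ in the wild case, forcing the extended version of Tits' theorem rather than the textbook one), while the paper's one-line citation is terser but hides this. Your alternative via the perpendicular category $\overline\kP^\perp$ is in fact closer in spirit to the paper's proof than you suggest: the Hubery--Krause framework is precisely the abstract bilinear-lattice formulation of the hereditary-algebra case, so no curve is needed there either.
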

\begin{proof}
    This follows from the fact that the bilinear lattice $\bigl(\Gamma_\circ, K\big|_{\Gamma_\circ \times \Gamma_\circ}\bigr)$ together with the complete exceptional sequence $R_\circ$ is a generalized Cartan lattice in the sense of \cite{HuberyKrause}. See Lemma 2.7 and the discussion after Lemma 3.1 therein.
\end{proof}

In Appendix \ref{S:Comparison}, we illustrate the \emph{Dynkin diagram} associated with the datum $(W_\circ,S_\circ)$. It turns out that the diagram is ``star-shaped'' (see Definition \ref{D:DynkinDiagramQuotient} and Figure \ref{rootDiagramQuotient}).

\subsection{Hyperbolic Extension}
In this section, we recall the notion of hyperbolic extensions following \cite{SaitoI}. We provide a detailed treatment on the linear algebra behind them in Appendix \ref{S:Appendix}.

Note that we can restrict our discussion on hyperbolic extensions to the tubular case as any hyperbolic extension $\widetilde{W}$ of a non-tubular reflection group of canonical type $W$ is just isomorphic to the group itself $W\cong\widetilde{W}$ by Lemma \ref{L:lowDimExt1}.

A careful definition of hyperbolic extensions following \cite{SaitoI} gives a choice between many hyperbolic extensions of a reflection group. However, for a tubular reflection group of canonical type $W$, any two hyperbolic extensions $\widetilde{W}, \widetilde{W}'$ of $W$ with respect to proper subspaces of $\Rad(B)$ will be isomorphic $\widetilde{W}\cong\widetilde{W}'$ by Corollary \ref{C:lowDimExt2}. Thus we shall simply speak of \emph{the} hyperbolic extension of $W$ and work with the following simple construction.

Let $(\Gamma, K)$ be a canonical bilinear lattice associated with a symbol 
\begin{equation*}
\sigma = \left( \begin{array}{ccc} 
            p_1 & \dots & p_t \\
            d_1 & \dots & d_t \\
            f_1 & \dots & f_t 
             \end{array} \right).
\end{equation*}
of tubular type such that $\varepsilon = 1$. We know that $\Gamma$ has a basis
\begin{equation}\label{E:distinguishedBasis2}
    \bigl(\alpha_{(t, p_t-1)}, \dots, \alpha_{(t, 1)}, \dots, \alpha_{(1, p_1-1)}, \dots, \alpha_{(1, 1)}, \alpha_0, a \bigr),
\end{equation}
i.e. $\Gamma$ decomposes as $\Gamma_\circ\oplus\llangle a\rrangle$. Let $V = \RR\otimes_{\ZZ} \Gamma$ be the real hull of $\Gamma$. Abusing the notation, we denote by $B:V \times V\lar\RR$ the extension of $B:\Gamma \times\Gamma\lar\ZZ$ to $V$.

\begin{definition}\label{hypConstruction}
    The \emph{hyperbolic extension} $(\widetilde{V},\widetilde{B},\iota)$ of $(V,B)$ is defined by
    \begin{itemize}
        \item $\widetilde{V}$ is the vector space spanned by
        $$
        \bigl(\alpha_{(t, p_t-1)}, \dots, \alpha_{(t, 1)}, \dots, \alpha_{(1, p_1-1)}, \dots, \alpha_{(1, 1)}, \alpha_0, a, a' \bigr).
        $$
        \item $\widetilde{B}$ is the symmetric bilinear form on $\widetilde{V}$ such that $\widetilde{B}\big|_{V \times V} = B$ as well as $\widetilde{B}(x,a')=0=\widetilde{B}(a',a')$ for any $x\in\Gamma_\circ$ and $\widetilde{B}(a,a')=1$.
        \item $\iota:V\to\widetilde{V}$ is the natural inclusion.
    \end{itemize}
\end{definition}

For any non-isotropic vector $v\in\widetilde{V}$, we denote by $\tilde{s}_v:\widetilde{V}\to\widetilde{V}$ its reflection in $\widetilde{V}$. Note that non-isotropic vectors $v\in{V}$ are non-isotropic in $\widetilde{V}$ and thus define a reflection $\tilde{s}_v:=\tilde{s}_{\iota(v)}$ in $\widetilde{V}$. In particular, we will again abbreviate $\tilde{s}_{\alpha_\omega}$ by $\tilde{s}_\omega$ for any $\omega\in\overline{\Omega}$.
\begin{definition}\label{D:hyperbolicExtensionReflGrpCanType}
    Let $(\Gamma, K)$ be a canonical bilinear lattice and $W$ the associated reflection group of canonical type. The \emph{hyperbolic extension} $\widetilde{W}$ is given by Definition \ref{D:Coxeter datum} for the simple roots
    \[\widetilde{R}:= \bigl(\alpha_{(t, p_t-1)}, \dots, \alpha_{(t, 1)},\dots,\alpha_{(1, p_1-1)}, \dots, \alpha_{(1, 1)}, \alpha_0, \alpha_{0^\ast}\bigr)\in\widetilde{V}^n.\]
    We denote by $\widetilde{S}$ and $\widetilde{T}$ the corresponding sets of (simple) reflections. In particular, the \emph{hyperbolic Coxeter element} is given by
        \begin{equation}\label{E:DefinitionHyperbolicCoxeter}
    \tilde{c}: = \tilde{s}_{(t, p_t-1)} \dots \tilde{s}_{(t, 1)} \dots \tilde{s}_{(1, p_1-1)} \dots \tilde{s}_{(1, 1)} \tilde{s}_{0} \tilde{s_{0^\ast}} \in \widetilde{W}.
    \end{equation}
\end{definition}

\begin{remark}
    Note that by Proposition \ref{P:HKLemmas} the Coxeter element $c$ in $W$ can also be written as a product of simple reflections.
    \begin{equation*}
    {c}: = {s}_{(t, p_t-1)} \dots {s}_{(t, 1)} \dots {s}_{(1, p_1-1)} \dots {s}_{(1, 1)} {s}_{0} {s_{0^\ast}} \in {W}.
    \end{equation*}
\end{remark}

Take any $\omega\in\overline{\Omega}$, $v\in V$. We have
$$\tilde{s}_\omega(v)=v-\frac{\widetilde{B}(v,\alpha_\omega)}{\widetilde{B}(\alpha_\omega,\alpha_\omega)}\alpha_\omega=v-\frac{{B}(v,\alpha_\omega)}{{B}(\alpha_\omega,\alpha_\omega)}\alpha_\omega=s_\omega(v).$$
Thus, the map sending $\tilde{s}_\omega$ to $s_\omega$ extends to an epimorphism $\pi:\widetilde{W}\to W$. Saito showed that this epimorphism as well as the hyperbolic Coxeter element are instrumental in the investigation of the structure of $\widetilde{W}$.
\begin{lemma}[\cite{SaitoI}, Lemma C]\label{hypSES}
    Let $W$ be a tubular reflection group of canonical type and $\widetilde{W}$ its hyperbolic extension. Then $\widetilde{W}$ is a central extension of $W$. More precisely, we have a short exact sequence
    \[1\to\langle\tilde{c}^p\rangle\xrightarrow{\xi}\widetilde{W}\xrightarrow{\pi}W\to1,\]
    where $\xi$ is the inclusion of $\langle\tilde{c}^p\rangle\subset\widetilde{W}$ into $\widetilde{W}$ and $p=\lcm(p_1,\dots,p_t)=\max(p_1,\dots,p_t)$ is the order of $c\in W$.
\end{lemma}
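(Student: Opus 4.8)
The plan is to read off $\mathsf{Ker}(\pi)$ from the geometry of $\widetilde V$. Recall from the discussion preceding the statement that $\pi:\widetilde W\to W$, $\tilde{s}_\omega\mapsto s_\omega$, is a well-defined epimorphism, obtained by restricting the $\widetilde W$-action to the invariant subspace $\iota(V)\subset\widetilde V$. Since $\pi(\tilde{c})=c$ and $c$ has order $p$ in $W$ --- this, together with $p=\lcm(p_1,\dots,p_t)=\max(p_1,\dots,p_t)$, is a standard property of the Coxeter element in the elliptic case, recorded in Appendix \ref{S:Comparison} --- we have $\tilde{c}^p\in\mathsf{Ker}(\pi)$. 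It therefore remains to show that $\langle\tilde{c}^p\rangle$ is central in $\widetilde W$ and that $\mathsf{Ker}(\pi)=\langle\tilde{c}^p\rangle$; the short exact sequence then follows, with $\xi$ the inclusion.

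The first step is to pin down the shape of an arbitrary $g\in\widetilde W$. The plane $\langle a,a'\rangle$ is $\widetilde B$-non-degenerate and $\widetilde B$-orthogonal to $\RR\Gamma_\circ$, so $\Rad(\widetilde B)=\Rad\bigl(B|_{\RR\Gamma_\circ}\bigr)$, which is one-dimensional in the tubular case by Proposition \ref{P:TypeDistinction}; fix a primitive generator $z\in\Gamma_\circ$. Every reflection $\tilde{s}_\omega$ fixes $\Rad(\widetilde B)$ pointwise and preserves $\iota(V)$, hence so does every $g\in\widetilde W$; and because $a$ lies in the radical of $B$, hence is fixed by $\widetilde W$, while $\widetilde B(a',a)=1$, writing $g(a')=\mu a'+v_g$ with $v_g\in\iota(V)$ forces $\mu=1$, that is $g(a')=a'+v_g$. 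If moreover $g\in\mathsf{Ker}(\pi)$, so that $g|_{\iota(V)}=\mathbbm{1}$, then the isometry relations $\widetilde B(a'+v_g,x)=\widetilde B(a',x)$ for $x\in\iota(V)$ and $\widetilde B(a'+v_g,a'+v_g)=0$ force $v_g\in\Rad(B)$ and annihilate its $a$-component, so $v_g\in\langle z\rangle$. Thus $\mathsf{Ker}(\pi)$ is exactly the group of transvections $a'\mapsto a'+\lambda z$ lying in $\widetilde W$, and since $\widetilde W$ acts discretely (it stabilises a lattice; see Appendix \ref{S:Appendix}) the admissible $\lambda$ form a discrete subgroup of $\RR$, whence $\mathsf{Ker}(\pi)$ is infinite cyclic or trivial. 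Centrality of $\tilde{c}^p$ then follows by a short computation: $\tilde{c}^p$ fixes $\iota(V)$ pointwise and sends $a'\mapsto a'+r$ with $r\in\langle z\rangle$; for any $g\in\widetilde W$ the identity $g\bigl(g^{-1}(a')\bigr)=a'$ yields the cocycle relation $v_g+g(v_{g^{-1}})=0$, and combining this with $g(r)=r$ one reads off $g\tilde{c}^p g^{-1}(a')=a'+r$ while $g\tilde{c}^p g^{-1}|_{\iota(V)}=\mathbbm{1}$, so that $g\tilde{c}^p g^{-1}=\tilde{c}^p$.

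It remains to prove $\mathsf{Ker}(\pi)=\langle\tilde{c}^p\rangle$, equivalently that $r$ generates the cyclic group of transvection vectors realised by $\mathsf{Ker}(\pi)$; in particular $r\ne 0$. Here one computes $\tilde{c}$ on $a'$ directly from the basis \eqref{E:distinguishedBasis2}: since $\tilde{s}_\omega(a')=a'$ for all $\omega\in\Omega\cup\{0\}$ and only $\tilde{s}_{0^\ast}$ moves $a'$, one gets $\tilde{c}(a')=a'+v_1$ with $v_1\in\iota(V)$ an explicit vector (essentially $c$ applied to a multiple of $\alpha_{0^\ast}$), whence
\[
\tilde{c}^p(a')=a'+\bigl(\mathbbm{1}+c+\dots+c^{p-1}\bigr)(v_1),
\]
and the increment lies in $\mathsf{Fix}(c)=\Rad(B)$ by Corollary \ref{C:FixPoints}, hence in $\langle z\rangle$ by the constraint above. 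The remaining point --- that this increment is a \emph{primitive} transvection vector for $\widetilde W$, so that no proper power of $\tilde{c}$ already lies in $\mathsf{Ker}(\pi)$ and no other element of $\widetilde W$ produces a shorter transvection --- is where the tubular normalisation (the relations among the $p_i,d_i,f_i$ encoded by $\delta=0$) is genuinely used, and this is the step I expect to be the main obstacle. It is precisely Saito's computation in \cite[Lemma C]{SaitoI}, whose linear-algebraic core we recast for canonical bilinear lattices in Appendix \ref{S:Appendix}. Granting it, $\mathsf{Ker}(\pi)=\langle\tilde{c}^p\rangle$ is infinite cyclic and $\xi:\langle\tilde{c}^p\rangle\hookrightarrow\widetilde W$ together with $\pi:\widetilde W\to W$ yields the asserted short exact sequence with central kernel.
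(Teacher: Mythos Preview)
The paper does not prove this lemma at all: it is stated as Saito's result \cite[Lemma C]{SaitoI}, and the only remark the paper adds is that Saito's realisation of the hyperbolic extension differs from Definition \ref{hypConstruction}, so one invokes Corollary \ref{C:lowDimExt2} to transfer the statement. Your proposal, by contrast, reconstructs most of the argument from scratch. Your structural analysis is correct and more informative than the paper's bare citation: the identification of $\mathsf{Ker}(\pi)$ with the group of transvections $a'\mapsto a'+\lambda z$ (for $z$ generating $\Rad(\widetilde B)$), the isometry argument killing the $a$-component of $v_g$, and the cocycle computation giving centrality are all sound. One small point: the discreteness claim (``$\widetilde W$ stabilises a lattice'') is not literally justified by Appendix \ref{S:Appendix}; what one actually needs is that the increments $v_g=g(a')-a'$ lie in the finitely generated subgroup of $V$ spanned by the vectors $\widetilde B(a',\alpha_\omega^\sharp)\alpha_\omega$, which follows by induction on $\ell_{\widetilde S}(g)$.

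Where both approaches coincide is the final step: you explicitly flag the primitivity of the transvection vector $r$ of $\tilde{c}^p$ as the ``main obstacle'' and defer it to Saito, just as the paper does. So your proof is not more complete than the paper's, only more explicit about what is being cited and why. The paper later carries out the relevant explicit computation of $\tilde c(a')$ in Proposition \ref{P:ActionCoxeter} and of the Jordan form of $\tilde c$ in Proposition \ref{P:JNFCoxeter}; these are close to what would be needed to finish your argument directly, but the paper uses them for a different purpose (reflection length) rather than to re-prove Lemma \ref{hypSES}.
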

Note that Saito uses a different realization of the hyperbolic extension compared to Definition \ref{hypConstruction}. The result still holds true by Corollary \ref{C:lowDimExt2}. However, we cannot use his explicit formula for $\tilde{c}$ or $\tilde{c}^p$ and have to compute ourselves. A straightforward computation gives the following result.
\begin{lemma}\label{L:UsefulFormulaReflection}
For any $\beta \in \Phi_\circ$ and $k,k' \in \ZZ$ we have:
\begin{equation}
\tilde{s}_{\beta + ka} \tilde{s}_{\beta +k'a}(a') = 
a' + \dfrac{2(k'-k)}{(\beta, \beta)} \beta -
\dfrac{2(k'-k)^2}{(\beta, \beta)}a. 
\end{equation}
In particular, for $\beta = \alpha_0$, $k = 0$ and $k' = 1$ we obtain
\begin{equation}\label{E:Useful}
\tilde{s}_{0} \tilde{s}_{0^\ast}(a') = a' + \dfrac{2}{(\alpha_0, \alpha_0)}(\alpha_0-a). 
\end{equation}
\end{lemma}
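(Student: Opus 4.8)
The plan is to reduce everything to the defining relations of the hyperbolic form $\widetilde{B}$ and then carry out the two reflections in the correct order. First I would record the elementary pairings that will be used repeatedly. Since $a\in\Rad(B)$, we have $\widetilde{B}(a,v)=0$ for every $v\in V$, while by construction $\widetilde{B}(a,a')=1$, $\widetilde{B}(a',a')=0$, and $\widetilde{B}(a',\beta)=0$ because $\beta\in\Phi_\circ\subset\Gamma_\circ$. Moreover $\beta$ is a pseudo-root of $\Gamma_\circ$, so $(\beta,\beta)>0$. Combining these, for any $m\in\ZZ$ one obtains $\widetilde{B}(\beta+ma,\beta+ma)=(\beta,\beta)\neq0$ (so the reflections $\tilde{s}_{\beta+ma}$ are legitimate), together with $\widetilde{B}(\beta,\beta+ma)=(\beta,\beta)$, $\widetilde{B}(a,\beta+ma)=0$, and $\widetilde{B}(a',\beta+ma)=m$.

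Next I would apply $\tilde{s}_v(x)=x-2\,\widetilde{B}(x,v)\,\widetilde{B}(v,v)^{-1}v$ twice, keeping in mind that the inner reflection is $\tilde{s}_{\beta+k'a}$. Using the pairings above,
\[
\tilde{s}_{\beta+k'a}(a')=a'-\frac{2k'}{(\beta,\beta)}\bigl(\beta+k'a\bigr)=a'-\frac{2k'}{(\beta,\beta)}\beta-\frac{2k'^2}{(\beta,\beta)}a.
\]
Then I would pair this vector with $\beta+ka$: the $a'$-term contributes $k$, the $\beta$-term contributes $-\tfrac{2k'}{(\beta,\beta)}\cdot(\beta,\beta)=-2k'$, and the $a$-term contributes $0$, so $\widetilde{B}\bigl(\tilde{s}_{\beta+k'a}(a'),\beta+ka\bigr)=k-2k'$. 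Applying $\tilde{s}_{\beta+ka}$ gives
\[
\tilde{s}_{\beta+ka}\tilde{s}_{\beta+k'a}(a')=a'-\frac{2k'}{(\beta,\beta)}\beta-\frac{2k'^2}{(\beta,\beta)}a-\frac{2(k-2k')}{(\beta,\beta)}\bigl(\beta+ka\bigr),
\]
and the final step is to collect coefficients: the coefficient of $\beta$ becomes $\tfrac{-2k'-2k+4k'}{(\beta,\beta)}=\tfrac{2(k'-k)}{(\beta,\beta)}$, and, recognizing $k^2-2kk'+k'^2=(k'-k)^2$, the coefficient of $a$ becomes $-\tfrac{2(k'-k)^2}{(\beta,\beta)}$, while the $a'$-coefficient is unchanged. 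This is exactly the asserted identity. The special case \eqref{E:Useful} then follows by taking $\beta=\alpha_0$, $k=0$, $k'=1$, and recalling that in the tubular normalization $\varepsilon=1$ one has $a=\alpha_{0^\ast}-\alpha_0$, so that $\tilde{s}_0=\tilde{s}_{\alpha_0}$ and $\tilde{s}_{0^\ast}=\tilde{s}_{\alpha_0+a}$.

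I do not anticipate a genuine obstacle here — the argument is an entirely routine direct computation. The only points that require a little care are: remembering that $a$ pairs to zero with every vector of $V$ but to $1$ with $a'$ (this is precisely what makes the $a'$-coefficient survive and the $a$-coefficient nontrivial); keeping the order of the two reflections straight; and verifying at the outset that each $\beta+ma$ is non-isotropic in $\widetilde{V}$ so that the reflections are defined.
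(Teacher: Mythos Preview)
Your proof is correct and is precisely the ``straightforward computation'' the paper alludes to without writing out; there is no alternative approach to compare, as the paper gives no proof beyond that phrase.
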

\begin{proposition}\label{P:ActionCoxeter}  The following formula is true:
\begin{equation}\label{E:ActionCoxeter}
\tilde{c}(a') = a' + \frac{2}{(\alpha_0, \alpha_0)}\left(\alpha_0-a + \sum\limits_{(i,j)\in\Omega} e_i\alpha_{(i,j)}\right).
\end{equation}
\end{proposition}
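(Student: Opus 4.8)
The plan is to strip the reflections off $a'$ one block at a time, starting from the two innermost factors $\tilde{s}_0\tilde{s}_{0^\ast}$, whose action has already been recorded. By Lemma \ref{L:UsefulFormulaReflection}, equation \eqref{E:Useful}, we have
\[
\tilde{s}_{0}\tilde{s}_{0^\ast}(a') = a' + \frac{2}{(\alpha_0,\alpha_0)}\bigl(\alpha_0 - a\bigr),
\]
so by \eqref{E:DefinitionHyperbolicCoxeter} it remains to apply $\tilde{\gamma} := \tilde{s}_{(t,p_t-1)}\cdots\tilde{s}_{(t,1)}\cdots\tilde{s}_{(1,p_1-1)}\cdots\tilde{s}_{(1,1)}$ to the right-hand side.

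First I would pin down how the factors $\tilde{s}_{(i,j)}$ with $(i,j)\in\Omega$ act on the relevant vectors $a'$, $a$ and $\alpha_0$. Since $\alpha_{(i,j)}\in\Gamma_\circ$, the definition of $\widetilde{B}$ (Definition \ref{hypConstruction}) gives $\widetilde{B}(a',\alpha_{(i,j)}) = 0$, hence $\tilde{s}_{(i,j)}(a') = a'$; since $a\in\Rad(B)$ and $\widetilde{B}|_{V\times V}=B$, we get $\widetilde{B}(a,\alpha_{(i,j)}) = 0$, hence $\tilde{s}_{(i,j)}(a) = a$; and on $V$ each $\tilde{s}_{(i,j)}$ restricts to $s_{(i,j)}$, as recorded just above. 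Therefore $\tilde{\gamma}(a') = a'$, $\tilde{\gamma}(a) = a$, and $\tilde{\gamma}(\alpha_0) = \gamma(\alpha_0)$, where $\gamma := s_{(t,p_t-1)}\cdots s_{(1,1)} \in W$; so the task is reduced to computing $\gamma(\alpha_0)$ in $\Gamma_\circ$.

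Next I would compute $\gamma(\alpha_0)$ block by block. Writing $c_i := s_{(i,p_i-1)}\cdots s_{(i,1)}$, we have $\gamma = c_t\cdots c_1$; roots attached to different points $x_i$ are mutually orthogonal (the off-diagonal blocks of \eqref{E:GramMatrix} vanish), so the $c_i$ commute pairwise and $c_i$ fixes $\alpha_{(i',j')}$ for $i'\ne i$. Inside the $i$-th block the $\alpha_{(i,j)}$ form a simple system of type $A_{p_i-1}$ (cf.\ the proof of Proposition \ref{P:TypeDistinction}), giving $s_{(i,j)}(\alpha_{(i,j-1)}) = \alpha_{(i,j-1)} + \alpha_{(i,j)}$ and $s_{(i,j)}(\alpha_{(i,k)}) = \alpha_{(i,k)}$ for $|j-k|\ge2$; moreover $(\alpha_{(i,1)}^\sharp,\alpha_0) = -e_i$ (recalled at the start of Subsection \ref{SubsectionQuotientCoxGrp}) and $B(\alpha_0,\alpha_{(i,j)}) = 0$ for $j\ge2$. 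A straightforward induction on $k$ (remembering that, by the ordering in \eqref{E:StandardCollection}, $c_i$ applies $s_{(i,1)}$ first) then yields
\[
s_{(i,k)}\cdots s_{(i,1)}(\alpha_0) = \alpha_0 + e_i\bigl(\alpha_{(i,1)}+\cdots+\alpha_{(i,k)}\bigr),
\]
so that $c_i(\alpha_0) = \alpha_0 + e_i\sum_{j=1}^{p_i-1}\alpha_{(i,j)}$. Applying $c_1,\dots,c_t$ in turn, each leaving the roots added by the other blocks fixed, gives $\gamma(\alpha_0) = \alpha_0 + \sum_{(i,j)\in\Omega} e_i\,\alpha_{(i,j)}$.

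Finally, assembling the pieces,
\[
\tilde{c}(a') = \tilde{\gamma}\bigl(\tilde{s}_0\tilde{s}_{0^\ast}(a')\bigr) = a' + \frac{2}{(\alpha_0,\alpha_0)}\bigl(\gamma(\alpha_0) - a\bigr) = a' + \frac{2}{(\alpha_0,\alpha_0)}\Bigl(\alpha_0 - a + \sum_{(i,j)\in\Omega} e_i\,\alpha_{(i,j)}\Bigr),
\]
which is exactly \eqref{E:ActionCoxeter}. I do not expect a real obstacle here: the computation is elementary, and the only points requiring genuine care are deducing the two fixing statements $\tilde{s}_{(i,j)}(a) = a$ and $\tilde{s}_{(i,j)}(a') = a'$ directly from Definition \ref{hypConstruction}, and carrying out the block induction with the $A$-type relations in the correct order.
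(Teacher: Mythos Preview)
Your proposal is correct and follows essentially the same route as the paper: start from \eqref{E:Useful}, use that each $\tilde{s}_{(i,j)}$ fixes both $a'$ and $a$ and restricts to $s_{(i,j)}$ on $V$, and then compute $s_{(t,p_t-1)}\cdots s_{(1,1)}(\alpha_0)$. The only difference is that the paper declares this last computation ``straightforward'' without spelling it out, while you carry out the block-by-block $A$-type induction explicitly.
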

\begin{proof} Using (\ref{E:Useful}) we obtain: 
\begin{align*}
\tilde{c}(a') &= \tilde{s}_{(t, p_t-1)} \dots \tilde{s}_{(t, 1)} \dots \tilde{s}_{(1, p_1-1)} \dots \tilde{s}_{(1, 1)} \bigl(\tilde{s}_{0} \tilde{s}_{0^\ast}(a')\bigr) \\
&= a' + \frac{2}{(\alpha_0, \alpha_0)}
\bigl(s_{(t, p_t-1)} \dots s_{(t,1)} \dots s_{(1, p_1-1)} \dots s_{(1,1)}(\alpha_0)-a\bigr)
\end{align*}
since $\tilde{s}_{(i,j)}\big|_{V} = s_{(i,j)}$, $\tilde{s}_{(i,j)}(a') = a'$  and ${s}_{(i,j)}(a) = a$ for all $(i, j) \in \Omega$. A  straightforward computation shows that 
$$
s_{(t, p_t-1)} \dots s_{(t,1)} \dots s_{(1, p_1-1)} \dots s_{(1,1)}(\alpha_0) = \alpha_0 + \sum\limits_{(i,j)\in\Omega} e_i\alpha_{(i,j)}
$$
which implies the claim.
\end{proof}

\begin{proposition}\label{P:JNFCoxeter} The Jordan normal form of $\tilde{c}$ is
\begin{equation}\label{E:JNFCoxeter}
\diag(\xi_t,\dots,\xi_t^{p_t-1},\dots,\xi_1,\dots,\xi_1^{p_1-1},1)\oplus\begin{pmatrix}1&1\\ 0 &1\end{pmatrix}
\end{equation}
where $\xi_l = \exp{\dfrac{2\pi i}{p_l}}$ for all $1\le l \le t$. In particular, $\mathsf{dim}_{\RR}\bigl(\mathsf{Fix}(\tilde{c})\bigr) = 2$.
\end{proposition}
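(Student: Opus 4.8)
The plan is to analyze $\tilde{c}$ through the decomposition $\widetilde{V} = V \oplus \RR a'$, where $V = \RR\otimes_{\ZZ}\Gamma$ and $\Gamma = \Gamma_\circ\oplus\llangle a\rrangle$. Every simple reflection $\tilde{s}_\omega$ preserves $V$ and restricts there to $s_\omega$ (the computation recorded just before Lemma \ref{hypSES}), so $V$ is $\tilde{c}$-invariant with $\tilde{c}\big|_V = c$, the Coxeter element of $W$. By Proposition \ref{P:ActionCoxeter} we have $\tilde{c}(a') - a' \in V$; hence in a basis of $\widetilde{V}$ adapted to $V\subset\widetilde{V}$ the matrix of $\tilde{c}$ is block triangular with diagonal blocks $[c]$ and $(1)$, so $\chi_{\tilde{c}}(x) = (x-1)\,\chi_c(x)$.

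Next I would compute $\chi_c$. Since $a\in\mathsf{Rad}(B)$, every reflection $s_\omega$ (hence $c$) fixes $a$, and by \eqref{E:decompositionReflection} the map induced by $c$ on $V/\llangle a\rrangle_{\RR}$ is, via the epimorphism $p$ of Lemma \ref{epi}, the element $p(c)\in W_\circ$. As $p(s_0) = p(s_{0^\ast})$ (both are the reflection in $\alpha_0$ on $\Gamma_\circ$, since $\alpha_{0^\ast} = \varepsilon\alpha_0 + a$), the two innermost factors of $p(c) = p(s_{(t,p_t-1)})\cdots p(s_{0}) p(s_{0^\ast})$ cancel, leaving $p(c) = p(s_{(t,p_t-1)})\cdots p(s_{(1,1)})$, the product of the simple reflections of $W_\circ$ along the arms only. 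Since the arm subspaces $\llangle\alpha_{(i,1)},\dots,\alpha_{(i,p_i-1)}\rrangle_{\RR}$ are pairwise orthogonal (see \eqref{E:GramMatrix}), this element is the commuting product of Coxeter elements of type $A_{p_1-1},\dots,A_{p_t-1}$ on these subspaces, together with a trivial action on $\llangle\alpha_0\rrangle_{\RR}$ modulo the arms; its characteristic polynomial is $(x-1)\prod_{l=1}^t\frac{x^{p_l}-1}{x-1}$. Combining,
\[
\chi_{\tilde{c}}(x) \;=\; (x-1)\,\chi_c(x) \;=\; (x-1)^3\prod_{l=1}^t\frac{x^{p_l}-1}{x-1}.
\]
In particular the eigenvalue $1$ has multiplicity exactly $3$ (none of $\xi_l,\dots,\xi_l^{p_l-1}$ equals $1$), while every other eigenvalue of $\tilde{c}$ is some $\xi_l^{\,j}\neq 1$; moreover all eigenvalues are $p$-th roots of unity, where $p=\max_l p_l$ is the order of $c$ by Lemma \ref{hypSES}.

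It remains to determine the Jordan blocks of $\tilde{c}$. By Lemma \ref{hypSES} the kernel $\langle\tilde{c}^{\,p}\rangle$ of $\pi$ is infinite cyclic, so $\tilde{c}$ has infinite order; since all of its eigenvalues are $p$-th roots of unity, $\tilde{c}$ is not semisimple. Also $\tilde{w}\big|_V = \pi(\tilde{w})$ for all $\tilde{w}\in\widetilde{W}$ (both are homomorphisms $\widetilde{W}\to\GL(V)$ agreeing on generators), so $\tilde{c}^{\,p}\big|_V = c^{\,p} = \id_V$; hence $\tilde{c}^{\,p} = \id + N_0$ with $\im(N_0)\subseteq V\subseteq\ker(N_0)$ and $N_0\neq 0$, which forces $N_0^2 = 0$ and $\rank N_0 = 1$. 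Writing the Jordan decomposition $\tilde{c} = \tilde{c}_s\,\tilde{c}_u$ with $\tilde{c}_u = \id + N$, one has $\tilde{c}_s^{\,p} = \id$, so $\id + N_0 = (\id+N)^p$; since a Jordan block of $N$ of size $\geq 3$ would force $\rank(\tilde{c}^{\,p}-\id)\geq 2$, we conclude $N^2 = 0$, $N_0 = pN$ and $\rank N = 1$, so $\tilde{c}$ has a unique non-trivial Jordan block, of size $2$, while being diagonalizable on every other $\tilde{c}_s$-eigenspace. To locate this block, telescope \eqref{E:ActionCoxeter}: $\tilde{c}^{\,p}(a') = a' + \sum_{k=0}^{p-1} c^{\,k}(w)$ with $w = \tfrac{2}{(\alpha_0,\alpha_0)}\bigl(\alpha_0 - a + \sum_{(i,j)\in\Omega}e_i\alpha_{(i,j)}\bigr)$, so the non-zero vector $v_0 := N(a') = \tfrac1p\sum_{k=0}^{p-1}c^{\,k}(w)$ lies in $\im(\id+c+\cdots+c^{\,p-1})\subseteq\mathsf{Fix}(c)$ (using $c^{\,p}=\id$; cf.\ Corollary \ref{C:FixPoints}); as $\im N = \llangle v_0\rrangle_{\RR}$ and $N$ commutes with $\tilde{c}_s$, the vector $v_0$ is a common eigenvector of $\tilde{c}_s$ and $N$, and $\tilde{c}(v_0) = c(v_0) = v_0$ forces its $\tilde{c}_s$-eigenvalue to be $1$. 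Hence the non-trivial block of $\tilde{c}$ is $\left(\begin{smallmatrix}1 & 1\\ 0 & 1\end{smallmatrix}\right)$; together with the eigenvalue list this gives \eqref{E:JNFCoxeter}, and $\dim_{\RR}\mathsf{Fix}(\tilde{c})$ equals the number of eigenvalue-$1$ Jordan blocks, which is $2$.

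The step I expect to be the main obstacle is this last one: one must rule out at once that $\tilde{c}$ is semisimple and that its non-trivial Jordan block exceeds size $2$, and then show that it sits at the eigenvalue $1$. Here Lemma \ref{hypSES} (equivalently, $\tilde{c}^{\,p}\neq\id$) and the explicit formula of Proposition \ref{P:ActionCoxeter} for $w$ are both indispensable, whereas the characteristic-polynomial computation and the block-triangular reductions are routine by comparison.
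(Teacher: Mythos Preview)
Your argument is correct, but it takes a different route from the paper's. The paper works directly in an eigenbasis of $c$ on $V_{\CC}$: since $c^p=\id$ (so $c$ is diagonalizable) with $\chi_c(x)=(x-1)^2\prod_l\frac{x^{p_l}-1}{x-1}$ (cited from Lenzing), one chooses eigenvectors $v_1,\dots,v_n$ with $v_{n-1}=a$, $v_n=b$ spanning $\mathsf{Rad}(B)=\mathsf{Fix}(c)$ and the first $n-1$ spanning the rank-zero subspace $\{v\in V_{\CC}\mid\mathsf{rk}(v)=0\}$. In the basis $(v_1,\dots,v_n,a')$ the matrix of $\tilde{c}$ is upper triangular with the last column determined by $\tilde{c}(a')-a'$; the key point is that by Proposition~\ref{P:ActionCoxeter} one has $\mathsf{rk}(\tilde{c}(a')-a')>0$, so the coefficient $\lambda$ at $v_n=b$ is non-zero, which immediately yields the Jordan form.

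Your version replaces the citation of Lenzing by the quotient argument through $p:W\to W_\circ$ (which is a nice self-contained computation), and replaces the single rank observation by a structural analysis: infinite order of $\tilde{c}$ from Lemma~\ref{hypSES} forces non-semisimplicity, then $\tilde{c}^p\big|_V=\id$ bounds the nilpotent rank and block size, and finally $\im N\subseteq\mathsf{Fix}(c)$ pins the block to eigenvalue $1$. This is longer and leans on Saito's Lemma~C for $\tilde{c}^p\neq\id$, whereas the paper's proof is logically independent of Lemma~\ref{hypSES} and gets everything from the single fact $\mathsf{rk}(\tilde{c}(a')-a')>0$. (Incidentally, you could also avoid Lemma~\ref{hypSES}: since $c$ preserves rank, $\mathsf{rk}(v_0)=\mathsf{rk}(w)>0$ already shows $v_0\neq 0$ directly.)
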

\begin{proof} It is clear that $V$ is a $\tilde{c}$--invariant subspace of $\widetilde{V}$ and $\tilde{c}\big|_{V} = c$. Recall the rank function defined in Definition \ref{D:rankFunction}. A direct computation shows that
$$\left\{v\in V \,\big|\, \mathsf{rk}(v) = 0 \right\}=\llangle a, \alpha_{(i,j)}|(i,j)\in\Omega\rrangle$$
is a $c$--invariant subspace of $V$ and thus a $\tilde{c}$--invariant subspace of $V\subset\widetilde{V}$. By \cite[Proposition 7.8]{LenzingKTheory}, the characteristic polynomial of $c$ is given by the formula
$$\chi_c(x) = (x-1)^2 \prod\limits_{l=1}^t \dfrac{x^{p_l}-1}{x-1}.$$
Furthermore, $c^p = \mathbbm{1}$, where $p=\lcm(p_1, \dots, p_t)$ as in Lemma \ref{hypSES}. This implies that $c$ is diagonalizable. Next, $\mathsf{Rad}(B)\subset\Gamma$ is fixed pointwise by $c$. Its real (or complex) hull is generated by the elements $a=\alpha_{0^*}-\alpha_0$ and
\begin{equation}\label{E:radElmtB}
    b=\alpha_0+\sum_{(i,j)\in\Omega}\frac{p_i-j}{p_i}e_i\alpha_{(i,j)}.
\end{equation}
The Jordan normal form of $c$ is given by the matrix
$$\diag(\xi_t,\dots,\xi_t^{p_t-1},\dots,\xi_1,\dots,\xi_1^{p_1-1},1,1)$$
where the last eigenvalue corresponds to the eigenvector $v_n:=b$ and the second-to-last to $v_{n-1}:=a$. Hence, the span of the first $n-1$ eigenvectors $v_1,\dots,v_{n-1}$ of $c$ is the vector space $\left\{v\in V_{\CC} \,\big|\, \mathsf{rk}(v) = 0 \right\}$. The matrix of $\tilde{c}$ in the basis $(v_1, \dots, v_n, a')$ of $\widetilde{V}_\CC$ has the following form:
$$
\left[
\begin{array}{ccccccccccc}
\xi_1 &      &  & & & & &  & & &  \ast\\
     & \ddots &      &  & & & & & & & \vdots\\
     & &  \xi_1^{p_1-1} &  & & & & & & & \ast\\
    &   &  &    \ddots & & & & & & & \vdots\\
     &   &  &    & &  \xi_t & & & & & \ast\\ 
&   &  &    & &   &   \ddots & & & & \vdots\\
&   &  &    & &   &   &    \xi_t^{p_t-1}  & & & \ast\\ 
&   &  &    & &   &   &      &    1 & & \ast\\
&   &  &    & &   &   &      &     & 1 & \lambda\\
&   &  &    & &   &   &      &     &  & 1\\
\end{array}
\right]
$$
for some $\lambda \in \CC$. By (\ref{E:ActionCoxeter}) we have
$$
\tilde{c}(a')-a' = \frac{2}{(\alpha_0, \alpha_0)}\left(\alpha_0-a + \sum\limits_{(i,j)\in\Omega}e_i\alpha_{(i,j)}\right) \in V.
$$
It follows that $\mathsf{rk}(\tilde{c}(a')-a') > 0$ implying that $\lambda \ne 0$. Hence, the Jordan normal form of $\tilde{c}$ is given by (\ref{E:JNFCoxeter}), as asserted. 
\end{proof}

\begin{corollary}\label{C:reflectionLengthHyp}
We have: $\ell_{\widetilde{T}}(\tilde{c}) = n$.
\end{corollary}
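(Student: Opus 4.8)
The plan is to mimic the proof of Corollary \ref{C:reflectionLength}: sandwich $\ell_{\widetilde T}(\tilde c)$ between an explicit upper bound and a fixed-space lower bound differing by exactly one, and then use a determinant (parity) argument to decide between the two possibilities.

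First I would record the upper bound: by the defining formula \eqref{E:DefinitionHyperbolicCoxeter}, $\tilde c = \tilde s_{(t, p_t-1)} \dots \tilde s_{(1,1)} \tilde s_0 \tilde s_{0^\ast}$ is a product of the $n$ simple reflections of $\widetilde W$, so $\ell_{\widetilde T}(\tilde c) \le n$. For the lower bound I would observe that $\widetilde V$ has dimension $n+1$ (the spanning family in Definition \ref{hypConstruction} has $\bigl(\sum_i (p_i-1)\bigr) + 3 = n+1$ members), while Proposition \ref{P:JNFCoxeter} gives $\mathsf{dim}_{\RR}\bigl(\mathsf{Fix}(\tilde c)\bigr) = 2$; hence $\mathsf{cod}\bigl(\mathsf{Fix}(\tilde c)\bigr) = n - 1$. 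Every element of $\widetilde T$ is a $\widetilde W$-conjugate of some $\tilde s_{\alpha_i}$, hence equals $\tilde s_\gamma$ for a vector $\gamma$ with $\widetilde B(\gamma, \gamma) = \widetilde B(\alpha_i, \alpha_i) > 0$ (the simple roots are non-isotropic in $\widetilde V$ since $\widetilde B|_{V \times V} = B$), so by Lemma \ref{L:ResultsRealReflect}(d) its fixed space has codimension one in $\widetilde V$ and its determinant is $-1$. Running the proof of Lemma \ref{L:estimatelength} verbatim inside $\mathsf{O}(\widetilde V, \widetilde B)$ --- writing $\tilde c = t_1 \dots t_r$ with $r = \ell_{\widetilde T}(\tilde c)$ and applying Lemma \ref{L:EstimateCodim} --- then gives $\ell_{\widetilde T}(\tilde c) \ge \mathsf{cod}\bigl(\mathsf{Fix}(\tilde c)\bigr) = n - 1$.

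Finally, since each $t_i$ above has determinant $-1$ while $\tilde c$ is a product of $n$ simple reflections, the parity argument of Proposition \ref{P:EstimatesLengthCoxeter}(b) gives $(-1)^{\ell_{\widetilde T}(\tilde c)} = \mathsf{det}(\tilde c) = (-1)^n$, i.e.~$\ell_{\widetilde T}(\tilde c) \equiv n \pmod 2$; combined with $n-1 \le \ell_{\widetilde T}(\tilde c) \le n$ this forces $\ell_{\widetilde T}(\tilde c) = n$. I expect no genuine obstacle: the only substantive point is that the bare estimate $\ell_{\widetilde T}(\tilde c) \ge \mathsf{cod}\bigl(\mathsf{Fix}(\tilde c)\bigr)$ yields only $n-1$ --- because $\mathsf{Fix}(\tilde c)$ is two-dimensional while $\tilde c$ is a product of $n$ simple reflections in the $(n+1)$-dimensional space $\widetilde V$ --- so the parity refinement, legitimate precisely because the gap is one, is genuinely needed, exactly as in the passage from Proposition \ref{P:EstimatesLengthCoxeter} to Corollary \ref{C:reflectionLength}. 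The remaining bookkeeping is just to confirm that Lemmas \ref{L:ResultsRealReflect}, \ref{L:EstimateCodim} and \ref{L:estimatelength}, although stated in the bilinear-lattice framework, apply verbatim to the (possibly degenerate) real quadratic space $(\widetilde V, \widetilde B)$ together with its reflection subgroup $\widetilde W$.
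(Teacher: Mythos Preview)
Your proof is correct and follows essentially the same approach as the paper: use Proposition \ref{P:JNFCoxeter} to get $\mathsf{cod}\bigl(\mathsf{Fix}(\tilde c)\bigr) = n-1$, invoke the codimension bound of Lemma \ref{L:estimatelength}, and finish with the parity argument of Proposition \ref{P:EstimatesLengthCoxeter}(b). Your extra care in verifying that Lemmas \ref{L:ResultsRealReflect}, \ref{L:EstimateCodim} and \ref{L:estimatelength} transfer to the real quadratic space $(\widetilde V, \widetilde B)$ is welcome, since the paper simply cites them without comment.
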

\begin{proof} By Proposition \ref{P:JNFCoxeter}, the fixed space of $\tilde{c}$ has dimension $\mathsf{dim}_{\RR}\bigl(\mathsf{Fix}(\tilde{c})\bigr) = 2$. Now Lemma \ref{L:estimatelength} implies $\ell_{\widetilde{T}}(\tilde{c}) \ge (n+1)-2 = n-1$. On the other hand, Proposition \ref{P:EstimatesLengthCoxeter}(b) implies that $\ell_{\widetilde{T}}(\tilde{c}) \equiv n \; \mathsf{mod}\; 2$. Combining these two facts, we get the result.
\end{proof}

\begin{proposition}\label{P:reflectionlengthCox} Let $(\Gamma, K)$ be a canonical bilinear lattice of tubular type and $c \in W$ be the corresponding Coxeter element. Then we have $\ell_T(c) = n$. 
\end{proposition}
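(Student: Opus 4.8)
The plan is to derive the equality from the structural results on the hyperbolic extension proved above, combined with the fixed-space estimate of Lemma \ref{L:estimatelength}. First I would reduce to the case $\varepsilon = 1$. By Proposition \ref{P:ReductionCaseEpsilon1} the datum $(W,c,S,T)$, and hence $\ell_T(c)$, depends only on the reduced symbol, and every tubular reduced symbol is realised by a symbol with $\varepsilon = 1$; so we may work with the hyperbolic extension $\widetilde W$ of $W$, the epimorphism $\pi\colon\widetilde W\twoheadrightarrow W$ with $\tilde s_\omega\mapsto s_\omega$ and $\ker\pi=\langle\tilde c^{\,p}\rangle$ (Lemma \ref{hypSES}, with $p=\lcm(p_1,\dots,p_t)$), and the Jordan normal form of $\tilde c$ from Proposition \ref{P:JNFCoxeter}. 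Since $c=s_{(t,p_t-1)}\cdots s_{(1,1)}s_0 s_{0^\ast}$ is a product of $n$ elements of $T$ we have $\ell_T(c)\le n$, and $\ell_T(c)\equiv n\pmod 2$ by Proposition \ref{P:EstimatesLengthCoxeter}(b); thus it suffices to show $\ell_T(c)\ge n-1$.

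The core step is to lift a hypothetical short reflection factorization of $c$ up to $\widetilde W$. Suppose $c=s_{\beta_1}\cdots s_{\beta_m}$ with $\beta_i\in\Phi$ and $m=\ell_T(c)$. Writing $w\in W$ as a word in the simple reflections $S$ and lifting that word letter by letter to $\widetilde S$ produces an element $\tilde w\in\widetilde W$ which preserves $V\subset\widetilde V$ and satisfies $\tilde w|_V=w$ (the reflection along a vector of $V$ is computed using only $\widetilde B|_V=B$). Taking $w$ and $\alpha\in R$ with $\beta_i=w(\alpha)$ we get $\tilde s_{\beta_i}=\tilde w\,\tilde s_\alpha\,\tilde w^{-1}\in\widetilde T$; and since $\pi$ coincides with the restriction map $\widetilde W\to\mathsf{O}(V,B)$ on generators, hence everywhere, the element $g:=\tilde s_{\beta_1}\cdots\tilde s_{\beta_m}\in\widetilde W$ satisfies $\pi(g)=s_{\beta_1}\cdots s_{\beta_m}=c=\pi(\tilde c)$. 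Therefore $g\tilde c^{-1}\in\langle\tilde c^{\,p}\rangle$, i.e. $g=\tilde c^{\,1+pj}$ for some $j\in\ZZ$, and consequently $\ell_{\widetilde T}(\tilde c^{\,1+pj})\le m$.

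It remains to bound $\ell_{\widetilde T}(\tilde c^{\,1+pj})$ from below uniformly in $j$. Reading off powers in the Jordan form of Proposition \ref{P:JNFCoxeter}, $\tilde c^{\,k}$ acts on $\widetilde V$ with the single $2\times 2$ unipotent block replaced by $\left(\begin{smallmatrix}1&k\\0&1\end{smallmatrix}\right)$ and each diagonal eigenvalue $\xi_l^i$ replaced by $\xi_l^{ik}$. For $k=1+pj$ one has $k\ne 0$ (as $p\ge 2$), and $p_l\mid p$ forces $\xi_l^{ik}=\xi_l^i\ne 1$ for $1\le i\le p_l-1$; hence $\mathsf{Fix}(\tilde c^{\,1+pj})$ is exactly $2$-dimensional, so $\mathsf{cod}\bigl(\mathsf{Fix}(\tilde c^{\,1+pj})\bigr)=(n+1)-2=n-1$. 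Applying Lemma \ref{L:estimatelength} to $\widetilde W\subset\mathsf{O}(\widetilde V,\widetilde B)$ gives $\ell_{\widetilde T}(\tilde c^{\,1+pj})\ge n-1$, and combining with the previous step yields $\ell_T(c)=m\ge n-1$, which completes the argument.

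The step I expect to require the most care is the bookkeeping between the two linear realizations: checking that $\pi$ is the restriction map $\widetilde W\to\mathsf{O}(V,B)$, that $\Phi$ embeds into the root system of $\widetilde W$ so that the lifted factors are genuine reflections of $\widetilde W$, and --- most delicate --- that the lift of a factorization of $c$ produces $\tilde c^{\,1+pj}$ rather than $\tilde c$ itself. It is precisely this last point that makes the power-independent lower bound for $\mathsf{cod}(\mathsf{Fix}(\tilde c^{\,1+pj}))$ indispensable, and this is exactly what the Jordan normal form in Proposition \ref{P:JNFCoxeter} provides.
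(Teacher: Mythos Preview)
Your argument is correct and is essentially the paper's own proof: lift a putative short reflection factorization of $c$ to $\widetilde W$, identify the lift as $\tilde c^{\,1+pj}$ via $\ker\pi=\langle\tilde c^{\,p}\rangle$, and derive a contradiction from the codimension of the fixed space read off Proposition~\ref{P:JNFCoxeter}. The only cosmetic differences are that the paper phrases the final step as a contradiction from the single remaining value $\ell_T(c)=n-2$, and leaves the reduction to $\varepsilon=1$ and the verification that $\mathsf{Fix}(\tilde c^{\,1+pj})$ is still $2$-dimensional implicit, whereas you spell these out.
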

\begin{proof} It is clear that $\ell_{T}(c) \le n$. Moreover, Proposition \ref{P:EstimatesLengthCoxeter} implies that $\ell_T(c) \ge n-2$ and $\ell_T({c}) \equiv n \; \mathsf{mod}\; 2$.

Suppose that $\ell_T(c) =  n-2$. Then $c = t_1 \dots t_{n-2}$, where $t_i = s_{\gamma_i}$ with some $\gamma_i \in \Phi$ for all $1 \le i \le n-2$. Consider the element  $\tilde{d} := \tilde{t}_1 \dots \tilde{t}_{n-2} \in \widetilde{W}$, where $\tilde{t}_i = \tilde{s}_{\gamma_i}$, i.e. $\pi(\tilde{t}_i)=t_i$ under the group homomorphism $\pi:\widetilde{W}\twoheadarrow W$ from Lemma \ref{hypSES}. Recall that the kernel of $\pi$ is a free cyclic group generated by the central element $\tilde{c}^p$. Since $\pi(\tilde{d})=c=\pi(\tilde{c})$, we conclude that $\tilde{d} = \tilde{c}^{1+pk}$ for some $k\in \ZZ$. Lemma \ref{L:estimatelength} and Proposition \ref{P:JNFCoxeter} imply that
$$
n-2 \ge \ell_{\widetilde{T}}(\tilde{d}) \ge \mathsf{cod}\bigl(\mathsf{Fix}(\tilde{d})\bigr) = (n+1)-2 = n-1,
$$
giving a contradiction. Hence, $\ell_T(c) = n$, as asserted.
\end{proof}

\section{Hurwitz transitivity}\label{S:HurwitzTransitivity}
Our main goal is to show the transitivity of the Hurwitz action on the set $\Red_{T}(c)$ of reduced reflection factorizations of the Coxeter element corresponding to a symbol $\sigma$. According to Proposition \ref{P:ReductionCaseEpsilon1}, we may, without loss of generality, assume $\varepsilon = 1$, i.e.
\begin{equation*}
\sigma = \left( \begin{array}{ccc} 
            p_1 & \dots & p_t \\
            d_1 & \dots & d_t \\
            f_1 & \dots & f_t 
             \end{array} \right).
\end{equation*}

In what follows, we investigate the associated bilinear lattice $(\Gamma, K)$ and the corresponding set of real  roots $\Phi\cong\widetilde\Phi$.
\subsection{Computations in the root system}
Using the star-like structure associated with $\Phi_\circ$ (see Figure \ref{rootDiagramQuotient} of Appendix \ref{S:Comparison}), we are able to give some nice results on the structure of $\Phi$. These results are then used for explicit computations regarding a particular form of factorizations in $\Red_T(c)$ that will arise in the proof of Theorem \ref{newMainTransitivity}.

\begin{definition} Given $\beta, \gamma \in \Phi$, we say that $\beta \sim \gamma$ if there exists $w \in W$ such that $\gamma = w(\beta)$.
\end{definition}

\begin{remark}
In particular, for any $\beta \in \Phi$, we have $\beta \sim -\beta$ and there exists a (not necessarily unique) simple root $\alpha = \alpha_{\omega}$ (with $\omega \in \overline\Omega$) such that $\beta \sim \alpha$. Moreover, as $\alpha_{(i,j)}\sim\alpha_{(i,1)}$ for any $(i,j)\in\Omega$, we even have: for any $\beta \in \Phi$ there exists a (not necessarily unique) simple root $\alpha = \alpha_{\omega}$ with $\omega \in\{0,0^*,(i,1)|1\leq i\leq t\}$ such that $\beta \sim \alpha$.
\end{remark}

We will now examine $\Phi$ through the relation $\sim$.

\begin{proposition}\label{P:Divisibility1} Let $\beta = \lambda_{0} \alpha_0 + \lambda_{0^\ast} \alpha_{0^\ast} + \sum\limits_{(i, j) \in \Omega} \lambda_{(i, j)} \alpha_{(i, j)} \in \Phi$ be  such that 
$\beta \sim \alpha_{(i,j)}$ for some $(i, j) \in \Omega$. Then the following statements hold.
\begin{enumerate}
\item[(a)] We have $f_i \, \big|\, \lambda_0$ and $f_i \,\big|\, \lambda_{0^\ast}$.
\item[(b)] For any $(k, l) \in \Omega$ with $k \ne i$ we have $f_i e_k \, \big|\,  \lambda_{(k, l)}$.
\end{enumerate}
\end{proposition}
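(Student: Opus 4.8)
The plan is to reduce everything to the quotient Coxeter group $W_\circ$ via the split epimorphism $p: W \twoheadrightarrow W_\circ$ from Lemma \ref{epi}, together with the explicit decomposition $\beta = d\beta_\circ + ka$ of real roots. Recall that the form $K$ restricted to $\Gamma_\circ$ makes $(\Gamma_\circ, K|_{\Gamma_\circ})$ a generalized Cartan lattice whose Dynkin diagram is the star-shaped diagram described in Appendix \ref{S:Comparison}: a central node $\alpha_0$ with $t$ arms, the $i$-th arm being a type $A_{p_i-1}$ chain $\alpha_{(i,1)}, \dots, \alpha_{(i,p_i-1)}$, and the edge from $\alpha_0$ to $\alpha_{(i,1)}$ carrying the asymmetric data $(\alpha_{(i,1)}^\sharp, \alpha_0) = -e_i$, $(\alpha_{(i,1)}, \alpha_0^\sharp) = -\varepsilon f_i = -f_i$. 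Since $a \in \Rad(B)$, writing $\beta = \beta_\circ + ka$ with $\beta_\circ \in \Gamma_\circ \otimes \RR$, the coefficient pattern of $\beta$ over the basis $R$ (apart from the $\alpha_{0^\ast}$-coordinate, which by Lemma \ref{epi}(b) equals $\varepsilon k = k$ when $d=1$, or is controlled by $d$ otherwise) is essentially that of $p(\beta) = \beta_\circ \in \Phi_\circ$. So it suffices to prove the divisibility statement for roots $\beta_\circ \in \Phi_\circ$ in the star-shaped generalized Cartan lattice, and then transport it back. I should be a little careful because when $d = \varepsilon = 2$ the $\alpha_0$- and $\alpha_{0^\ast}$-coordinates of $\beta$ get an extra factor, but that only improves the divisibility claims, so the $d = 1$ case is the essential one.

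**The core divisibility argument in $\Phi_\circ$.** For $\beta_\circ \in \Phi_\circ$ with $\beta_\circ \sim \alpha_{(i,j)}$, I would argue by induction on the reflection length $\ell_{S_\circ}(w)$ of an element $w \in W_\circ$ with $w(\alpha_{(i,j)}) = \beta_\circ$, exactly as in the proof of Lemma \ref{epi}(b). The base case $w = \mathbbm{1}$ is trivial: $\alpha_{(i,j)}$ itself has $\lambda_0 = 0$, $\lambda_{(k,l)} = 0$ for $k \ne i$. For the inductive step, write $w = s_{\alpha_\omega} w'$ with $\ell_{S_\circ}(w') < \ell_{S_\circ}(w)$, set $\gamma_\circ = w'(\alpha_{(i,j)})$ (which satisfies the divisibility by induction), and compute $\beta_\circ = s_{\alpha_\omega}(\gamma_\circ) = \gamma_\circ - (\gamma_\circ, \alpha_\omega^\sharp)\alpha_\omega$. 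One then checks that applying $s_{\alpha_\omega}$ preserves the divisibility properties (a) and (b). The key point is that the only way the $\alpha_0$- or the $\alpha_{(k,l)}$-coordinate (for $k \ne i$) of $\gamma_\circ$ changes is when $\alpha_\omega = \alpha_0$ or $\alpha_\omega = \alpha_{(k,l)}$ (for some $k \ne i$), since the Gram matrix is "star-shaped" — the only pairs with nonzero inner product are along arms and at the central vertex. When $\alpha_\omega = \alpha_0$: the integer coefficient of $\alpha_0$ in $s_{\alpha_0}(\gamma_\circ)$ is $\mu_0 - (\gamma_\circ, \alpha_0^\sharp)$ where $\mu_0$ is the old coefficient; here $(\gamma_\circ, \alpha_0^\sharp) = \mu_0 (\alpha_0, \alpha_0^\sharp) + \sum_k \mu_{(k,1)}(\alpha_{(k,1)}, \alpha_0^\sharp) = 2\mu_0 - \sum_k f_k \mu_{(k,1)}$, and by induction $f_i \mid \mu_0$ and $f_i e_k \mid \mu_{(k,1)}$ for $k \ne i$, so $f_i \mid f_k \mu_{(k,1)}$ for $k \ne i$; the only surviving term is $f_i \mu_{(i,1)}$, which is clearly divisible by $f_i$ — so $f_i$ still divides the new $\alpha_0$-coefficient. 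Similarly, reflecting in $\alpha_{(k,l)}$ with $k \ne i$: the new $\alpha_{(k,l)}$-coefficient is $\mu_{(k,l)} - (\gamma_\circ, \alpha_{(k,l)}^\sharp)$, and $(\gamma_\circ, \alpha_{(k,l)}^\sharp)$ involves only the coefficients along the $k$-th arm and (if $l=1$) the $\alpha_0$-coefficient with weight $(\alpha_0, \alpha_{(k,1)}^\sharp) = -e_k$; by induction $f_i e_k$ divides all arm-$k$ coefficients and $f_i$ divides the $\alpha_0$-coefficient, so $f_i e_k$ divides $e_k$ times the $\alpha_0$-coefficient — again the divisibility persists. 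Reflections in $\alpha_{(i,l)}$ (on the $i$-th arm) never touch the $\alpha_0$-coordinate unless $l = 1$, and then the change to $\mu_0$ is by a multiple of $f_i$ automatically.

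**Transporting back and the main obstacle.** Having established (a) and (b) for $\beta_\circ \in \Phi_\circ$, I transport to $\Phi$: given $\beta \in \Phi$ with $\beta \sim \alpha_{(i,j)}$, apply $p$ to get $p(\beta) = \beta_\circ \in \Phi_\circ$ with $\beta_\circ \sim \alpha_{(i,j)}$ in $W_\circ$ (conjugation descends along $p$). The coefficients of $\beta$ over $R$ in all slots except $\alpha_{0^\ast}$ agree with those of $\beta_\circ$ when $d = 1$; the $\alpha_{0^\ast}$-coefficient is $\lambda_{0^\ast} = \varepsilon k = k$ for the integer $k$ in Lemma \ref{epi}(b), and one sees from the explicit decomposition that $\lambda_{0^\ast}$ is tied to $\lambda_0$ in a way forcing $f_i \mid \lambda_{0^\ast}$ once $f_i \mid \lambda_0$ — this is where I need to track the relation between the $a = \alpha_{0^\ast} - \varepsilon\alpha_0$ term and the $\Gamma_\circ$-component carefully. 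I expect the main obstacle to be precisely this bookkeeping: making airtight the statement that the $\alpha_0$- and $\alpha_{0^\ast}$-coordinates of $\beta \in \Phi$ are "the same" (up to the factor $d \in \{1, \varepsilon\}$) as the $\alpha_0$-coordinate of $p(\beta) \in \Phi_\circ$, given that $\alpha_0$ appears both in $\Gamma_\circ$ and inside $a$. One clean way around this is to avoid the quotient altogether and run the induction on $\ell_S(w)$ directly in $W$ using the full Gram matrix (\ref{E:GramMatrix}), noting again that it is star-shaped with $\alpha_0, \alpha_{0^\ast}$ both attached only to the "ends" $\alpha_{(i,1)}$ of the arms, with weights $(\alpha_{(i,1)}, \alpha_0^\sharp) = -f_i$, $(\alpha_{(i,1)}, \alpha_{0^\ast}^\sharp) = -f_i$ (after symmetrization), $(\alpha_{(i,1)}^\sharp, \alpha_0) = -e_i$, $(\alpha_{(i,1)}^\sharp, \alpha_{0^\ast}) = -\varepsilon e_i$; the same four-case inductive check (reflect in $\alpha_0$, in $\alpha_{0^\ast}$, in an $\alpha_{(k,l)}$ with $k\ne i$, or in an $\alpha_{(i,l)}$) then yields both (a) and (b) simultaneously. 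I would present the direct version to keep the exposition self-contained, reserving the quotient-group viewpoint as the conceptual motivation.
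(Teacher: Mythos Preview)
Your final approach --- the direct induction on $\ell_S(w)$ in $W$ with the four-case check (reflecting in $\alpha_0$, in $\alpha_{0^\ast}$, in $\alpha_{(k,l)}$ with $k\ne i$, or in $\alpha_{(i,l)}$) --- is exactly the paper's proof, and your pairing values and divisibility bookkeeping are correct. The quotient detour through $W_\circ$ is unnecessary (as you yourself conclude), so you could drop it entirely and present only the direct argument.
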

\begin{proof}
By definition, there exists $w \in W$ such that $\beta = w\bigl(\alpha_{(i, j)}\bigr)$. We prove both divisibility results by induction on the length $\ell_S(w)$. The basis of induction for $w \in W$ with $\ell_S(w) = 0$ is trivial.

To prove the induction step, suppose that $\beta = s_\gamma(\beta')=\beta'-(\beta',\gamma^\sharp)\gamma$, where $\gamma$ is a simple root and 
$\beta' = \lambda'_{0} \alpha_0 + \lambda'_{0^\ast} \alpha_{0^\ast} + \sum\limits_{(i, j) \in \Omega} \lambda'_{(i, j)} \alpha_{(i, j)} \in \Phi$ is such that 
\begin{enumerate}
\item[(a)] $f_i \, \big|\, \lambda'_0$, $f_i \,\big|\, \lambda'_{0^\ast}$ and
\item[(b)] for any $(k, l) \in \Omega$ with $k \ne i$ we have $f_i e_k \, \big|\,  \lambda'_{(k, l)}$.
\end{enumerate}
Note that all coefficients $\lambda_\omega$ and $\lambda'_\omega$ of $\beta$ and $\beta'$ are the same except for $\omega\in\overline\Omega$ with $\gamma=\alpha_\omega$. We go through all the possible cases.
\begin{itemize}
\item If $\gamma = \alpha_{(i, p)}$ for some $1\le p \le p_i-1$ then all the ``relevant'' coefficients of 
$\beta$ and $\beta'$ are the same. Hence, the statement of proposition is true. 
\item Suppose that $\gamma = \alpha_0$. Then
\begin{equation*}
\lambda'_0 - \lambda_0 =  \bigl(\beta', \alpha_0^\sharp\bigr) = 
2\lambda'_0 +2\lambda'_{0^\ast}+ \sum\limits_{k = 1}^t \lambda'_{(k, 1)} \bigl(\alpha_{(k, 1)}, \alpha_0^\sharp\bigr).
\end{equation*}
By the induction hypothesis, $\lambda'_0$ and $\lambda'_{0^*}$ are divisible by $f_i$. For any $1 \le k \le t$ with $k\neq i$ the even stronger condition $f_i e_k\, \big| \, \lambda'_{(k,l)}$ is fulfilled. Finally, $\bigl(\alpha_{(i, 1)}, \alpha_0^\sharp\bigr)=-f_i$ is divisible by $f_i$. It follows that $f_i \, \big| \, \lambda_0$, as asserted.
\item In the case $\gamma = \alpha_{0^\ast}$ we proceed as in the previous case. 
\item Finally, suppose that $\gamma = \alpha_{(k, p)}$ with $k \ne i$ and $1 \le p \le p_k-1$. Then we have
\begin{equation*}
\lambda'_{(k,p)} - \lambda_{(k, p)} =  \bigl(\beta', \alpha_{(k, p)}^\sharp\bigr) = 
\lambda'_0 \bigl(\alpha_0, \alpha_{(k, p)}^\sharp\bigr) + \lambda'_{0^\ast} \bigl(\alpha_{0^\ast}, \alpha_{(k, p)}^\sharp\bigr) + \sum\limits_{l = 1}^{p_k-1} \lambda'_{(k, l)} \bigl(\alpha_{(k, l)}, \alpha_{(k, p)}^\sharp\bigr).
\end{equation*}
By the induction hypothesis, $\lambda'_{(k, l)}$ is divisible by $f_i e_k$ for any $1 \le l \le p_k-1$. Moreover, $\lambda'_0$ and $\lambda'_{0^\ast}$ are both divisible by $f_i$, whereas
$$
\bigl(\alpha_0, \alpha_{(k, p)}^\sharp\bigr) =  \bigl(\alpha_{0^\ast}, \alpha_{(k, p)}^\sharp\bigr) = 
\left\{
\begin{array}{ccl}
-e_k & \mbox{if} & p = 1 \\
0 & \mbox{if} & 2 \le p \le p_k-1.
\end{array}
\right.
$$
It follows that $f_i e_k \, \big| \, \lambda_{(k, p)}$, as asserted.
\end{itemize}
This concludes the proof of the induction step and implies the statement.
\end{proof}

\begin{corollary}\label{C:Divisibility2} Let $\beta = \lambda_{0} \alpha_0 + \nu_0 a  + \sum\limits_{(i, j) \in \Omega} \lambda_{(i, j)} \alpha_{(i, j)} \in \Phi$ be  such that 
$\beta \sim \alpha_{(i,j)}$ for some $(i, j) \in \Omega$. Then the following statements hold.
\begin{enumerate}
\item[(a)] We have $f_i \, \big|\, \lambda_0$ and $f_i \,\big|\, \nu_0$.
\item[(b)] For any $(k, l) \in \Omega$ with $k \ne i$ we have $f_i e_k \, \big|\,  \lambda_{(k, l)}$.
\end{enumerate}
\end{corollary}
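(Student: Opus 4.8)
The plan is to deduce this directly from Proposition~\ref{P:Divisibility1} by a change of basis. Recall that throughout this section $\varepsilon = 1$, so that $a = \alpha_{0^\ast} - \alpha_0$, equivalently $\alpha_{0^\ast} = \alpha_0 + a$. Substituting this relation into the given expression
$$\beta = \lambda_{0} \alpha_0 + \nu_0 a + \sum_{(k, l) \in \Omega} \lambda_{(k, l)} \alpha_{(k, l)}$$
yields
$$\beta = (\lambda_0 - \nu_0)\,\alpha_0 + \nu_0\,\alpha_{0^\ast} + \sum_{(k,l)\in\Omega}\lambda_{(k,l)}\,\alpha_{(k,l)},$$
which is precisely the form appearing in Proposition~\ref{P:Divisibility1}: the $\alpha_0$-coordinate of $\beta$ is $\lambda_0 - \nu_0$, the $\alpha_{0^\ast}$-coordinate is $\nu_0$, and the $\alpha_{(k,l)}$-coordinates are unchanged.

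Next I would invoke Proposition~\ref{P:Divisibility1}, applicable since $\beta \sim \alpha_{(i,j)}$ by hypothesis. Part (a) of that proposition gives $f_i \mid (\lambda_0 - \nu_0)$ and $f_i \mid \nu_0$; adding these and using that $\lambda_0 = (\lambda_0 - \nu_0) + \nu_0$, we obtain $f_i \mid \lambda_0$ and $f_i \mid \nu_0$, which is part (a) of the corollary. Part (b) is immediate from Proposition~\ref{P:Divisibility1}(b), because the $\alpha_{(k,l)}$-coordinates of $\beta$ in the basis $R$ and in the basis $(\alpha_{(t,p_t-1)},\dots,\alpha_0,a)$ are literally the same.

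There is essentially no obstacle here: the corollary is just a reformulation of Proposition~\ref{P:Divisibility1} adapted to the basis of $\Gamma$ featuring the radical vector $a$ (the basis used in the hyperbolic-extension discussion of Section~\ref{S:ReflectionGroups}). The only points requiring (routine) care are correctly identifying which coordinate is which under the substitution $\alpha_{0^\ast} = \alpha_0 + a$, and noting that $f_i\ZZ$ is closed under addition so that divisibility of $\lambda_0$ follows from divisibility of $\lambda_0 - \nu_0$ and $\nu_0$.
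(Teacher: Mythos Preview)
Your proposal is correct and is exactly the intended derivation: the paper states this as an immediate corollary of Proposition~\ref{P:Divisibility1} without further proof, and the change of basis $\alpha_{0^\ast} = \alpha_0 + a$ (valid since $\varepsilon = 1$ here) is the obvious way to pass between the two formulations.
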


\begin{proposition}\label{P:Divisibility2} Let $\beta = \lambda_{0} \alpha_0 + \nu_0 a  + \sum\limits_{(i, j) \in \Omega} \lambda_{(i, j)} \alpha_{(i, j)} \in \Phi$ be  such that 
$\beta \sim \alpha_0$  or $\beta \sim \alpha_{0^\ast}$. Then we have $e_i \, \big| \, \lambda_{(i, j)}$ for all $(i, j) \in \Omega$. 
\end{proposition}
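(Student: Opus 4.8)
The plan is to argue by induction on $\ell_S(w)$, where $w \in W$ is any element with $\beta = w(\alpha_0)$ or $\beta = w(\alpha_{0^\ast})$; such a $w$ exists precisely because $\beta \sim \alpha_0$ or $\beta \sim \alpha_{0^\ast}$. This parallels the proofs of Proposition~\ref{P:Divisibility1} and Corollary~\ref{C:Divisibility2}, but is lighter, since only the divisibility of the $\alpha_{(i,j)}$-coordinates has to be propagated. For the base case $\ell_S(w) = 0$ we have $\beta \in \{\alpha_0, \alpha_{0^\ast}\}$; since $\varepsilon = 1$ we may write $\alpha_{0^\ast} = \alpha_0 + a$, so in either case $\lambda_{(i,j)} = 0$ for all $(i,j) \in \Omega$ and the assertion is vacuous.

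For the inductive step I would take a reduced expression $w = s_{\gamma_1}\cdots s_{\gamma_m}$ with each $\gamma_r$ a simple root, set $\beta' := s_{\gamma_2}\cdots s_{\gamma_m}(\alpha_0)$ (respectively with $\alpha_{0^\ast}$), so that $\beta = s_{\gamma_1}(\beta')$, $\beta' \sim \alpha_0$ (respectively $\alpha_{0^\ast}$), and $\beta'$ is obtained from an element of $\ell_S$-length $m-1$. Writing $\beta' = \lambda'_0 \alpha_0 + \nu'_0 a + \sum_{(i,j)\in\Omega}\lambda'_{(i,j)}\alpha_{(i,j)}$, the inductive hypothesis gives $e_i \mid \lambda'_{(i,j)}$ for all $(i,j)$. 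Since $\beta = \beta' - (\beta', \gamma_1^\sharp)\gamma_1$, the coordinates of $\beta$ and $\beta'$ agree outside the slot indexed by $\gamma_1$. If $\gamma_1 \in \{\alpha_0, \alpha_{0^\ast}\}$, then $\gamma_1 \in \langle \alpha_0, a\rangle$, so every $\alpha_{(i,j)}$-coordinate is unchanged and the claim follows immediately from the inductive hypothesis.

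The only remaining case is $\gamma_1 = \alpha_{(k,p)}$ for some $(k,p) \in \Omega$, where $\lambda_{(i,j)} = \lambda'_{(i,j)}$ for $(i,j)\neq(k,p)$ and $\lambda_{(k,p)} = \lambda'_{(k,p)} - (\beta', \alpha_{(k,p)}^\sharp)$. Since $e_k \mid \lambda'_{(k,p)}$, it suffices to show $e_k \mid (\beta', \alpha_{(k,p)}^\sharp)$, and here I would expand
$$(\beta', \alpha_{(k,p)}^\sharp) = \lambda'_0 (\alpha_0, \alpha_{(k,p)}^\sharp) + \nu'_0 (a, \alpha_{(k,p)}^\sharp) + \sum_{(i,l)\in\Omega}\lambda'_{(i,l)} (\alpha_{(i,l)}, \alpha_{(k,p)}^\sharp)$$
and treat the terms one at a time: $(a, \alpha_{(k,p)}^\sharp) = 0$ since $a \in \Rad(B)$; $(\alpha_{(i,l)}, \alpha_{(k,p)}^\sharp) = 0$ for $i \ne k$ since distinct arms of the star-shaped diagram (see Proposition~\ref{P:TypeDistinction} and Figure~\ref{rootDiagramQuotient}) are mutually orthogonal; for $i = k$ the integers $(\alpha_{(k,l)}, \alpha_{(k,p)}^\sharp)$ are the Cartan integers of $A_{p_k - 1}$, so each summand $\lambda'_{(k,l)}(\alpha_{(k,l)}, \alpha_{(k,p)}^\sharp)$ is divisible by $e_k$ by the inductive hypothesis; and finally, as recorded in the proof of Proposition~\ref{P:Divisibility1}, $(\alpha_0, \alpha_{(k,p)}^\sharp)$ equals $-e_k$ when $p = 1$ and $0$ otherwise, so $\lambda'_0(\alpha_0, \alpha_{(k,p)}^\sharp)$ is divisible by $e_k$ too. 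Hence $e_k \mid (\beta', \alpha_{(k,p)}^\sharp)$, therefore $e_k \mid \lambda_{(k,p)}$, and the induction closes. I do not expect any genuine obstacle: the argument is pure bookkeeping, and the only points requiring care are verifying that $s_{\alpha_0}$ and $s_{\alpha_{0^\ast}}$ leave the $\alpha_{(i,j)}$-coordinates untouched (which uses $\varepsilon = 1$, i.e.\ $\alpha_{0^\ast} = \alpha_0 + a$) and reading off the pairing values $(\alpha_0, \alpha_{(k,p)}^\sharp)$ and the within-arm Cartan integers from the Gram matrix~(\ref{E:GramMatrix})/(\ref{E:MatrixRescaled}).
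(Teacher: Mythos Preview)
Your proof is correct and follows essentially the same approach as the paper's: induction on $\ell_S(w)$, with the simple-reflection case split into $\gamma \in \{\alpha_0,\alpha_{0^\ast}\}$ (where the $\alpha_{(i,j)}$-coordinates are untouched since $\alpha_0,\alpha_{0^\ast}\in\langle\alpha_0,a\rangle$) and $\gamma = \alpha_{(k,p)}$ (where one expands $(\beta',\alpha_{(k,p)}^\sharp)$ and uses $(\alpha_0,\alpha_{(k,p)}^\sharp)\in\{-e_k,0\}$ together with the inductive hypothesis). Your write-up is in fact slightly more explicit than the paper's in spelling out why the cross-arm and radical terms vanish.
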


\begin{proof} We give a proof in the case $\beta \sim \alpha_0$. There exists $w \in W$ such that $\beta = w(\alpha_0)$. We prove this result by induction on $\ell_S(w)$. The basis of induction for $w \in W$ with $\ell_S(w) = 0$ is trivial.

To prove the induction step, suppose that $\beta = s_\gamma(\beta')$, where $\gamma$ is a simple root and 
$\beta' = \lambda'_{0} \alpha_0 + \nu'_{0} a + \sum\limits_{(i, j) \in \Omega} \lambda'_{(i, j)} \alpha_{(i, j)} \in \Phi$ is such that $e_i \, \big| \, \lambda'_{(i, j)}$ for all $(i, j) \in \Omega$. 
\begin{itemize}
\item If $\gamma = \alpha_0$ or $\gamma = \alpha_{0^\ast}$, then $\beta - \beta' \in \llangle \alpha_0, a\rrangle_{\ZZ}$ and the statement follows from the statement follows from the induction hypothesis.
\item If $\gamma = \alpha_{(i, j)}$ for some $(i, j) \in \Omega$, then $\beta' - \beta = \bigl(\beta',\alpha_{(i, j)}^\sharp\bigr)\alpha_{(i, j)}$. We compute
\[\bigl(\beta',\alpha_{(i, j)}^\sharp\bigr) = \lambda'_0 \bigl(\alpha_0,\alpha_{(i,j)}^\sharp\bigr) + \sum\limits_{l = 1}^{p_i-1} \lambda'_{(i, l)} \bigl(\alpha_{(i,l)},\alpha_{(i, j)}^\sharp\bigr).\]
By the induction hypothesis, we have $e_i \, \big| \, \lambda'_{(i, l)}$ for all $1 \le l \le p_i-1$. Since
$$
\bigl(\alpha_0,\alpha_{(i, j)}^\sharp\bigr) =
\left\{
\begin{array}{ccl}
-e_i & \mbox{if} & j = 1 \\
0 & \mbox{if} & 2 \le j \le p_i-1.
\end{array}
\right.
$$
we conclude that $e_i \, \big| \, \lambda_{(i, j)}$, as asserted.
\end{itemize}
This concludes the proof of the induction step and implies the statement.
\end{proof}

\begin{lemma}\label{L:MissingLemma} Let $(i, j) \in \Omega$ and $m \in \ZZ$ be such that $\alpha_{(i,j)} + ma \in \Phi$. Then we have $\alpha_{(i,j)} + ma  \sim \alpha_{(i,j)}$. 
\end{lemma}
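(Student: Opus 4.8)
The plan is to first reduce to the case $j=1$ and then argue separately according to whether $f_i\geq 2$ or $f_i=1$. For the reduction I would use that $\alpha_{(i,1)},\dots,\alpha_{(i,p_i-1)}$ span a root subsystem of type $A_{p_i-1}$ whose finite Weyl group $\langle s_{(i,1)},\dots,s_{(i,p_i-1)}\rangle\leq W$ acts transitively on its roots, and that every $s_{(i,l)}$ fixes $a$ since $(\alpha_{(i,l)},a)=0$. Choosing $w_i$ in this subgroup with $w_i(\alpha_{(i,j)})=\alpha_{(i,1)}$, we get $w_i(\alpha_{(i,j)}+ma)=\alpha_{(i,1)}+ma$; as $W$ preserves $\Phi$ this already shows $\alpha_{(i,1)}+ma\in\Phi$, and once $\alpha_{(i,1)}+ma\sim\alpha_{(i,1)}$ is known, applying $w_i^{-1}$ gives $\alpha_{(i,j)}+ma\sim\alpha_{(i,j)}$. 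So put $\beta:=\alpha_{(i,1)}+ma\in\Phi$; since $a\in\Rad(B)$ the $W$-invariant number $(\beta,\beta)$ equals $(\alpha_{(i,1)},\alpha_{(i,1)})=2\kappa\varepsilon f_i/e_i$.

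Suppose first that $f_i\geq 2$. By the Remark preceding Proposition~\ref{P:Divisibility1} there is $\omega\in\{0,0^\ast\}\cup\{(r,1)\mid 1\leq r\leq t\}$ with $\beta\sim\alpha_\omega$, and I would rule out every $\omega\neq(i,\cdot)$. If $\omega\in\{0,0^\ast\}$, then comparing $B$-lengths gives $2\kappa\varepsilon f_i/e_i=(\alpha_\omega,\alpha_\omega)=2\kappa$, while Proposition~\ref{P:Divisibility2} applied to the $\alpha_{(i,1)}$-coefficient $1$ of $\beta$ forces $e_i\mid 1$; hence $e_i=1$ and then $f_i=1$, a contradiction. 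If $\omega=(r,1)$ with $r\neq i$, then Corollary~\ref{C:Divisibility2} with index $r$, applied to the same coefficient, forces $f_re_i\mid 1$, so $f_r=e_i=1$; comparing $B$-lengths then gives $2\kappa f_i=(\alpha_{(r,1)},\alpha_{(r,1)})=2\kappa/e_r$, i.e.\ $f_ie_r=1$, so again $f_i=1$, a contradiction. Hence $\omega=(i,s)$ for some $s$, and $\beta\sim\alpha_{(i,s)}\sim\alpha_{(i,1)}$.

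Now suppose $f_i=1$. Write $\theta_i:=\alpha_{(i,1)}+\dots+\alpha_{(i,p_i-1)}$. I would first evaluate $c$ on $\alpha_{(i,1)}$: either directly from the Gram matrix~\eqref{E:GramMatrix}, or categorically, using that $c$ is induced by the Auslander--Reiten translate, so $c(\alpha_{(i,1)})=[\kS_i^{(0)}]$, while $\sum_{j=0}^{p_i-1}[\kS_i^{(j)}]$ is $\tau$-invariant, lies in $\Rad(B)$, has rank zero, and therefore equals a positive multiple of $a$, in fact $f_ia=a$; this gives $\gamma_0:=c(\alpha_{(i,1)})=a-\theta_i$. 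Since $c\in W$, the reflection $s_{\gamma_0}=c\,s_{(i,1)}\,c^{-1}$ lies in $W$, and $(\gamma_0,\alpha_{(i,1)},\dots,\alpha_{(i,p_i-1)})$ is a simple system of affine type $\widetilde A_{p_i-1}$ with null vector $\gamma_0+\theta_i=a$. A short computation gives $s_{\theta_i}s_{\gamma_0}(\theta_i)=\theta_i+2a$, and since $s_{\theta_i}s_{\gamma_0}$ fixes $a$ it translates the line $\theta_i+\ZZ a$ by $2a$. The $W$-orbit of $\alpha_{(i,1)}$ contains $\theta_i$ (via the finite Weyl group) and $\gamma_0=a-\theta_i$ (via $c$), hence it contains $\theta_i+\ell a$ for all even $\ell$ and, after negating roots, for all odd $\ell$; so it contains $\theta_i+\ell a$ for every $\ell\in\ZZ$, and applying the element of $\langle s_{(i,1)},\dots,s_{(i,p_i-1)}\rangle$ that carries $\theta_i$ back to $\alpha_{(i,1)}$ shows it contains $\alpha_{(i,1)}+\ell a$ for every $\ell\in\ZZ$. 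In particular $\beta\sim\alpha_{(i,1)}$.

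I expect the main obstacle to be the explicit evaluation $c(\alpha_{(i,1)})=a-\theta_i$ --- equivalently $\sum_{j=0}^{p_i-1}[\kS_i^{(j)}]=f_ia$ --- together with the verification that $(\gamma_0,\alpha_{(i,1)},\dots,\alpha_{(i,p_i-1)})$ is genuinely a simple system of affine type $\widetilde A_{p_i-1}$ whose null vector is a multiple of $a$. Granting these, the orbit translation in the case $f_i=1$ and the length/divisibility bookkeeping in the case $f_i\geq 2$ are routine computations.
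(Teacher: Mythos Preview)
Your proof is correct. The case $f_i\ge 2$ parallels the paper's argument exactly: both use Proposition~\ref{P:Divisibility2} and Corollary~\ref{C:Divisibility2} together with the norm comparison to force $f_i=1$ in every case $\omega\notin\{(i,\,\cdot\,)\}$, which for you gives the desired contradiction. Your reduction to $j=1$ via the finite $A_{p_i-1}$ Weyl group and the computation $c(\alpha_{(i,1)})=f_i a-\theta_i$ (so $=a-\theta_i$ when $f_i=1$) are both valid; I checked the latter directly from $c=s_{(t,p_t-1)}\cdots s_{(1,1)}s_0 s_{0^\ast}$.

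The genuine difference is in the $f_i=1$ case. You build an affine $\widetilde A_{p_i-1}$ subsystem on $(\gamma_0,\alpha_{(i,1)},\dots,\alpha_{(i,p_i-1)})$ with null vector $a$, and use the translation $s_{\theta_i}s_{\gamma_0}$ (together with $-\gamma_0=\theta_i-a$) to exhibit the entire string $\alpha_{(i,1)}+\ZZ a$ inside the $W$-orbit of $\alpha_{(i,1)}$. This is stronger than what the lemma asks and gives a structural picture of the root system. The paper instead keeps the case analysis on $\omega$ throughout: having derived $e_i=f_i=1$ (and, when $\omega=(k,1)$ with $k\ne i$, also $e_k=f_k=1$), it simply writes down the short conjugating words $s_0 s_{(i,1)}(\alpha_0)=\alpha_{(i,1)}$ and $s_0 s_{(i,1)} s_{(k,1)} s_0(\alpha_{(k,1)})=\alpha_{(i,1)}$. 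The paper's route is more economical and uniform (no split on $f_i$, no computation of $c(\alpha_{(i,1)})$), while yours trades that economy for the extra information that the full $\ZZ a$-string lies in the orbit when $f_i=1$.
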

\begin{proof}
By the assumption that $\alpha_{(i,j)}+ma\in\Phi$, we know that there exists $\omega\in\overline\Omega$ such that $\alpha_{(i,j)}+ma\sim\alpha_\omega$. If $\omega=(k,l)$ with $k=i$, then we are done. If $\omega=(k,l)$ with $k\neq i$ or $\omega\in\{0,0^*\}$, we only need to show that $\alpha_{(k,1)}\sim\alpha_{(i,1)}$, $\alpha_0\sim\alpha_{(i,1)}$ or $\alpha_{0^*}\sim\alpha_{(i,1)}$ respectively.

First, let $\omega=0$. By Proposition \ref{P:Divisibility2}, we know that $e_i$ divides $\lambda_{(i,j)} = 1$, hence $e_i=1$. Since the norms $\Vert \alpha_0\Vert= \Vert\alpha_{(i,j)}+ma\Vert = \Vert\alpha_{(i,j)}\Vert = \Vert\alpha_{(i,1)}\Vert $, we get $f_i = -(\alpha_{(i,1)},\alpha_0^\sharp) = -(\alpha_{(i,1)}^\sharp,\alpha_0) = e_i$.
 Therefore, $s_{0}s_{(i,1)}(\alpha_0)=\alpha_{(i,1)}.$
The case $\omega=0^*$ is analogous. 

Now, let $\omega=(k,1)$ with $k\neq i$. According to  Corollary \ref{C:Divisibility2}, $f_k e_i$ divides $\lambda_{(i,j)} = 1$, hence $f_k=e_i=1$. Next,  $\Vert \alpha_{(k,1)}\Vert = \Vert \alpha_{(i,j)}+ma\Vert = \Vert\alpha_{(i,j)}\Vert = \Vert\alpha_{(i,1)}\Vert$. Since $f_k = -(\alpha_{(k,1)}, \alpha_0^\sharp) = 1$ and $e_k = - (\alpha_{(k,1)}^\sharp, \alpha_0) = \dfrac{(\alpha_0,\alpha_0)}{(\alpha_{(k,1)},\alpha_{(k,1)})} f_k = \dfrac{(\alpha_0,\alpha_0)}{(\alpha_{(k,1)},\alpha_{(k,1)})}$ is an integer, we infer that $\Vert \alpha_{(k,1)}\Vert \le \Vert \alpha_{0}\Vert$. 

 Similarly, $e_i=1$ implies that $\Vert \alpha_0\Vert \leq \Vert\alpha_{(i,1)}\Vert$. 
 Since, $\Vert \alpha_{(k,1)}\Vert \le \Vert\alpha_0\Vert \le \Vert \alpha_{(i,1)}\Vert$ and $\Vert \alpha_{(k,1)}\Vert  = \Vert\alpha_{(i,1)}\Vert$, we conclude that 
 $\Vert \alpha_{(k,1)}\Vert = \Vert \alpha_{0}\Vert = \Vert\alpha_{(i,1)}\Vert$, hence 
  $f_i=e_k=1$. Therefore, we have
\[s_{0}s_{(i,1)}s_{(k,1)}s_{0}(\alpha_{(k,1)})=s_{0}s_{(i,1)}(\alpha_0)=\alpha_{(i,1)},\]
implying the statement.
\end{proof}

\begin{lemma}\label{L:ModificationOfEquation}
Let $\beta = \lambda_0 \alpha_0 + \sum\limits_{(i,j) \in \Omega} \lambda_{(i,j)} \alpha_{(i,j)} \in \Phi_\circ$ and $k,k'\in\ZZ$ be such that $\beta+ka,\beta+k'a\in\Phi$ and $\dfrac{(\alpha_0, \alpha_0)}{(\beta, \beta)}\lambda_0(k'-k)=1$. Then we have $\Vert\beta\Vert =\Vert\alpha_0\Vert$ and $\lambda_0=k'-k=\pm1$.
\end{lemma}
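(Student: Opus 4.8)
The plan is to distinguish according to which simple root of $R_\circ$ the root $\beta$ is conjugate to. Since $\beta \in \Phi_\circ$ and every root of $\Phi_\circ$ lies in the $W_\circ$-orbit of a simple root of $R_\circ = (\dots, \alpha_0)$ — and inside each arm of type $A_{p_i-1}$ all roots are conjugate — there are only two cases: $\beta \sim \alpha_0$, or $\beta \sim \alpha_{(i,1)}$ for some $1 \le i \le t$. I would first observe that the defining equation forces $\lambda_0 \neq 0$ and $k \neq k'$, and, as $(\alpha_0,\alpha_0)$ and $(\beta,\beta)$ are positive, $\lambda_0(k'-k) > 0$. In the case $\beta \sim \alpha_0$ one has $(\beta,\beta) = (\alpha_0,\alpha_0)$ because $W$ acts by isometries, so the hypothesis reads $\lambda_0(k'-k) = 1$, whence $\lambda_0 = k'-k = \pm 1$ and $\Vert\beta\Vert = \Vert\alpha_0\Vert$, so nothing more is needed.

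So suppose $\beta = w(\alpha_{(i,1)})$ for some $w \in W$, and fix such a $w$. The key point is that $a \in \mathsf{Rad}(B)$ is fixed by every reflection, hence by all of $W$; therefore $\beta + ka = w(\alpha_{(i,1)} + ka)$, and since $W$ permutes $\Phi$ we obtain $\alpha_{(i,1)} + ka = w^{-1}(\beta + ka) \in \Phi$. Lemma \ref{L:MissingLemma} then gives $\alpha_{(i,1)} + ka \sim \alpha_{(i,1)}$, and conjugating back by $w$ yields $\beta + ka \sim \alpha_{(i,1)}$. Running the same argument with $k'$ in place of $k$ shows $\beta + k'a \sim \alpha_{(i,1)}$ — crucially with the \emph{same} arm index $i$. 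This ``same-arm'' step is where I expect the main difficulty to lie: it is exactly what allows the divisibility data coming from $\beta + ka$ and from $\beta + k'a$ to be combined, and it rests on Lemma \ref{L:MissingLemma} together with the $W$-invariance of $a$.

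It then remains to assemble the numerics. Expanding $\beta + ka$ (resp.\ $\beta + k'a$) in the basis $(\dots, \alpha_0, a)$, the $\alpha_0$-coefficient equals $\lambda_0$ and the $a$-coefficient equals $k$ (resp.\ $k'$); applying Corollary \ref{C:Divisibility2}(a) to each — legitimate since both are $\sim \alpha_{(i,1)}$ — gives $f_i \mid \lambda_0$, $f_i \mid k$ and $f_i \mid k'$, hence $f_i \mid (k'-k)$ and $f_i^2 \mid \lambda_0(k'-k)$. On the other hand, reading the norms off the Gram matrix \eqref{E:GramMatrix} with $\varepsilon = 1$ yields $(\alpha_0,\alpha_0) = 2\kappa$ and $(\beta,\beta) = (\alpha_{(i,1)},\alpha_{(i,1)}) = 2\kappa f_i/e_i$, so the hypothesis becomes $\lambda_0(k'-k) = f_i/e_i$, a positive integer. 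Combining $f_i^2 \mid f_i/e_i$ with $f_i/e_i \ge 1$ forces $f_i = e_i = 1$; hence $(\beta,\beta) = 2\kappa = (\alpha_0,\alpha_0)$, i.e.\ $\Vert\beta\Vert = \Vert\alpha_0\Vert$, and $\lambda_0(k'-k) = 1$, so $\lambda_0 = k'-k = \pm 1$ with a common sign, which would complete the argument.
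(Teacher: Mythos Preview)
Your proposal is correct and follows essentially the same approach as the paper: case split on whether $\beta \sim \alpha_0$ or $\beta \sim \alpha_{(i,1)}$, then in the second case use the $W$-invariance of $a$ together with Lemma~\ref{L:MissingLemma} to show $\beta + ka$ and $\beta + k'a$ are both $\sim \alpha_{(i,1)}$ with the \emph{same} $i$, apply Corollary~\ref{C:Divisibility2} to get $f_i^2 \mid \lambda_0(k'-k)$, and finish with a norm computation. The only cosmetic difference is that the paper packages the final numerics as $\dfrac{(\alpha_0,\alpha_0)}{(\beta,\beta)}f_i^2 = d_i$ and concludes $d_i \mid 1$, whereas you compute the ratio of norms explicitly as $e_i/f_i$; both lead to $e_i = f_i = 1$.
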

\begin{proof}
As $\lambda_0,k,k'\in\ZZ$, we only need to show that $\Vert\beta\Vert=\Vert\alpha_0\Vert$. Thus, if $\beta \sim \alpha_0$, we are done immediately.

Suppose that $\beta \sim \alpha_{(i,j)}$ for some $(i,j)\in\Omega$. Then $||\beta||=||\alpha_{(i,j)}||=||\alpha_{(i,1)}||$ and we have $\beta+ka \sim \alpha_{(i,j)}+ka$ and $\beta+k'a \sim \alpha_{(i,j)}+k'a$. By Lemma \ref{L:MissingLemma}, we have $\beta+ka,\beta+k'a \sim \alpha_{(i,j)}$. Corollary \ref{C:Divisibility2} implies that $f_i \,\big|\, \lambda_0$, $f_i \,\big|\, k$ and $f_i \,\big|\, k'$. As a consequence, $f_i^2 \,\big|\, \lambda_0 (k'-k)$. Next, we have
$$
\dfrac{(\alpha_0, \alpha_0)}{(\beta, \beta)}f_i^2 = 
- \dfrac{(\alpha_0, \alpha_0)}{(\beta, \beta)}\bigl(\alpha_0^\sharp, \alpha_{(i,1)}\bigr) f_i = - \bigl(\alpha_0, \alpha_{(i,1)}^\sharp\bigr) f_i = d_i.
$$
It follows that  $1= \dfrac{(\alpha_0, \alpha_0)}{(\beta, \beta)} \lambda_0(k'-k)$ is divisible by $d_i$. Hence $d_i = 1$ and, as a consequence, $e_i = 1 = f_i$. Finally, $e_i = \dfrac{(\alpha_0,\alpha_o)}{(\alpha_{(i,1)},\alpha_{(i,1)})} f_i$ implies $||\alpha_0||=||\alpha_{(i,1)}||=||\beta||$.
\end{proof}

\begin{proposition}\label{P:KillingTheArms} Let $\beta = \alpha_0 + \sum\limits_{(k, l) \in \Omega} \lambda_{(k, l)} \alpha_{(k, l)}
\in \Phi_\circ$ be such that $\Vert \beta\Vert  = \Vert \alpha_0\Vert$. Then there exists
$w \in \llangle s_{(i, j)} \, \big|\, (i, j) \in \Omega\rrangle$ such that $w(\beta) = \alpha_0$. 
\end{proposition}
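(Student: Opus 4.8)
The plan is to run a height-reduction argument inside the standard parabolic subgroup $W_\circ' := \llangle s_{(i,j)} \mid (i,j) \in \Omega\rrangle$ of the Coxeter system $(W_\circ, S_\circ)$, and then to close the argument by a positive-definiteness estimate. First I would record the structural facts. Since $\alpha_0$ is the last vector of $R_\circ$, the subset $S_\circ \setminus \{s_{\alpha_0}\} = \{s_{(i,j)}\mid (i,j)\in\Omega\}$ generates a \emph{standard} parabolic subgroup $W_\circ'$ of $(W_\circ, S_\circ)$, whose Dynkin diagram is the disjoint union of the arms, i.e.\ of type $A_{p_t-1}\times\cdots\times A_{p_1-1}$; in particular $W_\circ'$ is finite. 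Each generator $s_{(i,j)}$ leaves the $\alpha_0$-coordinate of any vector unchanged, because $s_{(i,j)}(x) = x - (x,\alpha_{(i,j)}^\sharp)\alpha_{(i,j)}$ with $\alpha_{(i,j)}\in\overline V$. Hence every element of the orbit $W_\circ'\beta$ still has $\alpha_0$-coordinate exactly $1$; by Proposition~\ref{P:CoxeterPositiveReduced} each such element is therefore a \emph{positive} root, of the form $\alpha_0 + \sum_{(k,l)\in\Omega}\mu_{(k,l)}\alpha_{(k,l)}$ with all $\mu_{(k,l)}$ non-negative integers, and in particular it is not a root of the arm subsystem $\Phi_\circ\cap\overline V$.

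Next I would pick, among the finitely many elements of $W_\circ'\beta$, an element $\gamma = w(\beta)$ (with $w\in W_\circ'$) minimizing the height $\mathrm{ht}(\gamma) := 1 + \sum_{(k,l)}\mu_{(k,l)}$. If there were some $(i,j)\in\Omega$ with $(\gamma, \alpha_{(i,j)}^\sharp) > 0$, then, this quantity being a positive integer (as $\alpha_{(i,j)}$ is a pseudo-root and $\gamma\in\Gamma_\circ$), the element $s_{(i,j)}(\gamma) = \gamma - (\gamma,\alpha_{(i,j)}^\sharp)\,\alpha_{(i,j)}$ would again lie in $W_\circ'\beta$, would again be a positive root (a positive root distinct from $\alpha_{(i,j)}$ is mapped by $s_{(i,j)}$ to a positive root), and would have strictly smaller height — contradicting the choice of $\gamma$. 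Thus $(\gamma,\alpha_{(i,j)})\le 0$ for all $(i,j)\in\Omega$.

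Finally I would pin down $\gamma = \alpha_0$ using the hypothesis $\Vert\beta\Vert=\Vert\alpha_0\Vert$. Write $\gamma = \alpha_0 + \gamma'$ with $\gamma' = \sum_{(k,l)}\mu_{(k,l)}\alpha_{(k,l)}\in\overline V$ and $\mu_{(k,l)}\ge 0$. Since $W$ acts by isometries of $B$, we have $(\gamma,\gamma) = (\alpha_0,\alpha_0)$, which expands to $(\gamma',\gamma') = -2(\alpha_0,\gamma')$. Combining $(\gamma,\alpha_{(k,l)})\le 0$ with $\mu_{(k,l)}\ge 0$ yields
\[
(\gamma',\gamma') = \sum_{(k,l)}\mu_{(k,l)}\bigl((\gamma,\alpha_{(k,l)}) - (\alpha_0,\alpha_{(k,l)})\bigr) \;\le\; -\sum_{(k,l)}\mu_{(k,l)}(\alpha_0,\alpha_{(k,l)}) = -(\alpha_0,\gamma') = \tfrac12(\gamma',\gamma'),
\]
so $(\gamma',\gamma')\le 0$; but $B$ restricted to $\overline V$ is positive definite (see the proof of Proposition~\ref{P:TypeDistinction}), which forces $\gamma' = 0$, i.e.\ $w(\beta) = \gamma = \alpha_0$. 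As $w\in W_\circ' = \llangle s_{(i,j)}\mid(i,j)\in\Omega\rrangle$, this is exactly the assertion.

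The step I expect to carry the (mild) weight of the argument is the second one: one must make sure that the standard Coxeter combinatorics — namely that $W_\circ'$ permutes the positive roots lying outside the arm subsystem, and that a single simple reflection strictly lowers the height of such a root — is genuinely available here. This is guaranteed by Proposition~\ref{P:CoxeterPositiveReduced}, which identifies $(W_\circ, S_\circ)$ as a Coxeter system with its standard reduced root system. The one point to keep checking throughout is that $\beta$, although not assumed simple, behaves like a positive root external to the arms precisely because its $\alpha_0$-coordinate is, and remains, equal to $1$. The concluding inequality is then a routine computation with the explicit Gram matrix~\eqref{E:GramMatrix}.
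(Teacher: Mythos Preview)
Your proof is correct and takes a genuinely different route from the paper. The paper works one arm at a time: for the first arm it applies an explicit element $s = s_{(1,1)}s_{(1,2)}\cdots s_{(1,m)}$ to $\beta$ and shows that the resulting $\alpha_{(1,1)}$-coefficient is $e_1-\lambda_{(1,m)}$; it then invokes the divisibility results of Proposition~\ref{P:Divisibility2} and Corollary~\ref{C:Divisibility2} (together with a case split on which simple root $\beta$ is conjugate to) to force $e_1\mid\lambda_{(1,m)}$, whence $e_1-\lambda_{(1,m)}\le 0$, and positivity of the root then pins down $\lambda_{(1,1)}=\cdots=\lambda_{(1,m)}=e_1$, killing that arm. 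Your argument replaces all of this by a single minimization over the finite orbit $W_\circ'\beta$ followed by the clean inequality $(\gamma',\gamma')\le\tfrac12(\gamma',\gamma')$, exploiting only positive-definiteness of $B\big|_{\overline V}$. The upshot is that your proof is shorter and, notably, makes no use of the arithmetic divisibility Propositions~\ref{P:Divisibility1}--\ref{P:Divisibility2}; on the other hand, the paper's explicit arm-by-arm reduction makes visible exactly which element of $W_\circ'$ does the job.
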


\begin{proof} Our arguments are  inspired by the proof of \cite[Lemma 7.2]{BaumeisterWegenerYahiateneII}. We first show the following preparatory statement.

\smallskip
\noindent
\underline{Claim}. Let $\beta = \alpha_0 + \sum\limits_{l= 1}^m \lambda_{(1, l)} \alpha_{(1, l)} + \beta' \in \Phi_\circ$ be such that $\beta' = \sum\limits_{k = 2}^t \sum\limits_{l = 1}^{p_k-1} \lambda_{(k,l)} \alpha_{(k,l)}$, $\Vert\beta\Vert  = \Vert\alpha_0\Vert$ and $\lambda_{(1, m)} \ne 0$ for some $1 \le m \le p_1-1$. Then we have $\lambda_{(1,1)} \ne 0$. 

\smallskip
\noindent
To show this statement, we put $t:= s_{(1,2)} \dots s_{(1, m)}$. Note that $\alpha_0+\beta'$ is perpendicular to $\alpha_{(1,j)}$, $2\leq j\leq p_1-1$ (with respect to $B$). Thus, $t(\alpha_0+\beta')=\alpha_0+\beta'$. Further, a direct computation shows
\[t\left(\sum_{j=1}^m\lambda_{(1,j)}\alpha_{(1,j)}\right) = \lambda_{(1, 1)} \alpha_{(1,1)} + \sum_{j=2}^m(\lambda_{(1,j-1)}-\lambda_{(1,m)})\alpha_{(1,j)}.\]
Therefore, we conclude
\[t(\beta)=\alpha_0+\lambda_{(1, 1)} \alpha_{(1,1)} + \sum_{j=2}^m(\lambda_{(1,j-1)}-\lambda_{(1,m)})\alpha_{(1,j)}+\beta'\in\Phi_\circ.\]
If $\lambda_{(1,1)} = 0$ then $t(\beta)$ is a root in $\Phi_\circ$ which is neither positive nor negative. This contradiction proves the claim.

\smallskip
\noindent
Now assume that $\beta = \alpha_0 + \sum\limits_{l= 1}^m \lambda_{(1, l)} \alpha_{(1, l)} + \beta'$, where 
$\beta' = \sum\limits_{k = 2}^t \sum\limits_{l = 1}^{p_k-1} \lambda_{(k,l)} \alpha_{(k,l)}$ and $\lambda_{(1, m)} \ne 0$. 
To prove the proposition, it is sufficient to show that there exists $w \in \llangle s_{(1, j)} \, \big|\, (1,j)\in\Omega\rrangle$ such that $w(\beta) = \alpha_0 + \beta'$. 

\smallskip
\noindent
We already know that $\lambda_{(1, 1)} \ne 0$. For $s := s_{(1,1)} s_{(1,2)} \dots s_{(1, m)} = s_{(1,1)} t$ we have
\begin{align*}
s(\beta) &= s_{(1,1)}\left(\alpha_0+\lambda_{(1, 1)} \alpha_{(1,1)} + \sum_{j=2}^m(\lambda_{(1,j-1)}-\lambda_{(1,m)})\alpha_{(1,j)}+\beta'\right)\\
&= \alpha_0 + (e_1 - \lambda_{(1, m)})\alpha_{(1,1)} + \sum_{j=2}^m(\lambda_{(1,j-1)}-\lambda_{(1,m)})\alpha_{(1,j)} + \beta'\in\Phi_\circ. 
\end{align*}
Suppose that $\beta \sim \alpha_0$ or $\beta \sim \alpha_{(i,1)}$ for some $2 \le i \le t$. Then Corollary \ref{C:Divisibility2} or Proposition ~\ref{P:Divisibility2} respectively imply that $e_1 \, \big| \, \lambda_{(1, m)}$. As a consequence, $e_1-\lambda_{(1, m)} \le 0$. However, $e_1-\lambda_{(1, m)} < 0$ is impossible since then $s(\beta)$ is a root in $\Phi_\circ$ which is neither positive nor negative, yielding a contradiction. Hence, $e_1-\lambda_{(1, m)} = 0$. But then the claim above implies that $\lambda_{(1,1)} = \dots = \lambda_{(1, m-1)} = \lambda_{(1, m)}$. Hence, 
$s(\beta) = \alpha_0 + \beta'$ and we are done. 

It remains to consider the last possibility $\beta \sim \alpha_{(1,1)}$. By Corollary \ref{C:Divisibility2}, we know that $f_1$ divides $1$, hence $f_1 = 1$. Since $\Vert\alpha_{(1,1)}\Vert = \Vert\beta\Vert = \Vert\alpha_0\Vert$, we conclude that $e_1 = 1$, too. Hence, $e_1 - \lambda_{(1,m)} \le 0$ and we can proceed as above. 
\end{proof}

We have investigated $\Phi$ sufficiently. We will use the following lemma to investigate a certain form of factorization in $\Red_T(c)$.

\begin{lemma}\label{L:ReflectionShiftRadicalElement} Let $V$ be a finite dimensional real vector space, $V \times V \stackrel{B}\lar \RR$ a symmetric bilinear form, $\gamma_1, \dots, \gamma_n \in V$ a collection of non-isotropic vectors, $m_1, \dots, m_n \in \ZZ$ and $a \in \mathsf{Rad}(B)$. Then for any $x \in V$ we have
\begin{equation}\label{E:ReflectionRadicalShift}
s_{\gamma_n + m_na} \dots s_{\gamma_1 +m_1a}(x) = s_{\gamma_n} \dots s_{\gamma_1}(x) -
\left(\sum\limits_{i=1}^n m_i\left(x, s_{\gamma_1} \dots s_{\gamma_{i-1}}(\gamma_i^\sharp)\right)\right)a.
\end{equation}
\end{lemma}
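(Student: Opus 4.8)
The plan is to prove the identity \eqref{E:ReflectionRadicalShift} by induction on $n$, the number of reflections. The key observation, which makes the induction work, is that conjugating a reflection by an isometry shifts its ``axis'': for any $f \in \mathsf{O}(V,B)$ and any non-isotropic $v$ one has $f s_v f^{-1} = s_{f(v)}$ (Lemma~\ref{L:ResultsRealReflect}(c)), and moreover $f$ preserves $\mathsf{Rad}(B)$, so $f(v + ma) = f(v) + ma$ whenever $a \in \mathsf{Rad}(B)$ and $f(a)=a$. Since every reflection $s_\gamma$ with $\gamma$ non-isotropic fixes $\mathsf{Rad}(B)$ pointwise, this last condition is automatic.

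First I would establish the base case $n=1$. For a non-isotropic $\gamma$ and $a \in \mathsf{Rad}(B)$, the vector $\gamma + ma$ is still non-isotropic since $(\gamma+ma,\gamma+ma) = (\gamma,\gamma)$, and $(\gamma+ma)^\sharp = \frac{2}{(\gamma,\gamma)}(\gamma+ma) = \gamma^\sharp + \frac{2m}{(\gamma,\gamma)}a$. Hence for any $x$,
\[
s_{\gamma+ma}(x) = x - (x, (\gamma+ma)^\sharp)(\gamma+ma) = x - (x,\gamma^\sharp)(\gamma+ma) = s_\gamma(x) - m(x,\gamma^\sharp)a,
\]
where I used that $(x,a) = 0$ because $a \in \mathsf{Rad}(B)$. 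This is exactly \eqref{E:ReflectionRadicalShift} for $n=1$, since $s_{\gamma_1}\dots s_{\gamma_0}$ is by convention the empty product, i.e.\ the identity, so $s_{\gamma_1}\dots s_{\gamma_{i-1}}(\gamma_i^\sharp)$ for $i=1$ reads $\gamma_1^\sharp$.

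For the inductive step, write $w := s_{\gamma_{n-1}}\dots s_{\gamma_1}$ and apply the induction hypothesis to the first $n-1$ reflections evaluated at $s_{\gamma_n + m_n a}(x)$. I would first peel off the outermost reflection using the base case: $s_{\gamma_n + m_n a}(x) = s_{\gamma_n}(x) - m_n(x,\gamma_n^\sharp)a$. Then apply the $(n-1)$-term identity to this vector; since the correction term $-m_n(x,\gamma_n^\sharp)a$ lies in $\mathsf{Rad}(B)$ and is fixed by $s_{\gamma_{n-1}}\dots s_{\gamma_1}$, it simply passes through, contributing $-m_n(x,\gamma_n^\sharp)a$ to the final answer. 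The main point requiring care is the indexing of the ``axis'' vectors in the resulting sum: applying the induction hypothesis gives a sum over $i = 1,\dots,n-1$ of $m_i(s_{\gamma_n}(x), s_{\gamma_1}\dots s_{\gamma_{i-1}}(\gamma_i^\sharp))a$, and I must rewrite $(s_{\gamma_n}(x), s_{\gamma_1}\dots s_{\gamma_{i-1}}(\gamma_i^\sharp))$ as $(x, s_{\gamma_n}s_{\gamma_1}\dots s_{\gamma_{i-1}}(\gamma_i^\sharp))$ using that $s_{\gamma_n}$ is an isometry and $s_{\gamma_n}^{-1} = s_{\gamma_n}$; this does \emph{not} match \eqref{E:ReflectionRadicalShift} directly, so the induction must instead be run ``from the inside out'' — peeling off $s_{\gamma_1 + m_1 a}$ first — or else one reindexes carefully. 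I expect this bookkeeping (choosing whether to induct on the innermost or outermost factor, and tracking that the radical corrections accumulate in the right order without interfering) to be the only real obstacle; the linear-algebra content is entirely contained in the base case. A clean way to sidestep the difficulty is to induct on the innermost factor: set $y := s_{\gamma_1 + m_1 a}(x) = s_{\gamma_1}(x) - m_1(x,\gamma_1^\sharp)a$, apply the $(n-1)$-term hypothesis to $s_{\gamma_n + m_n a}\dots s_{\gamma_2 + m_2 a}(y)$, and note that the extra radical term in $y$ is fixed by all subsequent reflections and reappears unchanged as $-m_1(x,\gamma_1^\sharp)a = -m_1(x, s_{\gamma_1}\dots s_{\gamma_0}(\gamma_1^\sharp))a$, matching the $i=1$ summand; the remaining summands come out with the correct prefixes $s_{\gamma_1}\dots s_{\gamma_{i-1}}$ automatically once one substitutes $x$ for $s_{\gamma_1}(x)$ using the isometry property, and $s_{\gamma_1}(y')$-type adjustments are absorbed because $s_{\gamma_1}^2 = \mathbbm{1}$.
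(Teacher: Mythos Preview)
Your proposal is correct and follows essentially the same approach as the paper: induction on $n$, with the base case $s_{\gamma+ma}(x) = s_\gamma(x) - m(x,\gamma^\sharp)a$ computed exactly as the paper does. The only difference is which factor you peel off in the induction step --- the paper peels the \emph{outermost} one (apply the induction hypothesis to $s_{\gamma_n+m_na}\dots s_{\gamma_1+m_1a}(x)$ first, then hit the result with $s_{\gamma+ma}$ and use the base case, rewriting $(\gamma^\sharp, s_{\gamma_n}\dots s_{\gamma_1}(x)) = (x, s_{\gamma_1}\dots s_{\gamma_n}(\gamma^\sharp))$), whereas you settle on peeling the \emph{innermost} one. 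Both routes are equally short.

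One remark: your intermediate ``outermost'' attempt has the composition order reversed --- you apply $s_{\gamma_n+m_na}$ to $x$ first and then the $(n-1)$-term hypothesis, which computes $s_{\gamma_{n-1}+m_{n-1}a}\dots s_{\gamma_1+m_1a}\bigl(s_{\gamma_n+m_na}(x)\bigr)$ rather than the product in the statement. That is why you obtained the mismatched prefix $s_{\gamma_n}s_{\gamma_1}\dots s_{\gamma_{i-1}}$; done in the correct order (induction hypothesis first, then apply $s_{\gamma_n+m_na}$), the outermost approach goes through without any extra bookkeeping, as in the paper.
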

\begin{proof} We prove this formula by induction on $n$. 

\smallskip
\noindent
Let $n = 1$ and $\gamma = \gamma_1$ and $m \in \ZZ$. Since $a \in  \mathsf{Rad}(B)$, it follows that for any $x\in V$ we have
\begin{equation}\label{E:ReflectionRadicalShiftSpecial}
    s_{\gamma +ma}(x) = x -\dfrac{2(\gamma+ma,x)}{(\gamma+ma,\gamma+ma)}(\gamma +ma)= x -\dfrac{2(\gamma,x)}{(\gamma,\gamma)}(\gamma +ma)= s_\gamma(x) - m(\gamma^\sharp, x) a,
\end{equation}
proving the formula (\ref{E:ReflectionRadicalShift}) in the case $n=1$. 

\smallskip
\noindent
Now we proceed with a proof of the induction step. For any non-isotropic $\gamma \in V$ and $m \in \ZZ$, we have
\begin{align*}
s_{\gamma + ma}\left(s_{\gamma_n + m_na} \dots s_{\gamma_1 +m_1a}(x)\right) =
s_{\gamma +ma}\left(s_{\gamma_n} \dots s_{\gamma_1}(x) - \left(\sum\limits_{i=1}^n m_i\left(x, s_{\gamma_1} \dots s_{\gamma_{i-1}}(\gamma_i^\sharp)\right)\right)a\right) \\
= s_{\gamma +ma}\left(s_{\gamma_n} \dots s_{\gamma_1}(x)\right) - \left(\sum\limits_{i=1}^n m_i\left(x, s_{\gamma_1} \dots s_{\gamma_{i-1}}(\gamma_i^\sharp)\right)\right)a,
\end{align*}
where we use the fact that $a$ remains fixed under any reflection. 
Using (\ref{E:ReflectionRadicalShiftSpecial}) we obtain
$$
s_{\gamma +ma}\bigl(s_{\gamma_n} \dots s_{\gamma_1}(x)\bigr) = s_\gamma s_{\gamma_n} \dots s_{\gamma_1}(x) -  m \bigl(\gamma^\sharp, s_{\gamma_n} \dots s_{\gamma_1}(x)\bigr) a. 
$$
However, $\bigl(\gamma^\sharp, s_{\gamma_n} \dots s_{\gamma_1}(x)\bigr) = \bigl(x, s_{\gamma_1} \dots s_{\gamma_n}(\gamma^\sharp)\bigr)$, where we use the facts that $s_{\gamma_i}$ is an isometry and $s_{\gamma_i}^2 = \mathbbm{1}$ for all $1 \le i \le n$. This concludes the proof of the induction step. 
\end{proof} 

We examine the formula (\ref{E:ReflectionRadicalShift}) in the following special case.
\begin{definition}
For $\beta\in\Phi_\circ$, we set
\begin{equation*}
\vec{\gamma}(\beta) = (\gamma_n, \dots, \gamma_1) :=
\bigl(\alpha_{(t, p_t-1)}, \dots,\alpha_{(t, 1)}, \dots, \alpha_{(1, p_1-1)}, \dots,\alpha_{(1, 1)}, \beta, \beta\bigr) \in \Phi_\circ^n.
\end{equation*}
Further, let
\begin{equation}\label{E:AnsatzVectorK}
\vec{k} = (k_n, \dots, k_1) = \bigl(k_{(t, p_t-1)}, \dots, k_{(t,1)}, \dots, k_{(1, p_1-1)}, \dots, k_{(1,1)}, k, k'\bigr) \in \ZZ^n
\end{equation}
be such that $\vec{\gamma}(\beta, \vec{k}) := \vec{\gamma}(\beta)  + \vec{k} a \in \Phi^n$.
Then we put $t(\beta, \vec{k}) := s_{\gamma_n + k_n a} \dots s_{\gamma_1 + k_1 a}$ and analogously $\tilde t(\beta, \vec{k}) :=\tilde s_{\gamma_n + k_n a} \dots\tilde s_{\gamma_1 + k_1 a}$ in the tubular case.
\end{definition}

\begin{lemma}\label{L:FactorizationFormula1}
Let $\beta\in\Phi_\circ$ and $\vec{k}$ as in (\ref{E:AnsatzVectorK}) such that $\vec{\gamma}(\beta, \vec{k}) \in \Phi^n$. Then for any $x\in\Gamma$, we have
\begin{equation*}
t(\beta, k)(x) = \hat{c}(x) -
\left((k'-k)(x, \beta^\sharp) + \sum_{(i,j)\in\Omega}\left(\sum_{q=j}^{p_i-1}k_{(i,q)}\right)(x,\alpha_{(i,j)}^\sharp)\right) a,
\end{equation*}
where $\hat{c}:= s_{(t, p_t-1)} \dots s_{(t,1)} \dots s_{(1, p_1-1)} \dots s_{(1,1)}$. In particular, $c(x)=\hat{c}(x) - (x, \alpha_0^\sharp) a$ for any $x\in\Gamma$.
\end{lemma}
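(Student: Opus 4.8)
The plan is to reduce the whole statement to a single application of Lemma~\ref{L:ReflectionShiftRadicalElement}. Since $\varepsilon=1$ throughout this section, the vector $a=\alpha_{0^\ast}-\alpha_0$ lies in $\mathsf{Rad}(B)$, and the entries of $\vec\gamma(\beta)$ are non-isotropic (elements of $\Phi_\circ$ have positive norm, as do the simple roots $\alpha_{(i,j)}$), so Lemma~\ref{L:ReflectionShiftRadicalElement} applies directly with $(\gamma_n,\dots,\gamma_1)=\vec\gamma(\beta)$ and integer shifts $(m_n,\dots,m_1)=\vec k$. It yields, for every $x\in\Gamma$,
\[
t(\beta,\vec k)(x) = s_{\gamma_n}\cdots s_{\gamma_1}(x) - \Bigl(\textstyle\sum_{i=1}^{n} k_i\,\bigl(x,\,s_{\gamma_1}\cdots s_{\gamma_{i-1}}(\gamma_i^\sharp)\bigr)\Bigr)\,a .
\]
Because $\gamma_1=\gamma_2=\beta$ and $s_\beta^{\,2}=\mathbbm{1}$, the leading operator telescopes: $s_{\gamma_n}\cdots s_{\gamma_1}=s_{(t,p_t-1)}\cdots s_{(1,1)}\,s_\beta s_\beta=\hat c$, which already matches the first term of the claim. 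It then remains to evaluate the vectors $\delta_i:=s_{\gamma_1}\cdots s_{\gamma_{i-1}}(\gamma_i^\sharp)$ occurring in the coefficient of $a$.

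For $i=1$ the empty product gives $\delta_1=\beta^\sharp$, while $\delta_2=s_\beta(\beta^\sharp)=-\beta^\sharp$ since $s_\beta(\beta)=-\beta$; as $k_1=k'$ and $k_2=k$, these two terms contribute $(k'-k)(x,\beta^\sharp)$, exactly as in the asserted formula. For $i\ge 3$ one has $\gamma_i=\alpha_{(i',j')}$ for the corresponding $(i',j')\in\Omega$. Using $s_\beta^{\,2}=\mathbbm{1}$ together with the orthogonality $(\alpha_{(k,l)},\alpha_{(i',m)})=0$ for $k\ne i'$ — so that every reflection $s_{(k,l)}$ with $k\ne i'$ fixes each $\alpha_{(i',m)}^\sharp$, hence any vector in the span of $\{\alpha_{(i',m)}^\sharp\}_m$ — the long product collapses on $\alpha_{(i',j')}^\sharp$ to $\delta_i=s_{(i',1)}s_{(i',2)}\cdots s_{(i',j'-1)}(\alpha_{(i',j')}^\sharp)$ (the product being empty when $j'=1$). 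Inside the $i'$-th arm all roots have a common norm and the pertinent Cartan integers are precisely those of type $A_{p_{i'}-1}$, so the standard identity $s_{(i',1)}\cdots s_{(i',j'-1)}(\alpha_{(i',j')})=\alpha_{(i',1)}+\cdots+\alpha_{(i',j')}$ (an immediate induction on $j'$), combined with the fact that $\alpha_{(i',m)}^\sharp$ is a fixed scalar multiple of $\alpha_{(i',m)}$, gives $\delta_i=\alpha_{(i',1)}^\sharp+\cdots+\alpha_{(i',j')}^\sharp$.

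Substituting these values, the coefficient of $a$ in $t(\beta,\vec k)(x)$ equals
\[
(k'-k)(x,\beta^\sharp) + \sum_{(i,j)\in\Omega} k_{(i,j)}\sum_{l=1}^{j}\bigl(x,\alpha_{(i,l)}^\sharp\bigr),
\]
and interchanging the order of summation rewrites the double sum as $\sum_{(i,j)\in\Omega}\bigl(\sum_{q=j}^{p_i-1}k_{(i,q)}\bigr)(x,\alpha_{(i,j)}^\sharp)$, which is the claimed expression. For the final assertion, apply the formula with $\beta=\alpha_0$ and $\vec k=(0,\dots,0,0,1)$: since $\alpha_{0^\ast}=\alpha_0+a$, the operator $t(\alpha_0,\vec k)=s_{(t,p_t-1)}\cdots s_{(1,1)}\,s_{\alpha_0}\,s_{\alpha_0+a}=\hat c\,s_0 s_{0^\ast}$ is precisely $c$, and the general identity specializes to $c(x)=\hat c(x)-(x,\alpha_0^\sharp)a$. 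The only delicate point is the bookkeeping in the previous paragraph — deciding which reflections survive when acting on $\alpha_{(i',j')}^\sharp$ and invoking the type-$A$ identity in the correct orientation — but this becomes routine once the mutual orthogonality of distinct arms is exploited.
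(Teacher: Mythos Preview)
Your proof is correct and follows essentially the same approach as the paper's: both apply Lemma~\ref{L:ReflectionShiftRadicalElement}, observe that $s_\beta s_\beta=\mathbbm{1}$ collapses the product to $\hat c$, compute the first two summands as $(k'-k)(x,\beta^\sharp)$, reduce the remaining summands via arm-orthogonality to the type-$A$ identity $s_{(i,1)}\cdots s_{(i,j-1)}(\alpha_{(i,j)}^\sharp)=\sum_{p=1}^{j}\alpha_{(i,p)}^\sharp$, and then swap the order of summation. Your treatment of the ``in particular'' clause via the specialization $\beta=\alpha_0$, $\vec k=(0,\dots,0,0,1)$ is a small expository addition that the paper leaves implicit.
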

\begin{proof} Since $\gamma_1 = \beta = \gamma_2$, we have $s_{\gamma_2} s_{\gamma_1} = \mathbbm{1}$ and 
$s_{\gamma_n} \dots s_{\gamma_1}(x) = \hat{c}(x)$. Next, we have
$$\sum\limits_{l=1}^2 k_l\bigl(x, s_{\gamma_1} \dots s_{\gamma_{l-1}}(\gamma_l^\sharp)\bigr) = 
(x, \beta^\sharp)(k'-k).
$$
For any $(i, j) \in \Omega$, we have $s_{(i,1)} \dots s_{(i, j-1)}\bigl(\alpha_{(i,j)}^\sharp\bigr) = \sum\limits_{p=1}^j \alpha_{(i,p)}^\sharp$. It follows that
$$
\sum\limits_{l=3}^n k_l\bigl(x, s_{\gamma_1} \dots s_{\gamma_{l-1}}(\gamma_l^\sharp)\bigr) = \sum\limits_{(i, j) \in \Omega} k_{(i,j)} \sum\limits_{p=1}^j \bigl(x, \alpha_{(i,p)}^\sharp\bigr)=\sum_{(i,j)\in\Omega}\left(\sum_{q=j}^{p_i-1}k_{(i,q)}\right)(x,\alpha_{(i,j)}^\sharp)
$$
and the statement follows from Lemma \ref{L:ReflectionShiftRadicalElement}. 
\end{proof}

\begin{corollary}\label{C:FactorizationCorollary} Let $\beta\in\Phi_\circ$ and $\vec{k}\in \ZZ^n$ be such that $\gamma(\beta,\vec{k})\in\Phi^n$. Then $t(\beta,\vec{k})=c$ if and only if 
\begin{equation*}
(x, \alpha_0^\sharp) = (k'-k)(x, \beta^\sharp) + \sum_{(i,j)\in\Omega}\left(\sum_{q=j}^{p_i-1}k_{(i,q)}\right)(x,\alpha_{(i,j)}^\sharp)
\end{equation*}
for all $x \in \Gamma$.  Equivalently,  the following identity is true: 
\begin{equation}\label{E:FactorizationCondition2}
\alpha_0^\sharp =  (k'-k)\beta^\sharp+ \sum_{(i,j)\in\Omega}\left(\sum_{q=j}^{p_i-1}k_{(i,q)}\right)\alpha_{(i,j)}^\sharp \; \mathsf{mod} \;  \mathsf{Rad}(B).
\end{equation}
\end{corollary}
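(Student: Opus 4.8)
The plan is to read off Corollary~\ref{C:FactorizationCorollary} directly from Lemma~\ref{L:FactorizationFormula1}. Applying Lemma~\ref{L:FactorizationFormula1} to the tuple $\vec{\gamma}(\beta,\vec{k})$, for every $x\in\Gamma$ we have
\[
t(\beta,\vec{k})(x) = \hat{c}(x) - \left((k'-k)(x,\beta^\sharp) + \sum_{(i,j)\in\Omega}\Bigl(\sum_{q=j}^{p_i-1}k_{(i,q)}\Bigr)(x,\alpha_{(i,j)}^\sharp)\right)a,
\]
while the same lemma, applied to the standard exceptional sequence (the case $\beta=\alpha_0$, $\vec{k}=0$), gives $c(x) = \hat{c}(x) - (x,\alpha_0^\sharp)a$ for every $x\in\Gamma$. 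Subtracting, $t(\beta,\vec{k})(x) - c(x)$ equals
\[
\left((x,\alpha_0^\sharp) - (k'-k)(x,\beta^\sharp) - \sum_{(i,j)\in\Omega}\Bigl(\sum_{q=j}^{p_i-1}k_{(i,q)}\Bigr)(x,\alpha_{(i,j)}^\sharp)\right)a
\]
for all $x\in\Gamma$. Since $a\neq 0$, this difference vanishes for every $x$ if and only if the scalar coefficient vanishes for every $x\in\Gamma$, which is precisely the stated equivalence between $t(\beta,\vec{k})=c$ and the displayed identity of linear functionals on $\Gamma$.

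It then remains to rephrase the vanishing of that linear functional as the congruence \eqref{E:FactorizationCondition2}. First I would extend $B$ to the real hull $V=\RR\otimes_\ZZ\Gamma$; since $\Gamma$ spans $V$, the functional $x\mapsto \bigl(x,\,v\bigr)$ on $\Gamma$ vanishes identically if and only if $v\in\Rad(B)$, where $v := \alpha_0^\sharp - (k'-k)\beta^\sharp - \sum_{(i,j)\in\Omega}\bigl(\sum_{q=j}^{p_i-1}k_{(i,q)}\bigr)\alpha_{(i,j)}^\sharp$. Rewriting $v\in\Rad(B)$ as the congruence $\alpha_0^\sharp \equiv (k'-k)\beta^\sharp + \sum_{(i,j)\in\Omega}\bigl(\sum_{q=j}^{p_i-1}k_{(i,q)}\bigr)\alpha_{(i,j)}^\sharp \pmod{\Rad(B)}$ gives exactly \eqref{E:FactorizationCondition2}. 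One minor point to make precise is that $\Rad(B)$ on $\Gamma$ and on $V$ correspond under $\otimes\RR$, so that the congruence makes sense integrally; this is routine since $\Gamma$ is free of finite rank and the Gram matrix \eqref{E:GramMatrix} has integer (indeed rational, after normalization) entries.

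There is essentially no obstacle here: the corollary is a direct bookkeeping consequence of Lemma~\ref{L:FactorizationFormula1}, and the only thing requiring a word of justification is the passage from ``the linear functional $x\mapsto(x,v)$ vanishes on $\Gamma$'' to ``$v\in\Rad(B)$'', which uses only that $\Gamma$ spans $V$ and the definition of the radical. The first equivalence (the pointwise statement about $t(\beta,\vec{k})(x)=c(x)$) and the second (the module-theoretic reformulation via $\Rad(B)$) are literally the two halves of the statement, so the proof is short.
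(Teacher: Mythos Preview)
Your argument is correct and matches the paper's approach: the corollary is stated without proof there, being an immediate consequence of Lemma~\ref{L:FactorizationFormula1} by comparing the two displayed formulas for $t(\beta,\vec{k})(x)$ and $c(x)$. One small slip: the formula $c(x)=\hat c(x)-(x,\alpha_0^\sharp)a$ does not correspond to ``$\vec{k}=0$'' but to $\beta=\alpha_0$, $k=0$, $k'=1$, $k_{(i,j)}=0$ (since $\alpha_{0^\ast}=\alpha_0+a$); the formula you quote is nonetheless exactly the one recorded in the lemma, so the argument goes through unchanged.
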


For a canonical bilinear lattice $(\Gamma, K)$, we have $\mathsf{rk}\bigl(\mathsf{Rad}(B)\bigr) = 1$ if $\delta \ne 0$ and $\mathsf{rk}\bigl(\mathsf{Rad}(B)\bigr) = 2$ if $\delta = 0$; see Proposition \ref{P:TypeDistinction}. Equation (\ref{E:FactorizationCondition2}) explains why the tubular case $\delta = 0$ requires special treatment. All the technical computations regarding the roots in $\Phi$ were needed to prove the following essential results. Note that each result is given in two versions - one for non-tubular and one for tubular types.
\begin{lemma}\label{L:condition1}
Let $(\Gamma,K)$ be a canonical bilinear lattice of non-tubular type. Let $\beta\in\Phi_\circ$ such that there exists $\vec{k}\in\ZZ^n$ with $\vec{\gamma}(\beta, \vec{k}) \in \Phi^n$ and $t(\beta, \vec{k}) = c$. Then there exists a $w\in\llangle s_{(i, j)} \, \big|\, (i, j) \in \Omega\rrangle\subset W$ such that $w(\beta)=\pm \alpha_0$ and $k'-k=\pm1$.
\end{lemma}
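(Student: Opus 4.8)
The key leverage is Corollary \ref{C:FactorizationCorollary}: the hypothesis $t(\beta,\vec k)=c$ is equivalent to the modular identity
\begin{equation*}
\alpha_0^\sharp =  (k'-k)\beta^\sharp+ \sum_{(i,j)\in\Omega}\left(\sum_{q=j}^{p_i-1}k_{(i,q)}\right)\alpha_{(i,j)}^\sharp \; \mathsf{mod} \;  \mathsf{Rad}(B),
\end{equation*}
and in the non-tubular case $\mathsf{Rad}(B)=\llangle a\rrangle$ is one-dimensional (Proposition \ref{P:TypeDistinction}), so the identity holds in $V/\llangle a\rrangle$. I plan to compare both sides against the basis dual to $R_\circ$. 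The plan is first to extract from this identity a formula for $\beta^\sharp$, and hence for $\beta$, as a $\ZZ$-linear combination of the $\alpha_\omega^\sharp$ with $\omega\in\Omega\cup\{0\}$: rearranging gives
$(k'-k)\beta^\sharp = \alpha_0^\sharp - \sum_{(i,j)}\bigl(\sum_{q\ge j}k_{(i,q)}\bigr)\alpha_{(i,j)}^\sharp \bmod \Rad(B)$. Since $\beta\in\Phi_\circ$ and $\Phi_\circ$ is reduced (Proposition \ref{P:CoxeterPositiveReduced}), $\beta$ is determined up to sign by $\beta^\sharp$ modulo $\Rad(B)$; and because $\beta\in\Gamma_\circ$ while $\Rad(B)\cap\Gamma_\circ = 0$ in the non-tubular case (the radical generator $a=\alpha_{0^*}-\alpha_0$ is not in $\Gamma_\circ$, by the decomposition $\Gamma=\Gamma_\circ\oplus\llangle a\rrangle$), I get an honest identity in $\Gamma_\circ$.

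\textbf{Step 2: the coefficient of $\alpha_0$ is a unit.} Writing $\beta = \lambda_0\alpha_0 + \sum_{(i,j)\in\Omega}\lambda_{(i,j)}\alpha_{(i,j)}\in\Phi_\circ$, I pair the rearranged identity with $\alpha_0$, i.e.\ apply $(-,\alpha_0)$ and divide by $(\alpha_0,\alpha_0)$; the terms $(\alpha_{(i,j)}^\sharp,\alpha_0)=-e_i\delta_{j,1}$ contribute integers, and $(\alpha_0^\sharp,\alpha_0)=2$, so I obtain an equation of the shape $(k'-k)\,\tfrac{(\alpha_0,\alpha_0)}{(\beta,\beta)}\lambda_0 = \text{integer}$; but more to the point, pairing instead against a suitable element or reading off the $\alpha_0^\sharp$-coordinate directly shows that $\dfrac{(\alpha_0,\alpha_0)}{(\beta,\beta)}\lambda_0(k'-k)=1$. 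This is precisely the hypothesis of Lemma \ref{L:ModificationOfEquation} (with the roles of $k,k'$ as there), whose conclusion gives $\Vert\beta\Vert = \Vert\alpha_0\Vert$ and $\lambda_0 = k'-k = \pm1$. That already delivers the assertion $k'-k=\pm1$.

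\textbf{Step 3: straightening $\beta$ to $\pm\alpha_0$.} With $\Vert\beta\Vert=\Vert\alpha_0\Vert$ and $\lambda_0=\pm1$ in hand, replace $\beta$ by $-\beta$ if necessary (allowed, since $\Phi_\circ$ is symmetric and $s_{-\beta}=s_\beta$, and negation is effected by an element of $W$, though here I only need $\beta$ up to sign) so that $\lambda_0 = 1$, i.e.\ $\beta = \alpha_0 + \sum_{(k,l)\in\Omega}\lambda_{(k,l)}\alpha_{(k,l)}$ with $\Vert\beta\Vert = \Vert\alpha_0\Vert$. This is exactly the hypothesis of Proposition \ref{P:KillingTheArms}, which produces $w\in\llangle s_{(i,j)}\mid (i,j)\in\Omega\rrangle$ with $w(\beta)=\alpha_0$; in the case where we negated first, $w$ applied to the original $\beta$ gives $-\alpha_0$. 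Either way $w(\beta)=\pm\alpha_0$, completing the proof.

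\textbf{Main obstacle.} The delicate point is Step 2: verifying that the $\alpha_0^\sharp$-component of the right-hand side of the rearranged identity is exactly $1$ modulo $\Rad(B)$, i.e.\ that no contribution from the $\alpha_{(i,j)}^\sharp$ terms can shift it. This needs care because the $\alpha_{(i,j)}^\sharp$, $\alpha_0^\sharp$ need not be a basis of $V$ over $\ZZ$ in a way compatible with $\Gamma$ — the dual vectors live in $\Gamma^\vee\otimes\QQ$, not $\Gamma$. The clean way around this is not to argue coordinatewise but to feed the rearranged identity and the relation $\lambda_0(k'-k)=\pm1$ (obtained by pairing with the fundamental weight dual to $\alpha_0$ within $\Gamma_\circ$, which is integral since $R_\circ$ is a basis of $\Gamma_\circ$) directly into Lemma \ref{L:ModificationOfEquation}; that lemma was evidently designed precisely for this situation, so once the bookkeeping identifying "$\dfrac{(\alpha_0,\alpha_0)}{(\beta,\beta)}\lambda_0(k'-k)=1$" is established, the rest is mechanical. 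I also need the non-tubular hypothesis to ensure $\Rad(B)=\llangle a\rrangle$ so that the modular identity pins down $\beta$ rather than a coset; the tubular case genuinely fails here, which is why it is treated separately in the sequel.
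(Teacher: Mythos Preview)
Your proposal is correct and follows essentially the same route as the paper: invoke Corollary~\ref{C:FactorizationCorollary}, extract the relation $\dfrac{(\alpha_0,\alpha_0)}{(\beta,\beta)}\lambda_0(k'-k)=1$, apply Lemma~\ref{L:ModificationOfEquation}, and finish with Proposition~\ref{P:KillingTheArms}. The paper handles your Step~2 more cleanly by expanding $\beta^\sharp$ in the $\{\alpha_\omega^\sharp\}$ basis, observing that the difference of the two sides of \eqref{E:FactorizationCondition2} lies in $\mathsf{Rad}(B)\cap\llangle\Gamma_\circ\rrangle_{\mathbb{Q}}=\{0\}$, and then reading off the $\alpha_0^\sharp$-coefficient directly using linear independence---this avoids the pairing gymnastics you sketch and makes the non-tubular hypothesis transparent.
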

\begin{proof}
Let $\beta=\lambda_0\alpha_0+\sum_{(i,j)\in\Omega}\lambda_{(i,j)}\alpha_{(i,j)}\in\Phi_\circ$. Then 
$$\beta^\sharp=\frac{(\alpha_0,\alpha_0)}{(\beta,\beta)}\lambda_0\alpha_0^\sharp+\sum_{(i,j)\in\Omega}\frac{(\alpha_{(i,j)},\alpha_{(i,j)})}{(\beta,\beta)}\lambda_{(i,j)}\alpha_{(i,j)}^\sharp.$$
Thus, equation (\ref{E:FactorizationCondition2}) implies that $t(\beta, \vec{k}) = c$ if and only if the following element
$$x=\left(\frac{(\alpha_0,\alpha_0)}{(\beta,\beta)}\lambda_0(k'-k)-1\right)\alpha_0^\sharp+\sum_{(i,j)\in\Omega}\left(\frac{(\alpha_{(i,j)},\alpha_{(i,j)})}{(\beta,\beta)}\lambda_{(i,j)}(k'-k)+\sum_{q=j}^{p_i-1}k_{(i,q)}\right)\alpha_{(i,j)}^\sharp$$
is in $\Rad(B)$. Therefore, $x\in\Rad(B)\cap \llangle\Gamma_\circ\rrangle_{\mathbb{Q}}=\{0\}$. The fact that $\{\alpha_\omega^\sharp|\omega\in\overline\Omega\}$ are linearly independent, implies in particular that $\dfrac{(\alpha_0\,\vert\,\alpha_0)}{(\beta\,\vert\,\beta)}\lambda_0(k'-k)=1$. This in turn implies $\Vert\alpha_0\Vert=\Vert\beta\Vert$ and $\lambda_0=k'-k=\pm1$ by Lemma \ref{L:ModificationOfEquation}. So, up to sign, we can assume that $\lambda_0=k'-k=1$ and Proposition \ref{P:KillingTheArms} concludes the proof.
\end{proof}

\begin{lemma}\label{L:condition1hyp}
Let $(\Gamma,K)$ be a canonical bilinear lattice of tubular type. Let $\beta\in\Phi_\circ$ such that there exists $\vec{k}\in\ZZ^n$ with $\vec{\gamma}(\beta, \vec{k}) \in \Phi^n$ and $\tilde{t}(\beta,\vec{k}) = \tilde{c}$. Then there exists a $w\in\llangle\tilde s_{(i, j)} \, \big|\, (i, j) \in \Omega\rrangle\subset\widetilde W$ such that $w(\beta)=\pm \alpha_0$ and $k'-k=\pm1$.
\end{lemma}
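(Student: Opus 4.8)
The plan is to mimic the proof of Lemma~\ref{L:condition1}, with the one step there that invoked $\rk\bigl(\mathsf{Rad}(B)\bigr)=1$ (which fails in the tubular case, where $\mathsf{Rad}(B)$ is two-dimensional) replaced by an application of the rank function of Definition~\ref{D:rankFunction} to the action of $\tilde c$ on the distinguished extra vector $a'$ of the hyperbolic extension. Throughout, write $\beta=\lambda_0\alpha_0+\sum_{(i,j)\in\Omega}\lambda_{(i,j)}\alpha_{(i,j)}\in\Phi_\circ$, and extend $\rk$ from $\Gamma$ to $\widetilde V$ $\RR$-linearly by setting $\rk(a'):=0$; thus $\rk(a)=0$, $\rk(\alpha_{(i,j)})=0$, $\rk(\alpha_0)=1$ and $\rk(\beta)=\lambda_0$.

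First I would compute $\rk\bigl(\tilde t(\beta,\vec k)(a')\bigr)$. The two rightmost factors of $\tilde t(\beta,\vec k)$ are $\tilde s_{\beta+ka}$ and $\tilde s_{\beta+k'a}$ (since $\gamma_1=\gamma_2=\beta$ and $k_1=k'$, $k_2=k$), so Lemma~\ref{L:UsefulFormulaReflection} gives
\[
\tilde s_{\beta+ka}\tilde s_{\beta+k'a}(a')=a'+\frac{2(k'-k)}{(\beta,\beta)}\beta-\frac{2(k'-k)^2}{(\beta,\beta)}a .
\]
Every remaining factor of $\tilde t(\beta,\vec k)$ is of the form $\tilde s_{\alpha_{(i,j)}+k_{(i,j)}a}$, i.e.\ a reflection of $\widetilde V$ in a vector of rank $0$; such a reflection adds to its argument a scalar multiple of a rank-$0$ vector and therefore preserves $\rk$. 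Hence
\[
\rk\bigl(\tilde t(\beta,\vec k)(a')\bigr)=\frac{2(k'-k)\lambda_0}{(\beta,\beta)} .
\]
On the other hand, Proposition~\ref{P:ActionCoxeter} gives $\tilde c(a')=a'+\tfrac{2}{(\alpha_0,\alpha_0)}\bigl(\alpha_0-a+\sum_{(i,j)\in\Omega}e_i\alpha_{(i,j)}\bigr)$, whence $\rk\bigl(\tilde c(a')\bigr)=\tfrac{2}{(\alpha_0,\alpha_0)}$. Since $\tilde t(\beta,\vec k)=\tilde c$ by hypothesis, comparing these two ranks yields the identity $\tfrac{(\alpha_0,\alpha_0)}{(\beta,\beta)}\,\lambda_0\,(k'-k)=1$.

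Now I would feed this identity into Lemma~\ref{L:ModificationOfEquation}. Its hypotheses are met: $\beta\in\Phi_\circ$, while $\beta+ka$ and $\beta+k'a$ are the last two entries of $\vec\gamma(\beta,\vec k)\in\Phi^n$, hence lie in $\Phi$. The lemma then gives $\Vert\beta\Vert=\Vert\alpha_0\Vert$ and $\lambda_0=k'-k=\pm1$, which already establishes the assertion $k'-k=\pm1$. As for $w$: if $\lambda_0=1$ then $\beta=\alpha_0+\sum_{(i,j)\in\Omega}\lambda_{(i,j)}\alpha_{(i,j)}$ with $\Vert\beta\Vert=\Vert\alpha_0\Vert$, so Proposition~\ref{P:KillingTheArms} provides $w'\in\llangle s_{(i,j)}\mid (i,j)\in\Omega\rrangle\subseteq W$ with $w'(\beta)=\alpha_0$; if $\lambda_0=-1$ we apply the same proposition to $-\beta\in\Phi_\circ$, getting $w'(\beta)=-\alpha_0$. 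Letting $w\in\widetilde W$ be the word obtained from $w'$ by replacing each syllable $s_{(i,j)}$ by $\tilde s_{(i,j)}$, we get $w\in\llangle\tilde s_{(i,j)}\mid(i,j)\in\Omega\rrangle$; and since each $\tilde s_{(i,j)}$ restricts to $s_{(i,j)}$ on $V$ and $\beta\in\Gamma_\circ\subseteq V$, it follows that $w(\beta)=w'(\beta)=\pm\alpha_0$, completing the proof.

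I expect the main obstacle to be the rank computation in the second paragraph, and in particular the verification that the arm reflections $\tilde s_{\alpha_{(i,j)}+k_{(i,j)}a}$ are rank-neutral on $\widetilde V$. This is routine but requires some care: unlike in $V$, the vector $a$ is no longer in the radical of $\widetilde B$ (one has $\widetilde B(a,a')=1$), so one has to unwind the reflections using $\widetilde B(a',\gamma)=0$ for $\gamma\in\Gamma_\circ$ together with $\widetilde B(a',a)=1$, rather than appeal verbatim to the radical-shift machinery of Lemma~\ref{L:ReflectionShiftRadicalElement}.
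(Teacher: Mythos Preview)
Your proof is correct and follows essentially the same route as the paper's: both compare ranks of $\tilde c(a')$ and $\tilde t(\beta,\vec k)(a')$ using Lemma~\ref{L:UsefulFormulaReflection} and Proposition~\ref{P:ActionCoxeter}, observe that the arm reflections $\tilde s_{\alpha_{(i,j)}+k_{(i,j)}a}$ preserve rank, deduce $\tfrac{(\alpha_0,\alpha_0)}{(\beta,\beta)}\lambda_0(k'-k)=1$, and then invoke Lemma~\ref{L:ModificationOfEquation} and Proposition~\ref{P:KillingTheArms}. The paper avoids extending $\rk$ to $\widetilde V$ by computing $\rk(\tilde c(a')-a')$ (which already lies in $V$), but this is only a cosmetic difference; your extra care with the lift of $w'$ to $\widetilde W$ and the sign of $\lambda_0$ is welcome but not strictly needed.
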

\begin{proof}
Note that $\tilde{t}(\beta, \vec{k})(a')-a' = \tilde{c}(a')-a'\in V$. Thus we can compare $\rk(\tilde{t}(\beta, \vec{k})(a')-a')$ and $\rk(\tilde{c}(a')-a')$. By Proposition \ref{P:ActionCoxeter} we have $\rk(\tilde{c}(a')-a')=\dfrac{2}{(\alpha_0, \alpha_0)}$. The action of the composition of reflections $\tilde{s}_{\alpha_{(i,j)} + k_{(i,j)}a}$ does not affect the rank as $a,\alpha_{(i,j)}\in\mathsf{Ker}(\rk)$ for any $(i,j)\in\Omega$. Therefore, Lemma \ref{L:UsefulFormulaReflection} implies that $\rk(\tilde{t}(\beta, \vec{k})(a')-a')=\dfrac{2(k'-k)}{(\beta, \beta)}\lambda_0$.

We conclude that $\dfrac{(\alpha_0,\alpha_0)}{(\beta,\beta)}\lambda_0(k'-k)=1$ as in the non-tubular case. This in turn implies $\Vert\alpha_0\Vert=\Vert\beta\Vert$ and $\lambda_0=k'-k=\pm1$ by Lemma \ref{L:ModificationOfEquation}. So, up to sign, we can assume that $\lambda_0=k'-k=1$ and Proposition \ref{P:KillingTheArms} concludes the proof.
\end{proof}

\begin{lemma}\label{L:condition2}
Let $(\Gamma,K)$ be a canonical bilinear lattice of non-tubular type. Let $\vec{k} \in \ZZ^n$ be such that $\vec{\gamma}(\alpha_0, \vec{k}) \in \Phi^n$ and $t(\alpha_0, \vec{k}) = c$. Then $k_{(i,j)}=0$ for any $(i, j) \in \Omega$.
\end{lemma}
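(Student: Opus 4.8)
The plan is to feed the hypothesis into Corollary~\ref{C:FactorizationCorollary} with $\beta = \alpha_0$ (so that $\beta^\sharp = \alpha_0^\sharp$) and then to delete the ``modulo $\mathsf{Rad}(B)$'' by exploiting that the radical is one-dimensional in the non-tubular case. Since $t(\alpha_0,\vec{k}) = c$ by hypothesis, the identity (\ref{E:FactorizationCondition2}) reads, inside $\Gamma\otimes_{\ZZ}\mathbb{Q}$ (where all the dual vectors $\alpha_\omega^\sharp = \tfrac{2}{(\alpha_\omega,\alpha_\omega)}\alpha_\omega$ live),
\[
\bigl(1-(k'-k)\bigr)\,\alpha_0^\sharp \;-\; \sum_{(i,j)\in\Omega}\biggl(\,\sum_{q=j}^{p_i-1}k_{(i,q)}\biggr)\alpha_{(i,j)}^\sharp \;\in\; \mathsf{Rad}(B)\otimes_{\ZZ}\mathbb{Q}.
\]

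For the second step I would observe that, since $(\Gamma,K)$ is of non-tubular type, $\mathsf{rk}\bigl(\mathsf{Rad}(B)\bigr) = 1$ by Proposition~\ref{P:TypeDistinction}; as $a = \alpha_{0^\ast}-\alpha_0$ is a nonzero element of $\mathsf{Rad}(B)$, this forces $\mathsf{Rad}(B)\otimes_{\ZZ}\mathbb{Q} = \mathbb{Q}a$. On the other hand, the decomposition $\Gamma = \Gamma_\circ\oplus\llangle a\rrangle$ gives $\mathbb{Q}a\cap\llangle\Gamma_\circ\rrangle_{\mathbb{Q}} = 0$, while the left-hand side of the displayed relation lies in $\llangle\Gamma_\circ\rrangle_{\mathbb{Q}}$, because each $\alpha_\omega^\sharp$ with $\omega\in\Omega\cup\{0\}$ is a scalar multiple of $\alpha_\omega\in\Gamma_\circ$. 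Hence the membership above improves to the honest equality
\[
\bigl(1-(k'-k)\bigr)\,\alpha_0^\sharp \;=\; \sum_{(i,j)\in\Omega}\biggl(\,\sum_{q=j}^{p_i-1}k_{(i,q)}\biggr)\alpha_{(i,j)}^\sharp .
\]

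Finally I would invoke linear independence: the family $\{\alpha_0^\sharp\}\cup\{\alpha_{(i,j)}^\sharp\mid(i,j)\in\Omega\}$ consists of nonzero scalar multiples of the subfamily $\{\alpha_0\}\cup\{\alpha_{(i,j)}\mid(i,j)\in\Omega\}$ of the basis $R$ of $\Gamma$, hence is linearly independent. Comparing coefficients in the last display therefore yields $k'-k = 1$ and $\sum_{q=j}^{p_i-1}k_{(i,q)} = 0$ for every $(i,j)\in\Omega$; writing $k_{(i,j)} = \sum_{q=j}^{p_i-1}k_{(i,q)} - \sum_{q=j+1}^{p_i-1}k_{(i,q)}$ (with the empty sum for $j=p_i$ equal to $0$) then gives $k_{(i,j)} = 0$ for all $(i,j)\in\Omega$, as claimed. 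I do not expect a genuine obstacle here; the only step that really matters is the passage from the congruence to the equality, which uses $\mathsf{rk}\bigl(\mathsf{Rad}(B)\bigr) = 1$ in an essential way — precisely where non-tubularity enters, and precisely why the tubular case, in which $\mathbb{Q}a\subsetneq\mathsf{Rad}(B)\otimes_{\ZZ}\mathbb{Q}$, must be handled separately via the rank function (cf.\ the argument for Lemma~\ref{L:condition1hyp}).
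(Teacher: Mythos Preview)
Your proof is correct and follows essentially the same route as the paper: apply Corollary~\ref{C:FactorizationCorollary} with $\beta=\alpha_0$, use $\mathsf{Rad}(B)\cap\llangle\Gamma_\circ\rrangle_{\mathbb{Q}}=0$ in the non-tubular case to upgrade the congruence to an equality, and read off the vanishing of the partial sums by linear independence of the $\alpha_\omega^\sharp$. You are in fact slightly more explicit than the paper, which simply asserts $k'-k=1$ at the outset, whereas you derive it from the $\alpha_0^\sharp$-coefficient; you also spell out the telescoping step the paper leaves implicit.
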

\begin{proof}
Apply equation (\ref{E:FactorizationCondition2}) for $\beta=\alpha_0$ and $k'-k=1$. It is immediate that
\[x=\sum_{(i,j)\in\Omega}\left(\sum_{q=j}^{p_i-1}k_{(i,q)}\right)\alpha_{(i,j)}^\sharp\in\mathsf{Rad}(B).\]
Thus $x\in\Rad(B)\cap \llangle\Gamma_\circ\rrangle_{\mathbb{Q}}=0$. As $\{\alpha_\omega^\sharp|\omega\in\Omega\}$ are linearly independent, we have
$\sum_{q=j}^{p_i-1}k_{(i,q)}=0$ for any $(i,j)\in\Omega$. This concludes the proof.
\end{proof}

\begin{lemma}\label{L:condition2hyp}
Let $(\Gamma,K)$ be a canonical bilinear lattice of tubular type. Let $\vec{k} \in \ZZ^n$ be such that $\vec{\gamma}(\alpha_0, \vec{k}) \in \Phi^n$ and $\tilde t(\alpha_0, \vec{k}) = \tilde c$. Then $k_{(i,j)}=0$ for any $(i, j) \in \Omega$.
\end{lemma}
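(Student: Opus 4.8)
The plan is to run the argument of Lemma \ref{L:condition2} almost verbatim, inserting at the very beginning the one extra step needed in the tubular case: pinning down $k'-k$, which is no longer forced by equation (\ref{E:FactorizationCondition2}) alone, since $\mathsf{Rad}(B)$ is now two-dimensional (Proposition \ref{P:TypeDistinction}).

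First I would show $k'-k=1$. This is immediate from Lemma \ref{L:condition1hyp} applied with $\beta=\alpha_0\in\Phi_\circ$: the coefficient $\lambda_0$ of $\alpha_0$ in $\alpha_0$ is $1$, so the identity $\frac{(\alpha_0,\alpha_0)}{(\beta,\beta)}\lambda_0(k'-k)=1$ occurring in that proof reads $k'-k=1$ (there is no sign ambiguity here, as $\beta$ is a fixed simple root rather than $\pm$ a root). Equivalently, one re-derives it by the rank computation used in the proof of Lemma \ref{L:condition1hyp}: both $\tilde t(\alpha_0,\vec k)(a')-a'$ and $\tilde c(a')-a'$ lie in $V$, the reflections $\tilde s_{\alpha_{(i,j)}+k_{(i,j)}a}$ with $(i,j)\in\Omega$ leave $\rk$ unchanged (as $\alpha_{(i,j)},a\in\mathsf{Ker}(\rk)$), Lemma \ref{L:UsefulFormulaReflection} with $\beta=\alpha_0$ gives $\rk\bigl(\tilde t(\alpha_0,\vec k)(a')-a'\bigr)=\frac{2(k'-k)}{(\alpha_0,\alpha_0)}$, while Proposition \ref{P:ActionCoxeter} gives $\rk\bigl(\tilde c(a')-a'\bigr)=\frac{2}{(\alpha_0,\alpha_0)}$.

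Then I would proceed as in Lemma \ref{L:condition2}. Restricting $\tilde t(\alpha_0,\vec k)=\tilde c$ to the invariant subspace $V$, on which $\tilde s_v=s_v$, yields $t(\alpha_0,\vec k)=c$; Corollary \ref{C:FactorizationCorollary} with $\beta=\alpha_0$ and $k'-k=1$ then says that $v:=\sum_{(i,j)\in\Omega}\bigl(\sum_{q=j}^{p_i-1}k_{(i,q)}\bigr)\alpha_{(i,j)}^\sharp$ lies in $\mathsf{Rad}(B)$. Since each $\alpha_{(i,j)}^\sharp$ is a positive multiple of $\alpha_{(i,j)}$, the vector $v$ lies in $\overline V=\llangle\alpha_\omega\,|\,\omega\in\Omega\rrangle_{\RR}$, on which $B$ is positive definite (proof of Proposition \ref{P:TypeDistinction}); hence $v\in\overline V\cap\mathsf{Rad}(B)=0$. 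Linear independence of $\{\alpha_\omega^\sharp\,|\,\omega\in\Omega\}$ then forces $\sum_{q=j}^{p_i-1}k_{(i,q)}=0$ for every $(i,j)\in\Omega$, and a descending induction on $j$ gives $k_{(i,j)}=0$ throughout $\Omega$.

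The only genuine obstacle is the first step, and it is present precisely because the tubular radical contains, besides $a$, the class $b=\alpha_0+\sum_{(i,j)\in\Omega}\frac{p_i-j}{p_i}e_i\alpha_{(i,j)}$ of (\ref{E:radElmtB}), whose $\alpha_0$-component is nonzero; without knowing $k'-k=1$ beforehand, (\ref{E:FactorizationCondition2}) would only tell us that $\bigl(1-(k'-k)\bigr)\alpha_0^\sharp-\sum(\cdots)\alpha_{(i,j)}^\sharp$ is a multiple of $b$, which does not by itself kill the $k_{(i,j)}$. Once $k'-k=1$ is in hand — obtained through the hyperbolic coordinate $a'$, the one piece of data that $\widetilde V$ has and $V$ lacks — the remainder is the same routine bookkeeping as in the non-tubular case.
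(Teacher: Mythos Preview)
Your proof is correct and follows essentially the same route as the paper: establish $k'-k=1$ via the rank computation on $a'$ (the paper invokes this implicitly from Lemma~\ref{L:condition1hyp}), restrict to $V$, and use Corollary~\ref{C:FactorizationCorollary} to force the sum $\sum_{(i,j)}(\sum_q k_{(i,q)})\alpha_{(i,j)}^\sharp$ into $\mathsf{Rad}(B)$. The one cosmetic difference is in killing this sum: you use that it lies in $\overline V$, where $B$ is positive definite, whereas the paper places it in $\mathsf{Rad}(B)\cap\llangle\Gamma_\circ\rrangle_{\mathbb{Q}}=\llangle b\rrangle$ and observes $\rk(b)\neq 0$ while the sum has rank zero---both arguments are immediate and equivalent.
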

\begin{proof}
Note that $\tilde t(\alpha_0, \vec{k}) = \tilde c$ implies that in particular $t(\alpha_0,\vec{k})=c$. Apply equation (\ref{E:FactorizationCondition2}) for $\beta=\alpha_0$ and $k'-k=1$. It is immediate that
\[x=\sum_{(i,j)\in\Omega}\left(\sum_{q=j}^{p_i-1}k_{(i,q)}\right)\alpha_{(i,j)}^\sharp\in\mathsf{Rad}(B).\]
Thus $x\in\Rad(B)\cap \llangle\Gamma_\circ\rrangle_{\mathbb{Q}}$. In the tubular case, we have $\Rad(B)\cap \llangle\Gamma_\circ\rrangle_{\mathbb{Q}}=\llangle b\rrangle$, where $b=\alpha_0+\sum_{(i,j)\in\Omega}\frac{p_i-j}{p_i}e_i\alpha_{(i,j)}$ as in (\ref{E:radElmtB}). Note that $\rk(b)\neq0=\rk(x)$, implying that $x=0$. As $\{\alpha_\omega^\sharp|\omega\in\Omega\}$ are linearly independent, we have
$\sum_{q=j}^{p_i-1}k_{(i,q)}=0$ for any $(i,j)\in\Omega$. This concludes the proof.
\end{proof}

\subsection{Proof of transitivity}
In this section we prove the Hurwitz transitivity in a slightly more general setting. The abstract formulation provides an easy to understand insight into why the Hurwitz transitivity follows from the previously established technical results. Moreover, it shows how to approach the investigation of Hurwitz orbits in a wider generality. Further applications of this strategy will appear in the fourth author's PhD thesis.

Throughout this section, let $(W,S)$ be a generalized Coxeter datum in the sense of Definition \ref{D:Coxeter datum}, where $S=\{s_1,\dots,s_n\}$ and $c=s_1\cdots s_n$ with $\ell_T(c)=n$. Let $(\overline{W},\overline{S})$ be a Coxeter system with $\overline{S}=\{\bar{s}_1,\dots,\bar{s}_{n-1}\}$ and set of reflections $\overline{T}$. Assume that we have a group epimorphism $p:W\to\overline{W}$ such that the following conditions hold.
\begin{enumerate}
    \item[(T1)] We have $p(s_i)=\bar{s}_i$ for any $1\leq i\leq n-1$, and $p(s_n)=\bar{s}_{n-1}$.
    \item[(T2)] All elements in $\{\bar{t}\in\overline{T}\,|\,p^{-1}(\bar{s}_1,\dots,\bar{s}_{n-2},\bar{t},\bar{t})\cap\Red_T(c)\}$ are conjugate to $\bar{s}_{n-1}$ under $\llangle\bar{s}_1,\dots,\bar{s}_{n-2}\rrangle\subset\overline W$.
    \item[(T3)] Let $(t_1,\dots,t_n)\in p^{-1}(\bar{s}_1,\dots,\bar{s}_{n-2},\bar{s}_{n-1},\bar{s}_{n-1})\cap\Red_T(c)$. Then, up to the Hurwitz action of $\llangle\sigma_{n-1}\rrangle\subset B_n$, we have $(t_1,\dots,t_n)=(s_1,\dots,s_n)$.
\end{enumerate}

For our proof, we will use that the Hurwitz transitivity of reduced reflection factorizations of Coxeter elements in Coxeter groups as well as some generalizations are very well understood. Recall that a standard parabolic Coxeter element in $(\overline{W},\overline{S})$ is an element of the form $\bar{s}_{i_1}\cdots\bar{s}_{i_k}$ where $1\leq i_1<\dots<i_k\leq n-1$.
\begin{lemma}\label{parabolic}
    Let $\bar{s}_{i_1}\cdots\bar{s}_{i_k}$ be a standard parabolic Coxeter element in $(\overline{W},\overline{S})$ and let $(\bar{t}_1,\dots,\bar{t}_{k+2})\in\overline{T}^{k+2}$ such that $\bar{t}_1\cdots\bar{t}_{k+2}=\bar{s}_{i_1}\cdots\bar{s}_{i_k}$. Then there exist $\bar{t}\in\overline{T}$ and $\tau\in B_{k+2}$ such that $\tau(\bar{t}_1,\dots,\bar{t}_{k+2})=(\bar{s}_{i_1},\dots,\bar{s}_{i_k},\bar{t},\bar{t})$.
\end{lemma}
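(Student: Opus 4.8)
The plan is to reduce the statement to the transitivity of the Hurwitz action on \emph{reduced} reflection factorizations of parabolic Coxeter elements in Coxeter groups, which is a known result of Baumeister--Dyer--Stump--Wegener. Write $w := \bar s_{i_1}\cdots\bar s_{i_k}$, so that $\ell_{\overline T}(w)=k$ (a standard parabolic Coxeter element of rank $k$ has reflection length $k$, and its reduced reflection factorizations all lie in the corresponding standard parabolic subgroup). The strategy has two stages: first transform the given length-$(k+2)$ factorization into one of the shape $(\bar r_1,\dots,\bar r_k,\bar t,\bar t)$ with $(\bar r_1,\dots,\bar r_k)$ a reduced reflection factorization of $w$; second, normalize the reduced part to $(\bar s_{i_1},\dots,\bar s_{i_k})$ without disturbing the appended pair.

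For the first stage I would proceed as follows. Write $\bar t_\ell=\bar s_{\beta_\ell}$ for positive roots $\beta_\ell$ of $(\overline W,\overline S)$. Since $k+2>k=\ell_{\overline T}(w)$, the vectors $\beta_1,\dots,\beta_{k+2}$ must be linearly dependent: were they independent, the lemma of Section~\ref{S:BilinearLattices} (a product of reflections in linearly independent non-isotropic vectors fixes exactly the common fixed space) would give $\mathsf{Fix}(w)=\bigcap_\ell\beta_\ell^{\perp}$, of codimension $k+2$, contradicting $\ell_{\overline T}(w)\ge\mathsf{cod}(\mathsf{Fix}(w))$ from Lemma~\ref{L:estimatelength}. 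Equivalently, recording the reflection lengths $\ell_{\overline T}(\bar t_1\cdots\bar t_j)$ for $j=0,\dots,k+2$ gives a $\pm1$-lattice path from $0$ to $k$ of length $k+2$, so it has exactly one descent. Exploiting either observation, I would use Hurwitz moves $\sigma_\ell^{\pm1}$ to push the redundancy to the right until two consecutive reflections coincide, and then move that trivial pair to the last two strands: note that $\sigma_\ell(\dots,\bar a,\bar a,\dots)=(\dots,\bar a,\bar a,\dots)$, while alternating $\sigma_{m+1}$ and $\sigma_m$ slides an equal pair at positions $m,m{+}1$ one step to the right, replacing $\bar a$ by a conjugate. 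What remains on the first $k$ strands is then a reflection factorization of $w$ of length $k=\ell_{\overline T}(w)$, hence reduced, so the factorization has the form $(\bar r_1,\dots,\bar r_k,\bar t,\bar t)$.

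For the second stage, the Baumeister--Dyer--Stump--Wegener theorem supplies a braid word $\tau_0\in B_k$, that is, a word in $\sigma_1,\dots,\sigma_{k-1}$, with $\tau_0(\bar r_1,\dots,\bar r_k)=(\bar s_{i_1},\dots,\bar s_{i_k})$. Viewed inside $B_{k+2}$, this $\tau_0$ acts only on the first $k$ strands and therefore fixes the trailing pair $(\bar t,\bar t)$. Composing $\tau_0$ with the braid word produced in the first stage gives the desired $\tau\in B_{k+2}$ with $\tau(\bar t_1,\dots,\bar t_{k+2})=(\bar s_{i_1},\dots,\bar s_{i_k},\bar t,\bar t)$.

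I expect the main obstacle to be the first stage — namely, showing that every non-reduced length-$(\ell_{\overline T}(w)+2)$ reflection factorization of a parabolic Coxeter element in a Coxeter group can be Hurwitz-transformed into ``reduced plus a trivial pair'' form. The bubbling argument sketched above is delicate: one must control the intermediate factorizations carefully, in particular when the bilinear form of $(\overline W,\overline S)$ is degenerate, which occurs in the tubular and wild cases (passing to $V/\mathsf{Rad}$ is one option, but then the relation between reflection length and codimension of fixed spaces must be re-examined). If a fully uniform argument turns out to be awkward, a natural fallback is to invoke the well-developed Hurwitz theory for the specific star-shaped Coxeter groups $\overline W$ that actually occur here (essentially affine in the domestic and tubular cases), or to cite a dedicated ``$+2$'' reduction lemma for reflection factorizations in Coxeter groups.
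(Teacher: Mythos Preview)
Your two-stage plan matches the paper's proof exactly: first reduce to a factorization of the shape $(\bar r_1,\dots,\bar r_k,\bar t,\bar t)$ with the initial segment reduced, then apply Baumeister--Dyer--Stump--Wegener on the first $k$ strands via the standard embedding $B_k\hookrightarrow B_{k+2}$.

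The only difference is in Stage~1. The paper does not attempt the linear-dependence/bubbling argument you sketch; it simply invokes \cite[Lemma~2.3]{WegenerYahiatene}, which is precisely the dedicated ``$+2$'' reduction lemma you name as your fallback. So your concern about the delicacy of the bubbling argument (degenerate form, passing to $V/\mathsf{Rad}$, etc.) is well founded but moot: the clean route is to cite that lemma, as you yourself suggest. Your Stage~2 is verbatim the paper's.
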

\begin{proof}
First, we know that $l_{\overline{S}}(\bar{s}_{i_1}\cdots\bar{s}_{i_k})=k=l_{\overline{T}}(\bar{s}_{i_1}\cdots\bar{s}_{i_k})$. Thus we can apply \cite[Lemma 2.3]{WegenerYahiatene} to obtain a braid $\tau_1\in B_{k+2}$ and reflections $r_1,\dots,r_k,r_{k+1}\in \overline{T}$ such that
\[\tau_1(t_1,\dots , t_{k+2}) = (r_1,\dots , r_k, r_{k+1}, r_{k+1}).\]
Now $(r_1,\dots , r_k)$ is a reduced reflection factorization of the parabolic Coxeter element $\bar{s}_{i_1}\cdots\bar{s}_{i_k}$. By \cite[Theorem 1.3]{BaumeisterDyerStumpWegener}, the Hurwitz action on this set is transitive. Therefore we can find a braid $\tau_2'\in B_k$ such that $\tau_2'(r_1,\dots , r_k)=(\bar{s}_{i_1},\dots,\bar{s}_{i_k})$. Interpreting $\tau_2'$ as a braid $\tau_2\in B_{k+2}$ via the standard embedding, we conclude
\begin{align*}
    \tau_2\tau_1(t_1,\dots , t_{k+2}) &= \tau_2(r_1,\dots , r_k, r_{k+1}, r_{k+1})\\
    &=(\bar{s}_{i_1},\dots,\bar{s}_{i_k}, r_{k+1}, r_{k+1}).
\end{align*}   
\end{proof}
Note that this factorization ends with two copies of the same reflection. We need one more simple result on the Hurwitz action in arbitrary groups dealing with such factorizations.
\begin{lemma}\label{sameSame}
Let $G$ be a group, $T \subseteq G$ be a subset closed under conjugation and $t_1, \dots, t_m, t \in T$ be some elements, where we additionally assume that $t^2 = \mathbbm{1}$. Then for any $x \in \llangle t_1, \dots, t_m\rrangle$, there exists a braid $\sigma\in B_{m+2}$ such that
\[\sigma(t_1,\dots , t_m, t, t) = (t_1,\dots , t_m, xtx^{-1}, xtx^{-1}).\]
\end{lemma}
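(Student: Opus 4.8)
## Proof proposal for Lemma \ref{sameSame}

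The plan is to proceed by induction on the word length of $x$ with respect to the generating set $\{t_1, \dots, t_m\}$. The base case $x = \mathbbm{1}$ is trivial: take $\sigma$ to be the identity braid, since then $xtx^{-1} = t$ and the tuple is unchanged. For the inductive step, write $x = t_i x'$ where $x' \in \llangle t_1, \dots, t_m\rrangle$ has shorter length, so that by the inductive hypothesis there is a braid $\sigma' \in B_{m+2}$ with $\sigma'(t_1, \dots, t_m, t, t) = (t_1, \dots, t_m, x' t x'^{-1}, x' t x'^{-1})$. It then suffices to find a further braid taking $(t_1, \dots, t_m, x' t x'^{-1}, x' t x'^{-1})$ to $(t_1, \dots, t_m, t_i x' t x'^{-1} t_i^{-1}, t_i x' t x'^{-1} t_i^{-1})$, i.e.\ to conjugate the last two (equal) entries simultaneously by $t_i$ while leaving the first $m$ entries fixed.

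The key observation is that such a simultaneous conjugation of the last two entries by a single reflection $t_i$ sitting in position $i \le m$ can be realized by a \emph{commutator of band generators}. Concretely, write $r := x' t x'^{-1}$ and consider the effect of Hurwitz moves that first "drag" the reflection in position $i$ past positions $i+1, \dots, m$ to land adjacent to the two copies of $r$, then conjugate, then drag it back. More precisely, let $\beta_{i} = \sigma_{m} \sigma_{m-1} \cdots \sigma_{i}$ be the braid that cyclically brings the $i$-th entry to position $m$; applying $\beta_i$ to $(t_1, \dots, t_m, r, r)$ moves $t_i$ rightward (conjugating the intervening entries, which I will undo) and then applying $\sigma_{m}^{-1}$ and its conjugates one conjugates the two copies of $r$ by the appropriate element. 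Rather than track this explicitly, the cleanest route is: note that $(r, r) \mapsto (g r g^{-1}, g r g^{-1})$ for any $g$ lying in the subgroup generated by the entries to the left is achievable by the Hurwitz action restricted to the last three strands together with an auxiliary strand, since $(s, r, r)$ with $s$ to the left admits the moves $(s,r,r) \to (r, r^{-1}sr, r) \to \dots$; iterating moves of the form $\sigma_{j}^{\pm 1}$ for $j$ near $m+1, m+2$ we can first borrow $t_i$ from position $i$ (at the cost of conjugating $t_{i+1}, \dots, t_m$, which we then restore by the reverse moves since those conjugations are by $t_i^{\pm 1}$ and reversible), use it to conjugate the pair $(r,r) \mapsto (t_i r t_i^{-1}, t_i r t_i^{-1})$ via a length-two shuffle on three strands, and return $t_i$ to position $i$.

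The main obstacle I expect is the careful bookkeeping in the middle paragraph: moving the reflection $t_i$ out of position $i$ past $t_{i+1}, \dots, t_m$ necessarily conjugates those intermediate entries, and one must verify that performing the reverse sequence of braid moves afterwards restores them exactly (this works because the braid group acts, so any move is invertible, and the key point is to choose the \emph{return} path to be literally the inverse braid of the \emph{outgoing} path, with the conjugation of the $(r,r)$-pair sandwiched in between on strands that do not interfere). A clean way to package this without the bookkeeping is to observe that the set of permutations of the "decorations" achievable by the Hurwitz action, when restricted to factorizations of a fixed length with the last two entries equal, contains all simultaneous conjugations of that equal pair by any word in the other entries --- this is a standard fact about the Hurwitz action (cf.\ the arguments in \cite{WegenerYahiatene, BaumeisterDyerStumpWegener}), and it is exactly the reversibility argument above. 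I would cite or reprove that in the one-line form: for $t^2 = \mathbbm{1}$ and any entry $u$ to the left of the pair $(t,t)$, one Hurwitz move through that entry sends $(\dots, u, t, t, \dots) \mapsto (\dots, t, u t u^{-1}\cdot t \cdot \text{(correction)}, \dots)$ --- and a short explicit three-strand computation shows $(u, t, t) \xrightarrow{\sigma_2} (u, t, t) $ is inert while $(u,t,t)\xrightarrow{\sigma_1^{-1}\sigma_2 \sigma_1} (u, utu^{-1}, utu^{-1})$ does the job; then compose over a word for $x$.
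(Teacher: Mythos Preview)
Your overall strategy coincides with the paper's: reduce by induction on the word length of $x$ to the case $x = t_i$, and then exhibit an explicit braid that conjugates the final pair $(t,t)$ by $t_i$ while restoring the first $m$ entries. The paper carries this out with the single formula
\[
\tau_i \;=\; \sigma_i \cdots \sigma_{m-1}\,\sigma_m^{-1}\sigma_{m+1}^{-1}\sigma_{m+1}^{-1}\sigma_m^{-1}\,\sigma_{m-1}^{-1}\cdots \sigma_i^{-1},
\]
and a direct check gives $\tau_i(t_1,\dots,t_m,t,t)=(t_1,\dots,t_m,\,t_i t t_i^{-1},\,t_i t t_i^{-1})$.

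The gap in your write-up is the explicit three-strand braid you propose at the end: $(u,t,t)\xrightarrow{\sigma_1^{-1}\sigma_2\sigma_1}(u,utu^{-1},utu^{-1})$ is false under the paper's Hurwitz conventions. Applying the rightmost generator first, $\sigma_1$ sends $(u,t,t)$ to $(t,\,tut,\,t)$, then $\sigma_2$ gives $(t,t,u)$, and finally $\sigma_1^{-1}$ leaves this as $(t,t,u)$; the other reading order fails as well. No length-three word in $\sigma_1^{\pm1},\sigma_2^{\pm1}$ will do the job, because one needs the assumption $t^2=\mathbbm{1}$ to enter \emph{twice} (once per copy of $t$) in order to return $u$ to the first slot. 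The minimal braid that works in the case $m=1$ is the length-four word $\sigma_1^{-1}\sigma_2^{-1}\sigma_2^{-1}\sigma_1^{-1}$, which is exactly the paper's $\tau_1$. Your informal ``drag $t_i$ over, conjugate, drag back'' description is morally correct and is precisely what $\tau_i$ encodes, but the middle block must be $\sigma_m^{-1}\sigma_{m+1}^{-1}\sigma_{m+1}^{-1}\sigma_m^{-1}$ rather than a single crossing; replacing your final formula by this one makes the argument go through.
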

\begin{proof}
For any $1 \le i \le m$, we put
$$\tau_i = \sigma_i \dots \sigma_{m-1} \sigma_m^{-1} \sigma_{m+1}^{-1} \sigma_{m+1}^{-1} \sigma_m^{-1} \sigma_{m-1}^{-1} \dots \sigma_i^{-1}.$$
One can check that  
$$\tau_i(t_1, \dots, t_m, t, t) = (t_1, \dots,  t_m, t_i t t_i^{-1}, t_i t t_i^{-1}),$$
which implies the statement.
\end{proof}

We are now ready to prove our main transitivity theorem.
\begin{theorem}\label{newMainTransitivity}
Let $(W,S)$ be a generalized Coxeter datum such that $\ell_T(c)=|S|$ and there exists a Coxeter system $(\overline{W},\overline{S})$ and a group epimorphism $p:W\to\overline{W}$ satisfying (T1), (T2) and (T3). Then the Hurwitz action on $\Red_T(c)$ is transitive.
\end{theorem}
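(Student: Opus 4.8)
The plan is to push an arbitrary reduced reflection factorisation of $c$ down to the Coxeter group $\overline{W}$ via $p$, normalise it there using the Hurwitz transitivity results available for Coxeter groups (Lemmas~\ref{parabolic} and~\ref{sameSame}), and then pull the resulting braid moves back up to $W$. Fix $(t_1,\dots,t_n)\in\Red_T(c)$. Since each $t_i\in T$ is conjugate in $W$ to some $s_j\in S$, its image $\bar t_i:=p(t_i)$ is conjugate to $p(s_j)\in\overline S$, hence $\bar t_i\in\overline T$; and by (T1) the product is $\bar t_1\cdots\bar t_n=p(c)=\bar s_1\cdots\bar s_{n-2}=:\bar c$, a standard parabolic Coxeter element of reflection length $n-2$. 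Two small facts keep the scheme running and should be recorded first: because $\ell_T(c)=n$ we have $\Red_T(c)=\{(r_1,\dots,r_n)\in T^n\mid r_1\cdots r_n=c\}$, so $\Red_T(c)$ is stable under the Hurwitz action and contains $(s_1,\dots,s_n)$; and the Hurwitz action is compatible with $p$, i.e.\ $p\bigl(\sigma\cdot(r_1,\dots,r_n)\bigr)=\sigma\cdot\bigl(p(r_1),\dots,p(r_n)\bigr)$ for every $\sigma\in B_n$, which is immediate from the word-map description of the generators $\sigma_i$.

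Next I would apply Lemma~\ref{parabolic} to the length-$n$ reflection factorisation $(\bar t_1,\dots,\bar t_n)$ of $\bar c=\bar s_1\cdots\bar s_{n-2}$ (so $k=n-2$ and $k+2=n$): there exist $\tau\in B_n$ and $\bar t\in\overline T$ with $\tau\cdot(\bar t_1,\dots,\bar t_n)=(\bar s_1,\dots,\bar s_{n-2},\bar t,\bar t)$. Applying the \emph{same} braid $\tau$ upstairs, $(t_1',\dots,t_n'):=\tau\cdot(t_1,\dots,t_n)$ lies again in $\Red_T(c)$ and, by compatibility, satisfies $p(t_i')=\bar s_i$ for $i\le n-2$ and $p(t_{n-1}')=p(t_n')=\bar t$. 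In particular $p^{-1}(\bar s_1,\dots,\bar s_{n-2},\bar t,\bar t)\cap\Red_T(c)\ne\varnothing$, so (T2) applies: $\bar t=\bar x\,\bar s_{n-1}\,\bar x^{-1}$ for some $\bar x\in\llangle\bar s_1,\dots,\bar s_{n-2}\rrangle$.

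Now I would invoke Lemma~\ref{sameSame}, but carried out \emph{inside $\overline W$}, where the two trailing entries genuinely coincide: with $G=\overline W$, $T=\overline T$, the reflections $\bar s_1,\dots,\bar s_{n-2}$, the involution $\bar t$, and the element $\bar x^{-1}\in\llangle\bar s_1,\dots,\bar s_{n-2}\rrangle$, it yields $\beta\in B_n$ with $\beta\cdot(\bar s_1,\dots,\bar s_{n-2},\bar t,\bar t)=(\bar s_1,\dots,\bar s_{n-2},\bar x^{-1}\bar t\,\bar x,\bar x^{-1}\bar t\,\bar x)=(\bar s_1,\dots,\bar s_{n-2},\bar s_{n-1},\bar s_{n-1})$. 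Applying $\beta$ upstairs gives $(t_1'',\dots,t_n''):=\beta\cdot(t_1',\dots,t_n')\in\Red_T(c)$ with $p$-image exactly $(\bar s_1,\dots,\bar s_{n-2},\bar s_{n-1},\bar s_{n-1})$. Hypothesis (T3) then says that, up to the Hurwitz action of $\llangle\sigma_{n-1}\rrangle$, this factorisation equals $(s_1,\dots,s_n)$, so $(t_1'',\dots,t_n'')$ lies in the Hurwitz orbit of $(s_1,\dots,s_n)$. Since $(t_1'',\dots,t_n'')=(\beta\tau)\cdot(t_1,\dots,t_n)$ with $\beta\tau\in B_n$, the original factorisation lies in that same orbit, and transitivity of the Hurwitz action on $\Red_T(c)$ follows.

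As for difficulty, once Lemmas~\ref{parabolic} and~\ref{sameSame} and hypotheses (T1)--(T3) are in hand the argument is short, so there is no single hard step; the point requiring care is the bookkeeping of which braid is applied in which group. The crucial idea is to perform the conjugation $\bar t\rightsquigarrow\bar s_{n-1}$ \emph{downstairs} in $\overline W$ — where Lemma~\ref{sameSame} applies verbatim because the two trailing entries are equal — and only afterwards to lift the resulting braid to $W$; attempting the same manipulation directly in $W$ is impossible, since a reduced reflection factorisation of $c$ can never have two equal consecutive entries (that would force $\ell_T(c)<n$). The genuinely substantial work has of course already been done in verifying (T1)--(T3) and $\ell_T(c)=n$ for the reflection groups of canonical type, which is where the root-system computations of the previous subsection enter.
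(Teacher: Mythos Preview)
Your proof is correct and follows essentially the same approach as the paper: push the factorisation down to $\overline W$ via $p$, use Lemma~\ref{parabolic} to reach the shape $(\bar s_1,\dots,\bar s_{n-2},\bar t,\bar t)$, use (T2) together with Lemma~\ref{sameSame} (applied in $\overline W$) to replace $\bar t$ by $\bar s_{n-1}$, lift the braids back to $W$, and finish with (T3). The paper's proof is exactly this, with your additional commentary on why Lemma~\ref{sameSame} must be applied downstairs being a helpful elaboration.
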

\begin{proof}
    First, note that the Hurwitz action and $p:W^n\to\overline{W}^n$ commute. Now, let $n=\ell_T(c)$ and let $(t_1,\dots,t_n)\in\Red_T(c)$ be any reduced reflection factorization of $c$. By assumption (T1), we infer that
    \[p(c)=\bar{s}_1\cdots\bar{s}_{n-2}\bar{s}_{n-1}\bar{s}_{n-1}=\bar{s}_1\cdots\bar{s}_{n-2}\]
    is a standard parabolic Coxeter element in $(\overline{W},\overline{S})$. Therefore $p(t_1,\dots,t_n)$ is a factorization of $\bar{s}_1\cdots\bar{s}_{n-2}$ of length $n$. By Lemma \ref{parabolic} there exists a braid $\tau_1\in B_n$ and a reflection $\bar{t}\in\overline{T}$ such that
    \[p(\tau_1(t_1,\dots,t_n))=\tau_1 p(t_1,\dots,t_n)=(\bar{s}_1,\dots,\bar{s}_{n-2},\bar{t},\bar{t}).\]
    By assumption (T2), we know that $\bar{t}$ is conjugate to $\bar{s}_{n-1}$ under $\llangle\bar{s}_1,\dots,\bar{s}_{n-2}\rrangle$. Applying Lemma \ref{sameSame}, we can find a braid $\tau_2$ such that
    \[\tau_2(\bar{s}_1,\dots,\bar{s}_{n-2},\bar{t},\bar{t})=(\bar{s}_1,\dots,\bar{s}_{n-2},\bar{s}_{n-1},\bar{s}_{n-1}).\]
    Therefore $p(\tau_2\tau_1(t_1,\dots,t_n))=(\bar{s}_1,\dots,\bar{s}_{n-2},\bar{s}_{n-1},\bar{s}_{n-1})$ and by assumption (T3), we know that actually
    \[\tau_2\tau_1(t_1,\dots,t_n)=(s_1,\dots,s_n)\]
    up to the Hurwitz action of $\llangle\sigma_{n-1}\rrangle$. Thus any $(t_1,\dots,t_n)\in\Red_T(c)$ is in the same Hurwitz orbit as $(s_1,\dots,s_n)$. This concludes the proof.
\end{proof}

\begin{corollary}\label{C:HurwitzTransitivityNonTub}
    Let $(\Gamma, K)$ be a non-tubular canonical bilinear lattice. Then the Hurwitz action on $\Red_T(c)$ is transitive.
\end{corollary}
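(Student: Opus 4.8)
The plan is to derive the corollary directly from Theorem \ref{newMainTransitivity}; concretely, I would exhibit a Coxeter system and an epimorphism satisfying (T1), (T2) and (T3), namely the quotient Coxeter system $(\overline W, \overline S) := (W_\circ, S_\circ)$ of Section \ref{SubsectionQuotientCoxGrp} (a Coxeter system by Proposition \ref{P:CoxeterPositiveReduced}) together with the split epimorphism $p : W \twoheadrightarrow W_\circ$ of Lemma \ref{epi}. As in the rest of Section \ref{S:HurwitzTransitivity} we may assume $\varepsilon = 1$ (Proposition \ref{P:ReductionCaseEpsilon1}), so that $a = \alpha_{0^\ast} - \alpha_0 \in \mathsf{Rad}(B)$ and $\Gamma = \Gamma_\circ \oplus \llangle a \rrangle_{\ZZ}$. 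Two of the hypotheses are immediate: since $(\Gamma, K)$ is non-tubular, Proposition \ref{P:TypeDistinction} gives $\mathsf{rk}\bigl(\mathsf{Rad}(B)\bigr) \le 1$, so $\ell_T(c) = \mathsf{rk}(\Gamma) = |S|$ by Corollary \ref{C:reflectionLength}; and ordering $S = \{s_1, \dots, s_n\}$ by the standard exceptional sequence makes $s_1, \dots, s_{n-2}$ the ``arm'' reflections $s_{(i,j)}$, $s_{n-1} = s_{\alpha_0}$ and $s_n = s_{\alpha_{0^\ast}}$, whence, using $(\alpha_{0^\ast})_\circ = \alpha_0$, the epimorphism $p$ satisfies $p(s_i) = \bar s_i$ for $i \le n-1$ and $p(s_n) = \bar s_{n-1}$. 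This is exactly (T1).

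To verify (T2) and (T3) I would first record that any $(t_1, \dots, t_n) \in \Red_T(c)$ whose image under $p$ is a standard parabolic factorization ending in a repeated reflection — say $p(t_i) = \bar s_i$ for $i \le n-2$ and $p(t_{n-1}) = p(t_n) = s_\beta$ with $\beta \in \Phi_\circ$ — is, after normalizing signs, one of the ansatz factorizations $t(\beta, \vec k) = s_{\gamma_n + k_n a} \cdots s_{\gamma_1 + k_1 a}$ studied in the previous subsection. Indeed, writing $t_i = s_{\delta_i}$ with $\delta_i \in \Phi$ and applying Lemma \ref{epi}(b) (with $\varepsilon = 1$, so that $d = 1$), the $\Gamma_\circ$-component of $\delta_i$ equals $\pm$ the expected root; since $s_{-v} = s_v$ and $\Phi = -\Phi$, one may normalize so that $t_i = s_{\alpha_{(i,j)} + m_i a}$ for the arm indices and $t_{n-1} = s_{\beta + k a}$, $t_n = s_{\beta + k' a}$, all these radical-shifted vectors lying in $\Phi$, so that $t(\beta, \vec k) = t_1 \cdots t_n = c$. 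Then for (T2) I would apply Lemma \ref{L:condition1} to obtain $w \in \llangle s_{(i,j)} \mid (i,j) \in \Omega \rrangle$ with $w(\beta) = \pm \alpha_0$; pushing through $p$ gives $p(w) \in \llangle \bar s_1, \dots, \bar s_{n-2}\rrangle$ and $p(w)\, s_\beta\, p(w)^{-1} = s_{p(w)(\beta)} = s_{\alpha_0} = \bar s_{n-1}$ (here $p(w)$ acts on $\Gamma_\circ$ as $w$ does), which is (T2). For (T3), where $\beta$ may be taken to be $\alpha_0$, I would invoke Lemma \ref{L:condition2} to kill all arm shifts $k_{(i,j)}$, so $t_i = s_i$ for $i \le n-2$ and hence $t_{n-1} t_n = (s_1 \cdots s_{n-2})^{-1} c = s_{\alpha_0} s_{\alpha_{0^\ast}}$; a short computation with (\ref{E:decompositionReflection}) forces $k' - k = 1$, and the Hurwitz action of $\sigma_{n-1}$ (which sends the pair $(k, k')$ to $(k+1, k+2)$, and $\sigma_{n-1}^{-1}$ to $(k-1,k)$) normalizes $(k, k')$ to $(0, 1)$, giving $t_{n-1} = s_{\alpha_0} = s_{n-1}$ and $t_n = s_{\alpha_0 + a} = s_{\alpha_{0^\ast}} = s_n$. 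Thus (T3) holds, and Theorem \ref{newMainTransitivity} finishes the argument.

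The conceptual content — the divisibility constraints on real roots (Propositions \ref{P:Divisibility1}--\ref{P:Divisibility2}) and the ``arm-killing'' Proposition \ref{P:KillingTheArms}, which together power Lemmas \ref{L:condition1} and \ref{L:condition2} — has already been established in this section, so I do not expect a genuinely new difficulty. The main thing to get right in the proof of the corollary itself is the bookkeeping of the previous paragraph: correctly identifying an arbitrary lift of a standard parabolic factorization with the ansatz $t(\beta, \vec k)$ (via Lemma \ref{epi}(b) and the reducedness of $\Phi$, with Lemma \ref{L:MissingLemma} justifying the ``$\sim$'' relations used inside Lemmas \ref{L:condition1} and \ref{L:condition2}), keeping the sign normalizations so that the two repeated factors $t_{n-1}, t_n$ share a common $\beta$, and tracking the $\llangle \sigma_{n-1} \rrangle$-Hurwitz action on the radical shifts $(k, k')$ in the last step of (T3).
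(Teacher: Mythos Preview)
Your proposal is correct and follows exactly the paper's approach: apply Theorem \ref{newMainTransitivity} with $(\overline W,\overline S)=(W_\circ,S_\circ)$ and the epimorphism $p$ of Lemma \ref{epi}, deducing $\ell_T(c)=n$ from Corollary \ref{C:reflectionLength} and Proposition \ref{P:TypeDistinction}, and reading off (T1) directly and (T2), (T3) from Lemmas \ref{L:condition1} and \ref{L:condition2}. Your write-up simply unpacks what the paper leaves implicit, namely the identification via Lemma \ref{epi}(b) of an arbitrary lift with an ansatz factorization $t(\beta,\vec k)$ and the $\llangle\sigma_{n-1}\rrangle$-normalization of $(k,k')$ to $(0,1)$ in the last step of (T3).
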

\begin{proof}
    We have already shown all necessary conditions to apply Theorem \ref{newMainTransitivity}. The reflection length is given by Corollary \ref{C:reflectionLength} combined with Proposition \ref{P:TypeDistinction}. The epimorphism $p:W\to\overline W:=W_\circ$ is given in Lemma \ref{epi}. It clearly satisfies condition (T1). The fact that $\overline W$ is a Coxeter group, is given in Proposition \ref{P:CoxeterPositiveReduced}. Conditions (T2) and (T3) are reformulated in terms of roots in Lemmas \ref{L:condition1} and \ref{L:condition2}, respectively.
\end{proof}

\begin{corollary}\label{C:HurwitzTransitivityTub}
    Let $(\Gamma, K)$ be a tubular canonical bilinear lattice. Then the Hurwitz action on $\Red_{\widetilde T}(\tilde c)$ is transitive.
\end{corollary}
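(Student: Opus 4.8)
The plan is to apply Theorem~\ref{newMainTransitivity} to the generalized Coxeter datum $(\widetilde W,\widetilde S)$ with Coxeter element $\tilde c$, following verbatim the scheme of the proof of Corollary~\ref{C:HurwitzTransitivityNonTub} but with the hyperbolic extension in place of $W$ throughout. First I would record that the hypothesis $\ell_{\widetilde T}(\tilde c)=|\widetilde S|=n$ holds by Corollary~\ref{C:reflectionLengthHyp}. For the target Coxeter system I would take $(\overline W,\overline S):=(W_\circ,S_\circ)$, which is a Coxeter system by Proposition~\ref{P:CoxeterPositiveReduced}, and for the epimorphism I would take the composite
\[
\bar p:=p\circ\pi\colon\widetilde W\xrightarrow{\ \pi\ }W\xrightarrow{\ p\ }W_\circ,
\]
where $\pi$ is the central extension morphism of Lemma~\ref{hypSES} and $p$ the split epimorphism of Lemma~\ref{epi}. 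Condition (T1) is then immediate: $\pi$ sends $\tilde s_\omega\mapsto s_\omega$ for each $\omega\in\overline\Omega$, while $p$ sends each $s_{\alpha_{(i,j)}}$ to the corresponding simple reflection of $W_\circ$ and sends both $s_0$ and $s_{0^\ast}$ to $s^\circ_{\alpha_0}$ (recall that $\varepsilon=1$, so $a=\alpha_{0^\ast}-\alpha_0$ lies in the kernel of the projection $\Gamma\twoheadrightarrow\Gamma_\circ$); hence $\bar p$ maps the standard simple reflections of $\widetilde W$ to $(\bar s_1,\dots,\bar s_{n-2},\bar s_{n-1},\bar s_{n-1})$.

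It then remains to check (T2) and (T3), and here I would invoke the root-theoretic lemmas of Section~\ref{S:HurwitzTransitivity}. By Lemma~\ref{epi}(b), which in the tubular case ($\varepsilon=1$) gives a unique decomposition $\beta=\beta_\circ+ka$ with $\beta_\circ\in\Phi_\circ$, $k\in\ZZ$ for every $\beta\in\widetilde\Phi\cong\Phi$, a reduced factorization of $\tilde c$ lying in $\bar p^{-1}(\bar s_1,\dots,\bar s_{n-2},\bar t,\bar t)$ can, after the parabolic and ``same--same'' normalizations built into the proof of Theorem~\ref{newMainTransitivity} (and up to signs of roots, which are irrelevant for reflections), be written in the form $\tilde t(\beta,\vec k)=\tilde c$ for some $\beta\in\Phi_\circ$ with $s^\circ_\beta=\bar t$ and some $\vec k\in\ZZ^n$ with $\vec\gamma(\beta,\vec k)\in\Phi^n$. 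Lemma~\ref{L:condition1hyp} then supplies $w\in\llangle\tilde s_{(i,j)}\mid(i,j)\in\Omega\rrangle$ with $w(\beta)=\pm\alpha_0$ and $k'-k=\pm1$, so $\bar t=s^\circ_\beta$ is conjugate to $\bar s_{n-1}=s^\circ_{\alpha_0}$ under $\llangle\bar s_1,\dots,\bar s_{n-2}\rrangle$; this is exactly (T2). For (T3) the same normalization reduces a factorization in $\bar p^{-1}(\bar s_1,\dots,\bar s_{n-2},\bar s_{n-1},\bar s_{n-1})\cap\Red_{\widetilde T}(\tilde c)$ to the case $\beta=\alpha_0$, and Lemma~\ref{L:condition2hyp} forces $k_{(i,j)}=0$ for all $(i,j)\in\Omega$. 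Hence the first $n-2$ entries of the factorization are already the standard simple reflections $\tilde s_{(i,j)}$, and, the product of these being fixed, the remaining pair has the form $(\tilde s_{\alpha_0+ka},\tilde s_{\alpha_0+k'a})$ with $k'-k=\pm1$ and product $\tilde s_0\tilde s_{0^\ast}$; a short computation with Lemma~\ref{L:UsefulFormulaReflection} (or an application of Lemma~\ref{sameSame}) shows this pair lies in the $\llangle\sigma_{n-1}\rrangle$-Hurwitz orbit of $(\tilde s_0,\tilde s_{0^\ast})$, which is exactly (T3).

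The work here is bookkeeping rather than a new idea: the main point to be careful about is matching the abstract conditions (T2) and (T3) for the composite $\bar p$ with the concrete statements of Lemmas~\ref{L:condition1hyp} and~\ref{L:condition2hyp} --- in particular verifying that the normalization inside the proof of Theorem~\ref{newMainTransitivity} genuinely brings an arbitrary preimage factorization into the special shape $\tilde t(\beta,\vec k)$, and, for (T3), pinning down and eliminating the residual ambiguity in the $a'$-component of the last two reflections. All of this is the faithful tubular analogue of the argument already carried out for Corollary~\ref{C:HurwitzTransitivityNonTub}, with $\tilde c$ and Lemmas~\ref{L:condition1hyp},~\ref{L:condition2hyp} replacing $c$ and Lemmas~\ref{L:condition1},~\ref{L:condition2}.
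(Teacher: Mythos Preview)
Your proposal is correct and matches the paper's proof essentially verbatim: the paper also applies Theorem~\ref{newMainTransitivity} with $\ell_{\widetilde T}(\tilde c)=n$ from Corollary~\ref{C:reflectionLengthHyp}, takes the epimorphism $\widetilde W\to W_\circ$ as the composite of Lemma~\ref{hypSES} and Lemma~\ref{epi}, notes that $(W_\circ,S_\circ)$ is Coxeter by Proposition~\ref{P:CoxeterPositiveReduced}, and refers to Lemmas~\ref{L:condition1hyp} and~\ref{L:condition2hyp} for (T2) and (T3). Your write-up is actually more explicit than the paper's in spelling out the reduction to the shape $\tilde t(\beta,\vec k)$ and the residual $\llangle\sigma_{n-1}\rrangle$-orbit argument for the last two factors, which the paper leaves implicit.
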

\begin{proof}
    We have already shown all necessary conditions to apply Theorem \ref{newMainTransitivity}. The reflection length is given by Corollary \ref{C:reflectionLengthHyp}. The epimorphism $p:\widetilde W\to\overline W:=W_\circ$ is given by the concatenation of the epimorphisms in Lemma \ref{hypSES} and Lemma \ref{epi}. The fact that $\overline W$ is a Coxeter group, is given in Proposition \ref{P:CoxeterPositiveReduced}. Conditions (T2) and (T3) are reformulated in terms of roots in Lemmas \ref{L:condition1hyp} and \ref{L:condition2hyp}, respectively.
\end{proof}

\section{Order preserving bijections}\label{S:OrderPreservingBijections}

In this section, we prove the main categorification result. The underlying reduction process has long been familiar to experts and can be found, for example, in \cite{IngallsThomas}, \cite{IgusaSchiffler}, \cite{RingelCatalanCombinatorics}, \cite{HuberyKrause}, and \cite{BaumeisterWegenerYahiateneI}. However, to the best of our knowledge, it has never been presented as clearly and abstractly as we do here.

To establish a poset bijection between non-crossing partitions and exceptional subcategories, one first observes that both are generated by certain sequences: subsequences of reduced reflection factorizations and exceptional sequences, respectively. Since any (sub)sequence can be extended to a sequence of maximal length, it suffices to consider these maximal sequences, on which the braid group acts. The problem of establishing the bijections then reduces to classifying the orbits in these maximal length sequences. The transitivity of the braid group action on complete exceptional sequences in 
$\Coh(\XX)$ was already known from \cite{KussinMeltzer}, whereas the transitivity for reduced reflection factorizations is the main result we established in Section \ref{S:HurwitzTransitivity}.

We adopt this abstract approach to highlight the conceptual simplicity of the proof. All arguments in this section are elementary. To complete the proof of the main result, it remains only to show that exceptional hereditary curves and their associated non-crossing partitions fit naturally into this abstract framework.

\begin{definition}\label{collectionOfMaps}
    Let $\E$ be a set, $n$ an integer and $\E_n\subseteq\E^n$ a set of distinguished $n$-tuples with elements in $\E$. Then we set
    \begin{equation}\label{categorification1}
        \E_r:=\left\{(e_1,\dots,e_r)\,|\,(e_1,\dots,e_n)\in\E_n\right\}. \tag{C1}
    \end{equation}
    Let $\{A_r,\mu_r\}_{1\leq r\leq n}$ be a collection of sets $A_r$ and surjective maps $\mu_r:\E_r\twoheadrightarrow A_r$ such that for any $1\leq r\leq n$, $(e_1,\dots,e_n)\in\E_n$ and $(e_1',\dots,e_r')\in\E_r$ we have
    \begin{equation}\label{categorification2}
        (e_1',\dots,e_r',e_{r+1},\dots,e_n)\in\E_n\Leftrightarrow\mu_r(e_1,\dots,e_r)=\mu_r(e_1',\dots,e_r'). \tag{C2}
    \end{equation}
    Then we call $\{\E_r,A_r,\mu_r\}_{1\leq r\leq n}$ \emph{exceptional datum}.
\end{definition}

\begin{example}\label{Ex:exceptionalDatum}
We give two examples of such exceptional data.
\begin{enumerate}
    \item[(a)] Let $\Coh(\XX)$ be the category of coherent sheaves on an exceptional hereditary curve $\XX$. Let $\E$ be the set of isomorphism classes of exceptional objects, $n=\rk\bigl(K_0(\XX)\bigr)$ and $\E_n$ be the set of complete exceptional sequences in $\Coh(\XX)$ (up to isomorphism). Then by Lemma \ref{L:ExtendabilityCohX} the set $\E_r$ is simply the set of exceptional sequences of length $r$.\\
    We want to find maps $\mu_r$ that govern the simultaneous extendability of exceptional sequences. Let us choose $A_r$ to be the set of exceptional subcategories generated by exceptional sequences of length $r$ and let $\mu_r$ be the map that sends an exceptional sequence to the thick subcategory generated by it. These maps are surjective by definition. Moreover, they fulfill assumption (\ref{categorification2}) by the perpendicular calculus from Lemma ~\ref{L:PerpendicularCalculus}: Let $(E_1,\dots,E_n)$ be a complete exceptional sequence and let $(E_1',\dots,E_r')$ be an exceptional sequence. If $(E_1',\dots,E_r',E_{r+1},\dots,E_n)$ is a complete exceptional sequence, then
    $$\llangle E_1,\dots,E_r\rrangle=\llangle E_{r+1},\dots,E_n\rrangle^\perp=\llangle E_1',\dots,E_r'\rrangle.$$
    On the other hand, if $\llangle E_1,\dots,E_r\rrangle=\llangle E_1',\dots,E_r'\rrangle$, then $$E_1',\dots,E_r' \in\llangle E_1,\dots,E_r\rrangle=\llangle E_{r+1},\dots,E_n\rrangle^\perp.$$ Thus $(E_1',\dots,E_r',E_{r+1},\dots,E_n)$ is an exceptional sequence and therefore a complete exceptional sequence.
    \item[(b)] Let $(W,T,c)$ be a generalized dual Coxeter datum in the sense of Definition \ref{D:Coxeter datum}. Let $\F=T$, $n=\ell_T(c)$ and $\F_n=\Red_T(c)$. For $1\leq r\leq n$, let $B_r$ be the set of non-crossing partitions $w\in\NC_T(W,c)$ of reflection length $\ell_T(w)=r$ and $\nu_r:\F_r\to B_r$ the map that sends a sequence of reflections $(t_1,\dots,t_r)$ to its product $t_1\cdots t_r$.\\
    To see that these maps are surjective, choose any reduced reflection factorization $(t_1,\dots,t_r)$ of $w\in B_r\subset\NC_T(W,c)$. Now $\ell_T(w^{-1}c)=n-r$. So for any reduced reflection factorization $(t_{r+1},\dots,t_n)$ of $w^{-1}c$ we conclude $(t_1,\dots,t_n)\in\Red_T(c)$. The fact that they fulfill assumption (\ref{categorification2}) follows from the elementary fact that $t_1\cdots t_r=c(t_{r+1}\cdots t_n)^{-1}$ for any $(t_1,\dots,t_n)\in\Red_T(c)$.
\end{enumerate}
\end{example}

Let $\{\E_r,A_r,\mu_r\}_{1\leq r\leq n},\{\F_r,B_r,\nu_r\}_{1\leq r\leq n}$ be two exceptional data and let $\rho:\E\hookrightarrow\F$ be an injective map between the respective base sets. Then we denote by $\rho_r:\E_r\rightarrow\F^r$ the maps defined by $\rho_r(e_1,\dots,e_r)=(\rho(e_1),\dots,\rho(e_r))$ for $1\leq r\leq n$. Assume that we have an element $x\in\E_n$ and a group $G$ such that the following holds.
\begin{enumerate}
    \item[(C3)] We have $\rho_n(x)\in\F_n$.
    \item[(C4)] The group $G$ acts on $\E_n$ and $\F^n$ such that $\rho_n:\E_n\to\F^n$ is equivariant under these actions.
\end{enumerate}

\begin{lemma}\label{bijectionCompleteSequences}
    If $G$ acts transitively on $\E_n$ and $\F_n$, then $\Image(\rho_n)=\F_n$. In other words, $\rho_n:\E_n\xrightarrow{\sim}\F_n$ is an isomorphism.
\end{lemma}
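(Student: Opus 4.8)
The plan is to run the standard orbit argument: an equivariant map between $G$-sets carries a single orbit onto a single orbit, and hypothesis (C3) anchors the distinguished element $x$ inside $\F_n$. First I would observe that, since $\rho\colon\E\hookrightarrow\F$ is injective, the componentwise map $\rho_n\colon\E_n\to\F^n$ is injective as well; hence it suffices to prove the set-theoretic equality $\Image(\rho_n)=\F_n$, after which $\rho_n\colon\E_n\to\F_n$ is automatically a bijection.

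For the inclusion $\Image(\rho_n)\subseteq\F_n$, I would take an arbitrary $e\in\E_n$, use transitivity of the $G$-action on $\E_n$ to write $e=g\cdot x$ for some $g\in G$, and then invoke the equivariance from (C4) to get $\rho_n(e)=g\cdot\rho_n(x)$. Since $\rho_n(x)\in\F_n$ by (C3) and the $G$-action on $\F^n$ restricts to an action on $\F_n$ (this is implicit in the statement that $G$ acts transitively on $\F_n$), it follows that $\rho_n(e)\in\F_n$. For the reverse inclusion $\F_n\subseteq\Image(\rho_n)$, I would take $f\in\F_n$, use transitivity of the $G$-action on $\F_n$ together with $\rho_n(x)\in\F_n$ to write $f=g\cdot\rho_n(x)$, and then apply equivariance once more: $f=\rho_n(g\cdot x)$ with $g\cdot x\in\E_n$, so $f\in\Image(\rho_n)$. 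Combining the two inclusions gives $\Image(\rho_n)=\F_n$, and with injectivity the lemma follows.

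There is no genuine obstacle here; the argument is purely formal. The only point deserving a word of care is the reading of the hypotheses: "transitively on $\E_n$" and "transitively on $\F_n$" must be understood as asserting that the $G$-actions supplied by (C4) restrict to these subsets and are transitive there, and both transitivity assumptions are used (one for each inclusion), while (C3) is what forces the common orbit of $\rho_n(x)$ to be exactly $\F_n$ rather than merely contained in it.
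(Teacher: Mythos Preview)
Your proof is correct and follows essentially the same orbit argument as the paper: the paper packages the identical steps as a single chain of equivalences ($y\in\Image(\rho_n)\Leftrightarrow\exists g:\rho_n(g\cdot x)=y\Leftrightarrow\exists g:g\cdot\rho_n(x)=y\Leftrightarrow y\in\F_n$), while you split it into the two inclusions, but the content is the same. Your explicit remark on injectivity of $\rho_n$ is a helpful addition that the paper leaves implicit.
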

\begin{proof}
    By assumption (C3), there is an $x\in\E_n$ such that $\rho_n(x)\in\F_n$. Let $y\in\F^n$. Then $y\in\Image(\rho_n)$ if and only if there exists $x'\in\E_n$ such that $\rho_n(x')=y$. By the transitivity on $\E_n$, this is equivalent to the existence of $g\in G$ such that $\rho_n(g\cdot x)=y$. By assumption (C4), this can be reformulated as the existence of $g\in G$ such that $g\cdot\rho_n(x)=y$. Finally, this is equivalent to $y\in\F_n$ by the transitivity on $\F_n$.
\end{proof}
\begin{lemma}\label{bijectionSequences}
    If $\rho_n:\E_n\xrightarrow{\sim}\F_n$ is an isomorphism, then $\rho_r:\E_r\xrightarrow{\sim}\F_r$ is an isomorphism for any $1\leq r\leq n$.
\end{lemma}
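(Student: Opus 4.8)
Lemma \ref{bijectionSequences} asks us to descend the isomorphism $\rho_n \colon \E_n \xrightarrow{\sim} \F_n$ from the top level $n$ down to each level $r$. The plan is to use the defining properties (C1) and (C2) of exceptional data together with (C3), (C4) in a purely formal way. Let me sketch this.

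First I would establish that $\rho_r$ maps into $\F_r$, i.e.\ $\Image(\rho_r) \subseteq \F_r$. Take $(e_1, \dots, e_r) \in \E_r$; by (C1) it extends to some $(e_1, \dots, e_n) \in \E_n$. Since $\rho_n$ is a bijection onto $\F_n$, we have $\rho_n(e_1, \dots, e_n) = (\rho(e_1), \dots, \rho(e_n)) \in \F_n$, and then by (C1) applied to the second exceptional datum, $(\rho(e_1), \dots, \rho(e_r)) \in \F_r$; this is exactly $\rho_r(e_1, \dots, e_r)$.

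Next, injectivity of $\rho_r$ is immediate: $\rho$ is injective by hypothesis, so $\rho_r$, being coordinatewise application of $\rho$, is injective too.

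The only real content is surjectivity of $\rho_r \colon \E_r \to \F_r$, and this is where properties (C2) on both sides must be combined. Given $(g_1, \dots, g_r) \in \F_r$, extend it by (C1) to $(g_1, \dots, g_n) \in \F_n$. Since $\rho_n$ is surjective onto $\F_n$, there is $(e_1, \dots, e_n) \in \E_n$ with $\rho(e_i) = g_i$ for all $i$; in particular $(e_1, \dots, e_r) \in \E_r$ and $\rho_r(e_1, \dots, e_r) = (g_1, \dots, g_r)$. So surjectivity actually falls out for free from the surjectivity of $\rho_n$, without even invoking (C2). Thus the proof is genuinely short: every step is a one-line consequence of (C1) and the bijectivity of $\rho_n$. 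I do not anticipate any obstacle; the subtlety that (C2) is designed to handle — compatibility of the maps $\mu_r, \nu_r$ with simultaneous extension — is not needed for \emph{this} particular lemma, which is why I would present it as a clean four-line argument. The formal write-up would read roughly:

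\begin{proof}
Fix $1 \leq r \leq n$. For injectivity, note that $\rho_r$ is the coordinatewise application of the injective map $\rho$, hence injective. For surjectivity, let $(g_1, \dots, g_r) \in \F_r$. By (\ref{categorification1}) there is $(g_1, \dots, g_n) \in \F_n$ extending it. Since $\rho_n \colon \E_n \to \F_n$ is surjective, there exists $(e_1, \dots, e_n) \in \E_n$ with $\rho(e_i) = g_i$ for all $1 \leq i \leq n$. By (\ref{categorification1}), $(e_1, \dots, e_r) \in \E_r$, and $\rho_r(e_1, \dots, e_r) = (\rho(e_1), \dots, \rho(e_r)) = (g_1, \dots, g_r)$. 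Finally, to see that $\rho_r$ indeed lands in $\F_r$: given $(e_1, \dots, e_r) \in \E_r$, extend by (\ref{categorification1}) to $(e_1, \dots, e_n) \in \E_n$; then $\rho_n(e_1, \dots, e_n) \in \F_n$, so $(\rho(e_1), \dots, \rho(e_r)) \in \F_r$ by (\ref{categorification1}) for the second datum. Hence $\rho_r \colon \E_r \xrightarrow{\sim} \F_r$.
\end{proof}
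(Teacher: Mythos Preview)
Your proof is correct and follows essentially the same approach as the paper's: both arguments use only property (\ref{categorification1}) together with the bijectivity of $\rho_n$, and indeed neither invokes (\ref{categorification2}). The only cosmetic difference is that the paper packages your ``lands in $\F_r$'' and ``surjectivity'' steps into a single chain of equivalences, starting from an arbitrary $(f_1,\dots,f_r)\in\F^r$ and showing it lies in $\Image(\rho_r)$ if and only if it lies in $\F_r$.
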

\begin{proof}
    Let $1\leq r\leq n$ and $(f_1,\dots,f_r)\in\F^r$. Then $(f_1,\dots,f_r)\in\Image(\rho_r)$ if and only if there exists $(e_1,\dots,e_r)\in\E_r$ such that $\rho_r(e_1,\dots,e_r)=(f_1,\dots,f_r)$. By the definition (\ref{categorification1}) of $\E_r$, this is the case if and only if there exist $(e_1,\dots,e_n)\in\E_n$ and $f_{r+1},\dots,f_n\in\F$ such that $\rho_n(e_1,\dots,e_n)=(f_1,\dots,f_n)$. By the assumption $\Image(\rho_n)=\F_n$, this is equivalent to the existence of $f_{r+1},\dots,f_n\in\F$ such that $(f_1,\dots,f_n)\in\F_n$. Finally, this is the case if and only if $(f_1,\dots,f_r)\in\F_r$ by the definition (\ref{categorification1}) of $\F_r$.
\end{proof}

\begin{lemma}\label{bijectionSets}
    If $\rho_r:\E_r\xrightarrow{\sim}\F_r$ is an isomorphism for any $1\leq r\leq n$, then there exist isomorphisms $\theta_r:A_r\xrightarrow{\sim}B_r$ for $1\leq r\leq n$ completing the following commutative square.
    \begin{equation}\label{E:commutativeSquare}
        \begin{tikzcd}
    \E_r \arrow[r, "\rho_r"', "\sim"] \arrow[d, two heads, "\mu_r"'] & \F_r \arrow[d, two heads, "\nu_r"] \\
    A_r \arrow[r, "\theta_r"', "\sim"] & B_r
    \end{tikzcd}
    \end{equation}
\end{lemma}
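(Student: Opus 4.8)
The plan is to define $\theta_r$ as the map that makes the square commute, and then check it is well-defined, surjective, and injective. Concretely, for $a \in A_r$, choose any $(e_1,\dots,e_r) \in \E_r$ with $\mu_r(e_1,\dots,e_r) = a$ (such a preimage exists since $\mu_r$ is surjective), and set $\theta_r(a) := \nu_r\bigl(\rho_r(e_1,\dots,e_r)\bigr)$. The first task is to verify this does not depend on the chosen preimage: if $\mu_r(e_1,\dots,e_r) = \mu_r(e_1',\dots,e_r')$, then by property (\ref{categorification2}) applied in $\E$, extending $(e_1,\dots,e_r)$ to some $(e_1,\dots,e_n) \in \E_n$ yields $(e_1',\dots,e_r',e_{r+1},\dots,e_n) \in \E_n$ as well. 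Applying $\rho_n$ and using $\Image(\rho_n) = \F_n$ (which we may assume by Lemma \ref{bijectionSequences} and the hypothesis, or more precisely which follows from $\rho_n$ being an isomorphism), both $\rho_n(e_1,\dots,e_n)$ and $\rho_n(e_1',\dots,e_r',e_{r+1},\dots,e_n)$ lie in $\F_n$ and share the same tail $(\rho(e_{r+1}),\dots,\rho(e_n))$; hence by property (\ref{categorification2}) applied in $\F$ we get $\nu_r(\rho_r(e_1,\dots,e_r)) = \nu_r(\rho_r(e_1',\dots,e_r'))$. This shows $\theta_r$ is well-defined and simultaneously that the square (\ref{E:commutativeSquare}) commutes by construction.

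Next I would establish surjectivity of $\theta_r$. Given $b \in B_r$, pick $(f_1,\dots,f_r) \in \F_r$ with $\nu_r(f_1,\dots,f_r) = b$. Since $\rho_r : \E_r \to \F_r$ is an isomorphism, there is $(e_1,\dots,e_r) \in \E_r$ with $\rho_r(e_1,\dots,e_r) = (f_1,\dots,f_r)$; then $\theta_r(\mu_r(e_1,\dots,e_r)) = \nu_r(f_1,\dots,f_r) = b$. For injectivity, suppose $\theta_r(a) = \theta_r(a')$ with $a = \mu_r(e_1,\dots,e_r)$ and $a' = \mu_r(e_1',\dots,e_r')$. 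Then $\nu_r(\rho_r(e_1,\dots,e_r)) = \nu_r(\rho_r(e_1',\dots,e_r'))$. I now want to run the argument from the well-definedness step in reverse: extend $\rho_r(e_1,\dots,e_r)$ to an element of $\F_n$ (possible since $\rho_n$ is onto $\F_n$, or directly by the structure of $\F_n$), and use property (\ref{categorification2}) in $\F$ to conclude that replacing the first $r$ entries by $\rho_r(e_1',\dots,e_r')$ still gives an element of $\F_n$; pulling this back along the isomorphism $\rho_n$ gives an element of $\E_n$ whose first $r$ entries are $(e_1',\dots,e_r')$ and whose remaining entries agree with an extension of $(e_1,\dots,e_r)$, so property (\ref{categorification2}) in $\E$ forces $\mu_r(e_1,\dots,e_r) = \mu_r(e_1',\dots,e_r')$, i.e. $a = a'$.

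The only mild subtlety — and the step I would be most careful about — is making sure the preimages under $\rho_n$ are handled correctly when extending a length-$r$ tuple: one must check that an element of $\F_n$ whose first $r$ coordinates lie in $\Image(\rho_r)$ in fact has \emph{all} its coordinates of the form $\rho(e_i)$, so that it lies in $\Image(\rho_n) = \F_n$ and can be pulled back. This is exactly what Lemma \ref{bijectionSequences} (together with the hypothesis of the present lemma) provides, via the definitions (\ref{categorification1}) of $\E_r$ and $\F_r$ and the equality $\Image(\rho_n) = \F_n$. Everything else is a routine diagram chase. Finally, I would remark that $\theta_r$ is automatically a bijection of \emph{sets}; the poset structure (and hence that $\theta_r$ assembles, over all $r$, into the poset isomorphism $\mathsf{cox}$) is addressed separately in the subsequent arguments, where inclusion of subcategories corresponds to the absolute order.
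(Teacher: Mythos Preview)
Your proof is correct and follows essentially the same approach as the paper: define $\theta_r(a) = \nu_r(\rho_r(e_1,\dots,e_r))$ for a preimage of $a$, and use property (C2) on both sides together with the bijectivity of $\rho_n$ to handle well-definedness and injectivity. The only cosmetic difference is that the paper packages well-definedness and injectivity into a single chain of equivalences ($a=a' \Leftrightarrow \theta_r(a)=\theta_r(a')$) rather than treating them as separate arguments.
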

\begin{proof}
    Let $1\leq r\leq n$. We define $\theta_r$ by
    \[\theta_r:A_r\rightarrow B_r,\quad\theta_r(a)=\nu_r\circ\rho_r(e_1,\dots,e_r)\text{ for }(e_1,\dots,e_r)\in\mu_r^{-1}(a).\]
    It is clear from the diagram (\ref{E:commutativeSquare}) that $\theta_r$ is surjective. Let us prove that $\theta_r$ is well-defined and injective at once.
    
    Let $a,a'\in A_r$ with $(e_1,\dots,e_r)\in\mu_r^{-1}(a)$, $(e_1',\dots,e_r')\in\mu_r^{-1}(a')$. By the assumptions (\ref{categorification1}) and (\ref{categorification2}) on $\mu_r$, we know that $a=a'$ if and only if there exist $e_{r+1},\dots,e_n\in\E$ such that $(e_1,\dots,e_n)$, $(e_1',\dots,e_r',e_{r+1},\dots,e_n)\in\E_n$. Applying the bijectivity of $\rho_n$, this is equivalent to the existence of $f_{r+1},\dots,f_n\in\F$ such that $(\rho(e_1),\dots,\rho(e_r),f_{r+1},\dots,f_n)$, $(\rho(e_1'),\dots,\rho(e_r'),f_{r+1},\dots,f_n)\in\F_n$. By the assumptions (\ref{categorification1}) and (\ref{categorification2}) on $\nu_r$, this is equivalent to $\nu_r(\rho(e_1),\dots,\rho(e_r))=\nu_r(\rho(e_1'),\dots,\rho(e_r'))$, i.e. $\theta_r(a)=\theta_r(a')$.
\end{proof}

In conclusion, we have shown a bijection between the sets $A_r$ and $B_r$ for any $1\leq r\leq n$ under the assumption of transitivity. We will now show that an exceptional datum even defines a partial order on $\sqcup_{r=1}^nA_r$ making $\sqcup_{r=1}^n\theta_r$ a bijection of posets.
\begin{lemma}\label{partialOrderGeneral}
    Let $\{\E_r,A_r,\mu_r\}_{1\leq r\leq n}$ be an exceptional datum and let $A:=\sqcup_{r=1}^nA_r$. Then
    \begin{equation*}
        a'\leq_\mu a\,:\Leftrightarrow\,\exists 1\leq r\leq s\leq n, (e_1,\dots,e_n)\in \E_n\text{ s.t. }\mu_r(e_1,\dots,e_r)=a',\,\mu_s(e_1,\dots,e_s)=a
    \end{equation*}
    defines a partial order on $A$.
\end{lemma}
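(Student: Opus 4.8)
The plan is to verify the three axioms of a partial order for $\leq_\mu$ on $A := \sqcup_{r=1}^n A_r$ directly from the definition, using only surjectivity of the maps $\mu_r$ together with the two defining properties (\ref{categorification1}) and (\ref{categorification2}) of an exceptional datum. The one structural observation I would record first is that, since $A$ is a disjoint union, each $a \in A$ lies in a unique $A_{r(a)}$, and that $a' \leq_\mu a$ already forces $r(a') \leq r(a)$: any sequence $(e_1,\dots,e_n) \in \E_n$ witnessing $a' \leq_\mu a$ must have $\mu_{r(a')}(e_1,\dots,e_{r(a')}) = a'$ and $\mu_{r(a)}(e_1,\dots,e_{r(a)}) = a$, and the inequality between indices is built into the definition of $\leq_\mu$.

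Reflexivity and antisymmetry are then immediate. For $a \in A_r$, surjectivity of $\mu_r$ gives $(e_1,\dots,e_r) \in \E_r$ with $\mu_r(e_1,\dots,e_r)=a$; by (\ref{categorification1}) this initial segment extends to some $(e_1,\dots,e_n) \in \E_n$, and taking $s=r$ in the definition yields $a \leq_\mu a$. For antisymmetry, if $a' \leq_\mu a$ and $a \leq_\mu a'$ then $r(a') \leq r(a) \leq r(a')$, hence $r(a')=r(a)=:r$, and a sequence witnessing $a' \leq_\mu a$ then satisfies $\mu_r(e_1,\dots,e_r)=a'$ and (since here the larger index equals $r$) also $\mu_r(e_1,\dots,e_r)=a$, forcing $a'=a$.

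The main work goes into transitivity, which I would handle by a splicing argument. Given $a'' \leq_\mu a'$ and $a' \leq_\mu a$, set $q := r(a'') \leq r := r(a') \leq s := r(a)$, and choose $(e_1,\dots,e_n) \in \E_n$ witnessing the first relation (so $\mu_q(e_1,\dots,e_q)=a''$, $\mu_r(e_1,\dots,e_r)=a'$) and $(e_1',\dots,e_n') \in \E_n$ witnessing the second (so $\mu_r(e_1',\dots,e_r')=a'$, $\mu_s(e_1',\dots,e_s')=a$). Since $\mu_r(e_1,\dots,e_r)=a'=\mu_r(e_1',\dots,e_r')$, property (\ref{categorification2}) applied to the distinguished tuple $(e_1',\dots,e_n')$ with the initial segment $(e_1,\dots,e_r)$ produces the spliced tuple $\hat e := (e_1,\dots,e_r,e_{r+1}',\dots,e_n') \in \E_n$. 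Its first $q$ entries are $e_1,\dots,e_q$, so $\mu_q(\hat e_1,\dots,\hat e_q)=a''$. The remaining point, and the only one that needs care, is to show $\mu_s(\hat e_1,\dots,\hat e_s)=a$: the segment $(\hat e_1,\dots,\hat e_s)$ need not coincide with $(e_1',\dots,e_s')$, but because $\hat e_i = e_i'$ for all $i>r$ and $s \geq r$, its completion $(\hat e_1,\dots,\hat e_s,e_{s+1}',\dots,e_n')$ is literally $\hat e$ and hence lies in $\E_n$; applying (\ref{categorification2}) a second time, now to $(e_1',\dots,e_n')$ with the initial segment $(\hat e_1,\dots,\hat e_s)$, gives $\mu_s(\hat e_1,\dots,\hat e_s)=\mu_s(e_1',\dots,e_s')=a$. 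Thus $\hat e \in \E_n$ witnesses $a'' \leq_\mu a$ with $q \leq s$, which completes the argument. I expect this double invocation of (\ref{categorification2}), together with the bookkeeping that identifies the re-spliced tuple with $\hat e$, to be the only delicate part.
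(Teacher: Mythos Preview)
Your proof is correct and follows essentially the same approach as the paper: the paper also dismisses reflexivity and antisymmetry as clear, and proves transitivity by the same splicing argument, applying (\ref{categorification2}) once to build the hybrid tuple and a second time to identify $\mu_s$ on it. The only cosmetic difference is that the paper swaps the roles of primed and unprimed sequences, forming $(e_1',\dots,e_r',e_{r+1},\dots,e_n)$ instead of your $(e_1,\dots,e_r,e_{r+1}',\dots,e_n')$.
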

\begin{proof}
    Reflexivity and antisymmetry are clear. We only show transitivity. Let $a,a',a''\in A$ such that $a''\leq_\mu a'$ and $a'\leq_\mu a$. Then there exist $1\leq q\leq r\leq s\leq n$, $(e_1,\dots,e_n)\in\E_n$ and $(e_1',\dots,e_n')\in\E_n$ such that $\mu_s(e_1,\dots,e_s)=a$, $\mu_r(e_1,\dots,e_r)=a'=\mu_r(e_1',\dots,e_r')$ and $\mu_q(e_1',\dots,e_q')=a''$. By assumption (\ref{categorification2}) we have $(e_1',\dots,e_r',e_{r+1},\dots,e_n)\in\E_n$. Once again, apply assumption (\ref{categorification2}) to see that not only $\mu_q(e_1',\dots,e_q')=a''$ but also $\mu_s(e_1',\dots,e_r',e_{r+1},\dots,e_s)=\mu_s(e_1,\dots,e_s)=a$. This shows $a''\leq_\mu a$.
\end{proof}

\begin{example}\label{Ex:partialOrder}
Let us apply Lemma \ref{partialOrderGeneral} to the exceptional data seen in Example \ref{Ex:exceptionalDatum}.
\begin{enumerate}
    \item[(a)] Let $\Coh(\XX)$ be the category of coherent sheaves on an exceptional hereditary curve $\XX$. Let $\{\E_r,A_r,\mu_r\}$ be the exceptional datum defined by exceptional sequences and the thick subcategories generated by them as seen in Example \ref{Ex:exceptionalDatum} ~(a).
    
    Let $\mathcal{H}', \mathcal{H}''$ be two exceptional subcategories of $\Coh(\XX)$. If $\mathcal{H}''\subseteq\mathcal{H}'$ then let $(E_1,\dots,E_r)$ be any exceptional sequence that generates $\mathcal{H}''$. By Lemma \ref{L:ExtendabilityCohX} there exist exceptional objects $E_{r+1},\dots,E_s$ such that $(E_1,\dots,E_s)$ is an exceptional sequence that generates $\mathcal{H}'$, i.e. $\mathcal{H}''\leq_\mu\mathcal{H}'$. On the other hand, let $\mathcal{H}''\leq_\mu\mathcal{H}'$. Then there exists $1\leq r\leq s\leq n$ and a complete exceptional sequence $(E_1,\dots,E_n)$ such that $(E_1,\dots,E_s)$ generates $\mathcal{H}'$ and $(E_1,\dots,E_r)$ generates $\mathcal{H}''$. In particular, $E_1,\dots,E_r\in\mathcal{H}'$ so $\mathcal{H}''\subseteq\mathcal{H}'$. Thus $\leq_\mu$ is the inclusion.
    \item[(b)] Let $(W,T,c)$ be a generalized dual Coxeter datum. Let $\{\F_r,B_r,\nu_r\}$ be the exceptional datum defined by reduced reflection factorizations and non-crossing partitions as seen in Example \ref{Ex:exceptionalDatum} (b).
    
    Let $v,w\in\NC_T(W,c)$ be two non-crossing partitions with $\ell_T(v)=r$, $\ell_T(w)=s$. If $v\leq_T w$, then let $(t_1,\dots,t_r)$ be any reduced reflection factorization of $v$. Now $\ell_T(v^{-1}w)=s-r$. So for any reduced reflection factorization $(t_{r+1},\dots,t_s)$ of $v^{-1}w$ we conclude that $(t_1,\dots,t_s)$ is a reduced reflection factorization of $w$ and thus an initial subsequence of a reduced reflection factorization of $c$, i.e. $v\leq_\nu w$. On the other hand, let $v\leq_\nu w$. Then $r\leq s$ and there exists $(t_1,\dots,t_n)\in\Red_T(c)$ such that $t_1\cdots t_s=w$ and $t_1\cdots t_r=v$. Then $v^{-1}w=t_{r+1}\cdots t_s$. In particular $v\leq_T w$. Thus $\leq_\nu$ is the absolute order $\leq_T$.
\end{enumerate}
\end{example}

\begin{proposition}\label{P:posetBijectionAbstract}
    Let $\{\E_r,A_r,\mu_r\}_{1\leq r\leq n},\{\F_r,B_r,\nu_r\}_{1\leq r\leq n}$ be two exceptional data and let $\rho:\E\hookrightarrow\F$ an injective map, $G$ a group satisfying the assumptions (C3) and (C4). Assume further that $G$ acts transitively on $\E_n$ and $\F_n$. Set $A=\sqcup_{r=1}^nA_r$, $B=\sqcup_{r=1}^nB_r$. Then there exists an order preserving bijection $\theta:A\to B$ with respect to the partial orders $\leq_\mu$ and $\leq_\nu$ as defined in Lemma \ref{partialOrderGeneral}.
\end{proposition}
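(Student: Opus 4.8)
The plan is to assemble the bijection from the three bijection lemmas already established and then verify order-preservation by a short diagram chase. First I would invoke Lemma~\ref{bijectionCompleteSequences}: the transitivity of the $G$-action on $\E_n$ and $\F_n$ together with hypotheses (C3) and (C4) gives that $\rho_n:\E_n\to\F_n$ is a bijection. Feeding this into Lemma~\ref{bijectionSequences} yields that $\rho_r:\E_r\to\F_r$ is a bijection for every $1\le r\le n$, and Lemma~\ref{bijectionSets} then produces bijections $\theta_r:A_r\xrightarrow{\sim}B_r$ fitting into the commutative squares \eqref{E:commutativeSquare}, that is, $\nu_r\circ\rho_r=\theta_r\circ\mu_r$ for all $r$. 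Since $A=\bigsqcup_{r=1}^n A_r$ and $B=\bigsqcup_{r=1}^n B_r$ are (formally) disjoint unions, setting $\theta:=\bigsqcup_{r=1}^n\theta_r$ defines a bijection $\theta:A\to B$.

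It remains to check that $\theta$ and $\theta^{-1}$ preserve the orders $\leq_\mu$ and $\leq_\nu$ of Lemma~\ref{partialOrderGeneral}. Suppose $a'\leq_\mu a$ with $a'\in A_r$, $a\in A_s$ and $r\le s$. By definition there is $(e_1,\dots,e_n)\in\E_n$ with $\mu_r(e_1,\dots,e_r)=a'$ and $\mu_s(e_1,\dots,e_s)=a$. Applying $\rho_n$ and using (C3) we get $\rho_n(e_1,\dots,e_n)=(\rho(e_1),\dots,\rho(e_n))\in\F_n$, hence $(\rho(e_1),\dots,\rho(e_r))\in\F_r$ and $(\rho(e_1),\dots,\rho(e_s))\in\F_s$ by \eqref{categorification1}. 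The commutativity of \eqref{E:commutativeSquare} gives $\nu_r(\rho(e_1),\dots,\rho(e_r))=\theta_r(a')=\theta(a')$ and $\nu_s(\rho(e_1),\dots,\rho(e_s))=\theta_s(a)=\theta(a)$, so by the definition of $\leq_\nu$ we conclude $\theta(a')\leq_\nu\theta(a)$. The reverse implication is entirely symmetric: given $\theta(a')\leq_\nu\theta(a)$, choose a witnessing element of $\F_n$, pull it back along the bijection $\rho_n^{-1}$ to an element of $\E_n$, and read the same commutative squares from right to left to obtain a witness for $a'\leq_\mu a$. Thus $\theta$ is in fact an isomorphism of posets, which in particular proves the claim.

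There is no genuine obstacle at this stage: all the work sits in Lemmas~\ref{bijectionCompleteSequences}--\ref{bijectionSets} and Lemma~\ref{partialOrderGeneral}, whose verification for the cases of interest reduces (via Theorem~\ref{T:KussinMeltzer} and Theorem~\ref{newMainTransitivity}) to the two transitivity statements. The one point worth a word of care is that the order $\leq_\mu$ (resp.\ $\leq_\nu$) is defined by quantifying over a \emph{single} maximal-length sequence simultaneously realizing both $a'$ and $a$ as images of nested initial segments; this is precisely the structure that the bijection $\rho_n$ transports faithfully in both directions, which is why no compatibility hypothesis beyond (C3) and (C4) is needed. The identifications of $\leq_\mu$ and $\leq_\nu$ with inclusion of subcategories and the absolute order, carried out in Example~\ref{Ex:partialOrder}, then turn this abstract statement into the desired categorification once the setting of exceptional hereditary curves is checked to satisfy (C1)--(C4), which is done in Examples~\ref{Ex:exceptionalDatum} and \ref{Ex:partialOrder} together with Theorems~\ref{T:KussinMeltzer} and \ref{newMainTransitivity}.
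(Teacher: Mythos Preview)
Your proposal is correct and follows essentially the same route as the paper: invoke Lemmas~\ref{bijectionCompleteSequences}--\ref{bijectionSets} to obtain the bijection $\theta=\sqcup_r\theta_r$, then verify order-preservation in both directions by transporting a witnessing maximal sequence through the bijection $\rho_n$ and reading off the commutative squares \eqref{E:commutativeSquare}. The only cosmetic difference is that the paper packages the two directions into a single chain of equivalences, and one small imprecision: when you write ``using (C3)'' to conclude $\rho_n(e_1,\dots,e_n)\in\F_n$, the actual reason is the already established bijectivity $\rho_n:\E_n\xrightarrow{\sim}\F_n$, not (C3) alone.
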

\begin{proof}
    By Lemmas \ref{bijectionCompleteSequences}, \ref{bijectionSequences} and \ref{bijectionSets}, it only remains to check that $\theta:=\sqcup_{r=1}^n\theta_r$ defined by (\ref{E:commutativeSquare}) is order preserving.
    
    Let $a,a'\in A$. Then $a'\leq_\mu a$ if and only if there exist $1\leq r\leq s\leq n$ and $(e_1,\dots,e_n)\in\E_n$ such that $\mu_r(e_1,\dots,e_r)=a'$ and $\mu_s(e_1,\dots,e_s)=a$. By the definition of $\theta_r$ and $\theta_s$ via (\ref{E:commutativeSquare}), this is equivalent to the existence of $1\leq r\leq s\leq n$ and $(e_1,\dots,e_n)\in\E_n$ such that $\nu_r\circ\rho_r(e_1,\dots,e_r)=\theta(a')$ and $\nu_s\circ\rho_s(e_1,\dots,e_s)=\theta(a)$. By the bijectivity of $\rho_n:\E_n\to\F_n$, this is equivalent to the existence of $1\leq r\leq s\leq n$ and $(f_1,\dots,f_n)\in\F_n$ such that $\nu_r(f_1,\dots,f_r)=\theta(a')$ and $\nu_s(f_1,\dots,f_s)=\theta(a)$. This is the case if and only if $\theta(a')\leq_\nu\theta(a)$ by definition of $\leq_\nu$.
\end{proof}

\noindent
We have thus seen the actual proof of the categorification result. We have seen how exceptional subcategories of $\Coh(\XX)$ and non-crossing partitions individually fit into this framework. It remains to associate a reflection group (of canonical type) to a category $\Coh(\XX)$ and note that the map sending an exceptional object to its reflection satisfies the assumptions (C3) and (C4).
\begin{definition}
    Let $\Coh(\XX)$ be the category of coherent sheaves on an exceptional hereditary curve $\XX$. Let $(\Gamma,K)$ be its Grothendieck group (of rank $n$) equipped with the Euler form. Let $(E_1,\dots,E_n)$ be any complete exceptional sequence in $\Coh(\XX)$ and let $R=([E_1],\dots,[E_n])$. Then the generalized dual Coxeter datum $(W,T,c)$ defined by $R$ via Definition \ref{D:Coxeter datum} is the \emph{generalized dual Coxeter datum associated to $\Coh(\XX)$}.
    
We denote by $\mathsf{Ex}\bigl(\Coh(\XX)\bigr)$ the set of thick exact subcategories of $\Coh(\XX)$ generated by an exceptional sequence. It  is a partially ordered set with respect to the inclusion of subcategories.
\end{definition}

\begin{proposition}\label{P:ReflectionGroupForCategory}
    Let $\Coh(\XX)$ be the category of coherent sheaves on an exceptional hereditary curve $\XX$. The generalized dual Coxeter datum associated to $\Coh(\XX)$ does not depend on the choice of complete exceptional sequence. Moreover, $W$ is a reflection group of canonical type in the sense of Definition \ref{D:CanonicalLattice}.
\end{proposition}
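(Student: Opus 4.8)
The plan is to reduce both assertions to facts already assembled in the excerpt. For the independence statement, I would argue as follows. Let $(E_1,\dots,E_n)$ and $(E_1',\dots,E_n')$ be two complete exceptional sequences in $\Coh(\XX)$, and let $R=([E_1],\dots,[E_n])$, $R'=([E_1'],\dots,[E_n'])$ be the corresponding tuples in the Grothendieck group $(\Gamma,K)$. By Theorem \ref{T:KussinMeltzer}, the braid group $B_n$ acts transitively on complete exceptional sequences in $\Coh(\XX)$, so there is a braid $\sigma\in B_n$ with $\sigma(E_1,\dots,E_n)=(E_1',\dots,E_n')$. The braid group action on exceptional sequences in $\cD=D^b(\Coh(\XX))$ given by (\ref{E:BraidGroup}) is compatible, on the level of Grothendieck classes, with the braid group action on exceptional sequences in the bilinear lattice $(\Gamma,K)$ from Proposition \ref{P:braidGroupActionRoots} --- this is precisely the content of the identities $[L_E(F)]=[F]-2\tfrac{(E,F)}{(E,E)}[E]$ and its dual from Lemma \ref{L:Mutate}. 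Hence $R$ and $R'$ lie in the same $B_n$-orbit of complete exceptional sequences in $(\Gamma,K)$. By Remark \ref{R:CoxData1}(d), two complete exceptional sequences in the same braid orbit produce the same triple $(W,\Phi,T)$; and the Coxeter element $c$ is uniquely determined by $(\Gamma,K)$ (Definition \ref{D:Coxeter element} and the uniqueness remark following it, cf.\ Proposition \ref{P:HKLemmas}(a)), so it too is independent of the choice. Therefore the generalized dual Coxeter datum $(W,T,c)$ is well-defined.

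For the second assertion --- that $W$ is a reflection group of canonical type --- I would simply invoke Proposition \ref{P:categoryInterpretation}. That proposition already states that $(\Gamma,K)$, with the Euler form, is a canonical bilinear lattice in the sense of Definition \ref{D:CanonicalLattice}, and that the classes $([E_1],\dots,[E_n])$ of any complete exceptional sequence form a complete exceptional sequence in $(\Gamma,K)$. Since the reflection group of canonical type associated to a canonical bilinear lattice is, by Definition \ref{D:CanonicalLattice}, precisely the reflection group $W\subset\mathsf{O}(\Gamma,B)$ generated by the reflections in the classes of a complete exceptional sequence --- and by the independence just established, this does not depend on which complete exceptional sequence we take --- we conclude that $W$ is a reflection group of canonical type. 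One should note here that Definition \ref{D:CanonicalLattice} a priori uses the \emph{standard} exceptional sequence (\ref{E:StandardCollection}) with its Gram matrix (\ref{E:GramMatrix}), but Theorem \ref{T:KussinMeltzer} guarantees every complete exceptional sequence is mutation-equivalent to the standard one, so this is the same group.

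The main obstacle, such as it is, is bookkeeping rather than mathematics: one must be careful that the braid group action on exceptional sequences in the \emph{category} $\Coh(\XX)$ (via left and right mutations, identified with their non-vanishing cohomology as in the remark after Theorem \ref{T:HereditaryKey}) really does descend, class by class, to the combinatorial braid action of Proposition \ref{P:braidGroupActionRoots} on pseudo-roots --- i.e.\ that the reflection $s_{[E_{i+1}]}$ acting on $[E_i]$ reproduces $[R_{E_{i+1}}(E_i)]$. This is exactly the assertion of Lemma \ref{L:Mutate}, so no new work is needed; one just has to cite it in the right place. Everything else is a direct concatenation of Theorem \ref{T:KussinMeltzer}, Remark \ref{R:CoxData1}(d), and Proposition \ref{P:categoryInterpretation}.

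\begin{proof}
Let $(E_1,\dots,E_n)$ and $(E_1',\dots,E_n')$ be two complete exceptional sequences in $\Coh(\XX)$ and set $R=([E_1],\dots,[E_n])$, $R'=([E_1'],\dots,[E_n'])$. By Theorem \ref{T:KussinMeltzer}, there is a braid $\sigma\in B_n$ with $\sigma(E_1,\dots,E_n)=(E_1',\dots,E_n')$. Comparing the braid group action (\ref{E:BraidGroup}) on exceptional sequences in $D^b\bigl(\Coh(\XX)\bigr)$ with the braid group action of Proposition \ref{P:braidGroupActionRoots} on exceptional sequences in $(\Gamma,K)$, and using the identities for $[L_E(F)]$ and $[R_E(F)]$ from Lemma \ref{L:Mutate}, we see that $\sigma$ sends $R$ to $R'$ under the action of Proposition \ref{P:braidGroupActionRoots}. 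Hence $R$ and $R'$ lie in the same $B_n$-orbit, so by Remark \ref{R:CoxData1}(d) the associated data $(W,\Phi,T)$ coincide; and the Coxeter element $c$ is uniquely determined by $(\Gamma,K)$ by Definition \ref{D:Coxeter element} and Proposition \ref{P:HKLemmas}(a). Thus the generalized dual Coxeter datum $(W,T,c)$ associated to $\Coh(\XX)$ is independent of the choice of complete exceptional sequence.

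For the last claim, by Proposition \ref{P:categoryInterpretation} the Grothendieck group $\Gamma=K_0(\XX)$ equipped with the Euler form $K$ is a canonical bilinear lattice in the sense of Definition \ref{D:CanonicalLattice}, and $([E_1],\dots,[E_n])$ is a complete exceptional sequence in $(\Gamma,K)$. By Theorem \ref{T:KussinMeltzer} this complete exceptional sequence can be mutated to the standard one (\ref{E:StandardCollection}), whose Gram matrix is (\ref{E:GramMatrix}); together with the independence established above, this shows that $W$ is exactly the reflection group $W\subset\mathsf{O}(\Gamma,B)$ attached to the canonical bilinear lattice $(\Gamma,K)$ as in Definition \ref{D:CanonicalLattice}, i.e.\ $W$ is a reflection group of canonical type.
\end{proof}
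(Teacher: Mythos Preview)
Your proof is correct and follows essentially the same approach as the paper: transitivity of the braid group action on complete exceptional sequences (Theorem \ref{T:KussinMeltzer}) combined with Lemma \ref{L:Mutate} to descend to the action of Proposition \ref{P:braidGroupActionRoots}, then Remark \ref{R:CoxData1}(d) for independence, and finally Proposition \ref{P:categoryInterpretation} together with the standard exceptional sequence for the canonical type. The paper's proof is terser but cites the same ingredients in the same order.
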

\begin{proof}
    By Theorem ~\ref{T:KussinMeltzer}, the Hurwitz action on complete exceptional sequences in $\Coh(\XX)$ is transitive. Using Lemma \ref{L:Mutate}, we can interpret this braid group action as the braid group action on complete exceptional sequences in $(\Gamma,K)$ given in Proposition \ref{P:braidGroupActionRoots}. Now Remark ~\ref{R:CoxData1} (d) shows that $(W,T,c)$ does not depend on the choice of complete exceptional sequence.
    
    By Theorem ~\ref{T:ExceptionalMain} and Proposition \ref{P:categoryInterpretation} it is clear that, choosing the standard exceptional sequence (\ref{E:StandardCollection}), the associated reflection group $W$ is a reflection group of canonical type. 
\end{proof}

\begin{theorem}\label{T:MainCategorification}
    Let $\Coh(\XX)$ be the category of coherent sheaves on a non-tubular exceptional hereditary curve $\XX$ and let $(W,T,c)$ be the associated generalized dual Coxeter datum. Then the map 
\begin{equation}\label{E:CoxMap1}
\mathsf{cox}: \mathsf{Ex}\bigl(\Coh(\XX)\bigr) \lar \mathsf{NC}_{T}(W, c), \quad \llangle F_1, \dots, F_r\rrangle \mapsto s_{[F_1]} \dots s_{[F_r]}
\end{equation}
is an isomorphism of posets. 
Here, $\llangle F_1, \dots, F_r\rrangle$ denotes the thick exact (in fact, abelian) subcategory of $\Coh(\XX)$  generated by an exceptional sequence $\bigl(F_1, \dots, F_r\bigr)$.
\end{theorem}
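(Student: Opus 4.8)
The strategy is to recognize the statement as an instance of the abstract poset isomorphism of Proposition~\ref{P:posetBijectionAbstract}, with the two exceptional data being exactly those of Examples~\ref{Ex:exceptionalDatum} and~\ref{Ex:partialOrder}. Concretely, I would take $\{\E_r,A_r,\mu_r\}_{1\le r\le n}$ from Example~\ref{Ex:exceptionalDatum}(a): $\E$ the isomorphism classes of exceptional objects in $\Coh(\XX)$, $n=\rk\bigl(K_0(\XX)\bigr)$, $\E_n$ the complete exceptional sequences, $A_r$ the exceptional subcategories generated by exceptional sequences of length $r$, and $\mu_r$ the ``generate a thick subcategory'' map, whose axioms (C1)--(C2) hold by Lemmas~\ref{L:ExtendabilityCohX} and~\ref{L:PerpendicularCalculus}; by Example~\ref{Ex:partialOrder}(a) the resulting order $\le_\mu$ on $A=\bigsqcup_r A_r=\mathsf{Ex}\bigl(\Coh(\XX)\bigr)$ is inclusion. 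Dually I take $\{\F_r,B_r,\nu_r\}_{1\le r\le n}$ from Example~\ref{Ex:exceptionalDatum}(b) for the generalized dual Coxeter datum $(W,T,c)$ associated to $\Coh(\XX)$: $\F=T$, $\F_n=\Red_T(c)$, $B_r$ the non-crossing partitions of reflection length $r$, $\nu_r$ the product map; by Example~\ref{Ex:partialOrder}(b) the resulting order $\le_\nu$ on $B=\bigsqcup_r B_r=\NC_T(W,c)$ is the absolute order $\le_T$.

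For the connecting map I would take $\rho\colon\E\to\F$, $E\mapsto s_{[E]}$, which is well defined since $[E]$ is a (real) pseudo-root by Lemma~\ref{L:ClassIsPseudoRoot}. Injectivity of $\rho$ needs a brief argument: if $s_{[E]}=s_{[F]}$ then $[E]=\pm[F]$ because $\Phi$ is reduced (Remark~\ref{R:CoxData1}(b)), and the rank function of Definition~\ref{D:rankFunction} rules out the sign $-$, since an exceptional object of $\Coh(\XX)$ is an indecomposable vector bundle or a finite-length sheaf and in both cases $[E]$ is never the negative of such a class; hence $[E]=[F]$ and therefore $E\cong F$ by Theorem~\ref{T:HereditaryKey}(b). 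Next, with $G=B_n$, I verify (C3) by taking $x\in\E_n$ to be the standard exceptional sequence~(\ref{E:StandardCollection}): its image under $\rho_n$ is a tuple of reflections whose product is $c$ (Proposition~\ref{P:HKLemmas}(a) together with Proposition~\ref{P:categoryInterpretation}) and which has length $n=\ell_T(c)$ (Corollary~\ref{C:reflectionLength} and Proposition~\ref{P:TypeDistinction} in the non-tubular case), so it lies in $\Red_T(c)=\F_n$. For (C4), $B_n$ acts on $\E_n$ by mutations of exceptional sequences (Lemma~\ref{L:Mutate}) and on $\F^n=T^n$ by the Hurwitz action, and comparing $[R_E(F)]=[F]-2\tfrac{(E,F)}{(E,E)}[E]=s_{[E]}([F])$ with the conjugation formula~(\ref{E:Action1}) shows that $\rho_n$ intertwines the two actions.

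With (C3)--(C4) in hand, the two transitivity hypotheses of Proposition~\ref{P:posetBijectionAbstract} are supplied by Theorem~\ref{T:KussinMeltzer} (transitivity of $B_n$ on complete exceptional sequences in $\Coh(\XX)$) and Corollary~\ref{C:HurwitzTransitivityNonTub} (transitivity of $B_n$ on $\Red_T(c)$ in the non-tubular case). Proposition~\ref{P:posetBijectionAbstract} then delivers an order-preserving bijection $\theta\colon(A,\le_\mu)\to(B,\le_\nu)$, that is, a poset isomorphism $\mathsf{Ex}\bigl(\Coh(\XX)\bigr)\to\NC_T(W,c)$ for inclusion and $\le_T$. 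Finally I would observe that $\theta=\mathsf{cox}$: by the defining square~(\ref{E:commutativeSquare}) one has $\theta\bigl(\llangle F_1,\dots,F_r\rrangle\bigr)=\nu_r\bigl(\rho_r(F_1,\dots,F_r)\bigr)=s_{[F_1]}\dots s_{[F_r]}$, which in particular re-establishes that $\mathsf{cox}$ is well defined (independent of the chosen generating exceptional sequence), this being precisely condition~(C2) for $\mu_r$.

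The substantive inputs, namely the two transitivity results, are already established, so what remains is essentially bookkeeping; within this argument the only points that require genuine care are the well-definedness and injectivity of $\rho$ and the identification of the abstractly produced $\theta$ with the explicit map $\mathsf{cox}$ of the statement.
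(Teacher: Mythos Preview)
Your proposal is correct and follows essentially the same approach as the paper's own proof: instantiate Proposition~\ref{P:posetBijectionAbstract} with the two exceptional data of Example~\ref{Ex:exceptionalDatum}, take $\rho(E)=s_{[E]}$, verify (C3)--(C4) via the standard exceptional sequence and Lemma~\ref{L:Mutate}, and invoke Theorem~\ref{T:KussinMeltzer} together with Corollary~\ref{C:HurwitzTransitivityNonTub} for the two transitivities. Your treatment of the injectivity of $\rho$ (ruling out the sign via rank/degree) and your explicit identification $\theta=\mathsf{cox}$ are in fact slightly more detailed than the paper, which handles both points tersely.
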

\begin{proof}
    Let $\{\E_r,A_r,\mu_r\}$ be the exceptional datum from Example \ref{Ex:exceptionalDatum} (a). Let $\{\F_r,B_r,\nu_r\}$ be the exceptional datum from Example \ref{Ex:exceptionalDatum} (b) for the generalized dual Coxeter datum $(W,T,c)$. Then $(\sqcup_{r=1}^nA_r,\leq_\mu)$ is the set $\mathsf{Ex}\bigl(\Coh(\XX)\bigr)$ of exceptional subcategories of $\Coh(\XX)$, ordered by inclusion, and $(\sqcup_{r=1}^nB_r,\leq_\nu)$ is $\mathsf{NC}_T(W,c)$, ordered by the absolute order $\leq_T$ by Example \ref{Ex:partialOrder}.

    Let $\rho:\E\to\F$ be the map that sends an exceptional object $E$ to its associated reflection $s_{[E]}$. Indeed, we have $s_{[E]}\in T$. Consider the following argument. Let $E$ be an exceptional object. Then by Lemma \ref{L:ExtendabilityCohX} there exists a complete exceptional sequence $(E_1,\dots,E_n)$ such that $E_1=E$. If we choose this complete exceptional sequence, it is clear that $s_{[E]}\in S\subset T$. As argued above, $T$ does not depend on the choice of complete exceptional sequence by Remark ~\ref{R:CoxData1} (d) and Theorem ~\ref{T:KussinMeltzer}. The map $\rho$ is injective by \ref{T:HereditaryKey} (b) and Proposition \ref{P:HKLemmas}, i.e. two exceptional objects are isomorphic if and only if they define the same reflection. Moreover, there exists $x\in\mathbf{E}_n$ such that $\rho_n(x)\in\F_n$ by construction. The reader who prefers a specific complete exceptional sequence may find it in Theorem \ref{T:ExceptionalMain}. In conclusion, $\rho:\E\hookrightarrow\F$ is a map satisfying assumption (C3).
    
    Finally, let $G=B_n$ be the braid group on $n$ strands. The braid group equivariance of $\rho_n$ follows from Lemma \ref{L:Mutate}, i.e. assumption (C4) is satisfied. The transitivity of the $B_n$ action on $\E_n$ is Theorem \ref{T:KussinMeltzer}. The transitivity of the $B_n$ action on $\F_n$ is our first main result Corollary \ref{C:HurwitzTransitivityNonTub}. The assertion thus follows from Proposition \ref{P:posetBijectionAbstract}.
\end{proof}

\begin{theorem}\label{T:MainCategorificationII}
    Let $\Coh(\XX)$ be the category of coherent sheaves on a tubular exceptional hereditary curve $\XX$ and let $(\widetilde W,\widetilde T,\tilde c)$ be the hyperbolic extension of the associated generalized dual Coxeter datum $(W,T,c)$. Then the map 
\begin{equation}\label{E:CoxMap2}
\widetilde{\mathsf{cox}}: \mathsf{Ex}\bigl(\Coh(\XX)\bigr) \lar \mathsf{NC}_{T}(W, c), \quad \llangle F_1, \dots, F_r\rrangle \mapsto \tilde{s}_{[F_1]} \dots \tilde{s}_{[F_r]}
\end{equation}
is an isomorphism of posets.     
\end{theorem}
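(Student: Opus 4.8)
The plan is to run the argument of Theorem~\ref{T:MainCategorification} \emph{verbatim}, replacing the generalized dual Coxeter datum $(W,T,c)$ by its hyperbolic extension $(\widetilde W,\widetilde T,\tilde c)$ and substituting the tubular versions of the inputs. I keep the exceptional datum $\{\E_r,A_r,\mu_r\}_{1\le r\le n}$ of Example~\ref{Ex:exceptionalDatum}(a), so that $\bigl(\sqcup_{r=1}^n A_r,\le_\mu\bigr)=\mathsf{Ex}\bigl(\Coh(\XX)\bigr)$ ordered by inclusion (Example~\ref{Ex:partialOrder}(a)), and I feed the construction of Example~\ref{Ex:exceptionalDatum}(b) the datum $(\widetilde W,\widetilde T,\tilde c)$: here $n=\ell_{\widetilde T}(\tilde c)$ by Corollary~\ref{C:reflectionLengthHyp}, $\F=\widetilde T$, $\F_n=\Red_{\widetilde T}(\tilde c)$, and $\bigl(\sqcup_{r=1}^n B_r,\le_\nu\bigr)=\mathsf{NC}_{\widetilde T}(\widetilde W,\tilde c)$ ordered by the absolute order $\le_{\widetilde T}$ (Example~\ref{Ex:partialOrder}(b)). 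The bridge map is $\rho\colon\E\hookrightarrow\F$, $E\mapsto\tilde s_{[E]}$. The key elementary observation throughout is that for any non-isotropic $v\in V$ one has $\tilde s_v|_V=s_v$, since $\widetilde B|_{V\times V}=B$; consequently $\tilde s_{[E]}=\tilde s_{[F]}$ forces $s_{[E]}=s_{[F]}$, whence $[E]=\pm[F]$ by Proposition~\ref{P:HKLemmas} and $E\cong F$ by Theorem~\ref{T:HereditaryKey}(b), so $\rho$ is injective, exactly as in the non-tubular case.

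Next I must check that $\rho$ lands in $\widetilde T$ and that the hyperbolic datum is intrinsic to $\Coh(\XX)$. Given an exceptional object $E$, Lemma~\ref{L:ExtendabilityCohX} extends it to a complete exceptional sequence $(E_1,\dots,E_n)$ with $E_1=E$, so $\tilde s_{[E]}\in\widetilde S$ relative to the hyperbolic datum built from $([E_1],\dots,[E_n])$; it therefore remains to see that $\widetilde W$, $\widetilde T$ and $\tilde c$ are independent of the chosen complete exceptional sequence. Using $\tilde s_\beta|_V=s_\beta$ once more: for $\beta,\gamma\in V$ with $\beta$ non-isotropic we have $\tilde s_\beta(\gamma)=s_\beta(\gamma)$, so Lemma~\ref{L:ResultsRealReflect}(c) gives $\tilde s_{s_\beta(\gamma)}=\tilde s_\beta\tilde s_\gamma\tilde s_\beta^{-1}$. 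Hence, along the braid-group action on complete exceptional sequences of pseudo-roots (Proposition~\ref{P:braidGroupActionRoots}), the subgroup $\llangle\tilde s_{[E_1]},\dots,\tilde s_{[E_n]}\rrangle\subseteq\Aut(\widetilde V)$, its set of conjugates of the simple reflections, and the product $\tilde s_{[E_1]}\cdots\tilde s_{[E_n]}$ are all invariant. Since by Theorem~\ref{T:KussinMeltzer} all complete exceptional sequences in $\Coh(\XX)$ form a single braid orbit, the triple $(\widetilde W,\widetilde T,\tilde c)$ is well defined, and in particular $\rho(\E)\subseteq\widetilde T$.

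With $\rho$ available, the hypotheses (C3) and (C4) of the abstract framework hold. For (C3) take $x$ to be the standard exceptional sequence~(\ref{E:StandardCollection}): then $\rho_n(x)=\bigl(\tilde s_{(t,p_t-1)},\dots,\tilde s_{(t,1)},\dots,\tilde s_{(1,p_1-1)},\dots,\tilde s_{(1,1)},\tilde s_0,\tilde s_{0^\ast}\bigr)$, whose product is $\tilde c$ by~(\ref{E:DefinitionHyperbolicCoxeter}) and which has length $n=\ell_{\widetilde T}(\tilde c)$, so it lies in $\Red_{\widetilde T}(\tilde c)=\F_n$. For (C4) let $G=B_n$ act on $\E_n$ by the mutation rules~(\ref{E:BraidGroup}) and on $\F^n$ by the Hurwitz action; since $[L_E(F)]=s_{[E]}([F])$ and $[R_E(F)]=s_{[E]}([F])$ by Lemma~\ref{L:Mutate}, and $\tilde s_{s_{[E]}([F])}=\tilde s_{[E]}\tilde s_{[F]}\tilde s_{[E]}^{-1}$, the map $\rho_n$ is $B_n$-equivariant. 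Finally $B_n$ acts transitively on $\E_n$ by Theorem~\ref{T:KussinMeltzer} and on $\F_n=\Red_{\widetilde T}(\tilde c)$ by Corollary~\ref{C:HurwitzTransitivityTub}. Proposition~\ref{P:posetBijectionAbstract} then produces an order-preserving bijection $\mathsf{Ex}\bigl(\Coh(\XX)\bigr)\to\mathsf{NC}_{\widetilde T}(\widetilde W,\tilde c)$, which unravels to the map $\widetilde{\mathsf{cox}}$ of the statement.

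I expect no essential obstacle: granted Theorem~\ref{T:MainCategorification} and, above all, the tubular Hurwitz transitivity Corollary~\ref{C:HurwitzTransitivityTub}, the proof is pure bookkeeping inside the scheme of Section~\ref{S:OrderPreservingBijections}. The one step genuinely specific to the tubular case — and the only place requiring real care — is the braid-invariance argument of the second paragraph showing that $(\widetilde W,\widetilde T,\tilde c)$ depends only on $\Coh(\XX)$; everything else transcribes directly from the proof of Theorem~\ref{T:MainCategorification}.
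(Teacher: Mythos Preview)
Your proposal is correct and follows essentially the same route as the paper: both feed the hyperbolic datum $(\widetilde W,\widetilde T,\tilde c)$ into the abstract machine of Section~\ref{S:OrderPreservingBijections}, verify (C3) via the standard exceptional sequence and~(\ref{E:DefinitionHyperbolicCoxeter}), verify (C4) via Lemma~\ref{L:Mutate}, and invoke Theorem~\ref{T:KussinMeltzer} and Corollary~\ref{C:HurwitzTransitivityTub} for the two transitivity inputs. The only cosmetic difference is that the paper handles injectivity of $\rho$ and membership $\tilde s_{[E]}\in\widetilde T$ by citing the set bijection $\widetilde T\leftrightarrow T$ of Corollary~\ref{hyperbolicEpimorphism}, whereas you argue both points directly via the restriction identity $\tilde s_v|_V=s_v$ and an explicit braid-invariance check; these are equivalent and your version is arguably more self-contained.
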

\begin{proof}
    Let $\{\E_r,A_r,\mu_r\}$ be the exceptional datum from Example \ref{Ex:exceptionalDatum} (a). Let $\{\F_r,B_r,\nu_r\}$ be the exceptional datum from Example \ref{Ex:exceptionalDatum} (b) for the generalized dual Coxeter datum $(\widetilde W,\widetilde T,\tilde c)$. Then $(\sqcup_{r=1}^nA_r,\leq_\mu)$ is the set $\mathsf{Ex}\bigl(\Coh(\XX)\bigr)$ of exceptional subcategories of $\Coh(\XX)$, ordered by inclusion, and $(\sqcup_{r=1}^nB_r,\leq_\nu)$ is $\mathsf{NC}_{\widetilde T}(\widetilde W,\tilde c)$, ordered by the absolute order $\leq_{\widetilde T}$ by Example ~\ref{Ex:partialOrder}.

    Recall from Lemma \ref{hypSES}, that there is a group epimorphism $\pi:\widetilde W\to W$. It is not hard to see that $\pi$ restricts to an isomorphism of sets between $\widetilde T$ and $T$; see Corollary \ref{hyperbolicEpimorphism}. Let $\rho:\E\to\F$ be the map that sends an exceptional object $E$ to its associated reflection $\tilde s_{[E]}$ in the hyperbolic extension. Indeed, $\tilde s_{[E]}\in\widetilde T$ by the same argument as in the proof of Theorem \ref{T:MainCategorification}. Note that $E\mapsto s_{[E]}$ is an injective map from $\E$ to $T$ and $s_{[E]}\mapsto\tilde s_{[E]}$ is a restriction of the isomorphism between $T$ and $\widetilde T=\F$. Thus $\rho:\E\to\F$ is injective. Moreover, by Theorem \ref{T:ExceptionalMain} there exists a complete exceptional sequence such that the corresponding product of reflections is $c\in W$. By Equation (\ref{E:DefinitionHyperbolicCoxeter}), this factorization lifts to a factorization of $\tilde c\in\widetilde W$, i.e. there exists $x\in\mathbf{E}_n$ such that $\rho_n(x)\in\F_n$. In conclusion, $\rho:\E\hookrightarrow\F$ is a map satisfying assumption (C3).
    
    Finally, let $G=B_n$ be the braid group on $n$ strands. The braid group equivariance of $\rho_n$ follows from Lemma \ref{L:Mutate} and the fact that $\pi:\widetilde W\to W$ is a morphism of groups. The transitivity of the $B_n$ action on $\E_n$ is Theorem \ref{T:KussinMeltzer}. The transitivity of the $B_n$ action on $\F_n$ is Corollary \ref{C:HurwitzTransitivityTub}. The assertion thus follows from Proposition \ref{P:posetBijectionAbstract}.
\end{proof}

\begin{corollary} Let $\XX$ be an exceptional hereditary curve and $\gamma \in  \Gamma$ be a real root.
\begin{itemize}
\item[(a)] Assume that $\XX$ is non-tubular. Then there exists an exceptional object 
$E \in \Coh(\XX)$ such that $[E] \in \bigl\{+\gamma, -\gamma\}$ if and only if 
$s_\gamma \in \mathsf{NC}_T(W, c)$ is a non-crossing partition.
\item[(b)] Similarly, if $\XX$ is tubular then there exists an exceptional object 
$E \in \Coh(\XX)$ such that $[E] \in \bigl\{+\gamma, -\gamma\}$ if and only if 
$\tilde{s}_\gamma \in \mathsf{NC}_{\widetilde{T}}(\widetilde{W}, \tilde{c})$ is a non-crossing partition.
\end{itemize}
\end{corollary}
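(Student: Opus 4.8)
The plan is to obtain this as a direct specialization of the poset isomorphisms $\mathsf{cox}$ and $\widetilde{\mathsf{cox}}$ of Theorems \ref{T:MainCategorification} and \ref{T:MainCategorificationII} to the part of the posets sitting in ``rank one''. I treat case (a) first; case (b) is identical after replacing $\mathsf{cox}, W, T, c, s_\bullet$ by $\widetilde{\mathsf{cox}}, \widetilde W, \widetilde T, \tilde c, \tilde s_\bullet$.

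First I would note that, since $\gamma$ is a real root, $s_\gamma$ is a reflection, hence $\ell_T(s_\gamma) = 1$, so that $s_\gamma \in \mathsf{NC}_T(W, c)$ amounts to $s_\gamma \le_T c$. By the construction of the exceptional data $\{\E_r, A_r, \mu_r\}$ and $\{\F_r, B_r, \nu_r\}$ in Section \ref{S:OrderPreservingBijections}, the isomorphism $\mathsf{cox}$ restricts, for each $r$, to a bijection between the exceptional subcategories generated by an exceptional sequence of length $r$ and the non-crossing partitions of reflection length $r$; taking $r = 1$, the elements of $\mathsf{NC}_T(W, c)$ of reflection length $1$ are precisely the products $\mathsf{cox}(\llangle F\rrangle) = s_{[F]}$ with $F \in \Coh(\XX)$ an exceptional object. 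Hence $s_\gamma \in \mathsf{NC}_T(W, c)$ if and only if $s_\gamma = s_{[F]}$ for some exceptional object $F$.

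It then remains to pass from an equality of reflections to an equality of roots. Both $\gamma$ and $[F]$ lie in $\Phi \subseteq \Pi$ (Remark \ref{R:CoxData1}(b)), and the set of pseudo-roots of a canonical bilinear lattice is reduced by Proposition \ref{P:HKLemmas}(b); therefore $s_\gamma = s_{[F]}$ forces $\gamma = \pm[F]$, i.e. $[F] \in \{+\gamma, -\gamma\}$, and conversely such an $F$ yields $s_\gamma = s_{[F]} = \mathsf{cox}(\llangle F\rrangle) \in \mathsf{NC}_T(W, c)$. For (b) the only extra remark is that $\tilde s_\gamma = \tilde s_{[F]}$ in $\widetilde W$ is equivalent to $s_\gamma = s_{[F]}$ in $W$, since $\tilde s_v|_V = s_v$ for $v \in V$ (equivalently, $\pi$ induces the bijection $\widetilde T \xrightarrow{\sim} T$ of Corollary \ref{hyperbolicEpimorphism}), after which reducedness of $\Phi$ applies verbatim. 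There is no genuine obstacle here: the statement is simply the rank-one slice of the two categorification theorems; the one point that requires care is that $\mathsf{cox}$ (resp.\ $\widetilde{\mathsf{cox}}$) respects the grading by reflection length, equivalently by the rank of the exceptional subcategory, which is built into the exceptional data used in Section \ref{S:OrderPreservingBijections}.
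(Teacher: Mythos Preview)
Your argument is correct and is precisely the intended derivation: the paper states this corollary immediately after Theorems \ref{T:MainCategorification} and \ref{T:MainCategorificationII} without proof, treating it as the rank-one slice of the poset isomorphisms $\mathsf{cox}$ and $\widetilde{\mathsf{cox}}$, together with the reducedness of pseudo-roots (Proposition \ref{P:HKLemmas}(b)) to pass from $s_\gamma = s_{[F]}$ to $[F] = \pm\gamma$. Your handling of the tubular case via the bijection $\pi\colon \widetilde{T}\xrightarrow{\sim} T$ of Corollary \ref{hyperbolicEpimorphism} is also the right move.
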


\begin{appendix}
\section{The domestic and tubular types}\label{S:Comparison}
In this appendix, we compare the reflection groups of canonical type to reflection groups already known in the literature. More precisely, we discuss that domestic reflection groups of canonical type are precisely the affine Coxeter groups. Moreover, tubular reflection groups of canonical type are elliptic Weyl groups introduced by Saito \cite{SaitoI}.

Classical (resp. elliptic) Dynkin diagrams are used to classify finite and affine Coxeter groups (resp. elliptic Weyl groups). In order to compare the reflection groups of canonical type with these groups, we will introduce the notion of a Dynkin diagram of canonical type. The conventions we are using are inspired from \cite{Bourbaki} and \cite{SaitoI}.

Consider the setting of Section \ref{S:ReflectionGroups}. Let $\sigma$ be a symbol as defined in (\ref{E:Symbol}); see Definition \ref{D:Symbol}. Let $W$ be the reflection group of canonical type obtained from the canonical bilinear lattice of Definition \ref{D:CanonicalLattice} with $$R = \bigl(\alpha_{(t, p_t-1)}, \dots, \alpha_{(t, 1)},\dots,\alpha_{(1, p_1-1)}, \dots, \alpha_{(1,1)}, \alpha_0, \alpha_{0^\ast}\bigr).$$ Recall also that the elements of $R$ are the simple roots in $(V,B)$, where $V$ is a real vector space with basis $R$, equipped with the symmetric bilinear form $B=(-,-)$, which is the symmetrization of the form $K$. Recall that the Coxeter element $c$ is the product of the reflections corresponding to the elements of $R$ in the same order.

\begin{definition}\label{D:DynkinCanonical}
    A \emph{Dynkin diagram of canonical type} is a diagram with set of vertices in bijection with the set of simple roots $R$ and the edges between two simple roots illustrate the symmetric bilinear form following the conventions in Figure \ref{rules}. We obtain the Dynkin diagrams of canonical type given in Figures \ref{epsilon1} and \ref{epsilon2} for the cases $\varepsilon=1$ and $\varepsilon=2$, respectively.
\end{definition}

\begin{figure}[h]
\noindent
\begin{tikzpicture}[
roundnode/.style={circle, draw=black, fill=black!50, inner sep=0pt, minimum width=4pt}
]
\node[roundnode]      (1)                            {};
\node[roundnode]      (1*)        [above=1cm of 1] {};
\node				  (d)         [above=0.3cm of 1] {};
\node				  (dots)      [below=2.5cm of 1] {$\dots$};
\node				  (1name)	  [below=0cm of 1] {\scriptsize $\alpha_0$};
\node				  (1*name)	  [above=0cm of 1*] {\scriptsize $\alpha_{0^*}$};

\node[roundnode]      (11)       [left=2cm of d] {};
\node[roundnode]      (12)       [left=1cm of 11] {};
\node			      (1dots)    [left=0.2cm of 12] {$\dots$};
\node[roundnode]      (1-1)      [left=1.2cm of 12] {};
\node[roundnode]      (1-2)      [left=1cm of 1-1] {};
\node			      (1dtext1)  [left=1.2cm of d] {};
\node			      (1ddtext1) [above=0.1cm of 1dtext1] {};
\node[rotate=345]     (1text1)   [below=0.35cm of 1ddtext1] {\scriptsize $(f_1,e_1)$};
\node			      (1dtext1*) [left=1cm of d] {};
\node[rotate=15]      (1text1*)  [above=0.1cm of 1dtext1*] {\scriptsize $(f_1,e_1)$};
\node				  (11name)	  [above=0cm of 11] {\scriptsize $\alpha_{(1,1)}$};
\node				  (12name)	  [above=0cm of 12] {\scriptsize $\alpha_{(1,2)}$};
\node				  (1-1dname)  [below=0.1cm of 1-1] {};
\node				  (1-1ddname) [right=1cm of 1-1dname] {};
\node				  (1-1name)	  [left=0.25cm of 1-1ddname] {\tiny $\alpha_{(1,p_1-2)}$};
\node				  (1-2name)	  [above=0cm of 1-2] {\tiny $\alpha_{(1,p_1-1)}$};

\node			      (dummy21)  [left=1.2cm of 1] {};
\node[roundnode]      (21)       [below=1.2cm of dummy21] {};
\node			      (dummy22)  [left=0.8cm of 21] {};
\node[roundnode]      (22)       [below=0.8cm of dummy22] {};
\node			      (2dummydots) [left=0.4cm of 22] {};
\node[rotate=45]      (2dots)    [below=0.4cm of 2dummydots] {$\dots$};
\node			      (dummy2-1) [left=1cm of 22] {};
\node[roundnode]      (2-1)      [below=1cm of dummy2-1] {};
\node			      (dummy2-2) [left=0.8cm of 2-1] {};
\node[roundnode]      (2-2)      [below=0.8cm of dummy2-2] {};
\node			      (2dtext1*) [left=1.2cm of d] {};
\node[rotate=60]      (2text1*)  [below=1.0cm of 2dtext1*] {\scriptsize $(f_2,e_2)$};
\node			      (2dtext1)  [left=0.7cm of d] {};
\node[rotate=45]	  (2text1)   [below=1.25cm of 2dtext1] {\scriptsize $(f_2,e_2)$};
\node				  (21dname)	  [right=0.1cm of 21] {};
\node				  (21name)	  [below=0cm of 21dname] {\scriptsize $\alpha_{(2,1)}$};
\node				  (2-2name)	  [below=0cm of 2-2] {\scriptsize $\alpha_{(2,p_2-1)}$};

\node[roundnode]      (-11)      [right=2cm of d] {};
\node[roundnode]      (-12)      [right=1cm of -11] {};
\node			      (-1dots)    [right=0.2cm of -12] {$\dots$};
\node[roundnode]      (-1-1)     [right=1.2cm of -12] {};
\node[roundnode]      (-1-2)     [right=1cm of -1-1] {};
\node			      (-1dtext1)  [right=1.3cm of d] {};
\node			      (-1ddtext1) [above=0.1cm of -1dtext1] {};
\node[rotate=15]      (-1text1)   [below=0.3cm of -1ddtext1] {\scriptsize $(e_{t},f_{t})$};
\node			      (-1dtext1*) [right=1cm of d] {};
\node[rotate=345]     (-1text1*)  [above=0.1cm of -1dtext1*] {\scriptsize $(e_{t},f_{t})$};
\node				  (-11name)	  [above=0cm of -11] {\scriptsize $\alpha_{(t,1)}$};
\node				  (-1-2dname) [left=0cm of -1-2] {};
\node				  (-1-2name)  [above=0cm of -1-2dname] {\tiny $\alpha_{(t,p_t-1)}$};

\node			      (dummy-21)  [right=1.2cm of 1] {};
\node[roundnode]      (-21)       [below=1.2cm of dummy-21] {};
\node			      (dummy-22)  [right=0.8cm of -21] {};
\node[roundnode]      (-22)       [below=0.8cm of dummy-22] {};
\node			      (-2dummydots) [right=0.4cm of -22] {};
\node[rotate=315]     (-2dots)    [below=0.4cm of -2dummydots] {$\dots$};
\node			      (dummy-2-1) [right=1cm of -22] {};
\node[roundnode]      (-2-1)      [below=1cm of dummy-2-1] {};
\node			      (dummy-2-2) [right=0.8cm of -2-1] {};
\node[roundnode]      (-2-2)      [below=0.8cm of dummy-2-2] {};
\node			      (-2dtext1*) [right=1.15cm of d] {};
\node[rotate=300]	  (-2text1*)  [below=0.9cm of -2dtext1*] {\tiny $(e_{t-1},f_{t-1})$};
\node			      (-2dtext1)  [right=0.6cm of d] {};
\node[rotate=315]	  (-2text1)   [below=1.15cm of -2dtext1] {\tiny $(e_{t-1},f_{t-1})$};
\node				  (-21dname)  [above=0cm of -21] {};
\node				  (-21ddname) [below=0.05cm of -21dname] {};
\node				  (-21name)	  [right=0cm of -21ddname] {\scriptsize $\alpha_{(t-1,1)}$};
\node				  (-2-2dname)  [right=0.1cm of -2-2] {};

\node				  (-2-2name)  [below=0cm of -2-2dname] {\scriptsize $\alpha_{(t-1,p_{t-1}-1)}$};

\begin{scope}[decoration={
    markings,
    mark=at position 0.6 with {\arrow{Straight Barb}}}
    ]
\draw (1) -- (11);
\draw (1) -- (21);
\draw (1) -- (-11);
\draw (1) -- (-21);
\draw (1*) -- (11);
\draw (1*) -- (21);
\draw (1*)  -- (-11);
\draw (1*) -- (-21);
\draw (11) -- (12);
\draw (1-1) -- (1-2);
\draw (21) -- (22);
\draw (2-1) -- (2-2);
\draw (-11) -- (-12);
\draw (-1-1) -- (-1-2);
\draw (-21) -- (-22);
\draw (-2-1) -- (-2-2);
\end{scope}
\begin{scope}[decoration={
    markings,
    mark=at position 0.75 with {\arrow{Straight Barb[length=3mm, width=3mm]}}}
    ] 
\draw[densely dotted, double] (1*) -- (1);
\end{scope}
\end{tikzpicture}
\caption{Dynkin diagram for a reflection group of canonical type in the case $\varepsilon=1$.}\label{epsilon1}
\end{figure}
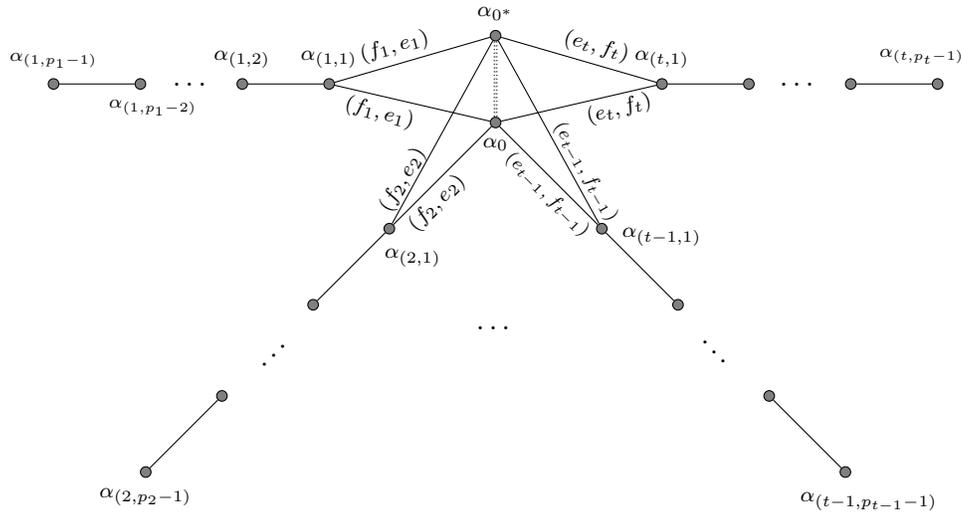
 
\begin{figure}[h]
\noindent
\begin{tikzpicture}[
roundnode/.style={circle, draw=black, fill=black!50, inner sep=0pt, minimum width=4pt}
]
\node[roundnode]      (1v2)                            {};
\node[roundnode]      (1*v2)        [above=1cm of 1v2] {};
\node				  (dv2)         [above=0.3cm of 1v2] {};
\node				  (dotsv2)      [below=2.5cm of 1v2] {$\dots$};

\node[roundnode]      (11v2)       [left=2cm of dv2] {};
\node[roundnode]      (12v2)       [left=1cm of 11v2] {};
\node			      (1dotsv2)    [left=0.2cm of 12v2] {$\dots$};
\node[roundnode]      (1-1v2)      [left=1.2cm of 12v2] {};
\node[roundnode]      (1-2v2)      [left=1cm of 1-1v2] {};
\node			      (1dtext1v2)  [left=1.2cm of dv2] {};
\node			      (1ddtext1v2) [above=0.1cm of 1dtext1v2] {};
\node[rotate=345]     (1text1v2)   [below=0.35cm of 1ddtext1v2] {\scriptsize $(2f_1,e_1)$};
\node			      (1dtext1*v2) [left=1cm of d] {};
\node[rotate=15]      (1text1*v2)  [above=0.1cm of 1dtext1*] {\scriptsize $(f_1,2e_1)$};

\node			      (dummy21v2)  [left=1.2cm of 1v2] {};
\node[roundnode]      (21v2)       [below=1.2cm of dummy21v2] {};
\node			      (dummy22v2)  [left=0.8cm of 21v2] {};
\node[roundnode]      (22v2)       [below=0.8cm of dummy22v2] {};
\node			      (2dummydotsv2) [left=0.4cm of 22v2] {};
\node[rotate=45]      (2dotsv2)    [below=0.4cm of 2dummydotsv2] {$\dots$};
\node			      (dummy2-1v2) [left=1cm of 22v2] {};
\node[roundnode]      (2-1v2)      [below=1cm of dummy2-1v2] {};
\node			      (dummy2-2v2) [left=0.8cm of 2-1v2] {};
\node[roundnode]      (2-2v2)      [below=0.8cm of dummy2-2v2] {};
\node			      (2dtext1*v2) [left=1.2cm of dv2] {};
\node[rotate=60]      (2text1*v2)  [below=1.0cm of 2dtext1*v2] {\scriptsize $(f_2,2e_2)$};
\node			      (2dtext1v2)  [left=0.7cm of dv2] {};
\node[rotate=45]	  (2text1v2)   [below=1.25cm of 2dtext1v2] {\scriptsize $(2f_2,e_2)$};

\node[roundnode]      (-11v2)      [right=2cm of dv2] {};
\node[roundnode]      (-12v2)      [right=1cm of -11v2] {};
\node			      (-1dotsv2)    [right=0.2cm of -12v2] {$\dots$};
\node[roundnode]      (-1-1v2)     [right=1.2cm of -12v2] {};
\node[roundnode]      (-1-2v2)     [right=1cm of -1-1v2] {};
\node			      (-1dtext1v2)  [right=1.3cm of dv2] {};
\node			      (-1ddtext1v2) [above=0.1cm of -1dtext1v2] {};
\node[rotate=15]      (-1text1v2)   [below=0.3cm of -1ddtext1v2] {\scriptsize $(e_{t},2f_{t})$};
\node			      (-1dtext1*v2) [right=1cm of dv2] {};
\node[rotate=345]     (-1text1*v2)  [above=0.1cm of -1dtext1*v2] {\scriptsize $(2e_{t},f_{t})$};

\node			      (dummy-21v2)  [right=1.2cm of 1v2] {};
\node[roundnode]      (-21v2)       [below=1.2cm of dummy-21v2] {};
\node			      (dummy-22v2)  [right=0.8cm of -21v2] {};
\node[roundnode]      (-22v2)       [below=0.8cm of dummy-22v2] {};
\node			      (-2dummydotsv2) [right=0.4cm of -22v2] {};
\node[rotate=315]     (-2dotsv2)    [below=0.4cm of -2dummydotsv2] {$\dots$};
\node			      (dummy-2-1v2) [right=1cm of -22v2] {};
\node[roundnode]      (-2-1v2)      [below=1cm of dummy-2-1v2] {};
\node			      (dummy-2-2v2) [right=0.8cm of -2-1v2] {};
\node[roundnode]      (-2-2v2)      [below=0.8cm of dummy-2-2v2] {};
\node			      (-2dtext1*v2) [right=1.15cm of dv2] {};
\node[rotate=300]	  (-2text1*v2)  [below=0.9cm of -2dtext1*v2] {\tiny $(2e_{t-1},f_{t-1})$};
\node			      (-2dtext1v2)  [right=0.6cm of dv2] {};
\node[rotate=315]	  (-2text1v2)   [below=1.15cm of -2dtext1v2] {\tiny $(e_{t-1},2f_{t-1})$};

\begin{scope}[decoration={
    markings,
    mark=at position 0.6 with {\arrow{Straight Barb}}}
    ]
\draw (1v2) -- (11v2);
\draw (1v2) -- (21v2);
\draw (1v2) -- (-11v2);
\draw (1v2) -- (-21v2);
\draw (1*v2) -- (11v2);
\draw (1*v2) -- (21v2);
\draw (1*v2) -- (-11v2);
\draw (1*v2) -- (-21v2);
\draw (11v2) -- (12v2);
\draw (1-1v2) -- (1-2v2);
\draw (21v2) -- (22v2);
\draw (2-1v2) -- (2-2v2);
\draw (-11v2) -- (-12v2);
\draw (-1-1v2) -- (-1-2v2);
\draw (-21v2) -- (-22v2);
\draw (-2-1v2) -- (-2-2v2);
\end{scope}
\begin{scope}[decoration={
    markings,
    mark=at position 0.75 with {\arrow{Straight Barb[length=3mm, width=3mm]}}}
    ] 
\draw[densely dotted, double, postaction={decorate}] (1*v2) -- (1v2);
\end{scope}
\end{tikzpicture}
\caption{Dynkin diagram for a  reflection group of canonical type in the case $\varepsilon=2$.}\label{epsilon2}
\end{figure}

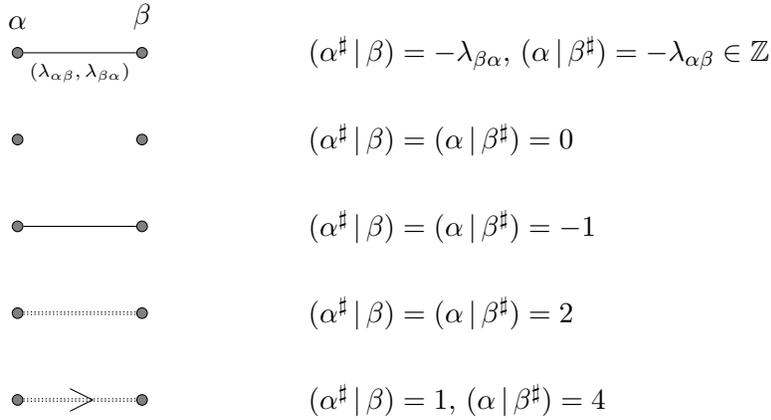
\begin{figure}[H]
\begin{tikzpicture}[
roundnode/.style={circle, draw=black, fill=black!50, inner sep=0pt, minimum width=4pt}
]
\node[roundnode]      (e0a)                              {};
\node[roundnode]        (e0b)       [right=1.5cm of e0a] {};
\node        (e0text)       [right=2cm of e0b] {$(\alpha^\sharp\,\vert\,\beta)=(\alpha\,\vert\,\beta^\sharp)=0$};

\node[roundnode]      (e1a)       [below=1cm of e0a]     {};
\node[roundnode]        (e1b)       [right=1.5cm of e1a] {};
\node       (e1text)       [right=2cm of e1b] {$(\alpha^\sharp \,\vert\,\beta)=(\alpha\,\vert\,\beta^\sharp)=-1$};

\node[roundnode]      (e*a)         [below=1cm of e1a]   {};
\node[roundnode]        (e*b)       [right=1.5cm of e*a] {};
\node        (e*text)       [right=2cm of e*b] {$(\alpha^\sharp \,\vert\,\beta)=(\alpha\,\vert\,\beta^\sharp) =2$};

\node[roundnode]      (e*2a)         [below=1cm of e*a]   {};
\node[roundnode]        (e*2b)       [right=1.5cm of e*2a] {};
\node        (e*2text)       [right=2cm of e*2b] {$(\alpha^\sharp\,\vert\,\beta)=1,\,(\alpha\,\vert\,\beta^\sharp)=4$};

\node[roundnode]      (exa)         [above=1cm of e0a]     {};
\node[roundnode]        (exb)       [right=1.5cm of exa] {};
\node       (extext)       [right=2cm of exb] {$(\alpha^\sharp\,\vert\,\beta)=-\lambda_{\beta\alpha},\,(\alpha\,\vert\, \beta^\sharp)=-\lambda_{\alpha\beta} \in \mathbb{Z}$};

\node  			   (ea)       [above=0.1cm of exa] {$\alpha$};
\node  			   (eb)       [above=0.1cm of exb] {$\beta$};

\begin{scope}[decoration={
    markings,
    mark=at position 0.6 with {\arrow{Straight Barb}}}
    ] 
\draw (e1a) -- (e1b);
\draw (exa) to node[below] {\tiny$(\lambda_{\alpha\beta},\lambda_{\beta\alpha})$} (exb);
\draw[densely dotted, double] (e*a.east) -- (e*b.west);
\end{scope}
\begin{scope}[decoration={
    markings,
    mark=at position 0.65 with {\arrow{Straight Barb[length=3mm, width=3mm]}}}
    ] 
\draw[densely dotted, double, postaction={decorate}] (e*2a.east) -- (e*2b.west);
\end{scope}
\end{tikzpicture}
\caption{The conventions giving the symmetric bilinear form between two simple roots 
$\alpha,\beta$ in a canonical lattice.}\label{rules}
\end{figure}

The integers $t$, $e_i$ and $f_i$, $1 \leq i \leq t$ that appear in the Dynkin diagrams of canonical type correspond to the combinatorial data of the symbol $\sigma$. The middle vertices of these diagrams are $\alpha_0$ and $\alpha
_{0^*}$. The edge relating them is the only double-dotted edge in the diagram. Moreover, we have $t$ arms of the form $\bigl(\alpha_0,\alpha_{(i,1)}, \dots, \alpha_{(i,p_i-1)}\bigr)$ and $t$ arms of the form $\bigl(\alpha_{0^*},\alpha_{(i,1)}, \dots, \alpha_{(i,p_i-1)}\bigr)$  for $1\leq i \leq t$. Each of these arms has $p_i$ vertices, where $1 \leq i \leq t$. Each vertex $\alpha_0$ and $\alpha_{0^*}$ is related to $\alpha_{(1,1)}$, $\alpha_{(2,1)}, \dots,$ and $\alpha_{(t,1)}$ by a decorated edge. All other edges of the diagram are not decorated.

\begin{definition}\label{D:DynkinDiagramQuotient}
    Consider the generalized Coxeter datum $(W_\circ,S_\circ)$ introduced in Section \ref{SubsectionQuotientCoxGrp}. We define the \emph{Dynkin diagram} associated with $(W_\circ,S_\circ)$ as the diagram with vertices in bijection with the simple roots $R_\circ$ and the edges between two simple roots illustrate the symmetric bilinear from following the conventions in Figure \ref{rules}. It has the Dynkin diagram of canonical type given in Figure \ref{rootDiagramQuotient}. We obtain the Dynkin diagram given in Figure \ref{rootDiagramQuotient}. Observe that it has the shape of a star. It is obtained by removing $\alpha_{0^*}$ from the diagrams of Figures \ref{epsilon1} and \ref{epsilon2}. It has $t$ arms of the form $\bigl(\alpha_0,\alpha_{(i,1)}, \dots, \alpha_{(i,p_i-1)}\bigr)$ for $1\leq i \leq t$. The only decorated edges are the one having $\alpha_0$ as a vertex.
\end{definition}

\begin{figure}[H]
\noindent
\begin{tikzpicture}[
roundnode/.style={circle, draw=black, fill=black!50, inner sep=0pt, minimum width=4pt}
]
\node[roundnode]      (1q)                            {};
\node				  (dotsq)      [below=2.5cm of 1q] {$\dots$};
\node				  (1nameq)	  [above=0cm of 1q] {\scriptsize $\alpha_0$};

\node[roundnode]      (11q)       [left=2cm of 1q] {};
\node[roundnode]      (12q)       [left=1cm of 11q] {};
\node			      (1dotsq)    [left=0.2cm of 12q] {$\dots$};
\node[roundnode]      (1-1q)      [left=1.2cm of 12q] {};
\node[roundnode]      (1-2q)      [left=1cm of 1-1q] {};
\node			      (1dtext1q)  [left=1.0cm of 1q] {};
\node			      (1ddtext1q) [above=0.1cm of 1dtext1q] {};
\node				  (11nameq)	  [above=0cm of 11q] {\scriptsize $\alpha_{(1,1)}$};
\node				  (1-2nameq)  [above=0cm of 1-2q] {\tiny $\alpha_{(1,p_1-1)}$};

\node			      (dummy21q)  [left=1.2cm of 1q] {};
\node[roundnode]      (21q)       [below=1.2cm of dummy21q] {};
\node			      (dummy22q)  [left=0.8cm of 21q] {};
\node[roundnode]      (22q)       [below=0.8cm of dummy22q] {};
\node			      (2dummydotsq) [left=0.4cm of 22q] {};
\node[rotate=45]      (2dotsq)    [below=0.4cm of 2dummydotsq] {$\dots$};
\node			      (dummy2-1q) [left=1cm of 22q] {};
\node[roundnode]      (2-1q)      [below=1cm of dummy2-1q] {};
\node			      (dummy2-2q) [left=0.8cm of 2-1q] {};
\node[roundnode]      (2-2q)      [below=0.8cm of dummy2-2q] {};
\node			      (2dtext1q)  [left=0.5cm of 1q] {};
\node[rotate=45]	  (2text1q)   [below=0.65cm of 2dtext1q] {\scriptsize $(\varepsilon f_2,e_2)$};
\node				  (21dnameq)  [right=0.1cm of 21q] {};
\node				  (21nameq)	  [below=0cm of 21dnameq] {\scriptsize $\alpha_{(2,1)}$};
\node				  (2-2nameq)  [below=0cm of 2-2q] {\scriptsize $\alpha_{(2,p_2-1)}$};

\node[roundnode]      (-11q)      [right=2cm of 1q] {};
\node[roundnode]      (-12q)      [right=1cm of -11q] {};
\node			      (-1dotsq)    [right=0.2cm of -12q] {$\dots$};
\node[roundnode]      (-1-1q)     [right=1.2cm of -12q] {};
\node[roundnode]      (-1-2q)     [right=1cm of -1-1q] {};
\node				  (-11nameq)  [above=0cm of -11q] {\scriptsize $\alpha_{(t,1)}$};
\node				  (-1-2dnameq) [left=0cm of -1-2q] {};
\node				  (-1-2nameq)  [above=0cm of -1-2dnameq] {\tiny $\alpha_{(t,p_t-1)}$};

\node			      (dummy-21q)  [right=1.2cm of 1q] {};
\node[roundnode]      (-21q)       [below=1.2cm of dummy-21q] {};
\node			      (dummy-22q)  [right=0.8cm of -21q] {};
\node[roundnode]      (-22q)       [below=0.8cm of dummy-22q] {};
\node			      (-2dummydotsq) [right=0.4cm of -22q] {};
\node[rotate=315]     (-2dotsq)    [below=0.4cm of -2dummydotsq] {$\dots$};
\node			      (dummy-2-1q) [right=1cm of -22q] {};
\node[roundnode]      (-2-1q)      [below=1cm of dummy-2-1q] {};
\node			      (dummy-2-2q) [right=0.8cm of -2-1q] {};
\node[roundnode]      (-2-2q)      [below=0.8cm of dummy-2-2q] {};
\node			      (-2dtext1q)  [right=0.5cm of 1q] {};
\node[rotate=315]	  (-2text1q)   [below=0.65cm of -2dtext1q] {\tiny $(e_{t-1},\varepsilon f_{t-1})$};
\node				  (-21dnameq)  [above=0cm of -21q] {};
\node				  (-21ddnameq) [below=0.05cm of -21dnameq] {};
\node				  (-21nameq)   [right=0cm of -21ddnameq] {\scriptsize $\alpha_{(t-1,1)}$};
\node				  (-2-2dnameq) [right=0.1cm of -2-2q] {};

\node				  (-2-2nameq)  [below=0cm of -2-2dnameq] {\scriptsize $\alpha_{(t-1,p_{t-1}-1)}$};

\begin{scope}[decoration={
    markings,
    mark=at position 0.6 with {\arrow{Straight Barb}}}
    ]
\draw (1q) to node[below] {\scriptsize $(\varepsilon f_1,e_1)$} (11q);
\draw (1q) -- (21q);
\draw (1q) to node[below] {\scriptsize $(e_t,\varepsilon f_t)$} (-11q);
\draw (1q) -- (-21q);
\draw (11q) -- (12q);
\draw (1-1q) -- (1-2q);
\draw (21q) -- (22q);
\draw (2-1q) -- (2-2q);
\draw (-11q) -- (-12q);
\draw (-1-1q) -- (-1-2q);
\draw (-21q) -- (-22q);
\draw (-2-1q) -- (-2-2q);
\end{scope}
\end{tikzpicture}
    \caption{The star-like Dynkin diagram associated with $W_\circ$.}
    \label{rootDiagramQuotient}
\end{figure}
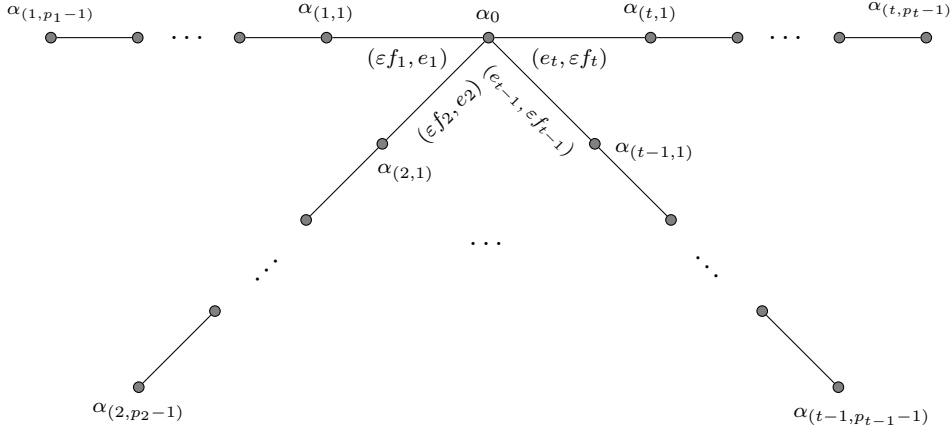

Let us first discuss the domestic case. Assume that the exceptional hereditary curve $\XX$ is domestic, i.e. $\delta(\XX) < 0$. It is well-known that in this case there exists a tame hereditary algebra $\Theta$ such that
$$D^b\bigl(\Coh(\XX)\bigr) \simeq D^b\bigl(\Theta\mbox{--}\mathsf{mod}\bigr).$$
Conversely, for any tame hereditary algebra $\Theta$, there exists a derived equivalent domestic exceptional hereditary curve $\XX$; see for instance \cite{KussinMemoirs}. It follows that reflection groups of domestic canonical type are precisely affine Weyl groups. A comparison of the affine root systems with the corresponding symbols can be found in \cite{LenzingKTheory}. In Table \ref{tab:domestic}, we provide a comparison between reduced symbols and affine Coxeter groups. We use the notation of \cite{Bourbaki} for affine Coxeter groups.

\begin{table}[h]
    \centering
\[\arraycolsep=1.5pt
\begin{array}{c|c|c|c|c|c|c|c|c|c}
\widetilde{A}_{p} & \widetilde{A}_{p_1+p_2-1} & \widetilde{B}_{p+1} & \widetilde{C}_{p} & \widetilde{D}_{p+2} & \widetilde{E}_{6} & \widetilde{E}_{7} & \widetilde{E}_{8} & \widetilde{F}_{4} & \widetilde{G}_{2}\\ \hline
\footnotesize{\left(\begin{array}{@{}ccc@{}}
     p
\end{array}\right)} &
\footnotesize{\left(\begin{array}{@{}cc@{}}
     p_1&p_2
\end{array}\right)} &
\footnotesize{\left(\begin{array}{@{}cc@{}}
     2&p\\2&1
\end{array}\right)} &
\footnotesize{\left(\begin{array}{@{}ccc@{}}
    p\\2
\end{array}\right)} &
\footnotesize{\left(\begin{array}{@{}cccc@{}}
     2 & 2 & p
\end{array}\right)} &
\footnotesize{\left(\begin{array}{@{}ccc@{}}
     2 & 3 & 3
\end{array}\right)}&
\footnotesize{\left(\begin{array}{@{}ccc@{}}
     2 & 3 & 4
\end{array}\right)}&
\footnotesize{\left(\begin{array}{@{}ccc@{}}
     2 & 3 & 5
\end{array}\right)} &
\footnotesize{\left(\begin{array}{@{}cc@{}}
     2&3\\1&2
\end{array}\right)} &
\footnotesize{\left(\begin{array}{@{}cc@{}}
     2\\3
\end{array}\right)}
	\end{array}
    \]
    
\caption{A dictionary between affine Coxeter groups and reduced symbols.}
    \label{tab:domestic}
\end{table}

\begin{remark}
As we observe from Table \ref{tab:domestic}, there are different ways to realize the affine Coxeter group $\widetilde{A}$ as a reflection group of canonical type. For example, let $p\geq2$ be any fixed integer. For any partition $p=p_1+p_2$ with $2\leq p_1\leq p_2$, we get a different generalized Coxeter datum $(W,S)$ for the same affine Coxeter group $W \cong \widetilde{A}_{p-1}$ (see the 2nd column of Table \ref{tab:domestic}). 

\end{remark}

\begin{example}
    We provide an example showing how to obtain the affine Coxeter system $\widetilde{A}_p$ in the classical sense from a reflection group of canonical type $W$, whose symbol is $(p)$ for any integer $p \geq 2$ (see the 1st column of Table \ref{tab:domestic}). Let $R$ be the corresponding complete exceptional sequence as in (\ref{E:SimpleRoots}):
    \[R=(\alpha_{(1,p-1)},\dots,\alpha_{(1,1)},\alpha_0,\alpha_{0^*}).\]
    Let $(W,S)$ be the associated generalized Coxeter datum.
    Apply the braid $\sigma_1^{-1}\cdots\sigma_p^{-1}$ to $R$ to obtain a new complete exceptional sequence
    \[R'=\left(\alpha_{0^*}-2\alpha_0+\sum_{i=1}^{p-1}\alpha_{(1,i)},\,\,\alpha_{(1,p-1)},\dots,\alpha_{(1,1)},\alpha_0\right)=:(e_1,\dots,e_{p+1}).\]
    Then we have $(e_i,e_i)=2$, $(e_i,e_j)=-1$ if and only if $i=j\pm1$ in $\ZZ_{p+1}$ and $(e_i,e_j)=0$ else. We get that the obtained generalized Coxeter datum $(W,S')$ is an affine Coxeter system in the classical sense. The corresponding Coxeter element is
    \[c=s_{e_1}\cdots s_{e_{p+1}}.\]
    This is precisely the Coxeter element that gives rise to a non-crossing partition \emph{lattice} and thus a \emph{Garside structure}; see \cite{Digne, McCammondSulway, PaoliniSalvetti}. The generalized Coxeter datum $(W,S)$ has also been used in the literature; see for example \cite{Shi} and \cite{NeaimeGarside}. 
\end{example}

Now, we discuss the tubular case. Let $\XX$ be tubular, i.e. $\delta(\XX) =  0$. Then the root system associated with $\XX$ is an elliptic root system of Saito of \emph{codimension one} and the associated reflection group is an elliptic Weyl group. Conversely, for any such elliptic root system, there exists a field $\kk$ such that the elliptic root system arises from an appropriate exceptional curve of tubular type over $\kk$; see \cite{LenzingExceptionalCurve, KussinMemoirs}.

In \cite{SaitoI}, elliptic root systems - also called marked extended affine root systems (abbreviated mEARS) - are classified via elliptic root diagrams. We provide a dictionary between those mEARS and tubular symbols. This is done in Tables \ref{tab:eps1} and \ref{tab:eps2} and can easily be checked by writing down the Dynkin diagram of canonical type explicitly. The first row of these tables indicates the notation used by Saito \cite{SaitoI} of the elliptic root diagram corresponding to a mEARS, and the second row indicates the corresponding symbol.

\begin{table}[h]
    \centering
\[\arraycolsep=1.5pt
\begin{array}{c|c|c|c|c|c|c|c|c|c}
BC_1^{(2,1)} & A_1^{(1,1)*} & BC_1^{(2,4)} & B_2^{(2,1)} & BC_2^{(2,2)}(1) & C_2^{(1,2)} & G_2^{(3,1)} & G_2^{(1,3)} & G_2^{(1,1)} & G_2^{(3,3)} \\ \hline
\footnotesize{\left(\begin{array}{@{}c@{}}
     2\\
     4 
\end{array}\right)} & 
\footnotesize{\left(\begin{array}{@{}c@{}}
     2\\
     4\\
     2
\end{array}\right)} & 
\footnotesize{\left(\begin{array}{@{}c@{}}
     2\\
     4\\
     4
\end{array}\right)} &
\footnotesize{\left(\begin{array}{@{}cc@{}}
     2 & 2\\
     2 & 2
\end{array}\right)} &
\footnotesize{\left(\begin{array}{@{}cc@{}}
     2 & 2\\
     2 & 2\\
     2 & 1
\end{array}\right)} &
\footnotesize{\left(\begin{array}{@{}cc@{}}
     2 & 2\\
     2 & 2\\
     2 & 2
\end{array}\right)} &
\footnotesize{\left(\begin{array}{@{}c@{}}
     3\\
     3
\end{array}\right)} &
\footnotesize{\left(\begin{array}{@{}c@{}}
     3\\
     3\\
     3
\end{array}\right)} &
\footnotesize{\left(\begin{array}{@{}cc@{}}
     2 & 2\\
     1 & 3
\end{array}\right)} & 
\footnotesize{\left(\begin{array}{@{}cc@{}}
     2 & 2\\
     1 & 3\\
     1 & 3
\end{array}\right)}
	\end{array}
    \]
\[\arraycolsep=1.5pt
\begin{array}{c|c|c|c|c|c|c|c|c|c}
B_3^{(1,1)} & C_3^{(2,2)} & F_4^{(2,1)} & F_4^{(1,2)} & F_4^{(1,1)} & F_4^{(2,2)} & D_4^{(1,1)} & E_6^{(1,1)} & E_7^{(1,1)} & E_8^{(1,1)} \\ \hline
\footnotesize{\left(\begin{array}{@{}ccc@{}}
     2 & 2 & 2\\
     1 & 1 & 2
\end{array}\right)} &
\footnotesize{\left(\begin{array}{@{}ccc@{}}
     2 & 2 & 2\\
     1 & 1 & 2 \\
     1 & 1 & 2
\end{array}\right)} &
\footnotesize{\left(\begin{array}{@{}cc@{}}
     4 & 2\\
     2 & 1
\end{array}\right)} &
\footnotesize{\left(\begin{array}{@{}cc@{}}
     4 & 2\\
     2 & 1\\
     2 & 1
\end{array}\right)} &
\footnotesize{\left(\begin{array}{@{}cc@{}}
     3 & 3\\
     1 & 2
\end{array}\right)} &
\footnotesize{\left(\begin{array}{@{}cc@{}}
     3 & 3\\
     1 & 2\\
     1 & 2
\end{array}\right)} &
\footnotesize{\left(\begin{array}{@{}cccc@{}}
     2 & 2 & 2 & 2
\end{array}\right)} &
\footnotesize{\left(\begin{array}{@{}ccc@{}}
     3 & 3 & 3
\end{array}\right)} &
\footnotesize{\left(\begin{array}{@{}ccc@{}}
     4 & 4 & 2
\end{array}\right)} &
\footnotesize{\left(\begin{array}{@{}ccc@{}}
     6 & 3 & 2
\end{array}\right)}
	\end{array}
    \]
\caption{Case $\varepsilon=1$: A dictionary between mEARS and symbols.}
    \label{tab:eps1}
\end{table}

\begin{table}[h]
    \centering
\[\arraycolsep=3.5pt
\begin{array}{c|c|c}
BC_1^{(2,1)} & BC_1^{(2,4)} & BC_2^{(2,2)}(1) \\ \hline
\footnotesize{\left(\begin{array}{@{}c|c@{}}
     2 & \\
     2 & 2\\
     2 & 
\end{array}\right)} &
\footnotesize{\left(\begin{array}{@{}c|c@{}}
     \begin{array}{@{}c@{}}
     2\\2
\end{array} & 2
\end{array}\right)} &
\footnotesize{\left(\begin{array}{@{}cc|c@{}}
     2 & 2 & 2
\end{array}\right)}
	\end{array}
    \]
    
\caption{Case $\varepsilon=2$: A dictionary between mEARS and symbols.}
    \label{tab:eps2}
\end{table}

\begin{remark}
    Many of the mEARS of Tables \ref{tab:eps1} and \ref{tab:eps2} yield the same reflection group. Actually, there are only $11$ cases of such groups. For instance, the cases $BC_1^{(2,1)}$, $A_1^{(1,1)*}$, and $BC_1^{(2,4)}$ give the same reflection group of canonical type; see Proposition \ref{P:ReductionCaseEpsilon1}.
\end{remark}

\section{Generalities on hyperbolic extensions}\label{S:Appendix}
Following \cite{SaitoI} and \cite{BaumeisterWegener}, we recall the notions of hyperbolic extensions, first for a pair $(V,B)$ of a finite-dimensional real vector space $V$ and a symmetric bilinear form $B$ on $V$, then for reflection groups. We will subsequently prove some easy but notable results on isomorphisms of these new groups. Note that the hyperbolic extensions are called hyperbolic covers in \cite{BaumeisterWegener}.

\begin{definition}
Let $(V,B)$ be a pair consisting of a finite dimensional real vector space $V$ and a symmetric bilinear form $B$ on $V$. Let $G$ be a subspace of $\Rad(B)$. A \emph{hyperbolic extension} of $(V,B)$ with respect to $G$ is a triple $(\widetilde{V},\widetilde{B},\iota)$ consisting of a real vector space $\widetilde{V}$ of dimension 
$\dim_{\RR}(\widetilde{V})=\dim_{\RR}(V)+\dim_{\RR}\left(\Rad(B)/G\right)$
a symmetric bilinear form $\widetilde{B}$ on $\widetilde{V}$ and an inclusion $\iota:V\to\widetilde{V}$ such that $\widetilde{B}\circ (\iota \times \iota)=B$ and $\Rad(\widetilde{B})=\iota(G)$.
\end{definition}
From this definition of hyperbolic extension, neither existence nor uniqueness is clear. However, the next two results give the desired properties via an explicit construction.
\begin{lemma}
Let $(V,B)$ be a real vector space with symmetric bilinear form and let $G$ be a subspace of $\Rad(B)$. Then there exists a hyperbolic extension $(\widetilde{V},\widetilde{B},\iota)$ of $(V,B)$ with respect to $G$.
\end{lemma}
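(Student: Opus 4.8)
The plan is to give an explicit construction and verify the three defining properties. Choose a complement $C$ of $G$ inside $\Rad(B)$, so that $\Rad(B) = G \oplus C$, and set $m := \dim_\RR(C) = \dim_\RR\bigl(\Rad(B)/G\bigr)$. Pick a basis $c_1, \dots, c_m$ of $C$. The idea is to ``pair off'' these radical directions with $m$ new hyperbolic basis vectors. Concretely, I would let $\widetilde V := V \oplus \RR^m$, write $a'_1, \dots, a'_m$ for the standard basis of the $\RR^m$ summand, and let $\iota: V \to \widetilde V$ be the obvious inclusion. Define $\widetilde B$ to be the unique symmetric bilinear form on $\widetilde V$ determined by $\widetilde B\bigl(\iota(v), \iota(w)\bigr) = B(v,w)$ for $v,w \in V$, together with $\widetilde B\bigl(\iota(c_i), a'_j\bigr) = \delta_{ij}$, $\widetilde B\bigl(\iota(v), a'_j\bigr) = 0$ whenever $v$ lies in a fixed complement of $\Rad(B)$ inside $V$ (extended bilinearly, using the decomposition $V = (\text{complement}) \oplus G \oplus C$ and the rule $\widetilde B(\iota(g), a'_j) = 0$ for $g \in G$), and $\widetilde B(a'_i, a'_j) = 0$. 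By construction $\dim_\RR(\widetilde V) = \dim_\RR(V) + m = \dim_\RR(V) + \dim_\RR\bigl(\Rad(B)/G\bigr)$ and $\widetilde B \circ (\iota \times \iota) = B$, so two of the three conditions are immediate.

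The one substantive point is to check $\Rad(\widetilde B) = \iota(G)$. First, $\iota(G) \subseteq \Rad(\widetilde B)$: for $g \in G \subseteq \Rad(B)$ we have $\widetilde B(\iota(g), \iota(v)) = B(g,v) = 0$ for all $v \in V$, and $\widetilde B(\iota(g), a'_j) = 0$ by the choice of pairing (since $g \in G$, not $C$). For the reverse inclusion, take $\tilde x = \iota(v) + \sum_j \mu_j a'_j \in \Rad(\widetilde B)$ with $v \in V$ and $\mu_j \in \RR$. Pairing $\tilde x$ against each $a'_k$ kills the $V$-part contributions that involve the complement, and against $\iota(c_k)$ it detects $\mu_k$: working out $\widetilde B(\tilde x, a'_k) = \widetilde B(\iota(v), a'_k)$ and $\widetilde B(\tilde x, \iota(c_k))$ forces first the coefficients of $v$ along $C$ to vanish and then $\mu_k = 0$ for all $k$; pairing against arbitrary $\iota(w)$ with $w$ in the complement then forces the complement-component of $v$ to be $0$ as well. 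Hence $\tilde x = \iota(v)$ with $v \in \Rad(B)$ and $v \perp C$, which pins $v$ down to lie in $G$.

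The only mild obstacle is bookkeeping: one must fix once and for all an internal direct sum decomposition $V = V_0 \oplus G \oplus C$ with $V_0$ a complement of $\Rad(B)$, and phrase the pairing of the $a'_j$ with $\iota(V)$ relative to that decomposition, so that ``$\iota(c_i)$ pairs with $a'_j$ as $\delta_{ij}$ and everything else in $\iota(V)$ pairs trivially'' is a well-defined recipe; after that the verification of $\Rad(\widetilde B) = \iota(G)$ is the elementary linear-algebra computation sketched above. I would present the construction and then carry out that computation in a few lines, noting that it does not depend on the choices beyond the isomorphism type, which is exactly what the following uniqueness lemma addresses.
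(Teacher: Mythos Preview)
Your construction is correct and matches the paper's proof almost exactly: both fix a decomposition $V = V_0 \oplus G \oplus C$ with $C$ a complement of $G$ in $\Rad(B)$, adjoin $m = \dim C$ new vectors paired hyperbolically with a basis of $C$, and set all other new pairings to zero. The only cosmetic difference is in verifying $\Rad(\widetilde B) = \iota(G)$: the paper writes down the block Gram matrix and reads off its rank to conclude by a dimension count, whereas you chase elements directly; both arguments are fine.
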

\begin{proof}
Let $n=\dim_{\RR}(V)$ and $m=\dim_{\RR}\left(\Rad(B)/G\right)$. Let $(v_1,\dots,v_n)$ be a basis of $V$ such that $(v_{n-m+1},\dots,v_n)$ is a basis of $H\subseteq\Rad(B)$ with $G\oplus H=\Rad(B)$ and $(v_1,\dots,v_{n-m})$ is a basis of $H'\subseteq V$ such that $H'\oplus H=V$ and $G\subseteq H'$. Then we define $\widetilde{V}$ to be the real vector space generated by $(v_1,\dots,v_n,v_{n-m+1}',\dots,v_n')$. Further, we define a symmetric bilinear form $\widetilde{B}$ on $\widetilde{V}$ via
\[\widetilde{B}(v_i,v_j)=B(v_i,v_j),\quad\widetilde{B}(v_i',v_j')=0,\quad\widetilde{B}(v_i,v_j')=\begin{cases}1&\text{if }i=j,\\0&\text{if }i\neq j.\end{cases}\]
Finally, we define $\iota:V\to\widetilde{V}$ to be the obvious inclusion given by $\iota(v_i)=v_i$. Then clearly $B=\widetilde{B}\circ\iota$. More precisely, the Gram matrix of $\widetilde{B}$ with respect to the basis $(v_1,\dots,v_n,v_{n-m+1}',\dots,v_n')$ is given by
\[[\widetilde{B}]=\begin{pmatrix}[B|_{H'\times H'}]&0_{n-m,m}&0_{n-m,m}\\0_{m,n-m}&0_{m,m}&I_m\\0_{m,n-m}&I_m&0_{m,m}\end{pmatrix}.\]
Thus its rank is given by $\rk[\widetilde{B}]=\rk[B\big|_{H'\times H'}]+2m=\dim_{\RR}(\widetilde{V})-\dim_{\RR}(G)$. As $\iota(G)\subseteq\Rad(\widetilde{B})$ is clear, the equality follows by dimension reasons.
\end{proof}

\begin{lemma}\label{hypRealisation}
Let $(\widetilde{V},\widetilde{B},\iota)$ be a hyperbolic extension of $(V,B)$ with respect to $G\subseteq\Rad(B)$ with $n=\dim_{\RR}(V)$ and $m=\dim_{\RR}\left(\Rad(B)/G\right)$. For any basis $v_1,\dots,v_n$ of $V$ such that $G\oplus\langle v_{n-m+1},\dots,v_n\rangle=\Rad(B)$ and $G\subseteq H'$ there exist $v_{n-m+1}',\dots,v_n'\in\widetilde{V}$ such that $\iota(v_1),\dots,\iota(v_n),v_{n-m+1}',\dots,v_n'$ is a basis of $\widetilde{V}$ and
\begin{equation}\label{E:hypExtMatrix}[\widetilde{B}]=\begin{pmatrix}[B\big|_{H'\times H'}]&0_{n-m,m}&0_{n-m,m}\\0_{m,n-m}&0_{m,m}&I_m\\0_{m,n-m}&I_m&0_{m,m}\end{pmatrix}\end{equation}
with respect to this basis, where $H'=\langle v_1,\dots,v_{n-m}\rangle$.
\end{lemma}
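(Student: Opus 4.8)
The plan is to treat \eqref{E:hypExtMatrix} as a normal‑form statement: identifying $V$ with $\iota(V)\subseteq\widetilde V$, so that $\widetilde B|_{V\times V}=B$ and $\Rad(\widetilde B)=G$, one must complete the given basis $v_1,\dots,v_n$ of $V$ to a basis of $\widetilde V$ by adjoining $m$ vectors $v'_{n-m+1},\dots,v'_n$ which (i) pair with $H:=\langle v_{n-m+1},\dots,v_n\rangle$ by the identity matrix, (ii) are $\widetilde B$‑orthogonal to $H':=\langle v_1,\dots,v_{n-m}\rangle$, and (iii) are mutually isotropic. I would build these vectors in three successive corrections, each preserving the properties already achieved, because the correction vectors will always lie in $V$ or in $\Rad(B)$.

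First I would produce a rough lift. Consider $\psi\colon\widetilde V\to\RR^m$, $x\mapsto(\widetilde B(v_{n-m+i},x))_{1\le i\le m}$. Since every $v_{n-m+i}$ lies in $\Rad(B)$ we have $V\subseteq\ker\psi$, and the standard identity $\dim U^{\perp_{\widetilde B}}=\dim\widetilde V-\dim U+\dim(U\cap\Rad\widetilde B)$ applied to $U=H$, together with $H\cap G=0$ (as $G\oplus H=\Rad B$ is a direct sum), gives $\dim\ker\psi=\dim H^{\perp_{\widetilde B}}=n$; hence $\ker\psi=V$ and $\psi$ is surjective. Choosing $u_j$ with $\psi(u_j)=e_j$ then yields a basis $(v_1,\dots,v_n,u_1,\dots,u_m)$. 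Next I would correct the pairings with $H'$: replacing $u_j$ by $u_j+z_j$ with $z_j\in V$ leaves $\psi(u_j)$ unchanged (as $V\subseteq\ker\psi$), and the goal is to solve $B(v_k,z_j)=-\widetilde B(v_k,u_j)$ for $1\le k\le n-m$. Finally, writing $X=(\widetilde B(v''_i,v''_j))$ for the resulting symmetric block, with $v''_j:=u_j+z_j$, the substitution $v'_j:=v''_j-\tfrac12\sum_i X_{ij}v_{n-m+i}$ does not disturb the first two blocks (the $v_{n-m+i}$ are isotropic and lie in $\Rad B$) and, by symmetry of $X$, makes $\widetilde B(v'_i,v'_j)=X_{ij}-\tfrac12X_{ij}-\tfrac12X_{ji}=0$. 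Reading off the pairings of $(v_1,\dots,v_n,v'_{n-m+1},\dots,v'_n)$ — using that $B$ is block‑diagonal $B|_{H'\times H'}\oplus 0$ relative to $V=H'\oplus H$ — then gives \eqref{E:hypExtMatrix}, each correction being visibly an invertible change of basis.

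The one genuinely delicate point — and the step I expect to be the main obstacle — is the solvability of the system $B(v_k,z_j)=-\widetilde B(v_k,u_j)$, $1\le k\le n-m$, in the second correction. Because $G\subseteq H'$, the subspace $H'$ is \emph{not} transverse to $\Rad(B)$, so the coefficient map $z\mapsto(B(v_k,z))_{k\le n-m}$ fails to be surjective onto $\RR^{n-m}$: its image has orthogonal complement $\{(a_k):\sum_k a_kv_k\in H'\cap\Rad B\}=\{(a_k):\sum_k a_kv_k\in G\}$, of dimension $\dim G$. The rescue is that for any such $(a_k)$ one has $\sum_k a_k\widetilde B(v_k,u_j)=\widetilde B\bigl(\sum_k a_kv_k,u_j\bigr)=0$, since $\sum_k a_kv_k\in G=\Rad(\widetilde B)$; hence the right‑hand side automatically lies in the image and the system is solvable. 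This is precisely where the defining property $\Rad(\widetilde B)=\iota(G)$ of a hyperbolic extension enters; everything else is routine linear‑algebra bookkeeping (invertibility of the three change‑of‑basis matrices and the final read‑off).
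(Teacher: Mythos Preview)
Your proof is correct. Both your argument and the paper's follow the same overall shape --- produce a rough complement to $V$, then correct in stages to achieve orthogonality with $H'$ and finally mutual isotropy --- but the implementations differ in two respects. First, the paper reduces to the case $m=1$ by induction, whereas you handle all $m$ at once; this costs you nothing and is arguably cleaner. Second, and more interestingly, the paper carries out the ``orthogonality with $H'$'' correction by first diagonalising $B|_{H'\times H'}$ so that an explicit subtraction formula is available, while you instead argue abstractly that the linear system $B(v_k,z_j)=-\widetilde B(v_k,u_j)$ is consistent because its right-hand side automatically annihilates every $(a_k)$ with $\sum a_kv_k\in G=\Rad(\widetilde B)$. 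Your version isolates more sharply \emph{where} the defining property $\Rad(\widetilde B)=\iota(G)$ of a hyperbolic extension is used; the paper's diagonalisation hides this inside the observation that $\widetilde B(\tilde v_i,v_n^{(4)})=0$ for the null directions of $B|_{H'\times H'}$. Either way the remaining step (killing the symmetric block $X$ by subtracting a combination of the $v_{n-m+i}$) is identical.
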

\begin{proof}
Let us first reformulate the statement: Let $V\subseteq\widetilde{V}$ be a $n$-dimensional subspace of a $(n+m)$-dimensional real vector space. Let $\widetilde{B}$ be a symmetric bilinear form on $\widetilde{V}$ and $B=\widetilde{B}\bigl|_{V\times V}$. Assume that $\Rad(\widetilde{B})\subseteq\Rad(B)$ such that $\dim_{\RR}\bigl(\Rad(B)\bigr)=
\dim_{\RR}\bigl(\Rad(\widetilde{B})\bigr)+m$. Then for any basis $v_1,\dots,v_n$ of $V$ with $\langle v_{n-m+1},\dots,v_n\rangle\oplus\Rad(\widetilde{B})=\Rad(B)$ and $\Rad(\widetilde B)\subseteq\langle v_1,\dots,v_{n-m}\rangle$, there exist $v'_{n-m+1},\dots,v'_n$ such that $v_{1},\dots,v_n,v'_{n-m+1},\dots,v'_n$ is a basis of $\widetilde{V}$ and $[\widetilde{B}]$ is as in (\ref{E:hypExtMatrix}).

From this formulation, it is clear that we can work inductively. Thus the only interesting case is $m=1$. We will compute the needed $v_n'$ step by step. Start with any $v_n^{(4)}\in\widetilde{V}$ such that $v_1,\dots,v_n,v_n^{(4)}$ is a basis of $\widetilde{V}$. Now replace $v_1,\dots,v_{n-1}$ by a basis $\tilde{v}_1,\dots,\tilde{v}_{n-1}$ of $H'=\langle v_1,\dots,v_{n-1}\rangle$ such that $\bigl[B\bigl|_{H'\times H'}\bigr]=I_p\oplus-I_q\oplus0_r$ for some $p,q,r\in\mathbb{N}_0$. Then $\Rad(\widetilde{B})=\langle\tilde{v}_{p+q+1},\dots,\tilde{v}_{n-1}\rangle$. We have $\widetilde{B}(\tilde{v}_i,v_n^{(4)})=0$ for any $p+q+1\leq i\leq n-1$. Now set
\[v_n^{(3)}=v_n^{(4)}-\sum_{i=1}^p\widetilde{B}(\tilde{v}_i,v_n^{(4)})\tilde{v}_i+\sum_{i=p+1}^{p+q}\widetilde{B}(\tilde{v}_i,v_n^{(4)})\tilde{v}_i.\]
Now $\widetilde{B}(\tilde{v}_i,v_n^{(3)})=\widetilde{B}(\tilde{v}_i,v_n^{(4)})-\widetilde{B}(\tilde{v}_i,v_n^{(4)})=0$ for any $1\leq i\leq p+q$ and thus any $1\leq i\leq n-1$. However, $\widetilde{B}(v_n,v_n^{(3)})\neq0$ as $v_n\in\Rad(B)\setminus\Rad(\widetilde{B})$. Therefore set
\[v_n^{(2)}=v_n^{(3)}-\frac{\widetilde{B}(v_n^{(3)},v_n^{(3)})}{2\widetilde{B}(v_n,v_n^{(3)})}v_n.\]
Then $\widetilde{B}(\tilde{v}_i,v_n^{(2)})=\widetilde{B}(\tilde{v}_i,v_n^{(3)})=0$ for $1\leq i\leq n-1$ as $v_n\in\Rad(B)$, and $$\widetilde{B}(v_n^{(2)},v_n^{(2)})=\widetilde{B}(v_n^{(3)},v_n^{(3)})-2\frac{\widetilde{B}(v_n^{(3)},v_n^{(3)})}{2\widetilde{B}(v_n,v_n^{(3)})}\widetilde{B}(v_n,v_n^{(3)})+\widetilde{B}(v_n,v_n)=0.$$
Finally, set $v_n'=\dfrac{1}{\widetilde{B}(v_n,v_n^{(2)})}v_n^{(2)}$. This finishes the proof.
\end{proof}

\begin{corollary}\label{hypMono}
Let $(\widetilde{V}_G,\widetilde{B}_G,\iota_G)$ and $(\widetilde{V}_H,\widetilde{B}_H,\iota_H)$ be hyperbolic extensions of $(V,B)$ with respect to subspaces $G$ and $H$ such that $H\subseteq G\subseteq\Rad(B)$. Then there exists a monomorphism $\varphi:\widetilde{V}_G\to\widetilde{V}_H$ with $\widetilde{B}_G=\widetilde{B}_H\circ\varphi$ and $\varphi\circ\iota_G=\iota_H$.
\end{corollary}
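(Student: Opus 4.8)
The plan is to read the statement off the explicit normal form provided by Lemma~\ref{hypRealisation}, after choosing a \emph{single} basis of $V$ adapted simultaneously to the flag $H\subseteq G\subseteq\Rad(B)\subseteq V$. Write $m_G=\dim_{\RR}\bigl(\Rad(B)/G\bigr)$ and $m_H=\dim_{\RR}\bigl(\Rad(B)/H\bigr)$; since $H\subseteq G$ we have $m_H\geq m_G$, so the inequality $\dim_{\RR}(\widetilde{V}_H)\geq\dim_{\RR}(\widetilde{V}_G)$ is at least consistent with the existence of a monomorphism.

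First I would fix a basis $v_1,\dots,v_n$ of $V$ such that a basis of $H$ is contained in $v_1,\dots,v_{n-m_H}$, the vectors $v_{n-m_H+1},\dots,v_{n-m_G}$ complete a basis of $H$ to a basis of $G$, and $v_{n-m_G+1},\dots,v_n$ complete a basis of $G$ to a basis of $\Rad(B)$. Such a basis plainly exists. It satisfies the hypotheses of Lemma~\ref{hypRealisation} for the extension $\widetilde{V}_G$ along $G$ (taking $m=m_G$, so that $\Rad(B)=G\oplus\langle v_{n-m_G+1},\dots,v_n\rangle$ and $G\subseteq H'_G:=\langle v_1,\dots,v_{n-m_G}\rangle$) and, with the same basis, for the extension $\widetilde{V}_H$ along $H$ (taking $m=m_H$, so that $\Rad(B)=H\oplus\langle v_{n-m_H+1},\dots,v_n\rangle$ and $H\subseteq H'_H:=\langle v_1,\dots,v_{n-m_H}\rangle$), because $G=H\oplus\langle v_{n-m_H+1},\dots,v_{n-m_G}\rangle$. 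Applying Lemma~\ref{hypRealisation} twice then yields vectors $u^G_{n-m_G+1},\dots,u^G_n\in\widetilde{V}_G$ and $u^H_{n-m_H+1},\dots,u^H_n\in\widetilde{V}_H$ such that $\iota_G(v_1),\dots,\iota_G(v_n),u^G_{n-m_G+1},\dots,u^G_n$ is a basis of $\widetilde{V}_G$, the tuple $\iota_H(v_1),\dots,\iota_H(v_n),u^H_{n-m_H+1},\dots,u^H_n$ is a basis of $\widetilde{V}_H$, and both Gram matrices take the shape~\eqref{E:hypExtMatrix}.

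I would then define $\varphi\colon\widetilde{V}_G\to\widetilde{V}_H$ by $\varphi(\iota_G(v_i))=\iota_H(v_i)$ for $1\leq i\leq n$ and $\varphi(u^G_j)=u^H_j$ for $n-m_G+1\leq j\leq n$. Since $\{n-m_G+1,\dots,n\}\subseteq\{n-m_H+1,\dots,n\}$, this sends the chosen basis of $\widetilde{V}_G$ to a subset of the chosen basis of $\widetilde{V}_H$, so $\varphi$ is a well-defined monomorphism, and $\varphi\circ\iota_G=\iota_H$ holds on $v_1,\dots,v_n$, hence on all of $V$. It remains to check $\widetilde{B}_G=\widetilde{B}_H\circ\varphi$, which I would verify on the above basis of $\widetilde{V}_G$ using~\eqref{E:hypExtMatrix}. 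On pairs $(\iota_G(v_i),\iota_G(v_j))$ both sides equal $B(v_i,v_j)$, this being the defining property $\widetilde{B}\circ(\iota\times\iota)=B$ of a hyperbolic extension (the off-$H'$ entries of the normal form vanish precisely because those $v_i$ lie in $\Rad(B)$, so it does not matter that $H'_G\neq H'_H$). On pairs $(\iota_G(v_i),u^G_j)$ with $n-m_G+1\leq j\leq n$ the value in $\widetilde{V}_G$ is $\delta_{ij}$, while $\widetilde{B}_H(\iota_H(v_i),u^H_j)$ equals $\delta_{ij}$ for $i\in\{n-m_G+1,\dots,n\}$ and $0$ for $i\leq n-m_G$ (where $i\leq n-m_G<j$ forces $\delta_{ij}=0$), so the two agree in all cases. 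Finally $\widetilde{B}_G(u^G_i,u^G_j)=0=\widetilde{B}_H(u^H_i,u^H_j)$.

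I do not anticipate a genuine obstacle: the only subtlety is purely combinatorial bookkeeping, namely that the ``middle'' indices $n-m_H+1,\dots,n-m_G$ belong to the $H'_G$-block of~\eqref{E:hypExtMatrix} for $\widetilde{V}_G$ but to the radical-complement block for $\widetilde{V}_H$, and one must see that both normal forms still restrict to the same bilinear form on $\im(\varphi)$; this follows from the observation just made. Specialising to $H=G$ gives $m_H=m_G$ and turns $\varphi$ into an isomorphism, which is the form in which the result feeds into Corollary~\ref{C:lowDimExt2}.
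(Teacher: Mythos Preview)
Your proof is correct and follows essentially the same approach as the paper: choose a single basis of $V$ adapted to the flag $H\subseteq G\subseteq\Rad(B)$, apply Lemma~\ref{hypRealisation} twice to obtain the additional basis vectors in each hyperbolic extension, and define $\varphi$ by matching these bases. You give more detail than the paper (which omits the explicit verification of $\widetilde{B}_G=\widetilde{B}_H\circ\varphi$), but the strategy is identical.
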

\begin{proof}
    Let $v_1,\dots,v_n$ be a basis of $V$ such that $\langle v_{n-s+1},\dots,v_n\rangle\oplus G=\Rad(B)$ and $\langle v_{n-t+1},\dots,v_n\rangle\oplus H=\Rad(B)$ (with $s\leq t$). Then we can find $v^{(G)}_{n-s+1},\dots,v^{(G)}_n\in\widetilde{V}_G$, $v^{(H)}_{n-t+1},\dots,v^{(H)}_n\in\widetilde{V}_H$ as in Lemma \ref{hypRealisation}. Define $\varphi:\widetilde{V}_G\to\widetilde{V}_H$ by $\varphi(\iota_G(v_i))=\iota_H(v_i)$ for any $1\leq i\leq n$ and $\varphi(v^{(G)}_i)=v^{(H)}_i$ for any $n-s+1\leq i\leq n$.
\end{proof}

Let us now define hyperbolic extensions of reflection groups.
\begin{definition}
Let $R=(\alpha_1,\dots,\alpha_n)$ be a set of non-isotropic vectors in $(V,B)$. Let $(W,T,c)$ be the corresponding generalized dual Coxeter datum. Further, let $G\subseteq\Rad(B)$ and $(\widetilde{V},\widetilde{B},\iota)$ a hyperbolic extension of $(V,B)$ with respect to $G$. The generalized dual Coxeter datum $(\widetilde{W},\widetilde{T},\tilde{c})$ associated to $\widetilde{R}=(\iota(\alpha_1),\dots,\iota(\alpha_n))$ in $(\widetilde{V},\widetilde{B})$ is called a \emph{hyperbolic ~extension} of $(W,T,c)$ with respect to $G$.
\end{definition}
Let us once again consider uniqueness of hyperbolic extensions.
\begin{lemma}
Let $(W,T,c)$ be a generalized dual Coxeter datum in $(V,B)$ and let $H\subseteq G\subseteq\Rad(B)$. Let $(\widetilde{W}_G,\widetilde{T}_G,\tilde{c}_G)$ and $(\widetilde{W}_H,\widetilde{T}_H,\tilde{c}_H)$ be hyperbolic extensions of $(W,T,c)$ with respect to $G$ and $H$ respectively. Then there exists a group epimorphism $\pi:\widetilde{W}_H\to \widetilde{W}_G$ such that $\pi(\tilde{c}_H)=\tilde{c}_G$ and $\pi$ restricts to a set isomorphism $\pi:\widetilde{T}_H\to\widetilde{T}_G$.
\end{lemma}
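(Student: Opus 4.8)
The plan is to construct the epimorphism $\pi$ directly from the monomorphism of bilinear spaces provided by Corollary \ref{hypMono}, and then to verify that it behaves well on simple reflections, on the Coxeter element, and on the sets of reflections.

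First I would invoke Corollary \ref{hypMono} to obtain a monomorphism $\varphi: \widetilde{V}_G \to \widetilde{V}_H$ satisfying $\widetilde{B}_G = \widetilde{B}_H \circ (\varphi\times\varphi)$ and $\varphi \circ \iota_G = \iota_H$. Write $V_G := \varphi(\widetilde{V}_G) \subseteq \widetilde{V}_H$; this is a $\varphi$-isometric copy of $\widetilde{V}_G$ inside $\widetilde{V}_H$, and it contains $\iota_H(V)$, hence all the simple roots $\iota_H(\alpha_i)$ of $\widetilde{R}_H$. The key linear-algebra observation is that $V_G$ is $\widetilde{s}^{(H)}_{\iota_H(\alpha_i)}$-invariant for every $i$: indeed for $x \in V_G$, since $\widetilde{B}_H(x, \iota_H(\alpha_i))$ only depends on $B$-data living inside $V_G$, the reflection formula shows $\widetilde{s}^{(H)}_{\iota_H(\alpha_i)}(x) = x - \frac{2\widetilde{B}_H(x,\iota_H(\alpha_i))}{\widetilde{B}_H(\iota_H(\alpha_i),\iota_H(\alpha_i))}\iota_H(\alpha_i) \in V_G$. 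Therefore each $\widetilde{s}^{(H)}_{\iota_H(\alpha_i)}$ restricts to $V_G$, and under the isometry $\varphi$ this restriction is exactly $\widetilde{s}^{(G)}_{\iota_G(\alpha_i)}$. Consequently the assignment $\widetilde{s}^{(H)}_{\iota_H(\alpha_i)} \mapsto \widetilde{s}^{(G)}_{\iota_G(\alpha_i)}$ extends to a well-defined group homomorphism: the map $\widetilde{W}_H \to \mathsf{O}(V_G, \widetilde{B}_H|_{V_G\times V_G}) \cong \mathsf{O}(\widetilde{V}_G,\widetilde{B}_G) \supseteq \widetilde{W}_G$ given by $w \mapsto \varphi^{-1}\circ(w|_{V_G})\circ\varphi$ sends each simple reflection of $\widetilde{W}_H$ to the corresponding simple reflection of $\widetilde{W}_G$, hence lands in $\widetilde{W}_G$ and is the desired $\pi$. (The argument is essentially the same one used just before Lemma \ref{hypSES} to produce the epimorphism $\pi:\widetilde{W}\to W$, now one level up.) Since $\pi$ takes the generating set $\widetilde{S}_H$ onto the generating set $\widetilde{S}_G$, it is an epimorphism, and since $\tilde{c}_H$ and $\tilde{c}_G$ are the products of the simple reflections in the same fixed order, $\pi(\tilde{c}_H) = \tilde{c}_G$.

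It remains to show $\pi$ restricts to a bijection $\widetilde{T}_H \to \widetilde{T}_G$. Surjectivity onto $\widetilde{T}_G$ is immediate from $T = \{wsw^{-1}\}$ together with $\pi(\widetilde{W}_H) = \widetilde{W}_G$ and $\pi(\widetilde{S}_H) = \widetilde{S}_G$: for $\tilde{t} = \tilde{w}\tilde{s}\tilde{w}^{-1} \in \widetilde{T}_G$ lift $\tilde{w}$ and $\tilde s$. For injectivity, note first that $\pi$ sends a reflection $\widetilde{s}^{(H)}_\beta$ (for $\beta \in \widetilde{V}_H$ a real root of $\widetilde{W}_H$) to $\widetilde{s}^{(G)}_{\varphi^{-1}(\beta')}$, where $\beta'$ is the $V_G$-component of $\beta$; more precisely, conjugating the above restriction identity by elements of $\widetilde{W}_H$ shows that every element of $\widetilde{T}_H$ preserves $V_G$ and its $\varphi^{-1}$-image is the corresponding element of $\widetilde{T}_G$. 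So if $\pi(\tilde{t}_1) = \pi(\tilde{t}_2)$ with $\tilde{t}_i \in \widetilde{T}_H$, then $\tilde{t}_1$ and $\tilde{t}_2$ agree on $V_G$; writing $\tilde{t}_i = \widetilde{s}^{(H)}_{\beta_i}$ with $\beta_i$ real roots, their restrictions to $V_G$ are reflections of $V_G$, and by Lemma \ref{L:ResultsRealReflect}(b) these reflections coincide iff the $V_G$-components of $\beta_1$ and $\beta_2$ are proportional. The main obstacle — and the step I would spend the most care on — is ruling out the possibility that $\tilde t_1 \neq \tilde t_2$ in $\widetilde W_H$ while they nonetheless agree on $V_G$, i.e. they differ only in the ``new'' hyperbolic direction $a'$ that $\widetilde{V}_H$ has but $V_G$ does not. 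Here I would use the structure theory: by Saito's results (Lemma \ref{hypSES} and its analogue, via Corollary \ref{C:lowDimExt2}) the kernel of the composite $\widetilde{W}_H \to \widetilde{W}_G$ (when $G \neq H$ this is the relevant extra central extension) is central and torsion-free, generated by a power of a Coxeter element, hence contains no reflection; therefore two distinct reflections of $\widetilde{W}_H$ cannot have the same image. This pins down injectivity and completes the proof. I expect the write-up to be short, the only genuine content being the invariance-of-$V_G$ computation and the kernel-contains-no-reflection argument.
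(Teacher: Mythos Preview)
Your construction of $\pi$ via the restriction to the $\varphi$-image $V_G \subseteq \widetilde V_H$ is exactly the paper's argument (the paper phrases it as $\pi(w) = \psi w \varphi$ for a splitting $\psi$ of $\varphi$, which is the same thing), and your treatment of surjectivity and of $\pi(\tilde c_H)=\tilde c_G$ is fine.

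The problem is your injectivity argument on $\widetilde T_H$. You invoke Lemma~\ref{hypSES} and Corollary~\ref{C:lowDimExt2} to conclude that $\ker\pi$ is central and torsion-free. But Lemma~\ref{hypSES} is stated and proved only for tubular reflection groups of canonical type, not for an arbitrary generalized dual Coxeter datum as in this appendix lemma; and Corollary~\ref{C:lowDimExt2} is a \emph{consequence} of the very lemma you are proving (it appears after it and depends on it), so citing it here is circular. So as written the injectivity step has a genuine gap.

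The fix is much more elementary than your kernel argument, and it is what the paper has in mind when it calls this step ``obvious''. Every simple reflection of $\widetilde W_H$ preserves $\iota_H(V)$ and acts there via the corresponding element of $W$; hence $\widetilde W_H$ preserves $\iota_H(V)$, and the root system of $\widetilde W_H$ is exactly $\widetilde\Phi_H = \iota_H(\Phi)$. The same holds with $G$ in place of $H$. Your own restriction computation then shows $\pi\bigl(\tilde s^{(H)}_{\iota_H(\gamma)}\bigr) = \tilde s^{(G)}_{\iota_G(\gamma)}$ for every $\gamma\in\Phi$. Now by Lemma~\ref{L:ResultsRealReflect}(b), two reflections $\tilde s^{(H)}_{\iota_H(\gamma)}$ and $\tilde s^{(H)}_{\iota_H(\gamma')}$ coincide iff $\iota_H(\gamma)$ and $\iota_H(\gamma')$ are proportional, iff $\gamma$ and $\gamma'$ are proportional (since $\iota_H$ is injective), iff $\tilde s^{(G)}_{\iota_G(\gamma)} = \tilde s^{(G)}_{\iota_G(\gamma')}$. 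This gives the bijection $\widetilde T_H \to \widetilde T_G$ directly, with no appeal to the structure of $\ker\pi$.
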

\begin{proof}
Let $R=(\alpha_1,\dots,\alpha_n)$ be the simple roots of $(W,T,c)$ and let $(\widetilde{V}_G,\widetilde{B}_G,\iota_G)$ and $(\widetilde{V}_H,\widetilde{B}_H,\iota_H)$ be the hyperbolic extensions of $(V,B)$ corresponding to $(\widetilde{W}_G,\widetilde{T}_G,\tilde{c}_G)$ and $(\widetilde{W}_H,\widetilde{T}_H,\tilde{c}_H)$. Let $\varphi:\widetilde{V}_G\to\widetilde{V}_H$ be the monomorphism from Corollary \ref{hypMono}. More precisely, $\varphi$ is split, i.e. there exists an epimorphism $\psi:\widetilde{V}_H\to\widetilde{V}_G$ with $\psi\circ\varphi=\id_{\widetilde{V}_G}$. We compute for any $x\in\widetilde{V}_G$
\begin{align*}
    s_{\iota_G(\alpha_i)}(x)&=x-\frac{2\widetilde{B}_G(x,\iota_G(\alpha_i))}{\widetilde{B}_G(\iota_G(\alpha_i),\iota_G(\alpha_i))}\iota_G(\alpha_i)\\
    &=\psi\left(\varphi(x)-\frac{2\widetilde{B}_G(x,\iota_G(\alpha_i))}{\widetilde{B}_G(\iota_G(\alpha_i),\iota_G(\alpha_i))}\varphi(\iota_G(\alpha_i))\right)\\
    &=\psi\left(\varphi(x)-\frac{2\widetilde{B}_H(\varphi(x),\varphi\circ\iota_G(\alpha_i))}{\widetilde{B}_H(\varphi\circ\iota_G(\alpha_i),\varphi\circ\iota_G(\alpha_i))}\varphi\circ\iota_G(\alpha_i)\right)\\
    &=\psi\left(s_{\varphi\circ\iota_G(\alpha_i)}(\varphi(x))\right)\\
    &=\psi\left(s_{\iota_H(\alpha_i)}(\varphi(x))\right).
\end{align*}
Therefore $s_{\iota_G(\alpha_i)}=\psi\circ s_{\iota_H(\alpha_i)}\circ\varphi$. From the explicit construction in Corollary \ref{hypMono} we know that $\Image(\varphi)$ is a $s_{\iota_H(\alpha_i)}$ invariant subspace for any $1\leq i\leq n$. Therefore $\varphi\circ\psi|_{\Image(\varphi)}=\id_{\Image(\varphi)}$ implies
\[s_{\iota_G(\alpha_i)}s_{\iota_G(\alpha_j)}=\psi\circ s_{\iota_H(\alpha_i)}\circ\varphi\psi\circ s_{\iota_H(\alpha_j)}\circ\varphi=\psi\circ s_{\iota_H(\alpha_i)}\circ s_{\iota_H(\alpha_j)}\circ\varphi.\]
Thus $\pi:\widetilde{W}_H\to\widetilde{W}_G$, $\pi(w)=\psi w\varphi$ is a morphism of groups with $\pi(s_{\iota_H(\alpha_i)})=s_{\iota_G(\alpha_i)}$. The surjectivity as well as $\pi(\widetilde{T}_H)=\widetilde{T}_G$ and $\pi(\tilde{c}_H)=\tilde{c}_G$ are obvious.
\end{proof}

In particular, two hyperbolic extensions $(\widetilde{W},\widetilde{T},\tilde{c})$ and $(\widetilde{W}',\widetilde{T}',\tilde{c}')$ with respect to the same $G\subseteq\Rad(B)$ are isomorphic, and the isomorphism identifies $\widetilde{T}$ with $\widetilde{T}'$ and $\tilde{c}$ with $\tilde{c}'$. Furthermore, this result allows us to clarify the relation between reflection groups and their hyperbolic extensions.
\begin{corollary}\label{hyperbolicEpimorphism}
Let $(W,T,c)$ be a generalized dual Coxeter datum in $(V,B)$ and let $G\subseteq\Rad(B)$. Let $(\widetilde{W},\widetilde{T},\tilde{c})$ be a hyperbolic extension of $(W,T,c)$ with respect to $G$. Then there exists a group epimorphism $\pi:\widetilde{W}\to W$ such that $\pi(\tilde{c})=c$ and $\pi$ restricts to a set isomorphism $\pi:\widetilde{T}\to T$.
\end{corollary}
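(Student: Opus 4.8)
The plan is to obtain this statement as an immediate consequence of the preceding lemma, once one recognizes $(W,T,c)$ itself as a hyperbolic extension of $(W,T,c)$ with respect to the full radical $\Rad(B)$.

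First I would record the following elementary observation. If one forms the hyperbolic extension $(\widetilde{V},\widetilde{B},\iota)$ of $(V,B)$ with respect to $\Rad(B)$ itself, then the dimension formula in the definition of a hyperbolic extension gives $\dim_\RR(\widetilde{V})=\dim_\RR(V)+\dim_\RR\bigl(\Rad(B)/\Rad(B)\bigr)=\dim_\RR(V)$, so the inclusion $\iota\colon V\to\widetilde{V}$ is in fact an isomorphism; it satisfies $\widetilde{B}\circ(\iota\times\iota)=B$ and $\Rad(\widetilde{B})=\iota(\Rad(B))$. Consequently $\iota$ carries the simple roots $R=(\alpha_1,\dots,\alpha_n)$ onto $\widetilde{R}=(\iota(\alpha_1),\dots,\iota(\alpha_n))$ isometrically, and hence identifies the generalized dual Coxeter datum attached to $\widetilde{R}$ in $(\widetilde{V},\widetilde{B})$ with $(W,T,c)$. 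In other words, $(W,T,c)$ is a hyperbolic extension of $(W,T,c)$ with respect to $\Rad(B)$.

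I would then apply the previous lemma with the two subspaces being $G$ (the given one) and $\Rad(B)$, in the roles of its $H$ and $G$ respectively; the inclusion hypothesis $H\subseteq G\subseteq\Rad(B)$ then reduces to $G\subseteq\Rad(B)$, which holds by assumption. With $(\widetilde{W}_H,\widetilde{T}_H,\tilde{c}_H)=(\widetilde{W},\widetilde{T},\tilde{c})$ and $(\widetilde{W}_G,\widetilde{T}_G,\tilde{c}_G)$ identified with $(W,T,c)$ via the first paragraph, the lemma directly yields a group epimorphism $\pi\colon\widetilde{W}\to W$ with $\pi(\tilde{c})=c$ that restricts to a set isomorphism $\widetilde{T}\to T$, which is precisely the assertion. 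I do not expect any real obstacle here: the only point requiring care is the bookkeeping of the identification above, namely that the hyperbolic extension taken with respect to the whole radical is trivial, after which the statement is a direct specialization of the lemma already proved.
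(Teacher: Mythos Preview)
Your proposal is correct and is exactly the argument the paper intends: the corollary is stated without proof because it is the specialization of the preceding lemma to the pair $G\subseteq\Rad(B)$, using the observation that the hyperbolic extension with respect to the full radical is trivial, so that $(W,T,c)$ plays the role of $(\widetilde{W}_G,\widetilde{T}_G,\tilde{c}_G)$ in the lemma's notation.
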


For low-dimensional hyperbolic extensions, we can even prove a stronger isomorphism result that is not true in general.
\begin{lemma}\label{L:lowDimExt1}
Let $(W,T,c)$ be a generalized dual Coxeter datum in $(V,B)$ such that  $\dim_{\RR}(\Rad(B))=1$ and let $(\widetilde{W},\widetilde{T},\tilde{c})$ be a hyperbolic extension of $(W,T,c)$ with respect to $\Rad(B)$. Then the epimorphism from Corollary \ref{hyperbolicEpimorphism} is an isomorphism.
\end{lemma}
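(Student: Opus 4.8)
The plan is to observe that taking a hyperbolic extension with respect to the \emph{full} radical $G=\Rad(B)$ does not enlarge the ambient space at all, so the extension is, up to isometry, nothing new. Concretely, since $\dim_\RR(\widetilde V)=\dim_\RR(V)+\dim_\RR\bigl(\Rad(B)/\Rad(B)\bigr)=\dim_\RR(V)$, the inclusion $\iota\colon V\hookrightarrow\widetilde V$ is an injective linear map between finite-dimensional spaces of equal dimension, hence a linear isomorphism; and the defining relation $\widetilde B\circ(\iota\times\iota)=B$ then says $\iota$ is an isometry $(V,B)\xrightarrow{\sim}(\widetilde V,\widetilde B)$. (The hypothesis $\dim_\RR\Rad(B)=1$ only serves to fix the relevant setting; what is actually used is solely that the extension is taken with respect to $\Rad(B)$ itself.)

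Next I would transport the generalized dual Coxeter datum along $\iota$. Writing $R=(\alpha_1,\dots,\alpha_n)$ for the simple roots of $(W,T,c)$, conjugation $g\mapsto\iota g\iota^{-1}$ is a group isomorphism $\mathsf{O}(V,B)\xrightarrow{\sim}\mathsf{O}(\widetilde V,\widetilde B)$, and, $\iota$ being an isometry, the computation behind Lemma \ref{L:ResultsRealReflect}(c) gives $\iota s_{\alpha_i}\iota^{-1}=\tilde s_{\iota(\alpha_i)}$ for every $i$. Since $\widetilde R=(\iota(\alpha_1),\dots,\iota(\alpha_n))$ is by definition the tuple of simple roots defining the hyperbolic extension $(\widetilde W,\widetilde T,\tilde c)$, this conjugation isomorphism restricts to a group isomorphism $\mu\colon W\xrightarrow{\sim}\widetilde W$ with $\mu(s_{\alpha_i})=\tilde s_{\iota(\alpha_i)}$; in particular $\mu(c)=\tilde c$ and $\mu(T)=\widetilde T$.

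Finally I would identify $\mu^{-1}$ with the epimorphism $\pi$ of Corollary \ref{hyperbolicEpimorphism}. By its construction (the proof of the lemma preceding that corollary, applied with the ``large'' extension taken to be $(V,B,\id_V)$), $\pi$ is the group homomorphism sending each simple reflection $\tilde s_{\iota(\alpha_i)}$ of $\widetilde W$ to $s_{\alpha_i}$. Hence $\pi\circ\mu\colon W\to W$ fixes every generator $s_{\alpha_i}$, so $\pi\circ\mu=\id_W$; as $\mu$ is bijective this forces $\pi=\mu^{-1}$, which is therefore an isomorphism, as claimed.

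The argument is essentially formal, so there is no genuine obstacle. The one point worth spelling out is the bookkeeping in the last paragraph: one must be sure that the $\pi$ produced abstractly by Corollary \ref{hyperbolicEpimorphism} really is the ``obvious'' map determined on generators by $\tilde s_{\iota(\alpha_i)}\mapsto s_{\alpha_i}$, rather than merely \emph{some} epimorphism compatible with $\tilde c\mapsto c$ and $\widetilde T\to T$ — because without pinning it down on generators one could not directly deduce $\pi\circ\mu=\id_W$ when $W$ is infinite (a surjective endomorphism of an infinite group need not be injective). This is immediate from the explicit construction of $\pi$, but it is the step I would state carefully.
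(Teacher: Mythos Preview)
Your reading of the statement is literal and, under that reading, your argument is sound: with $G=\Rad(B)$ the dimension formula gives $\dim_\RR\widetilde V=\dim_\RR V$, so $\iota$ is an isometric isomorphism and conjugation by $\iota$ identifies $(\widetilde W,\widetilde T,\tilde c)$ with $(W,T,c)$; your bookkeeping that pins $\pi$ down on generators is also correct. You already flag that the hypothesis $\dim_\RR\Rad(B)=1$ is then never used---that is the warning sign.

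The paper, however, is not proving this tautology. Its proof works in a space of dimension $n+1$ whose displayed Gram matrix $[\widetilde B]$ is nondegenerate, i.e.\ $\Rad(\widetilde B)=0$. That is the hyperbolic extension with respect to $G=\{0\}$, not $G=\Rad(B)$; the phrase ``with respect to $\Rad(B)$'' in the lemma is evidently a slip. This intended reading is also what the application in Section~\ref{S:ReflectionGroups} requires (``any hyperbolic extension $\widetilde W$ of a non-tubular $W$ is isomorphic to $W$'') and what makes Corollary~\ref{C:lowDimExt2} follow ``analogously''.

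Your approach does not reach the intended case, since once $\iota$ is a proper inclusion one cannot conjugate by it. The paper instead takes $\tilde w\in\ker\pi$, so that $\tilde w|_V=\id_V$, writes $[\tilde w]=\left(\begin{smallmatrix}I_n&\vec\lambda\\0&\lambda_{n+1}\end{smallmatrix}\right)$ in a basis adapted to~(\ref{E:hypExtMatrix}), and imposes $[\tilde w]^T[\widetilde B][\tilde w]=[\widetilde B]$. The off-diagonal blocks force $\lambda_1=\dots=\lambda_{p+q}=0$ and $\lambda_{n+1}=1$; the bottom-right entry then gives $\sum_{i\le p}\lambda_i^2-\sum_{p<i\le p+q}\lambda_i^2+2\lambda_n\lambda_{n+1}=0$, hence $\lambda_n=0$. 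Here $\dim_\RR\Rad(B)=1$ is essential: with a single hyperbolic pair there is one remaining unknown and one scalar equation; with two or more pairs the analogous system is underdetermined, which is exactly why in the tubular case $\widetilde W$ is a genuine non-trivial extension of $W$.
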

\begin{proof}
The claim can be shown using explicit matrix calculations. Choose a basis $(v_1,\dots,\linebreak v_{n+1})$ of $V'$ such that $\langle v_1,\dots,v_n\rangle=V$ and
\[[\widetilde{B}]=\begin{pmatrix}
    I_p&0&0&0\\0&-I_q&0&0\\0&0&0&1\\0&0&1&0
\end{pmatrix}\]
for some $p,q\in\mathbb{N}_0$. Assume that $\tilde{w}\in\ker(\pi)\subset\widetilde{W}$. Then $\tilde{w}(v)=v$ for any $v\in V$, i.e.
\[[\tilde{w}]=\begin{pmatrix}
    I_n&\vec{\lambda}\\0&\lambda_{n+1}
\end{pmatrix}\]
for some $\vec{\lambda}\in\mathbb{R}^n$ with entries $\lambda_1,\dots,\lambda_n$. Now $\widetilde{W}\subset O(\widetilde{V},\widetilde{B})$ implies
\[[\widetilde{B}]=[\tilde{w}]^T[\widetilde{B}][\tilde{w}].\]
In other words, $\lambda_1=\dots=\lambda_p=0$, $-\lambda_{p+1}=\dots=-\lambda_{p+q}=0$, $\lambda_{n+1}=1$ and $$\sum_{i=1}^p\lambda_i^2-\sum_{i=p+1}^{p+q}\lambda_i^2+2\lambda_n\lambda_{n+1}=0$$ which shows $\lambda_1=\dots=\lambda_n=0$, $\lambda_{n+1}=1$, i.e. $\tilde{w}=\id_{\widetilde{V}}$.
\end{proof}
Analogously, we have the following result.
\begin{corollary}\label{C:lowDimExt2}
Let $(W,T,c)$ be a generalized dual Coxeter datum in $(V,B)$ and let $G\subseteq\Rad(B)$ with $\dim(G)=1$. Let $(\widetilde{W}_G,\widetilde{T}_G,\tilde{c}_G)$ and $(\widetilde{W}_0,\widetilde{T}_0,\tilde{c}_0)$ be hyperbolic extensions of $(W,T,c)$ with respect to $G$ and 0 respectively. Then there exists a group isomorphism $\varphi:\widetilde{W}_0\to\widetilde{W}_G$ with $\varphi(\widetilde{T}_0)=\widetilde{T}_G$ and $\varphi(\tilde{c}_0)=\tilde{c}_G$.
\end{corollary}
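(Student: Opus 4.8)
Since $0\subseteq G\subseteq\Rad(B)$, the (unlabeled) lemma preceding Corollary~\ref{hyperbolicEpimorphism}, applied with $H=0$, already produces a group epimorphism $\pi\colon\widetilde{W}_0\twoheadrightarrow\widetilde{W}_G$ with $\pi(\tilde c_0)=\tilde c_G$ and $\pi$ restricting to a bijection $\widetilde T_0\to\widetilde T_G$. Thus the whole content of the corollary is that $\pi$ is \emph{injective}; once this is known, $\varphi:=\pi$ is the asserted isomorphism and it automatically transports $\widetilde T$ and $\tilde c$ as required. The plan is to deduce injectivity from Lemma~\ref{L:lowDimExt1} by viewing $(\widetilde W_G,\widetilde T_G,\tilde c_G)$ as a generalized dual Coxeter datum in its own right.

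Concretely, $(\widetilde W_G,\widetilde T_G,\tilde c_G)$ is the datum attached to $\widetilde R_G=(\iota_G(\alpha_1),\dots,\iota_G(\alpha_n))$ in $(\widetilde V_G,\widetilde B_G)$ (the $\iota_G(\alpha_i)$ are non-isotropic because the $\alpha_i$ are), and since $\dim G=1$ its radical $\Rad(\widetilde B_G)=\iota_G(G)$ is one-dimensional. By Corollary~\ref{hypMono} (with $H=0\subseteq G$) there is a monomorphism $\varphi_0\colon\widetilde V_G\hookrightarrow\widetilde V_0$ with $\widetilde B_0\circ(\varphi_0\times\varphi_0)=\widetilde B_G$ and $\varphi_0\circ\iota_G=\iota_0$. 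The dimension count
$\dim_{\RR}\widetilde V_0=\dim_{\RR}V+\dim_{\RR}\Rad(B)=\dim_{\RR}\widetilde V_G+1=\dim_{\RR}\widetilde V_G+\dim_{\RR}\bigl(\Rad(\widetilde B_G)/0\bigr)$,
together with $\Rad(\widetilde B_0)=0=\varphi_0(0)$, shows that $(\widetilde V_0,\widetilde B_0,\varphi_0)$ is a hyperbolic extension of $(\widetilde V_G,\widetilde B_G)$ with respect to the trivial subspace $0$. Moreover $\varphi_0$ carries each simple root $\iota_G(\alpha_i)$ of $\widetilde R_G$ to the simple root $\iota_0(\alpha_i)$ of $\widetilde R_0$, so the triple $(\widetilde W_0,\widetilde T_0,\tilde c_0)$ is precisely \emph{the} hyperbolic extension of $(\widetilde W_G,\widetilde T_G,\tilde c_G)$ with respect to $0$. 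Now Lemma~\ref{L:lowDimExt1}, applied to $(\widetilde W_G,\widetilde T_G,\tilde c_G)$ in place of $(W,T,c)$ — legitimate since $\dim_{\RR}\Rad(\widetilde B_G)=1$ — asserts that the canonical epimorphism $\widetilde W_0\twoheadrightarrow\widetilde W_G$ of Corollary~\ref{hyperbolicEpimorphism} is an isomorphism; identifying this map with $\pi$ (both are the homomorphism $w\mapsto\psi_0\,w\,\varphi_0$ induced by the inclusion $\varphi_0$ and a retraction $\psi_0$, exactly as in the proofs of Corollaries~\ref{hypMono} and \ref{hyperbolicEpimorphism}) finishes the argument.

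The one genuinely delicate point I expect is this last identification — verifying that the several ``canonical'' maps in play (the $\pi$ of the comparison lemma, the epimorphism of Corollary~\ref{hyperbolicEpimorphism} for $\widetilde W_G$, and the pair $\varphi_0,\psi_0$) all coincide, which amounts to unwinding the explicit constructions. If one prefers to sidestep this bookkeeping, one can prove injectivity of $\pi$ directly, mimicking the proof of Lemma~\ref{L:lowDimExt1} verbatim: choose a basis $(v_1,\dots,v_m,v_{m+1})$ of $\widetilde V_0$ with $\langle v_1,\dots,v_m\rangle=\varphi_0(\widetilde V_G)$ in which $\widetilde B_0$ has Gram matrix $I_p\oplus(-I_q)\oplus\smtr{0&1\\1&0}$ (possible by Lemma~\ref{hypRealisation}, since $\Rad(\widetilde B_0)=0$ and $\Rad(\widetilde B_G)$ is spanned by $v_m$); any $\tilde w\in\ker(\pi)$ fixes $\varphi_0(\widetilde V_G)$ pointwise — because $\varphi_0(\widetilde V_G)$ is $\widetilde W_0$-invariant and $\pi(w)=\psi_0w\varphi_0$ — so $[\tilde w]=\smtr{I_m&\vec\lambda\\0&\mu}$, and the isometry relation $[\widetilde B_0]=[\tilde w]^{T}[\widetilde B_0][\tilde w]$ forces $\vec\lambda=0$ and $\mu=1$, i.e.\ $\tilde w=\mathbbm{1}$. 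Either way, $\pi$ is the desired isomorphism $\varphi$.
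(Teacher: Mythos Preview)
Your proposal is correct. The paper's own ``proof'' is simply the phrase ``Analogously, we have the following result,'' meaning one is to rerun the matrix computation of Lemma~\ref{L:lowDimExt1} in the new setting; your second approach (the direct argument choosing a basis of $\widetilde V_0$ adapted to $\varphi_0(\widetilde V_G)$ and reading off $\vec\lambda=0$, $\mu=1$ from the isometry relation) is exactly that analogous computation, so on this route you match the paper.

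Your first approach --- recognizing that $(\widetilde V_0,\widetilde B_0,\varphi_0)$ is itself a hyperbolic extension of $(\widetilde V_G,\widetilde B_G)$ with respect to $0$ and then invoking Lemma~\ref{L:lowDimExt1} as a black box --- is a genuinely different and rather elegant reduction that the paper does not make explicit. It buys you the result without redoing any linear algebra, at the cost (which you correctly flag) of checking that the epimorphism produced by the comparison lemma coincides with the one to which Lemma~\ref{L:lowDimExt1} applies; since both are built from the same $\varphi_0$ and a retraction $\psi_0$ via $w\mapsto\psi_0 w\varphi_0$, this identification does go through. Either route is fine.
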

\end{appendix}

\bibliographystyle{alpha}
\bibliography{Bibliography}

@article {BaumeisterDyerStumpWegener,
    AUTHOR = {Baumeister, Barbara and Dyer, Matthew and Stump, Christian and
              Wegener, Patrick},
     TITLE = {A note on the transitive {H}urwitz action on decompositions of
              parabolic {C}oxeter elements},
   JOURNAL = {Proc. Amer. Math. Soc. Ser. B},
  FJOURNAL = {Proceedings of the American Mathematical Society. Series B},
    VOLUME = {1},
      YEAR = {2014},
     PAGES = {149--154},
      ISSN = {2330-1511},
   MRCLASS = {20F55 (20F36)},
  MRNUMBER = {3294251},
MRREVIEWER = {Luis\ Paris},
       DOI = {10.1090/S2330-1511-2014-00017-1},
       URL = {https://doi.org/10.1090/S2330-1511-2014-00017-1},
}

@article {Bondal,
    AUTHOR = {Bondal, Alexey},
     TITLE = {Representations of associative algebras and coherent sheaves},
   JOURNAL = {Izv. Akad. Nauk SSSR Ser. Mat.},
  FJOURNAL = {Izvestiya Akademii Nauk SSSR. Seriya Matematicheskaya},
    VOLUME = {53},
      YEAR = {1989},
    NUMBER = {1},
     PAGES = {25--44},
      ISSN = {0373-2436},
   MRCLASS = {14F05},
  MRNUMBER = {992977},
MRREVIEWER = {Michael\ M.\ Kapranov},
       DOI = {10.1070/IM1990v034n01ABEH000583},
       URL = {https://doi.org/10.1070/IM1990v034n01ABEH000583},
}

@book {Bourbaki,
    AUTHOR = {Bourbaki, Nicolas},
     TITLE = {Lie groups and {L}ie algebras. {C}hapters 4--6},
    SERIES = {Elements of Mathematics (Berlin)},
      NOTE = {Translated from the 1968 French original by Andrew Pressley},
 PUBLISHER = {Springer-Verlag, Berlin},
      YEAR = {2002},
     PAGES = {xii+300},
      ISBN = {3-540-42650-7},
   MRCLASS = {17-01 (00A05 20E42 20F55 22-01)},
  MRNUMBER = {1890629},
       DOI = {10.1007/978-3-540-89394-3},
       URL = {https://doi.org/10.1007/978-3-540-89394-3},
}

@article {Burban,
    AUTHOR = {Burban, Igor},
     TITLE = {Exceptional hereditary curves and real curve orbifolds},
   JOURNAL = {Algebra Discrete Math.},
  FJOURNAL = {Algebra and Discrete Mathematics},
    VOLUME = {38},
      YEAR = {2024},
    NUMBER = {2},
     PAGES = {166--203},
      ISSN = {1726-3255,2415-721X},
   MRCLASS = {14A22 (16E35)},
  MRNUMBER = {4851364},
}

@article {BurbanDrozd,
    AUTHOR = {Burban, Igor and Drozd, Yuriy},
     TITLE = {Morita theory for non-commutative noetherian schemes},
   JOURNAL = {Adv. Math.},
  FJOURNAL = {Advances in Mathematics},
    VOLUME = {399},
      YEAR = {2022},
     PAGES = {Paper No. 108273, 42},
      ISSN = {0001-8708,1090-2082},
   MRCLASS = {14F08},
  MRNUMBER = {4385131},
MRREVIEWER = {Mee\ Seong\ Im},
       DOI = {10.1016/j.aim.2022.108273},
       URL = {https://doi.org/10.1016/j.aim.2022.108273},
}

@article {BurbanDrozdGavran,
    AUTHOR = {Burban, Igor and Drozd, Yuriy and Gavran, Volodymyr},
     TITLE = {Minors and resolutions of non-commutative schemes},
   JOURNAL = {Eur. J. Math.},
  FJOURNAL = {European Journal of Mathematics},
    VOLUME = {3},
      YEAR = {2017},
    NUMBER = {2},
     PAGES = {311--341},
      ISSN = {2199-675X,2199-6768},
   MRCLASS = {14A22 (14F05)},
  MRNUMBER = {3695056},
MRREVIEWER = {Jon\ Eivind\ Vatne},
       DOI = {10.1007/s40879-017-0128-6},
       URL = {https://doi.org/10.1007/s40879-017-0128-6},
}

@incollection {NonCrossing,
    AUTHOR = {Baumeister, Barbara and Bux, Kai-Uwe and G\"{o}tze, Friedrich and Kielak, Dawid and
              Krause, Henning},
     TITLE = {Non-crossing partitions},
 BOOKTITLE = {Spectral structures and topological methods in mathematics},
    SERIES = {EMS Ser. Congr. Rep.},
     PAGES = {235--274},
 PUBLISHER = {EMS Publ. House, Z\"{u}rich},
      YEAR = {[2019] \copyright 2019},
   MRCLASS = {05C10 (20F36)},
  MRNUMBER = {4248147},
       DOI = {10.4171/197-1/11},
       URL = {https://doi.org/10.4171/197-1/11},
}

@article {ChanIngalls,
    AUTHOR = {Chan, Daniel and Ingalls, Colin},
     TITLE = {Non-commutative coordinate rings and stacks},
   JOURNAL = {Proc. London Math. Soc. (3)},
  FJOURNAL = {Proceedings of the London Mathematical Society. Third Series},
    VOLUME = {88},
      YEAR = {2004},
    NUMBER = {1},
     PAGES = {63--88},
      ISSN = {0024-6115,1460-244X},
   MRCLASS = {14A22 (14A20 16S38)},
  MRNUMBER = {2018958},
MRREVIEWER = {Peter\ J\o rgensen},
       DOI = {10.1112/S0024611503014278},
       URL = {https://doi.org/10.1112/S0024611503014278},
}

@article {ChangHaidenSchroll,
    AUTHOR = {Chang, Wen and Haiden, Fabian and Schroll, Sibylle},
     TITLE = {Braid group actions on branched coverings and full exceptional
              sequences},
   JOURNAL = {Adv. Math.},
  FJOURNAL = {Advances in Mathematics},
    VOLUME = {472},
      YEAR = {2025},
     PAGES = {Paper No. 110284, 24},
      ISSN = {0001-8708,1090-2082},
   MRCLASS = {57M50 (16E35 20F36 57M12)},
  MRNUMBER = {4897441},
       DOI = {10.1016/j.aim.2025.110284},
       URL = {https://doi.org/10.1016/j.aim.2025.110284},
}

@article {DlabRingel,
    AUTHOR = {Dlab, Vlastimil and Ringel, Claus Michael},
     TITLE = {Indecomposable representations of graphs and algebras},
   JOURNAL = {Mem. Amer. Math. Soc.},
  FJOURNAL = {Memoirs of the American Mathematical Society},
    VOLUME = {6},
      YEAR = {1976},
    NUMBER = {173},
     PAGES = {v+57},
      ISSN = {0065-9266,1947-6221},
   MRCLASS = {16A64},
  MRNUMBER = {447344},
MRREVIEWER = {Yu.\ A.\ Drozd},
       DOI = {10.1090/memo/0173},
       URL = {https://doi.org/10.1090/memo/0173},
}

@article {Digne,
    AUTHOR = {Digne, François},
     TITLE = {Pr\'esentations duales des groupes de tresses de type affine
              {$\widetilde A$}},
   JOURNAL = {Comment. Math. Helv.},
  FJOURNAL = {Commentarii Mathematici Helvetici. A Journal of the Swiss
              Mathematical Society},
    VOLUME = {81},
      YEAR = {2006},
    NUMBER = {1},
     PAGES = {23--47},
      ISSN = {0010-2571,1420-8946},
   MRCLASS = {20F36 (20F05)},
  MRNUMBER = {2208796},
MRREVIEWER = {Noelle\ C.\ Antony},
       DOI = {10.4171/CMH/41},
       URL = {https://doi.org/10.4171/CMH/41},
}

@incollection {GeigleLenzingWeightedCurves,
    AUTHOR = {Geigle, Werner and Lenzing, Helmut},
     TITLE = {A class of weighted projective curves arising in
              representation theory of finite-dimensional algebras},
 BOOKTITLE = {Singularities, representation of algebras, and vector bundles
              ({L}ambrecht, 1985)},
    SERIES = {Lecture Notes in Math.},
    VOLUME = {1273},
     PAGES = {265--297},
 PUBLISHER = {Springer, Berlin},
      YEAR = {1987},
      ISBN = {3-540-18263-2},
   MRCLASS = {14H45 (14F05 16A64)},
  MRNUMBER = {915180},
MRREVIEWER = {Alfred\ G.\ Wiedemann},
       DOI = {10.1007/BFb0078849},
       URL = {https://doi.org/10.1007/BFb0078849},
}

@book {GilleSzamuely,
    AUTHOR = {Gille, Philippe and Szamuely, Tam\'as},
     TITLE = {Central simple algebras and {G}alois cohomology},
    SERIES = {Cambridge Studies in Advanced Mathematics},
    VOLUME = {165},
   EDITION = {Second},
 PUBLISHER = {Cambridge University Press, Cambridge},
      YEAR = {2017},
     PAGES = {xi+417},
      ISBN = {978-1-316-60988-0; 978-1-107-15637-1},
   MRCLASS = {16K20 (14C35 14F22 19C30)},
  MRNUMBER = {3727161},
}

@article {HappelTilting,
    AUTHOR = {Happel, Dieter},
     TITLE = {A characterization of hereditary categories with tilting
              object},
   JOURNAL = {Invent. Math.},
  FJOURNAL = {Inventiones Mathematicae},
    VOLUME = {144},
      YEAR = {2001},
    NUMBER = {2},
     PAGES = {381--398},
      ISSN = {0020-9910,1432-1297},
   MRCLASS = {18E30 (16G70)},
  MRNUMBER = {1827736},
MRREVIEWER = {M.\ Teply},
       DOI = {10.1007/s002220100135},
       URL = {https://doi.org/10.1007/s002220100135},
}

@book{HappelTriangulated,
    AUTHOR = {Happel, Dieter},
     TITLE = {Triangulated categories in the representation theory of
              finite-dimensional algebras},
    SERIES = {London Mathematical Society Lecture Note Series},
    VOLUME = {119},
 PUBLISHER = {Cambridge University Press, Cambridge},
      YEAR = {1988},
     PAGES = {x+208},
      ISBN = {0-521-33922-7},
   MRCLASS = {16A46 (16A48 16A62 16A64 18E30)},
  MRNUMBER = {935124},
MRREVIEWER = {Alfred\ G.\ Wiedemann},
       DOI = {10.1017/CBO9780511629228},
       URL = {https://doi.org/10.1017/CBO9780511629228},
}

@article{HappelReiten,
    AUTHOR = {Happel, Dieter and Reiten, Idun},
     TITLE = {Hereditary abelian categories with tilting object over
              arbitrary base fields},
   JOURNAL = {J. Algebra},
  FJOURNAL = {Journal of Algebra},
    VOLUME = {256},
      YEAR = {2002},
    NUMBER = {2},
     PAGES = {414--432},
      ISSN = {0021-8693,1090-266X},
   MRCLASS = {16G70 (18E30)},
  MRNUMBER = {1939113},
MRREVIEWER = {Lidia\ Angeleri H\"ugel},
       DOI = {10.1016/S0021-8693(02)00088-1},
       URL = {https://doi.org/10.1016/S0021-8693(02)00088-1},
}

@article {Harada,
    AUTHOR = {Harada, Manabu},
     TITLE = {Hereditary orders},
   JOURNAL = {Trans. Amer. Math. Soc.},
  FJOURNAL = {Transactions of the American Mathematical Society},
    VOLUME = {107},
      YEAR = {1963},
     PAGES = {273--290},
      ISSN = {0002-9947,1088-6850},
   MRCLASS = {16.40},
  MRNUMBER = {151489},
MRREVIEWER = {M.\ Auslander},
       DOI = {10.2307/1993894},
       URL = {https://doi.org/10.2307/1993894},
}

@article {HuberyKrause,
    AUTHOR = {Hubery, Andrew and Krause, Henning},
     TITLE = {A categorification of non-crossing partitions},
   JOURNAL = {J. Eur. Math. Soc. (JEMS)},
  FJOURNAL = {Journal of the European Mathematical Society (JEMS)},
    VOLUME = {18},
      YEAR = {2016},
    NUMBER = {10},
     PAGES = {2273--2313},
      ISSN = {1435-9855,1435-9863},
   MRCLASS = {17B22 (05A17 05E15 16E20 16E60 20F55)},
  MRNUMBER = {3551191},
MRREVIEWER = {Volodymyr\ Mazorchuk},
       DOI = {10.4171/JEMS/641},
       URL = {https://doi.org/10.4171/JEMS/641},
}

@article {IgusaSchiffler,
    AUTHOR = {Igusa, Kiyoshi and Schiffler, Ralf},
     TITLE = {Exceptional sequences and clusters},
   JOURNAL = {J. Algebra},
  FJOURNAL = {Journal of Algebra},
    VOLUME = {323},
      YEAR = {2010},
    NUMBER = {8},
     PAGES = {2183--2202},
      ISSN = {0021-8693},
   MRCLASS = {20F55 (13F60 16G20 20F36)},
  MRNUMBER = {2596373},
MRREVIEWER = {Gregoire Dupont},
       DOI = {10.1016/j.jalgebra.2010.02.003},
       URL = {https://doi.org/10.1016/j.jalgebra.2010.02.003},
}

@article{IngallsThomas,
    AUTHOR = {Ingalls, Colin and Thomas, Hugh},
     TITLE = {Noncrossing partitions and representations of quivers},
   JOURNAL = {Compos. Math.},
  FJOURNAL = {Compositio Mathematica},
    VOLUME = {145},
      YEAR = {2009},
    NUMBER = {6},
     PAGES = {1533--1562},
      ISSN = {0010-437X,1570-5846},
   MRCLASS = {16G20 (05E10 05E15 13F60)},
  MRNUMBER = {2575093},
MRREVIEWER = {Gregoire\ Dupont},
       DOI = {10.1112/S0010437X09004023},
       URL = {https://doi.org/10.1112/S0010437X09004023},
}

@article {Keller,
    AUTHOR = {Keller, Bernhard},
     TITLE = {Deriving {DG} categories},
   JOURNAL = {Ann. Sci. \'Ecole Norm. Sup. (4)},
  FJOURNAL = {Annales Scientifiques de l'\'Ecole Normale Sup\'erieure.
              Quatri\`eme S\'erie},
    VOLUME = {27},
      YEAR = {1994},
    NUMBER = {1},
     PAGES = {63--102},
      ISSN = {0012-9593},
   MRCLASS = {18E30 (16D90)},
  MRNUMBER = {1258406},
MRREVIEWER = {Jeremy\ Rickard},
       URL = {http://www.numdam.org/item?id=ASENS_1994_4_27_1_63_0},
}

@article {Krah,
    AUTHOR = {Krah, Johannes},
     TITLE = {A phantom on a rational surface},
   JOURNAL = {Invent. Math.},
  FJOURNAL = {Inventiones Mathematicae},
    VOLUME = {235},
      YEAR = {2024},
    NUMBER = {3},
     PAGES = {1009--1018},
      ISSN = {0020-9910,1432-1297},
   MRCLASS = {14F08 (14C20 14J26)},
  MRNUMBER = {4701883},
MRREVIEWER = {Shintarou\ Yanagida},
       DOI = {10.1007/s00222-023-01234-0},
       URL = {https://doi.org/10.1007/s00222-023-01234-0},
}

@article {KrauseStableDerived,
    AUTHOR = {Krause, Henning},
     TITLE = {The stable derived category of a {N}oetherian scheme},
   JOURNAL = {Compos. Math.},
  FJOURNAL = {Compositio Mathematica},
    VOLUME = {141},
      YEAR = {2005},
    NUMBER = {5},
     PAGES = {1128--1162},
      ISSN = {0010-437X,1570-5846},
   MRCLASS = {18E30 (14F05 16E30 16E65 55U35)},
  MRNUMBER = {2157133},
MRREVIEWER = {J.\ P. C. Greenlees},
       DOI = {10.1112/S0010437X05001375},
       URL = {https://doi.org/10.1112/S0010437X05001375},
}

@book {KrauseBook,
    AUTHOR = {Krause, Henning},
     TITLE = {Homological theory of representations},
    SERIES = {Cambridge Studies in Advanced Mathematics},
    VOLUME = {195},
 PUBLISHER = {Cambridge University Press, Cambridge},
      YEAR = {2022},
     PAGES = {xxxiv+482},
      ISBN = {978-1-108-83889-4},
   MRCLASS = {16G10 (16E35 16E45 16G20 18E10 18G80 20C15)},
  MRNUMBER = {4327095},
MRREVIEWER = {E.\ R.\ Alvares},
}

@article {KussinMemoirs,
    AUTHOR = {Kussin, Dirk},
     TITLE = {Noncommutative curves of genus zero: related to finite
              dimensional algebras},
   JOURNAL = {Mem. Amer. Math. Soc.},
  FJOURNAL = {Memoirs of the American Mathematical Society},
    VOLUME = {201},
      YEAR = {2009},
    NUMBER = {942},
     PAGES = {x+128},
      ISSN = {0065-9266,1947-6221},
      ISBN = {978-0-8218-4400-7},
   MRCLASS = {16G10 (14A22 14H45 16S38)},
  MRNUMBER = {2548114},
MRREVIEWER = {Jon\ Eivind\ Vatne},
       DOI = {10.1090/memo/0942},
       URL = {https://doi.org/10.1090/memo/0942},
}

@article {KussinWeightedCurve,
    AUTHOR = {Kussin, Dirk},
     TITLE = {Weighted noncommutative regular projective curves},
   JOURNAL = {J. Noncommut. Geom.},
  FJOURNAL = {Journal of Noncommutative Geometry},
    VOLUME = {10},
      YEAR = {2016},
    NUMBER = {4},
     PAGES = {1465--1540},
      ISSN = {1661-6952,1661-6960},
   MRCLASS = {14A22 (11R58 14F05 14H52 16H10 18E10 30F50)},
  MRNUMBER = {3597149},
MRREVIEWER = {Adam\ Nyman},
       DOI = {10.4171/JNCG/264},
       URL = {https://doi.org/10.4171/JNCG/264},
}

@article{KussinMeltzer,
    AUTHOR = {Kussin, Dirk and Meltzer, Hagen},
     TITLE = {The braid group action for exceptional curves},
   JOURNAL = {Arch. Math. (Basel)},
  FJOURNAL = {Archiv der Mathematik},
    VOLUME = {79},
      YEAR = {2002},
    NUMBER = {5},
     PAGES = {335--344},
      ISSN = {0003-889X,1420-8938},
   MRCLASS = {16G10 (16E60 18E30 20F36)},
  MRNUMBER = {1951302},
MRREVIEWER = {Alexey\ N.\ Rudakov},
       DOI = {10.1007/PL00012455},
       URL = {https://doi.org/10.1007/PL00012455},
}

@incollection {LenzingKTheory,
    AUTHOR = {Lenzing, Helmut},
     TITLE = {A {$K$}-theoretic study of canonical algebras},
 BOOKTITLE = {Representation theory of algebras ({C}ocoyoc, 1994)},
    SERIES = {CMS Conf. Proc.},
    VOLUME = {18},
     PAGES = {433--454},
 PUBLISHER = {Amer. Math. Soc., Providence, RI},
      YEAR = {1996},
      ISBN = {0-8218-0395-6},
   MRCLASS = {16E20 (16G10)},
  MRNUMBER = {1388066},
MRREVIEWER = {Reinhard\ C.\ Laubenbacher},
       DOI = {10.4064/cm-71-2-161-181},
       URL = {https://doi.org/10.4064/cm-71-2-161-181},
}

@incollection {LenzingCurveSingularities,
    AUTHOR = {Lenzing, Helmut},
     TITLE = {Curve singularities arising from the representation theory of
              tame hereditary algebras},
 BOOKTITLE = {Representation theory, {I} ({O}ttawa, {O}nt., 1984)},
    SERIES = {Lecture Notes in Math.},
    VOLUME = {1177},
     PAGES = {199--231},
 PUBLISHER = {Springer, Berlin},
      YEAR = {1986},
      ISBN = {3-540-16432-4},
   MRCLASS = {16A64 (16A46)},
  MRNUMBER = {842467},
MRREVIEWER = {Idun\ Reiten},
       DOI = {10.1007/BFb0075266},
       URL = {https://doi.org/10.1007/BFb0075266},
}

@incollection {LenzingSurvey,
    AUTHOR = {Lenzing, Helmut},
     TITLE = {Hereditary categories},
 BOOKTITLE = {Handbook of tilting theory},
    SERIES = {London Math. Soc. Lecture Note Ser.},
    VOLUME = {332},
     PAGES = {105--146},
 PUBLISHER = {Cambridge Univ. Press, Cambridge},
      YEAR = {2007},
      ISBN = {978-0-521-68045-5; 0-521-68045-X},
   MRCLASS = {18E10 (14F05 16G10)},
  MRNUMBER = {2384609},
MRREVIEWER = {Lidia\ Angeleri H\"ugel},
       DOI = {10.1017/CBO9780511735134.006},
       URL = {https://doi.org/10.1017/CBO9780511735134.006},
}

@incollection {LenzingExceptionalCurve,
    AUTHOR = {Lenzing, Helmut},
     TITLE = {Representations of finite-dimensional algebras and singularity
              theory},
 BOOKTITLE = {Trends in ring theory ({M}iskolc, 1996)},
    SERIES = {CMS Conf. Proc.},
    VOLUME = {22},
     PAGES = {71--97},
 PUBLISHER = {Amer. Math. Soc., Providence, RI},
      YEAR = {1998},
      ISBN = {0-8218-0849-4},
   MRCLASS = {16G20 (14H20 14H60 16G60)},
  MRNUMBER = {1491919},
MRREVIEWER = {Hagen\ Meltzer},
       DOI = {10.1109/22.660982},
       URL = {https://doi.org/10.1109/22.660982},
}

@article {LenzingdelaPena,
    AUTHOR = {Lenzing, Helmut and de la Pe\~na, Jos\'e{} Antonio},
     TITLE = {Concealed-canonical algebras and separating tubular families},
   JOURNAL = {Proc. London Math. Soc. (3)},
  FJOURNAL = {Proceedings of the London Mathematical Society. Third Series},
    VOLUME = {78},
      YEAR = {1999},
    NUMBER = {3},
     PAGES = {513--540},
      ISSN = {0024-6115,1460-244X},
   MRCLASS = {16G20 (14H60 16G70)},
  MRNUMBER = {1674837},
MRREVIEWER = {Hagen\ Meltzer},
       DOI = {10.1112/S0024611599001872},
       URL = {https://doi.org/10.1112/S0024611599001872},
}

@article {NeaimeGarside,
    AUTHOR = {Neaime, Georges},
     TITLE = {Interval {G}arside structures related to the affine {A}rtin
              groups of type {$\tilde A$}},
   JOURNAL = {J. Algebra},
  FJOURNAL = {Journal of Algebra},
    VOLUME = {607},
      YEAR = {2022},
     PAGES = {411--436},
      ISSN = {0021-8693,1090-266X},
   MRCLASS = {20F55 (05E16 20F36)},
  MRNUMBER = {4441335},
MRREVIEWER = {Giovanni\ Paolini},
       DOI = {10.1016/j.jalgebra.2020.10.029},
       URL = {https://doi.org/10.1016/j.jalgebra.2020.10.029},
}

@article {NaeghvdBergh,
    AUTHOR = {de Naeghel, Koen and van den Bergh, Michel},
     TITLE = {Ideal classes of three-dimensional {S}klyanin algebras},
   JOURNAL = {J. Algebra},
  FJOURNAL = {Journal of Algebra},
    VOLUME = {276},
      YEAR = {2004},
    NUMBER = {2},
     PAGES = {515--551},
      ISSN = {0021-8693,1090-266X},
   MRCLASS = {16S38},
  MRNUMBER = {2058456},
       DOI = {10.1016/j.jalgebra.2003.09.023},
       URL = {https://doi.org/10.1016/j.jalgebra.2003.09.023},
}

@book {Reiner,
    AUTHOR = {Reiner, Irving},
     TITLE = {Maximal orders},
    SERIES = {London Mathematical Society Monographs. New Series},
    VOLUME = {28},
      NOTE = {Corrected reprint of the 1975 original,
              With a foreword by M. J.\ Taylor},
 PUBLISHER = {The Clarendon Press, Oxford University Press, Oxford},
      YEAR = {2003},
     PAGES = {xiv+395},
      ISBN = {0-19-852673-3},
   MRCLASS = {16H05 (11R54 16K20)},
  MRNUMBER = {1972204},
}

@article {RingelBimod,
    AUTHOR = {Ringel, Claus Michael},
     TITLE = {Representations of {$K$}-species and bimodules},
   JOURNAL = {J. Algebra},
  FJOURNAL = {Journal of Algebra},
    VOLUME = {41},
      YEAR = {1976},
    NUMBER = {2},
     PAGES = {269--302},
      ISSN = {0021-8693},
   MRCLASS = {16A64},
  MRNUMBER = {422350},
MRREVIEWER = {Yu. A. Drozd},
       DOI = {10.1016/0021-8693(76)90184-8},
       URL = {https://doi.org/10.1016/0021-8693(76)90184-8},
}

@incollection {RingelCatalanCombinatorics,
    AUTHOR = {Ringel, Claus Michael},
     TITLE = {The {C}atalan combinatorics of the hereditary {A}rtin
              algebras},
 BOOKTITLE = {Recent developments in representation theory},
    SERIES = {Contemp. Math.},
    VOLUME = {673},
     PAGES = {51--177},
 PUBLISHER = {Amer. Math. Soc., Providence, RI},
      YEAR = {2016},
      ISBN = {978-1-4704-1955-4},
   MRCLASS = {16G20 (05A18 05E10 16G60)},
  MRNUMBER = {3546710},
MRREVIEWER = {M\"uge\ Kanuni},
       DOI = {10.1090/conm/673/13490},
       URL = {https://doi.org/10.1090/conm/673/13490},
}

@book {Helices,
     TITLE = {Helices and vector bundles},
    SERIES = {London Mathematical Society Lecture Note Series},
    VOLUME = {148},
      NOTE = {Seminaire Rudakov,
              Translated from the Russian by A. D. King, P. Kobak and A.
              Maciocia},
 PUBLISHER = {Cambridge University Press, Cambridge},
      YEAR = {1990},
     PAGES = {iv+143},
      ISBN = {0-521-38811-2},
   MRCLASS = {14-06 (14J60)},
  MRNUMBER = {1074776},
}

@article {SaitoI,
    AUTHOR = {Saito, Kyoji},
     TITLE = {Extended affine root systems. {I}. {C}oxeter transformations},
   JOURNAL = {Publ. Res. Inst. Math. Sci.},
  FJOURNAL = {Kyoto University. Research Institute for Mathematical
              Sciences. Publications},
    VOLUME = {21},
      YEAR = {1985},
    NUMBER = {1},
     PAGES = {75--179},
      ISSN = {0034-5318,1663-4926},
   MRCLASS = {17B99 (32G11)},
  MRNUMBER = {780892},
MRREVIEWER = {Peter\ Slodowy},
       DOI = {10.2977/prims/1195179841},
       URL = {https://doi.org/10.2977/prims/1195179841},
}

@article {SaitoII,
    AUTHOR = {Saito, Kyoji},
     TITLE = {Extended affine root systems. {II}. {F}lat invariants},
   JOURNAL = {Publ. Res. Inst. Math. Sci.},
  FJOURNAL = {Kyoto University. Research Institute for Mathematical
              Sciences. Publications},
    VOLUME = {26},
      YEAR = {1990},
    NUMBER = {1},
     PAGES = {15--78},
      ISSN = {0034-5318,1663-4926},
   MRCLASS = {14B07 (17B99 32S30)},
  MRNUMBER = {1053908},
MRREVIEWER = {Peter\ Slodowy},
       DOI = {10.2977/prims/1195171662},
       URL = {https://doi.org/10.2977/prims/1195171662},
}

@article {Scherk,
    AUTHOR = {Scherk, Peter},
     TITLE = {On the decomposition of orthogonalities into symmetries},
   JOURNAL = {Proc. Amer. Math. Soc.},
  FJOURNAL = {Proceedings of the American Mathematical Society},
    VOLUME = {1},
      YEAR = {1950},
     PAGES = {481--491},
      ISSN = {0002-9939,1088-6826},
   MRCLASS = {20.0X},
  MRNUMBER = {36762},
MRREVIEWER = {J.\ Dieudonn\'e},
       DOI = {10.2307/2032316},
       URL = {https://doi.org/10.2307/2032316},
}

@article {Shi,
    AUTHOR = {Shi, Jian-Yi},
     TITLE = {Certain imprimitive reflection groups and their generic
              versions},
   JOURNAL = {Trans. Amer. Math. Soc.},
  FJOURNAL = {Transactions of the American Mathematical Society},
    VOLUME = {354},
      YEAR = {2002},
    NUMBER = {5},
     PAGES = {2115--2129},
      ISSN = {0002-9947,1088-6850},
   MRCLASS = {20F55},
  MRNUMBER = {1881032},
MRREVIEWER = {Sait\ Hal\i c\i o\u glu},
       DOI = {10.1090/S0002-9947-02-02941-0},
       URL = {https://doi.org/10.1090/S0002-9947-02-02941-0},
}

@article {TakahashiEtAl,
    AUTHOR = {Shiraishi, Yuuki and Takahashi, Atsushi and Wada, Kentaro},
     TITLE = {On {W}eyl groups and {A}rtin groups associated to orbifold
              projective lines},
   JOURNAL = {J. Algebra},
  FJOURNAL = {Journal of Algebra},
    VOLUME = {453},
      YEAR = {2016},
     PAGES = {249--290},
      ISSN = {0021-8693},
   MRCLASS = {14F05 (16G20)},
  MRNUMBER = {3465355},
MRREVIEWER = {J\o rgen Vold Rennemo},
       DOI = {10.1016/j.jalgebra.2015.12.016},
       URL = {https://doi.org/10.1016/j.jalgebra.2015.12.016},
}

@book {SnapperTroyer,
    AUTHOR = {Snapper, Ernst and Troyer, Robert J.},
     TITLE = {Metric affine geometry},
    SERIES = {Dover Books on Advanced Mathematics},
   EDITION = {Second},
 PUBLISHER = {Dover Publications, Inc., New York},
      YEAR = {1989},
     PAGES = {xx+435},
      ISBN = {0-486-66108-3},
   MRCLASS = {51-01},
  MRNUMBER = {1034484},
}

@article {WegenerYahiatene,
    AUTHOR = {Wegener, Patrick and Yahiatene, Sophiane},
     TITLE = {A note on non-reduced reflection factorizations of {C}oxeter
              elements},
   JOURNAL = {Algebr. Comb.},
  FJOURNAL = {Algebraic Combinatorics},
    VOLUME = {3},
      YEAR = {2020},
    NUMBER = {2},
     PAGES = {465--469},
      ISSN = {2589-5486},
   MRCLASS = {20F55 (05E16)},
  MRNUMBER = {4099003},
MRREVIEWER = {O.\ V.\ Shvartsman},
       DOI = {10.5802/alco.99},
       URL = {https://doi.org/10.5802/alco.99},
}

@article {YekutieliZhang,
    AUTHOR = {Yekutieli, Amnon and Zhang, James J.},
     TITLE = {Dualizing complexes and perverse sheaves on noncommutative
              ringed schemes},
   JOURNAL = {Selecta Math. (N.S.)},
  FJOURNAL = {Selecta Mathematica. New Series},
    VOLUME = {12},
      YEAR = {2006},
    NUMBER = {1},
     PAGES = {137--177},
      ISSN = {1022-1824,1420-9020},
   MRCLASS = {14A22 (14F05)},
  MRNUMBER = {2244264},
       DOI = {10.1007/s00029-006-0022-4},
       URL = {https://doi.org/10.1007/s00029-006-0022-4},
}

@article{RingelCrawleyBoevey,
    AUTHOR = {Ringel, Claus Michael},
     TITLE = {The canonical algebras},
 BOOKTITLE = {Topics in algebra, {P}art 1 ({W}arsaw, 1988)},
    SERIES = {Banach Center Publ.},
    VOLUME = {26, Part 1},
     PAGES = {407--432},
      NOTE = {With an appendix by William Crawley-Boevey},
 PUBLISHER = {PWN, Warsaw},
      YEAR = {1990},
      ISBN = {83-01-09579-2},
   MRCLASS = {16G10 (16G70)},
  MRNUMBER = {1171247},
MRREVIEWER = {H\'ector\ A.\ Merklen},
}

@unpublished{ArtindeJong,
    author = {Artin, Michael and de Jong, Johan},
    title = {Stable orders over surfaces},
    note = {unpublished manuscript},
    year = {2004}
}

@article {BaumeisterWegenerYahiateneI,
    AUTHOR = {Baumeister, Barbara and
              Wegener, Patrick and Yahiatene, Sophiane},
     TITLE = {Extended {W}eyl groups, {H}urwitz transitivity and weighted projective lines {I}: Generalities and the tubular case},
   JOURNAL = {arXiv:1808.05083v5},
}

@article {BaumeisterWegenerYahiateneII,
    AUTHOR = {Baumeister, Barbara and
              Wegener, Patrick and Yahiatene, Sophiane},
     TITLE = {Extended {W}eyl groups, {H}urwitz transitivity and weighted projective lines {II}: a uniform approach},
   JOURNAL = {arXiv:2104.07075v3},
}

@article {BaumeisterWegener,
    AUTHOR = {Baumeister, Barbara and Wegener, Patrick},
     TITLE = {The hyperbolic cover of an elliptic {W}eyl group},
   JOURNAL = {arXiv:2411.06401v3},
}

@article{Perniok,
    author = {Perniok, Daniel},
    title = {Coxeter--Dynkin algebras of canonical type},
    journal = {	arXiv:2509.17887}
}

@article {McCammondSulway,
    AUTHOR = {McCammond, Jon and Sulway, Robert},
     TITLE = {Artin groups of {E}uclidean type},
   JOURNAL = {Invent. Math.},
  FJOURNAL = {Inventiones Mathematicae},
    VOLUME = {210},
      YEAR = {2017},
    NUMBER = {1},
     PAGES = {231--282},
      ISSN = {0020-9910,1432-1297},
   MRCLASS = {20F36},
  MRNUMBER = {3698343},
MRREVIEWER = {Kisnney\ Emiliano de Almeida},
       DOI = {10.1007/s00222-017-0728-2},
       URL = {https://doi.org/10.1007/s00222-017-0728-2},
}

@article {PaoliniSalvetti,
    AUTHOR = {Paolini, Giovanni and Salvetti, Mario},
     TITLE = {Proof of the {$K(\pi,1)$} conjecture for affine {A}rtin
              groups},
   JOURNAL = {Invent. Math.},
  FJOURNAL = {Inventiones Mathematicae},
    VOLUME = {224},
      YEAR = {2021},
    NUMBER = {2},
     PAGES = {487--572},
      ISSN = {0020-9910,1432-1297},
   MRCLASS = {20F36 (20F55 55R35)},
  MRNUMBER = {4243019},
MRREVIEWER = {Jie\ Wu},
       DOI = {10.1007/s00222-020-01016-y},
       URL = {https://doi.org/10.1007/s00222-020-01016-y},
}

@phdthesis{VanderLekThesis,
    author = {Van der Lek, Harm},
    title = {The homotopy type of complex hyperplane complements},
    school = {University of Nijmegen},
    year = {1983}
}

@article {BradyMcCammond,
    AUTHOR = {Brady, Noel and McCammond, Jon},
     TITLE = {Factoring {E}uclidean isometries},
   JOURNAL = {Internat. J. Algebra Comput.},
  FJOURNAL = {International Journal of Algebra and Computation},
    VOLUME = {25},
      YEAR = {2015},
    NUMBER = {1-2},
     PAGES = {325--347},
      ISSN = {0218-1967,1793-6500},
   MRCLASS = {51M05 (06A06 06B23 20F55)},
  MRNUMBER = {3325886},
MRREVIEWER = {E.\ W.\ Ellers},
       DOI = {10.1142/S0218196715400135},
       URL = {https://doi.org/10.1142/S0218196715400135},
}

@article {McCammondPaolini,
    AUTHOR = {McCammond, Jon and Paolini, Giovanni},
     TITLE = {Factoring isometries of quadratic spaces into reflections},
   JOURNAL = {J. Algebra},
  FJOURNAL = {Journal of Algebra},
    VOLUME = {605},
      YEAR = {2022},
     PAGES = {226--252},
      ISSN = {0021-8693,1090-266X},
   MRCLASS = {20G15 (06A07 20F55 51M05)},
  MRNUMBER = {4421719},
MRREVIEWER = {E.\ W.\ Ellers},
       DOI = {10.1016/j.jalgebra.2022.03.017},
       URL = {https://doi.org/10.1016/j.jalgebra.2022.03.017},
}

\vspace*{1em}

\end{document}